\let\oldtocsection=\tocsection
\let\oldtocsubsection=\tocsubsection
\renewcommand{\tocsection}[2]{\hspace{0em}\vspace{0.2em}\rule{0pt}{14pt}\oldtocsection{#1}{#2}\bf}
\renewcommand{\tocsubsection}[2]{\hspace{2em}\oldtocsubsection{#1}{#2}}
\newcommand{\changelocaltocdepth}[1]{%
	\addtocontents{toc}{\protect\setcounter{tocdepth}{#1}}%
	\setcounter{tocdepth}{#1}%
	}
\tikzset{
  optree/.style={scale=.5,thick,grow'=up,level distance=10mm,inner sep=1pt},
  comp/.style={draw=none,circle,fill,line width=0,inner sep=0pt},
  dot/.style={draw,circle,fill,inner sep=0pt,minimum width=3pt},
  circ/.style={draw,circle,inner sep=1pt,minimum width=4mm},
  emptycirc/.style={draw,circle,inner sep=1pt,minimum width=2mm},
  root/.style={level distance=10mm,inner sep=1pt},
  leaf/.style={draw=none,circle,fill,line width=0,inner sep=0pt},
  nodot/.style={draw,circle,inner sep=1pt},
}
\definecolor{Chocolat}{rgb}{0.36, 0.2, 0.09}
\definecolor{BleuTresFonce}{rgb}{0.215, 0.215, 0.36}
\theoremstyle{plain}
\newtheorem{lemma}{Lemma}[section]
\newtheorem{theorem}[lemma]{Theorem}
\newtheorem{corollary}[lemma]{Corollary}
\newtheorem{proposition}[lemma]{Proposition}
\newtheorem*{convention}{\sc Convention}
\theoremstyle{definition}
\newtheorem{definition}[lemma]{Definition}
\newtheorem{remark}[lemma]{\sc Remark}
\newtheorem{example}[lemma]{\sc Example}
\newtheorem{notation}[lemma]{\sc Notation}
\newtheorem{assumption}[lemma]{\sc Assumption}
\def\Hbul{H^\bullet}
\def\dE{d_\mathrm{E}}
\def\DE{\mathrm{d}_\mathrm{E}}
\def\ad{\mathrm{ad}}
\def\Co{\mathbb{C}}
\newcommand{\NNs}{\overline{\NN^2}}
\newcommand{\K}{\mathbb{k}}
\newcommand{\ZZ}{\mathbb{Z}}
\newcommand{\Ga}{\Gamma}
\newcommand{\Bij}{\mathsf{{Bij}}}
\def\M{\mathcal{M}}
\def\Gra{\mathrm{CGra}}
\def\EGra{\mathrm{EGra}}
\def\S{\mathcal{S}}
\def\Lie{\mathrm{Lie}}
\def\T{\mathrm{T}}
\def\Com{\mathrm{Com}}
\def\grt{\mathfrak{grt}}
\def\GRT{\mathrm{GRT}}
\def\oM{\overline{\mathcal{M}}}
\def\CC{\mathrm{C}}
\def\PP{\mathrm{P}}
\def\BCH{\mathrm{BCH}}
\def\aa{\mathfrak{a}}
\def\SDL{\S\Delta\Lie}
\def\SL{\S\Lie}
\def\F{\mathcal{F}}
\def\Aut{\mathrm{Aut}}
\def\MC{\mathrm{MC}}
\def\calMC{\mathcal{MC}}
\def\1{\mathbb{1}}
\newcommand{\G}{\mathscr{G}}
\newcommand{\NN}{\mathbb{N}}
\def\TTT{\mathcal{T}}
\newcommand{\Sy}{\mathbb{S}}
\newcommand{\Syg}{\widehat{\mathbb{S}}}
\DeclareMathOperator{\id}{id}
\newcommand{\ac}{{\scriptstyle \text{\rm !`}}}
\def\B{\mathrm{B}}
\def\P{\mathcal{P}}
\def\whP{\widehat{\mathcal{P}}}
\def\C{\mathcal{C}}
\DeclareMathOperator{\End}{End}
\DeclareMathOperator{\EEnd}{\mathcal{E}\!\mathcal{n}\!\mathcal{d}}
\DeclareMathOperator{\Hom}{Hom}
\DeclareMathOperator{\Tw}{Tw}
\def\Smod{\Syg\textsf{-}\mathsf{Mod}}
\def\g{\mathfrak{g}}
\def\gh{\mathfrak{g}[[\hbar]]}
\def\gth{\mathfrak{g}^\hbar}
\def\d{\mathrm{d}}
\def\ba{\bar{\mathfrak{a}}}
\def\b{\mathfrak{b}}
\def\bb{\bar{\mathfrak{b}}}
\def\R{\mathfrak{R}}
\def\m{\mathfrak{m}}
\def\GGra{\mathrm{TautGra}}
\def\Mrk{\mathrm{M}}
\def\Frob{\mathcal{F}\!\mathcal{r}\!\mathcal{o}\!\mathcal{b}}
\def\od{\mathrm{odd}}
\def\ev{\mathrm{even}}
\def\Z2{\ZZ/2\ZZ}
\def\Khbar{\K[[\hbar]]}
\def\giv{\mathfrak{giv}}
\def\GIV{\mathrm{GIV}}
\def\givgrt{\mathfrak{ggrt}}
\def\GIVGRT{\mathrm{GGRT}}
\def\BR{\mathrm{BR}}
\author{Vladimir Dotsenko}
\address{Institut de Recherche Math\'ematique Avanc\'ee, UMR 7501, Universit\'e de Strasbourg et CNRS, 7 rue Ren\'e-Descartes, 67000 Strasbourg, France}
\email{vdotsenko@unistra.fr}
\author{Sergey Shadrin}
\address{Korteweg-de Vries Institute for Mathematics, University of Amsterdam, P. O. Box 94248, 1090 GE Amsterdam, The Netherlands}
\email{s.shadrin@uva.nl}
\author{Arkady Vaintrob}
\address{Department of Mathematics, University of Oregon, OR 97403, Eugene, USA}
\email{vaintrob@uoregon.edu}
\author{Bruno Vallette}
\address{Universit\'e Sorbonne Paris Nord, Laboratoire de G\'eom\'etrie, Analyse et Applications, LAGA, CNRS, UMR 7539, F-93430, Villetaneuse, France}
\email{vallette@math.univ-paris13.fr}
\title{Deformation theory of Cohomological Field Theories}
\date{
\today
}
\keywords{Deformation theory, moduli spaces of stable curves, cohomological field theories, modular operads, homotopical algebra, graph complexes, Grothendieck--Teichm\"uller groups, Givental group.}
\thanks{2020 \emph{Mathematics Subject Classification.} Primary 18M85; Secondary 18G85, 18M70, 53D55, 14H10, 53D45.
\newline
B.V. was supported by the Institut Universitaire de France. 
S.S was supported by the Netherlands Organization for Scientific Research.
During the work on the final version of the paper, V.D. and B.V. were supported by the French national research agency project ANR-20-CE40-0016.}
\begin{document}

\begin{abstract}
 
 We develop the deformation theory of cohomological field theories (CohFTs), which is done as a special case of a general deformation theory of morphisms of modular operads. 
 This leads us to introduce two new natural extensions of the notion of a CohFT: homotopy 
 (necessary algebraic toolkit to develop chain-level Gromov--Witten invariants) and quantum (with examples found in the works of Buryak and Rossi on integrable systems). 
 The universal group of symmetries of morphisms of modular operads, based on Kontsevich's graph complex, is shown to be trivial. Using the tautological rings on moduli spaces of curves, we introduce a natural enrichment of Kontsevich's graph complex. This leads to universal groups of nontrivial symmetries of both homotopy and quantum CohFTs, which, in the latter case, is shown to contain both the prounipotent Grothendieck--Teichm\"uller group and the Givental group.
\end{abstract}

\maketitle

\setcounter{tocdepth}{2}

\tableofcontents

\section*{Introduction}
\changelocaltocdepth{1}

\subsection*{Moduli spaces of curves and related symmetries}
In his celebrated
\emph{Esquisse d'un programme} \cite{Grothendieck97}, Grothendieck observed that the absolute
Galois group $\mathrm{Gal}\big(\overline{\mathbb{Q}}/\mathbb{Q}\big)$ acts on
the (geometric) fundamental groupoids of the moduli stacks $\mathcal{M}_{g,n}$ of genus $g$ curves
with $n$ marked points and proposed to study this group  via its action on the \emph{Teichm\"uller tower},
the collection of all these groupoids for all $g$ and $n$ connected by natural operations induced
by gluing and  forgetting marked points. Bely\u{\i}'s theorem~\cite{Belyi79} already shows that the
geometric fundamental group of $\mathcal{M}_{0,4}$ contains a faithful action of the absolute Galois group.
This motivated Drinfeld to introduce in~\cite{Drinfeld90} several flavors of
\emph{Grothendieck--Teichm\"uller} groups whose profinite version contains the absolute Galois
group and is conjecturally isomorphic to it.

\medskip

Since the \'etale (resp.\ Betti) fundamental group of $\mathcal{M}_{0,4}$ is the profinite (resp.\ prounipotent) completion of the free group on two generators,
Drinfeld's Grothendieck--Teichm\"uller groups can be defined in purely algebraic terms.
They also play a prominent role in deformation theory, quantum groups and braided monoidal categories.
One of the sources of inspiration for the present work was to bring
Drinfeld's algebraic definitions closer to Grothendieck's original plan to study an arithmetic object via geometrical methods related to the moduli stacks of curves and their structural operations.

\medskip

On the other hand, the Deligne--Mumford--Knudsen stacks  $\overline{\mathcal{M}}_{g,n}$  of stable marked curves compactifying
$\mathcal{M}_{g,n}$ are also connected by similar operations of gluing and forgetting marked points.
This collection of spaces forms the basis for the notion of a  \emph{cohomological field theory}
(CohFT) which was introduced by Kontsevich and Manin~\cite{KontsevichManin94, Manin99} in order to
formalize the algebraic structures of quantum cohomology and
Gromov--Witten invariants. Since then they have become the standard tool in the study of the mirror symmetry and related areas (singularity theory, symplectic geometry, theory of integrable hierarchies, to name just a few).
Inspired by the operations on $\overline{\mathcal{M}}_{g,n}$ Getzler and Kapranov introduced
in~\cite{GetzlerKapranov95} a general notion of a \emph{modular operad}. In
this language, a CohFT is an algebra over the modular operad $H_*(\overline{\mathcal{M}}_{g,n})$~.

\medskip

In his work on formal properties of Gromov--Witten invariants, Givental~\cite{Givental01}
introduced a remarkable universal group which, as was shown by Teleman in~\cite{Teleman2012},
naturally acts on the set of CohFTs. In a different direction, Pandharipande--Zvonkine described, in a
recent paper~\cite{PZ18},  an explicit method to construct non-trivial examples of CohFTs by
deforming topological field theories using minimal cohomology classes on  $\overline{\mathcal{M}}_{g,n}$~.
\medskip

One of the overarching goals of the current work was to find and develop a deformation-theoretic
context which would allow us to unite under one umbrella all these different universal symmetries of objects
governed by the natural operations induced by the spaces  $\overline{\mathcal{M}}_{g,n}$~.
Accomplishing this is, what we believe, our main result.

\subsection*{Homotopy CohFT structures}
In this paper, we develop the deformation theory of morphisms of modular operads following the general operadic methods described in~\cite{MerkulovVallette09I, MerkulovVallette09II, LodayVallette12}.
We start with the definition of a modular operad as an algebra over a certain monad of graphs; this classical approach will play a crucial role latter on.
On a more innovative side, we also encode the category of modular operads with a certain
 operad $\mathcal{O}$ colored in \emph{groupoids}.

\medskip

The Koszul duality for operads has recently been extended to groupoid-colored operads by Ward in
\cite{Ward19}, the goal being to obtain a notion of a homotopy modular operad and an associated homotopy transfer theorem, and to introduce Massey products in graph homology.
Notice that we do not follow the approach of (the first version of) \cite{Ward19} exactly.
Instead, we use the notion of the Koszul dual groupoid-colored \emph{cooperad},
which allows us to avoid the finiteness assumptions required in \emph{op.\ cit.}.
This relates to the principle that ``operads encode algebras and cooperads encode homotopy algebras''.
(After the present work first appeared on the ArXiv, Ward updated his paper to deal with cooperads this time.)
With our approach, we recover in a more conceptual way various important constructions related to
modular operads including the Feynman transform and its (homotopy) inverse. This also allows us to clarify various degrees and signs appearing in these constructions.

\medskip

We apply the calculus of modular operads in a different way in order to introduce a notion of \emph{cohomological field theories up to homotopy}, or \emph{homotopy CohFTs} for short, together with their deformation theory. Note that if we restrict ourselves to the genus zero case, homotopy CohFTs can be described in a very economical way, since the corresponding operad has the Koszul property. Since this property fails for higher genera, one has to consider bigger resolutions. Here we choose to apply the functorial bar-cobar resolution (Feynman transforms), available for any modular operad, which gives a new homotopy invariant generalization of CohFTs with homotopic properties expected to be satisfied by chain-level Gromov--Witten theories.

\medskip

While fully constructed examples of homotopy CohFTs, for instance, chain-level Gromov--Witten theories, are not yet available in the literature (and this will presumably require a lot of hard
analytic work), the algebraic toolkit can be prepared in advance and may suggest natural ways to pack the answer and potentially provide help for their constructions. Notice that some first steps in the direction of chain-level Gromov--Witten invariants are performed  in \cite{seidel2019formal, MannRobalo}.

\medskip

It seems that it is unavoidable to work on the chain level in the Gromov--Witten theory once one has to think about the choices involved in the open-closed and real ramifications of the theory, as well as in the B-model categorical constructions of Gromov--Witten-like classes. But even without these natural sources, it would be just unnatural to ignore the homotopy structures behind CohFTs --- as an analogy (which is in fact the simplest pieces of the structure that we study, in some cases) think of the relationships between the product structure on cohomology and the higher Massey products.

\subsection*{Quantum homotopy CohFTs and the Buryak--Rossi functor}
Our formalism allows to study infinitesimal deformations, formal deformations, and obstructions to
deformations for homotopy CohFTs using traditional methods of deformation theory.
As an application we give a conceptual explanation and an extension of the recent construction, due
to Pandharipande--Zvonkine~\cite{PZ18}, of a CohFT obtained by deforming a topological quantum field  theory in the
direction of a minimal cohomology class on $\overline{\mathcal{M}}_{g,n}$.
Interestingly enough, these deformations, a priori constructed only infinitesimally, happen to be global ones.

\medskip

An interesting feature of our description of homotopy CohFTs as solutions of the Maurer--Cartan equation in some
differential graded (dg) Lie algebra 
is that it leads to yet another notion --- of a \emph{quantum homotopy CohFT} ---
which occurs naturally and is technically necessary for various steps in our construction.
It is obtained by replacing our Maurer--Cartan equation over the ground field $\K$ by an
$\hbar$-deformation over $\K[[\hbar]]$,
the so-called  \emph{quantum master equation} in the physics literature.
Surprisingly, examples of these quantum CohFT structures do occur naturally in the theory of integrable hierarchies associated to CohFTs, namely they are the core ingredients of the Buryak--Rossi program of quantization of such integrable systems~\cite{BuryakRossi2016}.
Specifically, we observe that the
quantization procedure of Buryak--Rossi, that basically involves multiplication of a CohFT by the
properly parametrized Hodge class, gives rise to a
functor,  that we call the \emph{Buryak--Rossi functor},
between the Deligne groupoids of classical and quantum homotopy CohFT structures.

\subsection*{Universal deformation group}
The deformation complex of homotopy CohFTs can be considered within a more general framework of morphisms of modular operads. Then it carries two fundamental algebraic structures:
the minimal one, needed only for writing the Maurer--Cartan equation, and the maximal one, encoding all natural
operations on it. This gives rise to two operads, one embedded into the other.

\medskip

One of our main objects of study in this paper is
the deformation complex of this embedding of operads.
This chain complex, in the quantum case, turns out to be isomorphic to the
 $\K[[\hbar]]$-extension of the Kontsevich graph complex~\cite{Kontsevich97,Willwacher15}, see \cite{MW14}.
The even homology of this deformation complex carries a structure of a prounipotent
group  which we call the \emph{universal deformation group of a morphism of quantum modular operads}.
We show that this group contains the prounipotent Grothendieck--Teichm\"uller group and that it acts
naturally
by universal and explicit formulas on the moduli space of gauge
equivalence classes of quantum Maurer--Cartan elements.
Our approach is essentially the same that was used by Merkulov and Willwacher in~\cite{MW14}, but we
present the results in detail using our new conceptual interpretation and
the tools of the pre-Lie deformation theory developed in~\cite{DSV16}.

\medskip

Back in the classical case,
we face the following problem: the corresponding deformation complex is almost acyclic
and so the deformation theory is not very interesting.
(This issue was independently observed in~\cite{KhorWillZiv2017}, where the authors
were specifically looking for an extension of the differential in the Kontsevich graph complex that would make it acyclic.)
However, the situation becomes much more interesting, if we consider some additional structure.

\subsection*{Givental--Grothendieck--Teichm\"uller group}

The maximal algebraic structure, encoding all natural
operations that use only the structure of morphisms of modular operads on the deformation complexes
of classical and quantum homotopy CohFTs can be extended further once we use the specifics of the
Deligne--Mumford--Knudsen modular operad. Namely, we use that it is a Hopf modular operad and that
is contains a distinguished and easy to describe suboperad of tautological classes~\cite{Faber}.
This allows us to extend the algebraic structures on
the deformation complexes of classical and quantum homotopy CohFTs
with the action of tautological classes combined with the non-stable parts of the endomorphism modular operad of the target vector space (or, more generally, one can think of non-stable components of the target modular operad). This gives rise to an embedding of the minimal operad needed to control the Maurer--Cartan equations into a much larger operad.
The deformation complex of this embedding is a huge extension of the Kontsevich and
Merkulov--Willwacher graph complexes. It has
non-trivial homology both in the classical and in the quantum case.
In particular, we obtained
the following results:
\begin{itemize}
\item In the classical case, we are able to compute the full homology and it matches
  the Givental group / Lie algebra action on CohFTs constructed by Teleman~\cite{Teleman2012}.
  So, quite
 surprisingly,
our construction recovers
  the full Givental group now acting in a natural way on \emph{homotopy} CohFTs.

\item In the quantum case,
while the complete homology computation is currently out of reach, the resulting group is huge.
To demonstrate that how interesting and non-trivial this group is, we show that it contains
both the prounipotent Grothendieck--Teichm\"uller group and the Givental group acting in a natural way on quantum homotopy CohFTs.
\end{itemize}

\medskip

We call the latter group the \emph{Givental--Grothendieck--Teichm\"uller group}.
Both
a full description of its structure and the consequences of its action on quantum homotopy CohFTs
(for instance, for the Buryak--Rossi theory of quantization of integrable hierarchies of
topological type) are very interesting
questions for future research.

\changelocaltocdepth{2}
\medskip

\paragraph{\bf Layout.} The first section recalls the notion of a modular operad together their homological constructions. In the second section, we develop the deformation theory of morphisms of modular operads and study the particular case of the deformation theory of (homotopy) CohFTs. The third section deals with the universal deformation group of morphisms of modular operads, its relationship with the prounipotent Grothendieck--Teichm\"uller group, and its action of the moduli space of gauge equivalent classes of Maurer--Cartan elements.
The last section introduces a new graph operad enriched with tautological classes whose deformation complex is shown to act on quantum homotopy CohFTs and to contain the prounipotent Grothendieck--Teichm\"uller group and the Givental group.

\medskip

\paragraph{\bf Conventions.}
We denote by $\K$ a fixed ground field of characteristic $0$.
We mainly work over the
category of differential graded (dg) vector spaces ($\ZZ$-graded by default)
with the usual monoidal structure including the Koszul sign rule.
At three points of the paper (\cref{subsec:DefCohFT},  \cref{subsec:CycleAct}, and \cref{sec:GGT}), we
switch from the $\ZZ$-grading to the induced $\Z2$-grading.

We use the homological degree convention, for which differentials have degree $-1$.
To accommodate that in the cases of cochain complexes and their cohomology, we place elements of cohomological degree $k$ in the opposite homological degree $-k$."
We denote by $s$
a one-dimensional vector space concentrated in degree $1$ (or abusing notation, a fixed non-zero
element in it), so tensoring with it produces the shift functor.
For a dg vector space $V$ we denote by $V^{\odot n}:=V^{\otimes n}/{\Sy_n}$ its $n$th symmetric power.

\medskip
\paragraph{\bf Acknowledgments.}
We express our appreciation to Mohammed Abouzaid, Alexander Alexandrov, Michael Batanin, Alexandr Buryak, Gabriel Drummond-Cole, Johan Leray, Sergei Mer\-ku\-lov, Dan Petersen, John Terilla, Alexander Voronov, Ben Ward, and Dimitri Zvonkine for useful discussions and references. Special thanks are due to Thomas Willwacher for numerous insightful remarks on a preliminary version of the paper.

\section{Modular operads}

In this section, we recall the necessary material on modular operads. We choose to
present them as algebras over
a groupoid-colored operad $\mathcal{O}$ as in~\cite{Ward19} mainly because it has a quadratic presentation which is Koszul. This allows to apply all the results of the general operad calculus~\cite{LodayVallette12} \emph{mutatis mutandis}.
There exist other tools for modeling various
types of operads and in particular modular operads, like
Feynman categories~\cite{KaufmannWard17} or operadic categories~\cite{BataninMarkl18} for instance.
The former theory has been shown to be equivalent to the theory of colored operads
in~\cite{Caviglia15, BKW18} and the latter has not yet been extended to colored operads.
Another reason we present here the calculus of modular operads in this way is  to have as few prerequisites
as possible. This  should help non-experts to follow the rest of the text.
 We do not  claim much
novelty here except in the presentation and in the interpretation of the Feynman transforms (direct
and  homotopy inverse) of~\cite{GetzlerKapranov95} as the bar and cobar constructions associated to
the universal Koszul morphism $\kappa : \mathcal{O}^{\ac} \to \mathcal{O}$. We conclude this section with the new notion of a \emph{homotopy cohomological field theory}, which is modeled by a cofibrant replacement of the modular operad
$H_\bullet\big(\overline{\mathcal{M}}_{g,n}\big)$; so the general theory ensures that this generalization carries the required homotopical properties.

\subsection{Definitions}
\label{subsec:ModOp}
We recall the notion of a modular operad from~\cite{GetzlerKapranov98}.
Let $(\mathsf{C}, \otimes)$ be a symmetric monoidal category, with colimits commuting with the monoidal product.
We consider the groupoid $\Syg$
that has
\begin{description}
\item[\sc Objects] the set $\NNs\coloneqq\left\{(g,n)\in \NN^2 \mid 2g+n>2\right\}
=\NN^2\backslash \left\{
(0,0), (0, 1), (0,2), (1,0)
\right\}$ \ and
\item[\sc Morphisms]
  $\Hom_{\Syg}((g,n), (g',n')) \coloneq \Sy_n$,  \ if $(g',n') =(g,n)$, \ and $\emptyset$  otherwise.
\end{description}

\begin{definition}[Stable $\Sy$-module]
A \emph{stable $\Sy$-module} is a module over the groupoid $\Syg$, that is
a collection
$\left\{\P_g(n)\right\}_{(g,n)\in \NNs}$ of elements of $\mathsf{C}$
such that each $\P_g(n)$ has an action of the symmetric group $\Sy_n$.
The corresponding category is  denoted by $\Smod$.
\end{definition}

We will also consider the
category $\widehat{\Bij}$ whose objects are pairs $(g, X)$, where $X$ is a finite set satisfying
$2g+|X|>2$,
with morphisms $(g_1,X_1)\to (g_2,X_2)$
given by bijections $ \mathrm{Bij}(X_1,X_2)$ when $g_1=g_2$~.
The groupoid $\Syg$ is the skeletal category of $\widehat{\Bij}$
and
thus the data
 of a stable $\Sy$-module is equivalent to the data of a $\widehat{\Bij}$-module under the formula
\[\P_g(X)\coloneqq \Big(\prod_{f\in \mathrm{Bij}(\underline{n},\, X)} \P_g(n)\Big)\big/\sim\ ,
\]
where $|X|=n$~,
\ $\underline{n}\coloneq \{1, \ldots, n\}$,  and $(f, \mu)\sim (g, \nu)$ when $\nu={g}^{-1}f\cdot\mu$.
We will be using both the coordinate-free and skeletal descriptions interchangeably, choosing the more appropriate one every time.

\medskip

We will be working with unoriented graphs which may have multiple edges, leaves (legs) and tadpoles (loops).
We refer the reader to~\cite[Sections~2.5-2.16]{GetzlerKapranov98} for complete definitions and more details.

\begin{definition}[Stable graphs]
  A \emph{labeled graph} is a connected graph $\gamma$
in which every vertex $\nu\in \text{vert}(\gamma)$ is labeled by a non-negative integer $g(\nu)$
called the \emph{internal genus} of $\nu$.
 The \emph{total genus} of a labeled graph $\gamma$ is the sum of its topological genus
 $b_1(\gamma)$~, the first Betti number of its geometric realization, with all the internal genera
$$g(\gamma)\coloneq b_1(\gamma)+\sum_{\nu\in \text{vert}(\gamma)}g(\nu).$$
A \emph{stable graph} is a labeled graph such that every vertex $\nu$ satisfies the stability condition
$$2g(\nu)+n(\nu)> 2~,$$
where $n(\nu)$ is the arity of $\nu$, that is the number of edges or leaves incident to it.
The set of isomorphism classes of stable graphs of total genus $g$ with $n$ leaves
is denoted by $\Ga_{g}(n)$\ .
\[\vcenter{\hbox{\begin{tikzpicture}[scale=0.8]

	\draw[thick] (-1, 1.3) to [out=0,in=135] (1,0);
	\draw[thick] (-1, -1.3) to [out=0,in=225] (1,0);
	\draw[thick] (-1, 1.3) to [out=245,in=115] (-1,-1.3);
	\draw[thick] (-1, 1.3) to [out=295,in=65] (-1,-1.3);

	\draw[thick]  (1.7,-0.4) arc [radius=0.4, start angle=270, end angle= 450];
	\draw[thick] (1, 0) to [out=315,in=180] (1.7,-0.4);
	\draw[thick] (1, 0) to [out=45,in=180] (1.7,0.4);

	\draw[thick]  (-1.7,-1.7) arc [radius=0.4, start angle=270, end angle= 90];
	\draw[thick] (-1, -1.3) to [out=225,in=0] (-1.7,-1.7);
	\draw[thick] (-1, -1.3) to [out=135,in=0] (-1.7,-0.9);

	\draw[thick] (-1, 1.3) -- (-1, 2) node[above]  {\scalebox{0.8}{$3$}};
	\draw[thick] (-1, 1.3) -- (-1.5, 1.8) node[above left]  {\scalebox{0.8}{$1$}};
	\draw[thick] (-1, 1.3) -- (-0.5, 1.8) node[above right]  {\scalebox{0.8}{$7$}};
	\draw[thick] (-1, -1.3) -- (-1, -2) node[below]  {\scalebox{0.8}{$2$}};
	\draw[thick] (-1, -1.3) -- (-0.5, -1.8) node[below right]  {\scalebox{0.8}{$5$}};
	\draw[thick] (1, 0) -- (1, 0.7) node[above]  {\scalebox{0.8}{$4$}};
	\draw[thick] (1, 0) -- (1, -0.7) node[below]  {\scalebox{0.8}{$6$}};

	\draw[fill=white, thick] (1,0) circle [radius=10pt];
	\draw[fill=white, thick] (-1,1.3) circle [radius=10pt];
	\draw[fill=white, thick] (-1,-1.3) circle [radius=10pt];

	\node at (1,0) {\scalebox{1.1}{$0$}};
	\node at (-1,1.3) {\scalebox{1.1}{$1$}};
	\node at (-1,-1.3) {\scalebox{1.1}{$3$}};
	\end{tikzpicture}}}
\]
\end{definition}

For $d\in \ZZ$, we consider the endofunctor
\[
\mathbb{G}_d : \Smod \to \Smod
\]
defined by
\[
  \left(\mathbb{G}_d(\P) \right)_g(n)\coloneqq \coprod_{\gamma\in \Ga_g(n)} \gamma(\P)\ ,
\]
where
\begin{equation}\label{eq:monadG}
\gamma(\P)\coloneqq\bigotimes_{v\in \text{vert}(\gamma)} \P_{g(\nu)}(n(v))
\end{equation}
and where each edge of $\gamma$ has degree $d$.
The operation of forgetting the nesting of graphs in
$\mathbb{G}_d\left(\mathbb{G}_d(\P)\right)$, produces elements of $\mathbb{G}_d(\P)$ and thus
induces a monad structure on $\mathbb{G}_d$.  We call this monad the \emph{monad of graphs}
with edges of degree $d$. (We will only need cases of $d=0$ and $d=1$.)

\begin{definition}[Modular operad]
A \emph{modular operad} is an algebra over the monad $\mathbb{G}_0$ of graphs.
\end{definition}

The monad of graphs admits a homogeneous
quadratic presentation and thus the notion of a modular operad can equivalently be defined as
a stable $\mathbb{S}$-module $\mathcal P$ endowed with the following structure maps.

\begin{definition}[Partial compositions and contraction maps]\label{def:PCdef}\leavevmode
\emph{Partial composition maps} on a stable $\Sy$-module $\P$ are equivariant maps of the form
\begin{equation*}
\circ_i^j \ : \ \mathcal{P}_g(n)\otimes \mathcal{P}_{g'}(n') \to  \mathcal{P}_{g+g'}(n+n'-2)\ , \quad \text{for} \ 1\leqslant i \leqslant n \ \text{and} \ 1\leqslant j \leqslant n'\ .
\end{equation*}
\[\vcenter{\hbox{\begin{tikzpicture}[scale=1]
	\draw[thick] (-1, -1.3) to [out=0,in=225] (1,0);

	\draw[thick] (-1, -1.3) -- (-1, -2) ;
	\draw[thick] (-1, -1.3) -- (-0.5, -1.8) ;
	\draw[thick] (-1, -1.3) -- (-1, -0.6) ;
	\draw[thick] (-1, -1.3) -- (-0.5, -0.8) ;
	\draw[thick] (-1, -1.3) -- (-1.7, -1.3) node[left]  {\scalebox{0.8}{$1$}};
	\draw[thick] (-1, -1.3) -- (-1.5, -0.8) node[above left]  {\scalebox{0.8}{$2$}};
	\draw[thick] (-1, -1.3) -- (-1.5, -1.8) node[below left]  {\scalebox{0.8}{$n$}};

	\draw[thick] (1, 0) -- (1, 0.7) node[above]  {\scalebox{0.8}{$n'$}};
	\draw[thick] (1, 0) -- (1, -0.7) ;
	\draw[thick] (1, 0) -- (0.3, 0) ;
	\draw[thick] (1, 0) -- (1.7, 0) node[right]  {\scalebox{0.8}{$2$}};
	\draw[thick] (1, 0) -- (1.5, 0.5) node[above right]  {\scalebox{0.8}{$1$}};
	\draw[thick] (1, 0) -- (0.5, 0.5) ;
	\draw[thick] (1, 0) -- (1.5, -0.5) ;

	\draw[fill=white, thick] (1,0) circle [radius=10pt];
	\draw[fill=white, thick] (-1,-1.3) circle [radius=10pt];

	\node at (1,0) {\scalebox{1.2}{$g'$}};
	\node at (-1,-1.3) {\scalebox{1.2}{$g$}};

	\node [right] at (-0.4,-1.3) {\scalebox{0.8}{$i$}};
	\node [below] at (0.5,-0.5) {\scalebox{0.8}{$j$}};
		\end{tikzpicture}}}
\]
\emph{Contraction maps} are equivariant maps of the form
\begin{equation*}
\xi_{ij} \ : \  \mathcal{P}_g(n) \to  \mathcal{P}_{g+1}(n-2), \quad \text{for} \ 1\leqslant i\neq j \leqslant n    \ .
\end{equation*}
\[\vcenter{\hbox{\begin{tikzpicture}[scale=1]
	\draw[thick] (0, 0) -- (0, 0.7) ;
	\draw[thick] (0, 0) -- (0, -0.7) ;
	\draw[thick] (0, 0) -- (-0.7, 0) node[left]  {\scalebox{0.9}{$n$}};
	\draw[thick] (0, 0) -- (0.7, 0) ;
	\draw[thick] (0, 0) -- (0.4, 0.4);
	\draw[thick] (0, 0) -- (-0.5, 0.5) node[above left]  {\scalebox{0.9}{$1$}};
	\draw[thick] (0, 0) -- (0.4, -0.4);
	\draw[thick] (0, 0) -- (-0.5, -0.5);

	\draw[fill=white, thick] (0,0) circle [radius=10pt];

	\node at (0,0) {\scalebox{1.2}{$g$}};

	\draw[thick]  (1,-0.6) arc [radius=0.6, start angle=270, end angle= 450];

	\draw[thick] (0.4, 0.4) to [out=45,in=180] (1,0.6);
	\draw[thick] (0.4, -0.4) to [out=315,in=180] (1,-0.6);

	\node [above right] at (0.4,0.5) {\scalebox{0.8}{$i$}};
	\node [below right] at (0.4,-0.5) {\scalebox{0.8}{$j$}};
	\end{tikzpicture}}}
\]
\end{definition}

The respective equivariance properties are explicitly given as follows. For any $i\in \underline{n}$ and $j\in \underline{n'}$, we consider the bijection
$
\Phi_{i,j}\, :\, \underline{n} \backslash \{i\} \, \sqcup\, \underline{n'} \backslash \{j\} \to \underline{n+n'-2}
$
corresponding to the total order given by
$
\{1, \ldots, i-1\}, \{j+1, \ldots, n'\}, \{1, \ldots, j-1\}, \{i+1, \ldots, n\}
$\ .
For any permutation $\sigma\in \Sy_n$, we denote by $\sigma_i$ the  induced bijection
$\sigma_i \, : \, \underline{n} \backslash \{i\} \to \underline{n} \backslash \{\sigma(i)\}$\ .
The equivariance property for the partial composition maps amounts to
\[\mu^\sigma \circ_{\sigma(i)}^{\tau(j)} \nu^\tau=\left(\mu \circ_i^j \nu\right)^{\omega}\ ,\]
where the permutation $\omega\in \Sy_{n+n'-2}$ is equal to
$
\omega=\Phi_{\sigma(i), \tau(j)}\circ (\sigma_i\sqcup \tau_j) \circ \Phi^{-1}_{i,j}
$\ .

For any $i,j\in \underline{n}$, we denote by
$
	\Psi_{i,j}\, :\, \underline{n} \backslash \{i,j\} \to \underline{n-2}
$
the order preserving  bijection.
For any permutation $\sigma\in \Sy_n$, we denote by $\sigma_{ij}$ the  induced bijection
$
\sigma_{ij} \, : \, \underline{n} \backslash \{i,j\} \to \underline{n} \backslash \{\sigma(i), \sigma(j)\}
$\ .
The equivariance property for the contraction maps amounts to
\[\xi_{\sigma(i)\sigma(j)} \big(\mu^\sigma\big)=\left(\xi_{ij}(\mu)\right)^{\chi}\ ,\]
where the permutation $\chi\in \Sy_{n-2}$ is equal to
$
\chi=\Psi_{\sigma(i), \sigma(j)}\circ \sigma_{ij} \circ \Psi^{-1}_{i,j}
$\ .

\begin{proposition}[{\cite[Theorem~3.7]{GetzlerKapranov98}}]\label{prop:DefEquivModOp}
A modular operad structure on a stable $\Sy$-module $\P$ is equivalent to the data of partial
composition  maps  $\circ_i^j$ and contraction maps $\xi_{ij}$ satisfying the following three
sets
of relations, for every $\mu \in \mathcal{P}_g(X)$, $\nu\in \mathcal{P}_{g'}(Y)$, and $\omega\in \mathcal{P}_{g''}(Z)$:
\begin{equation}\label{Rel1}
\big(\mu \circ_i^j \nu\big)\circ_k^l\omega = \left\{
\begin{array}{ll}
 \mu \circ_i^j \big(\nu\circ_k^l\omega\big)\ , &\text{when}\ k\in Y\ ,  \\
\rule{0pt}{12pt} \big(\mu \circ_k^l\omega\big)\circ_{i}^j\nu\ , &\text{when}\  k\in X\ , \\
\end{array}\right. \
\text{for any}\  i\in X, j\in Y,  \text{and}\  l\in Z\ ,
\end{equation}
\begin{equation}\label{Rel2}
\xi_{ij} \xi_{kl} \, \mu =\xi_{kl} \xi_{ij} \, \mu   \ ,
\quad \text{for any distinct}\ i,j,k,l \in X\ ,
\end{equation}
\begin{equation}\label{Rel3}
\xi_{ij}\big(\mu \circ_k^l \nu\big) = \left\{
\begin{array}{ll}
\xi_{ij}(\mu) \circ_k^l \nu\ , &\text{when}\ i,j\in X\ ,  \\
\rule{0pt}{12pt}\mu \circ_k^l \xi_{ij}(\nu)\ , &\text{when}\ i,j\in Y\ ,  \\
\rule{0pt}{12pt}\xi_{kl}\big(\mu \circ_i^j \nu\big)\ , &\text{when}\ i\in X \ \text{and}\ j\in Y\ ,  \\
\rule{0pt}{12pt}\xi_{kl}\big(\nu \circ_i^j \mu\big)\ , &\text{when}\ i\in Y \ \text{and}\ j\in X\ ,
\end{array}\right.\
\text{for any}\ k\in X\ \text{and}\  l\in Y\ .
\end{equation}

\begin{remark}
  In the category of (differential) graded vector spaces, the
second of the relations~\eqref{Rel1}
  receives the non-trivial sign
$\big(\mu \circ_i^j \nu\big)\circ_k^l\omega =(-1)^{|\nu||\omega|}\big(\mu \circ_k^l\omega\big)\circ_{i}^j\nu$ and
the last relation in~\eqref{Rel3} receives the sign
$\xi_{ij}\big(\mu \circ_k^l \nu\big) =(-1)^{|\mu||\nu|}\xi_{kl}\big(\nu \circ_i^j \mu\big)$\ .
\end{remark}

\end{proposition}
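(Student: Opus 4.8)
The plan is to establish the equivalence by exhibiting the single-edge stable graphs as generators of the monad $\mathbb{G}_0$ and the relations \eqref{Rel1}--\eqref{Rel3} as a complete set of relations among them, in the spirit of presenting an operad by generators and relations. First I would isolate the \emph{elementary} graphs in $\Ga_g(n)$: those carrying exactly one internal edge. Such a graph is of precisely two types --- either two distinct vertices joined by a single edge, or a single vertex carrying one tadpole. Decorating these by $\P$ and applying the structure map $\gamma\colon \mathbb{G}_0(\P)\to\P$ of a $\mathbb{G}_0$-algebra recovers, respectively, the partial composition $\circ_i^j$ and the contraction $\xi_{ij}$, while the corolla with no edges recovers the identity; the equivariance of $\gamma$ under $\widehat{\Bij}$ immediately yields the stated equivariance of these operations.

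The core combinatorial input is a generation lemma: every stable graph is obtained from a collection of corollas by iterated insertion of single edges. I would prove this by induction on the number of internal edges. Given $\gamma$ with at least one edge, choose an edge $e$. If cutting $e$ disconnects $\gamma$, the two resulting components $\gamma_1,\gamma_2$ (with new legs at the cut) are smaller stable graphs and $\gamma$ is their partial composition along those legs; the total genus is additive since a bridge lies on no cycle. If instead $e$ lies on a cycle, cutting it keeps $\gamma$ connected and produces a stable graph $\gamma'$ of total genus one less with two extra legs, from which $\gamma$ is recovered by a single contraction. In either case the inductive hypothesis applies, so the decoration map attached to $\gamma$ factors as a composite of the generators $\circ_i^j$ and $\xi_{ij}$.

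Next I would check that the relations \eqref{Rel1}--\eqref{Rel3} are precisely the instances of the associativity of the monad $\mathbb{G}_0$ applied to the two-edge graphs. Concretely, \eqref{Rel1} records the two ways of building a three-vertex, two-edge tree (sequentially versus in parallel); \eqref{Rel2} records the commutativity of the two contractions on a single vertex bearing two disjoint tadpoles; and the four cases of \eqref{Rel3} record the two ways of building the graphs obtained by combining one composition with one contraction, including the double-edge (genus-one) graph on two vertices that can be assembled by inserting either edge first. Since monad associativity holds in any $\mathbb{G}_0$-algebra, each such algebra satisfies \eqref{Rel1}--\eqref{Rel3}, giving one direction of the equivalence.

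For the converse, given operations $\circ_i^j$ and $\xi_{ij}$ satisfying \eqref{Rel1}--\eqref{Rel3}, I would define the action of an arbitrary graph $\gamma$ by choosing one of the decompositions furnished by the generation lemma and composing the corresponding generators. The hard part will be well-definedness: the resulting map must be independent of the chosen order of edge-insertions. Any two such orders are connected by a sequence of transpositions of consecutive insertions, and the heart of the argument is that each elementary transposition is governed by exactly one of the relations, according to the combinatorial type of the two edges involved --- two bridges meeting or not meeting at a vertex for \eqref{Rel1}, two tadpoles for \eqref{Rel2}, and a mixed or cycle-creating pair for \eqref{Rel3} --- together with the equivariance relations. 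Establishing this coherence, which is essentially a confluence argument for the presentation of the graph monad, is the only genuinely technical point; once it is in place, the two assignments between $\mathbb{G}_0$-algebra structures and $(\circ_i^j,\xi_{ij})$-structures satisfying the relations are manifestly mutually inverse, and the proposition follows.
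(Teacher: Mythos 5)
The paper gives no proof of this proposition, simply citing \cite[Theorem~3.7]{GetzlerKapranov98}; your sketch reproduces the standard argument behind that result --- single-edge stable graphs as generators of the monad $\mathbb{G}_0$, two-edge graphs as the source of the relations \eqref{Rel1}--\eqref{Rel3}, an induction on the number of edges (bridge versus cycle edge) for the generation lemma, and a confluence argument for well-definedness --- and is correct in outline. The step you rightly single out as the only technical one, namely that any two orderings of the edge-insertions are linked by adjacent transpositions each governed by exactly one of the listed relations, is precisely where the work lies in the Getzler--Kapranov proof.
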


\begin{example}
\leavevmode
\begin{enumerate}
\item
The collection of the Deligne--Mumford--Knudsen moduli spaces
$\oM\coloneqq\{\overline{\mathcal{M}}_{g,n}\}_{(g,n)\in \NNs}$
of stable curves with marked points forms a topological modular operad where the structure maps
are given by gluing curves at marked points, see~\cite[Section~6]{GetzlerKapranov98}.

\item The  homology groups $H_\bullet\big(\oM\big)$ of the Deligne--Mumford--Knudsen modular operad form a modular operad in the category of graded vector spaces.

\item In a similar fashion, we can construct modular operads $\overline{\M}^f$ and $H_\bullet\big(\overline{\M}^f\big)$, where $\overline{\M}^f_{(g,n)}$ is the Kimura--Stasheff--Voronov real compactification~\cite{KSV95} of the moduli space of genus $g$ curves with $n$ framed marked points (a point with a choice of a unit tangent vector).

\item
Consider the stable $\Sy$-module $\Frob$, where   $\Frob_g(n)\coloneq \K \chi_{g,n}$ is a
one-dimensional space with trivial $\Sy_n$ action concentrated in degree $0$.
It forms a modular operad once equipped by the following  partial composition maps and contraction maps
\begin{align*}
& \chi_{g,n}\circ_i^j \chi_{g',n'}\coloneq \chi_{g+g',n+n'-2}\ , \\
& \xi_{ij}\left(\chi_{g,n}\right)\coloneq \chi_{g+1,n-2}\ .
\end{align*}

\item 
Let $(A, d_A, \langle\ ,\,  \rangle)$ be a \emph{differential graded (dg) symmetric vector space},
i.e.\  a dg vector space $(A,d_A)$ with a symmetric bilinear form of degree  $0$ (of even degree in the
$\Z2$-graded case) compatible with the differential.
The \emph{endomorphism modular operad}  $\EEnd_A$  of
 $(A, d_A, \allowbreak \langle\ ,\,  \rangle)$
is the  stable $\Sy$-module \[\EEnd_A(g,n)\coloneqq A^{\otimes n}\]
with the partial composition and contraction maps given by
\[\circ_i^j(a_1\otimes \cdots \otimes a_m, b_1\otimes \cdots \otimes b_n)\coloneqq \pm \,\langle a_i, b_j\rangle \,
a_1 \otimes \cdots \otimes a_{i-1}\otimes b_{j+1} \otimes \cdots \otimes b_n \otimes b_1 \otimes \cdots \otimes b_{j-1}\otimes
a_{i+1}\otimes\cdots \otimes a_m \]
and
\[\xi_{ij}(a_1\otimes \cdots \otimes a_n)\coloneqq\pm \,\langle a_i, a_j\rangle \,
a_1 \otimes \cdots \otimes a_{i-1}\otimes a_{i+1} \otimes \cdots \otimes a_{j-1}\otimes a_{j+1} \otimes \cdots\otimes
a_n\ ,
\]
where the signs come automatically from the permutation of terms: one first permutes the terms, then applies the pairing. The previous example is a particular case of this one (when $A$ is a one-dimensional space concentrated in homological degree zero)
\end{enumerate}
\end{example}

\begin{remark}
Modular operads have no unit elements
with respect to the partial composition maps since
they would have to live in $\P_0(2)$, an unstable component not considered here.
\end{remark}

    A \emph{morphism of modular operads} is a map of stable $\Sy$-modules which commutes
with the respective partial composition and contraction maps.

\begin{definition}[Algebra over a modular operad]
An \emph{algebra structure over a modular operad $\P$} on a
dg symmetric vector space  $(A, d_A, \langle\ ,\, \rangle)$
is given by the datum of a morphism of modular operads $\P \to \EEnd_A$.
\end{definition}

\begin{example}
Algebras over the modular operad $\Frob$ correspond to commutative Frobenius algebras,
or equivalently, two-dimensional topological quantum field theories.
\end{example} 

\begin{definition}[Cohomological field theory]
A \emph{(non-unital) cohomological field theory (CohFT)}
  is a ($\ZZ$-graded)
dg symmetric vector space  $(A, d_A, \langle\ ,\, \rangle)$
equipped with a $\Z2$-graded
algebra structure over
the homology $H_\bullet\big(\oM\big)$ of the Deligne--Mumford--Knudsen modular operad.
\end{definition}

In other words, we consider the induced
$\Z2$-grading
on both modular operads $H_\bullet\big(\oM\big)$ and $\EEnd_A$
and an even morphism between them.
\subsection{Bar and cobar constructions}
One conceptual way to produce a homotopically meaningful notion of bar-cobar adjunction from a category of algebraic structures is as follows: encode it with a colored operad $\mathcal{O}$, coin a quadratic presentation of $\mathcal{O}$, compute its Koszul dual colored cooperad $\mathcal{O}^{\ac}$ and then apply the operadic calculus of the Koszul duality theory, as explained for instance in~\cite[Chapter~11]{LodayVallette12}.
\[
\vcenter{\hbox{
\begin{tikzcd}
\mathcal{O}
\arrow[d, dotted, "\textsf{alg}"]
 &\arrow[l, "\kappa"'] \mathcal{O}^{\ac}
 \arrow[d, dotted, "\textsf{coalg}"]
 \\
\Omega : \mathcal{O}\textsf{-}\mathsf{alg}
\arrow[r, harpoon, shift left=0.9ex, "\bot"']
&
\arrow[l, harpoon,  shift left=0.9ex]
\mathcal{O}^{\ac}\textsf{-}\mathsf{coalg} : \mathrm{B}
\end{tikzcd}
}}
\]
The example of the category of operads is treated in detail in~\cite{DehlingVallette15}.

\medskip

In order to treat the category of modular operads in this way, one needs to consider operads colored by \emph{groupoids} \cite{BaezDolan98, Petersen13} and not only by sets \cite{BoardmanVogt73}: this step is necessary in order to encode the equivariance property of the partial composition maps and the contraction maps, made explicit in \cref{def:PCdef} below.
The Koszul duality for groupoid-colored operads was developed by Ward in~\cite{Ward19}.

\begin{definition}[The groupoid-colored operad $\G$]\label{def:QLCPresentation}
We define the $\Syg$-colored operad $\G=\TTT(E)/(R)$ by the following presentation. The generators are given by
\begin{align*}
  &
  E\big((g+1,n-2);(g,n)\big) := \left\{\Sy_{n-2}\times
    \begin{aligned}\begin{tikzpicture}[optree]
    \node{}
      child { node[circ]{$\xi_{ij}$}
              child { edge from parent node[right,near end]{\tiny$(g,n)$} }
        edge from parent node[right,near start]{\tiny$(g+1,n-2)$} } ;
   \end{tikzpicture}\end{aligned}\ , \
   i\neq j \in \{1, \ldots, n\}
  \right\}\ ,
\end{align*}
with regular left action of $\Sy_{n-2}$ and right action given, in the notation of \cref{def:PCdef} below, by
\[\left(\id, \xi_{\sigma(i)\sigma(j)}\right)^\sigma=\left(\chi, \xi_{ij}\right)\ ,\]
 and
\begin{multline*}
 E\big((g+g', n+n'-2); (g,n), (g',n')\big) \coloneq \left\{
\Sy_{n+n'-2}\times
   \begin{aligned}\begin{tikzpicture}[optree]
    \node{}
      child { node[circ]{$\circ^j_i$}
        child { node[label=above:$1$]{} edge from parent node[left,near end]{\tiny$(g,n)$} }
        child { node[label=above:$2$]{} edge from parent node[right,near end]{\tiny$(g',n')$} }
        edge from parent node[right,near start]{\tiny$(g+g', n+n'-2)$} } ;
   \end{tikzpicture}\end{aligned} , \
   1 \leqslant i \leqslant n,    1 \leqslant j \leqslant n';
   \right.\\ \left.
   \Sy_{n+n'-2}\times
   \begin{aligned}\begin{tikzpicture}[optree]
    \node{}
      child { node[circ]{$\circ^j_i$}
        child { node[label=above:$2$]{} edge from parent node[left,near end]{\tiny$(g,n)$} }
        child { node[label=above:$1$]{} edge from parent node[right,near end]{\tiny$(g',n')$} }
        edge from parent node[right,near start]{\tiny$(g+g', n+n'-2)$} } ;
   \end{tikzpicture}\end{aligned} , \
   1 \leqslant i \leqslant n',    1 \leqslant j \leqslant n\right\}\ ,
\end{multline*}
with regular $\Sy_2$-action,  that is the action of the transposition $(12)$ sends
\[
\vcenter{\hbox{\begin{tikzpicture}[optree]
    \node{}
      child { node[circ]{$\circ^j_i$}
        child { node[label=above:$1$]{} edge from parent node[left,near end]{\tiny$(g,n)$} }
        child { node[label=above:$2$]{} edge from parent node[right,near end]{\tiny$(g',n')$} }
        edge from parent node[right,near start]{\tiny$(g+g', n+n'-2)$} } ;
   \end{tikzpicture}}}
 \ \text{to} \
\vcenter{\hbox{\begin{tikzpicture}[optree]
    \node{}
      child { node[circ]{$\circ^j_i$}
        child { node[label=above:$2$]{} edge from parent node[left,near end]{\tiny$(g',n')$} }
        child { node[label=above:$1$]{} edge from parent node[right,near end]{\tiny$(g,n)$} }
        edge from parent node[right,near start]{\tiny$(g+g', n+n'-2)$} } ;
   \end{tikzpicture}}}\ ,\]
and with regular left $\Sy_{n+n'-2}$ action and right  action  given, in the notation of  \cref{def:PCdef} below, by
\[\left(\id, \circ_{\sigma(i)}^{\sigma(j)}\right)^{(\sigma,\tau)}=\left(\omega, \circ_i^j\right)\ .\]
   The quadratic relations $R$ are the ones given in  \eqref{Rel1}, \eqref{Rel2} and \eqref{Rel3}.
\end{definition}

\cref{prop:DefEquivModOp} implies that the category of $\G$-algebras is the category of modular
operads. This shows for instance that the free modular operad $ \mathbb{G}_0(\M)$ on a stable
$\Sy$-module $\M$ is equivalently given by $\G(\M)$\ .

\begin{remark}
The notion of an \emph{operad colored with a small category} was introduced by Baez--Dolan in \cite{BaezDolan98},
where is was shown that the category of (symmetric) operads can be encoded this way, see also \cite{Petersen13}.
Equivalent notions of \emph{symmetric substitudes}, \emph{regular patterns}, and \emph{Feynman categories} were respectively and gradually introduced first by Day--Street \cite{DayStreet03}, then by Getzler \cite{Getzler09}, and finally by Kaufmann--Ward \cite{KaufmannWard17} inspired by \cite{BorisovManin08}.
In these latter three references, the authors even mention that the category of modular operads can be encoded using their new notion. We refer the reader to \cite{Caviglia15, BKW18} for precise statements relating these four equivalent  theories.
The description of the groupoid-colored operad $\G$ encoding modular first appeared in \cite{Ward19}.
\end{remark}

\begin{lemma}[{\cite[Corollary~3.9]{Ward19}}, see also {\cite[Theorem~12.10]{BataninMarkl18}}]\label{lem:G!}
The Koszul dual colored operad $\G^!$ admits for generators
\[  s^{-1}E\big((g+1,n-2);(g,n)\big)^* \quad \text{and} \quad E\big((g+g', n+n'-2);(g,n), (g',n')\big)^*\otimes \mathrm{sgn}_{\Sy_2}\]
satisfying strictly the relations \eqref{Rel1} and the
first three
relations of \eqref{Rel3} and satisfying up to an extra minus sign the relations \eqref{Rel2} and the fourth relation \eqref{Rel3}.
\end{lemma}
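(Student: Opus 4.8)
The plan is to specialize the Koszul duality of groupoid-colored operads (in the form used for the pair $\kappa\colon\G^{\ac}\to\G$) to the explicit quadratic presentation $\G=\TTT(E)/(R)$ of \cref{def:QLCPresentation}, and to read off both the generators and the relations of $\G^!=\TTT(E^\vee)/(R^\perp)$. Recall that the generators $E^\vee$ are obtained from $E$ by linear dualization twisted by the operadic suspension, so that in arity $n$ one has $E^\vee(n)=s^{\,n-2}\,\mathrm{sgn}_{\Sy_n}\otimes E(n)^*$, while the relations $R^\perp$ form the orthogonal complement of $R$ inside the weight-two part $\TTT(E)^{(2)}$ with respect to the canonical pairing with $\TTT(E^\vee)^{(2)}$.

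For the generators I would simply evaluate this formula in the two relevant arities. The contraction generators are unary, so their duals are $s^{-1}\,\mathrm{sgn}_{\Sy_1}\otimes E\big((g+1,n-2);(g,n)\big)^*=s^{-1}E\big((g+1,n-2);(g,n)\big)^*$, with no sign twist since $\mathrm{sgn}_{\Sy_1}$ is trivial; in particular they sit in degree $-1$, hence are odd. The partial composition generators are binary, so their duals are $s^{0}\,\mathrm{sgn}_{\Sy_2}\otimes E\big((g+g',n+n'-2);(g,n),(g',n')\big)^*$, carrying the sign representation but no degree shift, hence are even. This already gives the stated description of the generators and exhibits the asymmetry between the $s^{-1}$ on the contractions and the $\mathrm{sgn}_{\Sy_2}$ on the compositions as a direct consequence of their differing arities under the operadic suspension.

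For the relations I would compute $R^\perp$ sector by sector, using that the pairing respects the decomposition of $\TTT(E)^{(2)}$ by the combinatorial type of the underlying two-vertex tree: two stacked contractions, a contraction on the output of a composition, a contraction grafted into an input of a composition, and a composition grafted into an input of a composition. The mechanism producing the signs is then transparent. The relations \eqref{Rel1} only resequence two \emph{even} composition generators, and the first three lines of \eqref{Rel3} only exchange the shape ``contraction on top'' with ``contraction into an input'' without transposing the two arguments of any single composition; in both cases no additional Koszul or sign-representation factor appears, so their orthogonal complements reproduce these relations strictly. By contrast, \eqref{Rel2} resequences two \emph{odd} contraction generators, and transposing two odd elements in $\TTT(E^\vee)$ yields a factor $-1$; and the fourth line of \eqref{Rel3} is the unique relation that exchanges the two inputs $\mu$ and $\nu$ of a single composition, on which the $\mathrm{sgn}_{\Sy_2}$ twist of the dual binary generator acts by $-1$. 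In both of these cases the orthogonal complement therefore picks up the announced extra minus sign.

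The hard part will be the careful sign bookkeeping needed to turn this heuristic into a rigorous identification of $R^\perp$, particularly in the mixed sector of \eqref{Rel3}, where trees of two distinct shapes are compared: one must simultaneously keep track of the Koszul signs already present in the differential graded setting (recorded in the Remark after \cref{prop:DefEquivModOp}), the suspension signs packaged in $s^{\,n-2}\,\mathrm{sgn}_{\Sy_n}$, and the groupoid-colored equivariance data --- the permutations $\omega$ and $\chi$ of \cref{def:QLCPresentation} --- which must be matched on both sides of each relation. Verifying that the resulting $R^\perp$ is stable under this equivariance and that the orthogonality computation is compatible with the left and right $\Sy$-actions on $E^\vee$ is the delicate point; once the four sectors are settled as above, the claimed presentation of $\G^!$ follows, in agreement with \cite[Corollary~3.9]{Ward19} and \cite[Theorem~12.10]{BataninMarkl18}.
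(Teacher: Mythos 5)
Your proposal follows essentially the same route as the paper: the paper's proof simply invokes $\G^!\coloneqq (\G^{\ac})^*\otimes \End_{\K s^{-1}}$ with its presentation $\TTT(s^{-1}\End_{\K s^{-1}}\otimes E^*)/(R^\perp)$ from \cite[Section~7.2.3 and Proposition~7.2.1]{LodayVallette12} and declares the rest ``a direct computation'', which is exactly the formula you unpack (your $s^{\,n-2}\,\mathrm{sgn}_{\Sy_n}\otimes E(n)^*$ is precisely $s^{-1}\End_{\K s^{-1}}\otimes E^*$ in arity $n$). Your sector-by-sector account of where the signs arise --- the $s^{-1}$ on the odd unary duals forcing the sign in \eqref{Rel2}, and the $\mathrm{sgn}_{\Sy_2}$ twist forcing the sign in the fourth case of \eqref{Rel3} --- is a correct elaboration of the computation the paper omits.
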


\begin{proof}
The Koszul dual operad is defined by
$\G^!\coloneqq \left(\G^{\ac}\right)^*\otimes \End_{\K s^{-1}}$, see~\cite[Section~7.2.3]{LodayVallette12}, and it admits for presentation
$\G^!\cong \TTT(s^{-1}\End_{\K s^{-1}}\otimes E^*)/(R^\perp)$
by~\cite[Proposition~7.2.1]{LodayVallette12}. The rest of the proof is a direct computation.
\end{proof}

By definition of the Koszul dual colored operad, a $\left(\G^{\ac}\right)^*$-algebra structure on a stable $\Sy$-module $\P$ is equivalent to a $\G^!$-algebra structure on the desuspension $s^{-1}\P$.

\begin{definition}[Shifted modular operad]
A \emph{shifted modular operad} is a $\left(\G^{\ac}\right)^*$-algebra, that is a stable $\Sy$-module $\P$ endowed with degree $-1$ partial composition maps $\circ_i^j$ and degree $-1$ contraction maps $\xi_{ij}$ satisfying strictly the relations \eqref{Rel1} and the third relation of \eqref{Rel3}
and satisfying up to an extra minus sign the relations \eqref{Rel2} and the three other relations of \eqref{Rel3}.
\end{definition}

\begin{definition}[Shifted modular cooperad]
A \emph{shifted modular cooperad} is a coalgebra over the Koszul dual cooperad $\G^{\ac}$, that is a stable $\Sy$-module $\C$ endowed with degree $-1$ \emph{partial decomposition maps}
\begin{equation*}
\delta_i^j \ : \ \C_{g}(X)\to
\bigoplus_{\substack{g'+g''=g\\  X'\sqcup X''=X}}
\C_{g'}(X'\cup\{i\})\otimes \C_{g''}(X''\cup \{j\})\ , \quad \text{for} \  X'\cap X''=\emptyset
\end{equation*}
and degree $-1$ \emph{expansion maps}
\begin{equation*}
\chi_{ij} \ : \  \C_{g+1}\big(X\backslash\{i,j\}\big) \to \C_g(X), \quad \text{for} \ i\neq j \in X
\end{equation*}
satisfying  
the following signed versions of the relations~\eqref{Rel1}--\eqref{Rel3}, for any distinct elements $i,j,k,l$:
\begin{equation}\label{Rel1'}
\big( \delta_i^j \otimes \id \big) \delta_k^l
= \left\{
\begin{array}{ll}
\rule{0pt}{8pt} -\big(\id \otimes \delta_k^l \big) \delta_i^j\ , &\text{when}\  j,k \ \text{lie in the same set}\ , \\
\rule{0pt}{12pt}
-\big(23)\big(\delta_k^l \otimes \id \big) \delta_i^j \ , &\text{when}\  i,k \ \text{lie in the same set}\ , \\
\end{array}\right.
\end{equation}
\begin{equation}\label{Rel2'}
\chi_{kl} \chi_{ij} =- \chi_{ij} \chi_{kl} \ ,
\end{equation}
\begin{equation}\label{Rel3'}
\delta_k^l \chi_{ij}
= \left\{
\begin{array}{cl}
\rule{0pt}{8pt}-(\chi_{ij}\otimes \id)\delta_k^l\ , &\text{when}\  i,j,k \ \text{belong to the same set}\ , \\
\rule{0pt}{12pt} -(\id \otimes \chi_{ij})\delta_k^l\ , &\text{when}\  i,j,l \ \text{belong to the same set}\ , \\
\rule{0pt}{12pt}\delta_i^j \chi_{kl}\ , &\text{when}\  i,k \ \text{belong to the same set and}\ j,l\ \text{belong to the same set}\ , \\
\rule{0pt}{12pt} -(12)\delta_i^j \chi_{kl}\ , &\text{when}\  i,l \ \text{belong to the same set and}\ j,k\ \text{belong to the same set}\ . \\
\end{array}\right.
\end{equation}
\end{definition}

The category of shifted modular cooperads is the category of coalgebras over a comonad of graphs
$\mathbb{G}^c_1 : \Smod \to \Smod$, dual to the monad
$\mathbb{G}_1$ of graphs with edges of degree $1$.
This implies for instance that the cofree
shifted modular cooperad on a stable $\Sy$-module $\M$ is given equivalently by $\G^{\ac}(\M)\cong \mathbb{G}^c_1(\M)$.

\medskip

The canonical Koszul morphism $\kappa : \G^{\ac} \to \G$ gives rise to the following adjunction

$$\xymatrix@C=30pt{{\B \ : \ \mathsf{modular} \ \mathsf{operads}  \ }  \ar@_{->}@<-1ex>[r]  \ar@^{}[r]|(0.45){\perp}   & \ar@_{->}@<-1ex>[l]  {\ \mathsf{shifted} \ \mathsf{modular} \ \mathsf{cooperads}  \ : \ \Omega}}\ .   $$

\begin{definition}[Bar construction]\label{def:BarCons}
The \emph{bar construction} of a modular operad $\big(\P, d_\P, \circ^j_i, \xi_{ij}\big)$ is
the quasi-cofree shifted modular cooperad
\[\B \P\coloneqq \big(\G^{\ac}(\P), d_1+d_2 \big)\ ,\]
where $d_1$ is the unique coderivation which extends the internal differential $d_\P$ and
where $d_2$ is the unique coderivation which extends the partial composition maps $\circ_i^j$
and the contraction maps $\xi_{ij}$.
\end{definition}

Explicitly, the bar construction is spanned by stable graphs  with edges of degree $1$ and with vertices
labeled
by elements of $\P$. The differential $d_1$ amounts to applying the internal differential
$d_\P$ to every vertex one by one and the differential $d_2$ amounts to contracting the
internal edges one by one and applying the partial composition maps or the contraction maps.

\begin{definition}[Cobar construction]
The \emph{cobar construction} of a shifted modular cooperad $\big(\C, d_\C,\allowbreak
\delta^j_i, \allowbreak  \chi_{ij}\big)$ is the quasi-free  modular operad
\[\Omega \C\coloneqq \big(
\G(\C), d_1+d_2
\big)\ ,\]
where $d_1$ is the unique derivation which extends the internal differential $d_\C$ and
where $d_2$ is the unique derivation which extends the partial decomposition maps $\delta_i^j$
and the expansion maps $\chi_{ij}$.
\end{definition}

Explicitly, the cobar construction is spanned by stable graphs with edges of degree $0$ and
with vertices
labeled
by elements of $\C$. The differential $d_1$ amounts to
applying the internal differential $d_\C$ to every vertex, and the differential $d_2$ amounts
to decomposing every vertex into two, applying one by one partial decomposition maps, and
creating one new loop by applying to every vertex one by one the expansion maps.

\begin{remark}
In the original paper~\cite{GetzlerKapranov98} on modular operads, these homological constructions were not introduced in this way. Since the authors stuck with the modular operads world, that is did not consider any version of modular cooperads; they just introduced a notion of \emph{twisted modular operad}. The notion of twisted modular operad associated to the dualizing cocycle given by the determinant on edges coincides with the present notion of shifted modular operad. Then, they introduced a fundamental functor from \emph{finite dimensional} modular operads to shifted modular operads called the \emph{Feynman transform} $\mathcal{F} \P$. This latter one is nothing but the linear dual of the present bar construction: $\mathcal{F} \P\cong (\B \P)^*$. What is called in~\cite[Section~5.3]{GetzlerKapranov98} the \emph{homotopy inverse Feynman transform} is nothing but the cobar construction $\Omega (\P^*)$ of the linear dual of a shifted modular operad. Notice that the present approach allows us to drop the  assumption of finite dimensionality and to work in full generality.
\end{remark}

\begin{theorem}[{\cite[Theorem~5.4]{GetzlerKapranov98}}, see also {\cite[Theorem~3.10]{Ward19}}]\label{thm:Koszul}
The colored operad $\G$ is Koszul.
\end{theorem}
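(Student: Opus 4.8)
The plan is to prove Koszulness directly from the quadratic presentation $\G=\TTT(E)/(R)$ of \cref{def:QLCPresentation}, using the rewriting method for groupoid-colored operads, which transplants the Gröbner-basis machinery of \cite[Chapter~8]{LodayVallette12} into the setting of \cite{Ward19}. Since the free $\G$-algebra on a stable $\Sy$-module $\M$ is the free modular operad $\mathbb{G}_0(\M)=\G(\M)$, the tree monomials of $\G$ are precisely stable graphs whose vertices are decorated by generators, and a monomial order on them can be built from a well-order refining the filtration by number of edges together with a labelling convention compatible with the $\Sy_n$-actions on the colors. The key structural observation is that the relations split according to the two families of generators already visible in \cref{def:PCdef}: the composition generators $\circ_i^j$, subject among themselves to the associativity relations \eqref{Rel1}, and the contraction generators $\xi_{ij}$, subject among themselves to the commutation relations \eqref{Rel2}, while the cross relations \eqref{Rel3} are exactly the compatibility rules between the two families. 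This decomposition $R=R_{\circ}\oplus R_{\mathrm{cross}}\oplus R_{\xi}$ is the signature of a distributive law, and it dictates the normal form I will aim for: every element of $\G$ should be rewritten so that all partial compositions are performed first (assembling a tree) and all contractions are performed afterwards (adding the loop-edges of the graph), in a fixed order.

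First I would check that each of the two sub-operads is Koszul, which is the easy input. The sub-operad generated by the contractions modulo \eqref{Rel2} has a manifest quadratic Gröbner basis: \eqref{Rel2} says that disjoint contractions commute, so a normal monomial is simply an unordered set of disjoint contracted pairs, and the only critical monomial resolves at once. The sub-operad generated by the compositions modulo \eqref{Rel1} is the modular analogue of the operad governing operad structures; the relations \eqref{Rel1} are the usual sequential and parallel associativity rewriting rules, whose only critical monomials are the weight-three trees, and these are classically confluent, producing say right-combed trees as normal forms. Orienting \eqref{Rel3} so that a contraction sitting above a composition is either pushed down into one of the two factors (the first two cases) or rewritten as a contraction of a reindexed composition (the last two cases) then supplies the rules $R_{\mathrm{cross}}$ that move contractions past compositions, yielding the normal form described above.

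The main obstacle, and the genuine combinatorial content, is the confluence of the \emph{mixed} critical pairs in weight three: configurations of three decorated vertices in which a contraction together with two compositions, or two contractions together with one composition, can be reduced in two different orders. I would enumerate these critical monomials --- there are finitely many shapes, mirroring the case distinctions in \eqref{Rel1} and \eqref{Rel3} --- and verify that the two reductions of each land on the same normal form, with special attention to two features of this setting: the signs, which in the shifted/Koszul-dual picture of \cref{lem:G!} decorate \eqref{Rel2} and the last case of \eqref{Rel3}, and the equivariance, which forces the monomial order and the critical-pair analysis to be carried out $\widehat{\Bij}$-equivariantly rather than merely skeletally. Once every critical pair is confluent, the Diamond Lemma produces a quadratic Gröbner basis for $R$, and the rewriting-method theorem of \cite[Chapter~8]{LodayVallette12} in its groupoid-colored form gives that $\G$ is Koszul. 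Equivalently, this confluence check is exactly the verification that the two orders of performing contractions and compositions are intertwined by an isomorphism, i.e.\ that \eqref{Rel3} defines a \emph{good} distributive law, and one concludes by the standard fact that a good distributive law between two Koszul operads yields a Koszul operad.
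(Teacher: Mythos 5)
Your route is genuinely different from the paper's: the paper does not attempt a rewriting-method proof at all, but instead observes that, under the dictionary identifying the bar and cobar constructions with the Feynman transform and its homotopy inverse, Koszulness of $\G$ (via the characterization of \cite[Theorem~11.3.3]{LodayVallette12}) is literally equivalent to \cite[Theorem~5.4]{GetzlerKapranov98} (see also \cite[Theorem~7.4.3]{KaufmannWard17} and \cite[Theorem~3.10]{Ward19}), so nothing new has to be verified. Your plan, if it worked, would yield a self-contained combinatorial proof, which would be valuable; but as written it has a genuine structural gap.

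The gap is in the distributive-law framing and in the asserted normal form. Write $\mathcal{A}$ for the suboperad generated by the $\circ_i^j$ modulo \eqref{Rel1} and $\mathcal{B}$ for the one generated by the $\xi_{ij}$ modulo \eqref{Rel2}. A distributive law requires the cross relations to be the graph of a rewriting map between $\mathcal{A}\circ_{(1)}\mathcal{B}$ and $\mathcal{B}\circ_{(1)}\mathcal{A}$, and yields $\G\cong\mathcal{A}\circ\mathcal{B}$ (or $\mathcal{B}\circ\mathcal{A}$) as $\Sy$-modules. Neither holds here. First, the third and fourth cases of \eqref{Rel3} relate two monomials of the \emph{same} shape ``contraction on top of a composition'' with permuted indices, so they are not the graph of any map in either direction; they are extra quadratic relations internal to $\mathcal{B}\circ_{(1)}\mathcal{A}$. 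Second, your proposed normal form (``all partial compositions first, all contractions afterwards'') points the wrong way and still cannot be a free composite: when $i\in X$ and $j\in Y$ the contraction $\xi_{ij}$ joins the two factors of $\mu\circ_k^l\nu$ and can never be pushed below the composition, so general elements of $\G(\M)$ are graphs obtained by composing along a spanning tree and then contracting the remaining edges. Different spanning trees of the same graph give different monomials of $\mathcal{B}\circ\mathcal{A}$, and cases 3--4 of \eqref{Rel3} are precisely the identifications between them; hence $\G$ is a proper quotient of $\mathcal{B}\circ\mathcal{A}$ and the ``standard fact that a good distributive law between Koszul operads is Koszul'' does not apply. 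A rewriting-system proof is not thereby ruled out, but it would have to (i) choose a termination order that decreases under the index-permuting relations 3--4 (which preserve the number and arrangement of generators), (ii) handle the fact that the generators of \cref{def:QLCPresentation} carry regular representations of large symmetric groups and the colors form a groupoid whose morphisms are those same symmetric groups --- the Gr\"obner-basis machinery you invoke from \cite[Chapter~8]{LodayVallette12} is developed for shuffle operads and has no off-the-shelf ``groupoid-colored form'' in the cited references --- and (iii) actually carry out the mixed critical-pair analysis, which is where the real content lies. Until those three points are supplied, the argument does not close.
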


\begin{proof}
Under the above identification between the (homotopy inverse) Feynman transform and the (co)bar construction, this statement is equivalent to~\cite[Theorem~5.4]{GetzlerKapranov98}, generalized in~\cite[Theorem~7.4.3]{KaufmannWard17}, using the characterization given in~\cite[Theorem~11.3.3]{LodayVallette12}.  Another more direct proof is given at~\cite[Theorem~3.10]{Ward19}.
\end{proof}

\begin{corollary}
The counit of the bar-cobar adjunction provides us with a functorial cofibrant replacement for modular operads:
\[\Omega \B \P \xrightarrow{\sim} \P\ .\]
\end{corollary}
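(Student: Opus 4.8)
The plan is to deduce the statement as a formal consequence of the Koszulness of the groupoid-colored operad $\G$ just established in \cref{thm:Koszul}, by transporting the general operadic Koszul duality machinery of \cite{LodayVallette12} to the groupoid-colored framework of \cite{Ward19}. The counit $\Omega \B \P \to \P$ is the one attached to the canonical Koszul morphism $\kappa : \G^{\ac}\to\G$ and to the identification of modular operads with $\G$-algebras, so there are exactly two things to verify: that this counit is a quasi-isomorphism, and that its source is cofibrant.

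For the quasi-isomorphism, I would first record that \cref{thm:Koszul} is, by \cite[Theorem~11.3.3]{LodayVallette12}, equivalent to the assertion that $\kappa$ is a \emph{Koszul morphism}, i.e.\ that the associated twisted (Koszul) complex is acyclic. The general Koszul duality theorem for algebras over an operad then applies \emph{mutatis mutandis}: for a Koszul morphism, the counit of the induced bar--cobar adjunction $\Omega \B A \to A$ is a quasi-isomorphism for every algebra $A$ over the operad, cf.\ \cite[Chapter~11]{LodayVallette12}. Since the constructions $\B$ and $\Omega$ of \cref{def:BarCons} are precisely the bar and cobar constructions relative to $\kappa:\G^{\ac}\to\G$, and since modular operads are exactly $\G$-algebras, this yields the desired quasi-isomorphism $\Omega\B\P\xrightarrow{\sim}\P$.

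For cofibrancy, I would observe that $\Omega\B\P=\big(\G(\B\P),d_1+d_2\big)$ is quasi-free as a $\G$-algebra: its underlying modular operad is the free one $\G(\B\P)=\mathbb{G}_0(\B\P)$ on the underlying graded stable $\Sy$-module of $\B\P$, equipped with a differential. In the model structure on $\G$-algebras obtained by transfer along the free--forgetful adjunction, such quasi-free objects are cofibrant, so $\Omega\B\P$ is genuinely a cofibrant replacement of $\P$; functoriality is automatic, as $\B$, $\Omega$ and the counit are all natural in $\P$. The only genuine subtlety — and the point I expect to require care rather than any new idea — is that each of these inputs (the acyclicity reformulation of Koszulness, the bar--cobar resolution for algebras, and the existence of the transferred model structure) must be carried out in the $\Syg$-colored setting rather than the classical one. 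This is exactly the content supplied by \cite{Ward19}, and it is what licenses the \emph{mutatis mutandis} application of \cite{LodayVallette12}, so the main obstacle is this verification of transfer, which I expect to be routine bookkeeping.
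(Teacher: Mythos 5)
Your proposal is correct and follows essentially the same route as the paper: the quasi-isomorphism is exactly the content of \cref{thm:Koszul} (Koszulness of $\G$, transported through the identification of modular operads with $\G$-algebras), and cofibrancy is read off from the (transferred) model structure on modular operads, for which the paper cites Berger--Moerdijk, Batanin--Berger, and Kaufmann--Ward. One small caveat: quasi-freeness alone does not imply cofibrancy in a transferred model structure in general --- you also need the exhaustive weight filtration on $\B\P$ (coming from its conilpotency as a cofree shifted modular cooperad), along which the differential $d_2$ strictly decreases, which is the standard reason cobar constructions on conilpotent objects are cofibrant.
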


\begin{proof}
The statement that the counit of the bar-cobar adjunction is always a quasi-isomorphism is \cref{thm:Koszul}.
The cofibrant property can be read in the model category structure on modular operads
introduced by Berger--Moerdjik~\cite{BergerMoerdijk07}, see also~\cite{BataninBerger17} and~\cite[Chapter~8]{KaufmannWard17}.
\end{proof}

\begin{definition}[Homotopy cohomological field theory]
A \emph{homotopy cohomological field theory} ($\text{\it CohFT}_\infty$) is
a dg symmetric vector space  $(A, d_A, \langle\ ,\, \rangle)$ ($\ZZ$-graded)
equipped with a $\Z2$-graded
algebra structure over the bar-cobar resolution
$\Omega \B  H_\bullet\big(\oM\big)$ of the homology of the Deligne--Mumford--Knudsen modular operad.
\end{definition}

\begin{remark}
A cyclic operad is a modular operad concentrated in genus $0$ and a Koszul duality theory has been
proposed on this level in~\cite{GetzlerKapranov95}. Its main idea amounts to using a quadratic
presentation of a cyclic operad in order to
obtain a
functorial bar-cobar resolution. For instance, the cyclic operad
$\mathrm{HyperCom}\coloneqq H_\bullet\big(\overline{\mathcal{M}}_0\big) \coloneqq
\{H_\bullet\big(\overline{\mathcal{M}}_{0,n}\big)\}$
is Koszul.
It admits a Koszul dual shifted cyclic cooperad  i.e.\ a shifted modular cooperad concentrated in
genus $0$, which is up to operadic suspension isomorphic to
$\mathrm{HyperCom}^{\ac}\cong
H^\bullet\big({\mathcal{M}}_0\big)$.
Thus, the minimal way  to define the notion of a \emph{homotopy tree level cohomological field theory}
in the terminology of~\cite{Manin99}, that is a ``homotopy cohomological field theory concentrated
in genus $0$'', is via the cyclic cobar construction $\Omega \mathrm{HyperCom}^{\ac}$.
The canonical embedding
$\mathrm{HyperCom}^{\ac}\hookrightarrow \B H_\bullet\big(\overline{\mathcal{M}}_{0}\big)$
induces a morphism of modular operads
$\Omega \mathrm{HyperCom}^{\ac} \to \Omega \B H_\bullet\big(\overline{\mathcal{M}}_0\big)$
and thus produces a canonical and functorial homotopy tree level cohomological field theory on any
homotopy cohomological field theory.

\medskip

In higher genera, there exists so far no Koszul duality theory for modular operads, so we do not
know how to reduce the bar-cobar resolution
$\Omega \B  H_\bullet\big(\oM\big)$
used here to define the notion of a homotopy cohomological field theory.
The arguments of~\cite[Section~1]{AlmPetersen17} and especially Proposition 1.11
 show that the shifted modular cooperad structure on $H^{\bullet}\left(\mathcal{M}\right)$ given by the residue maps cannot be the ``Koszul dual'' of  $H_\bullet\big(\oM\big)$: the cobar construction of $H^{\bullet}\left(\mathcal{M}\right)$ is not a resolution of $H_\bullet\big(\oM\big)$.
Since the Koszul dual operad is not readily available (and is not known to be Koszul), one can use instead the
general operadic method, see for instance~\cite{DrummondColeVallette13}. It amounts to first computing the homology groups of $\B  H_\bullet\big(\oM\big)$: they are actually given by $H^{\bullet}\left(\mathcal{M}\right)$, see~\cite[Proposition~6.11]{GetzlerKapranov95}. In the second step, one endows this homology with
 a homotopy shifted modular cooperad structure via the homotopy transfer theorem; this would produce the minimal model for the modular operad $H_\bullet\big(\oM\big)$.
The only drawback with this latter point does not lie in the formula for the homotopy transfer theorem, which is produced (dually) in~\cite[Section~3.6]{Ward19}, but in  the choice of an explicit contraction to use.
\end{remark}

\section{Deformation theory}\label{sec:Def}
Following the general method used for instance in~\cite[Section~12.2]{LodayVallette12} and in~\cite{MerkulovVallette09I, MerkulovVallette09II}, we introduce the suitable  deformation complex for morphisms of modular operads. We use it to control the deformation theory of cohomological field theories. This allows us to recover  conceptually and to extend recent constructions of CohFTs due to Pandharipande--Zvonkine~\cite{PZ18}.
By considering the quantum master equation instead of the master equation, we introduce  a new notion of \emph{quantum homotopy CohFT}. The program of quantization of Dubrovin--Zhang integrable hierarchies proposed
by Buryak--Rossi~\cite{BuryakRossi2016} actually provides us with examples of such a structure.

\subsection{Deformation complex of morphisms of modular operads}\label{subsec:DefCom}

The following algebraic notion is prompted by our main example. Notice that we use the homological degree convention that might seem at bit unusual at first sight.

\begin{definition}[Shifted $\Delta$-Lie algebra]
A \emph{shifted dg  $\Delta$-Lie algebra}
is a quadruple $(\g, \d,\Delta, \{\ , \,\})$
consisting of a dg vector space $(\g,\d)$, a degree $-1$ linear operator
$\Delta : \g \to \g$, and a degree $-1$ symmetric product $\{\ , \,\} : \g^{\odot 2} \to \g$
satisfying the following relations:
\[\Delta^2=0 \ , \qquad  \d \Delta+\Delta \d =0\ ,
\quad \d \ \text{and} \ \Delta\ \ \text{are derivations of }\ \ \{\ ,\,\}\ ,
\]
and the shifted Jacobi identity for  \ $\{\ ,\,\}$
i.e.\
\[ \{\{x,y\},z\}+
(-1)^{(|x|+1)(|y|+|z|)}
\{\{y,z\},x\}+
(-1)^{(|z|+1)(|x|+|y|)}
\{\{z,x\},y\}=0 \
 \]
for homogeneous elements $x, y, z$.
\end{definition}
\begin{remark}
When considering the total derivation $\d +\Delta$, one gets a shifted dg Lie algebra,
that is a dg Lie algebra structure on the desuspension $s^{-1}\g$ of $\g$~.
\end{remark}

\begin{remark}
This notion was already considered in the literature, sometimes with different degree conventions, see \cite{KaufmannWard17, VoronovQuantDefII} for instance.
\end{remark}

\begin{definition}[Master equation]
The \emph{master equation} is the equation
\begin{equation}\label{eq:MaEq}
\d \alpha + \Delta \alpha + {\textstyle \frac12} \{\alpha, \alpha\}=0
 \end{equation}
in a shifted $\Delta$-Lie algebra.  We only consider solutions of degree $|\alpha|=0$; their set is denoted by $\MC(\g)$.
\end{definition}

\begin{remark}
The master equation in a shifted $\Delta$-Lie algebra is equivalent to the Maurer--Cartan equation
$d\alpha + {\textstyle \frac12} \{\alpha, \alpha\}=0$ in the associated shifted
dg Lie algebra, where $d=\d+\Delta$\ .
\end{remark}

Let us now introduce our main example.

\begin{definition}[Totalisation]
The  \emph{totalisation} of a stable $\Sy$-module $\P$ is the graded vector space defined by
\[
\whP\coloneqq \prod_{(g,n) \in \NNs} \P_g(n)^{\Sy_n}
\ .\]
\end{definition}

\begin{remark}
Since we are working here in characteristic $0$, we could have equivalently considered coinvariants instead of invariants in the definition of the totalisation.
\end{remark}

\begin{lemma}\label{lem:ModtoDeltaLie}
 The  assignment
\[
  \big(\P, d_\P, \circ^j_i, \xi_{ij}\big)
  \mapsto
 \big(\whP, \d, \Delta, \{\ , \,\}, \F
\big)\ ,
\]
defines a functor 
\[
\mathsf{shifted} \ \mathsf{modular} \ \mathsf{operads}
\to
\mathsf{complete} \ \mathsf{shifted} \ \Delta\textsf{-}\mathsf{Lie}\ \mathsf{algebras}~, \]
where the differential $\d$ is induced by the differential $d_\P$,
the operator $\Delta$ is induced by the contraction maps
\[\Delta(\mu)\coloneqq \sum_{i,j}
\xi_{ij}(\mu)\ ,\]
 the shifted Lie bracket is induced by partial composition maps
\[\{\mu,\nu\}\coloneq
\sum_{i,j}\circ^j_i (\mu\otimes \nu)\ ,
\]
and the term $\F_N$ of the decreasing filtration
\[\whP=\F_0=\F_1 \supset \F_2 \supset \cdots \F_N \supset \F_{N+1}\supset \cdots \ ,\]
consists of elements of \ $\whP$ in which the components with
$n+2g-2<N$ vanish.
\end{lemma}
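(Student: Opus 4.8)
The plan is to show that each defining relation of a \emph{complete shifted $\Delta$-Lie algebra} is the totalisation of one of the structural relations of a shifted modular operad, and that morphisms are sent to morphisms. I would split the verification into three stages: well-definedness together with completeness of the filtration, the five algebraic identities, and functoriality.

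\textbf{Well-definedness and completeness.} First I would check that the prescribed operations actually land in $\whP$ and respect the filtration. For each fixed $(g,n)$ there are only finitely many pairs $i\neq j$, so $\Delta(\mu)=\sum_{i,j}\xi_{ij}(\mu)$ and $\{\mu,\nu\}=\sum_{i,j}\circ_i^j(\mu\otimes\nu)$ are finite sums in every arity and hence define elements of the product $\whP$. The explicit equivariance properties of $\xi_{ij}$ and $\circ_i^j$ recorded just before \cref{prop:DefEquivModOp} show that these total sums intertwine the relevant symmetric group actions, so they carry invariants to invariants; and since in a shifted modular operad both structure maps have degree $-1$, the operators $\Delta$ and $\{\,,\}$ have degree $-1$ as required. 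For completeness I would observe that the integer $N=n+2g-2$ is \emph{additive} under $\circ_i^j$ and is \emph{preserved} by both $\xi_{ij}$ and $d_\P$. This yields at once $\d(\F_N)\subseteq\F_N$, $\Delta(\F_N)\subseteq\F_N$ and $\{\F_M,\F_{M'}\}\subseteq\F_{M+M'}$; as the minimal value of $N$ on $\NNs$ is $1$ we get $\F_0=\F_1=\whP$, each quotient $\whP/\F_N$ is a finite product, and the bracket strictly raises the filtration, exhibiting $\whP$ as a complete filtered object.

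\textbf{The identities.} Each of the five relations is then read off from the corresponding signed version of \eqref{Rel1}--\eqref{Rel3}. The graded symmetry $\{\mu,\nu\}=(-1)^{|\mu||\nu|}\{\nu,\mu\}$ comes from the block-transposition $\Sy_2$-equivariance of $\circ_i^j$ twisted by $\mathrm{sgn}_{\Sy_2}$. The identity $\Delta^2=0$ follows from \eqref{Rel2} holding up to a minus sign: the anticommutation $\xi_{ij}\xi_{kl}=-\xi_{kl}\xi_{ij}$ on disjoint index pairs makes the double sum $\sum_{i,j,k,l}\xi_{ij}\xi_{kl}$ antisymmetric, hence zero. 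The relations $\d\Delta+\Delta\d=0$ and ``$\d$ is a derivation of $\{\,,\}$'' follow from the compatibility of $d_\P$ with $\xi_{ij}$ and with $\circ_i^j$, combined with the Koszul sign of the degree $-1$ maps. The shifted Jacobi identity is the totalisation of the associativity relations \eqref{Rel1}: expanding $\{\{x,y\},z\}$ and its two cyclic permutations and sorting the summands according to whether the outer composition attaches to the first or the second factor, the two clauses of \eqref{Rel1} match the terms pairwise, exactly as in the classical passage from an operadic composition to a Lie bracket. Finally, that $\Delta$ is a derivation of $\{\,,\}$ is the totalisation of \eqref{Rel3}: the first two clauses produce the terms $\{\Delta\mu,\nu\}$ and $\pm\{\mu,\Delta\nu\}$, while the remaining cross-contraction clauses relate a term that contracts one leg of $\mu$ against one leg of $\nu$ after composing another such pair to the term obtained by exchanging the composed and contracted pairs, and these paired terms cancel.

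\textbf{Main obstacle and functoriality.} The hard part is the sign bookkeeping in this last identity and in the Jacobi identity: one must track how the Koszul signs of the two degree $-1$ operations interact with the extra signs built into the shifted relations (the $\mathrm{sgn}_{\Sy_2}$-twist and the ``up to a minus sign'' clauses), as well as with the summation over $\Sy_n$-orbits of legs, and verify in particular that the cross-terms of the derivation property genuinely acquire a \emph{relative} minus sign, so that they cancel rather than double. Once the identities are established, functoriality is formal: a morphism $f\colon\P\to\P'$ of shifted modular operads commutes with the differential, the partial compositions and the contractions by definition and preserves arities, so the induced totalisation map $\widehat f\colon\whP\to\widehat{\mathcal P'}$ commutes with $\d$, $\Delta$ and $\{\,,\}$ and preserves the filtration, hence is a morphism of complete shifted $\Delta$-Lie algebras.
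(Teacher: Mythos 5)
Your proposal is correct and follows essentially the same route as the paper's proof: the five $\Delta$-Lie identities are read off from the signed relations \eqref{Rel1}--\eqref{Rel3} of a shifted modular operad (with the cross-contraction terms of the derivation property cancelling by the sign discrepancy between the last two clauses of \eqref{Rel3}), the filtration by $n+2g-2$ is checked to be preserved, additive under the bracket, and complete, and functoriality is formal. You merely spell out more of the bookkeeping (well-definedness of the sums, the additivity of $n+2g-2$, the graded symmetry of the bracket) that the paper leaves implicit.
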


\begin{proof}
Since the partial composition maps $\circ_i^j$ and the contraction maps $\xi_{ij}$ have degree $-1$,
the operator $\Delta$ and the bracket $\{\, ,\}$ also have degree $-1$.
Therefore the relation~\eqref{Rel2'} forces the operator $\Delta$ to square to zero and~\eqref{Rel1'} implies the shifted Jacobi relation.
The composition $\Delta\circ\{\ , \})$ can be rewritten using the relation \eqref{Rel3'}.
The first two cases give respectively $-\{\Delta,\, \}$ and $-\{\ , \Delta\}$. Due to the difference of signs in the last two cases, they cancel and we finally get the shifted derivation relation between $\Delta$ and $\{\, , \}$. Any morphism of shifted modular operads induces a morphism of shifted $\Delta$-Lie algebras.

By definition, this filtration defines a complete topology; we refer the reader to~\cite[Chapter~1]{DotsenkoShadrinVallette18} for the respective definitions.
The stability condition of the underlying stable $\Sy$-module of a shifted modular operad ensures that
$\whP=\F_0=\F_1$\ .
The various operations  preserve this filtration: $\d, \Delta : \F_N \to \F_N$ and
$\{\ , \,\} : \F_N\odot \F_M \to \F_{N+M}$\ .
\end{proof}

\begin{definition}[Totalisation of a modular operad]
We call the above complete shifted $\Delta$-Lie
algebra $\big(\whP, \d, \Delta, \{\ , \,\}, \F \big)$
the \emph{totalisation} of the shifted modular operad $\P$ and
(slightly abusing notation) will denote it by $\whP$.
\end{definition}

\begin{example}\label{ex:TPoly}
Let $V$ be
a graded vector space with finite dimensional components. The vector space
$A\coloneq V\oplus s V^*$ is equipped with the degree $-1$
pairing defined by $\langle s f, x \rangle\coloneq f(x)$\
which makes it an odd symplectic manifold.
The associated endomorphism
operad $\EEnd_A$ is a shifted modular operad
and its totalisation is the shifted $\Delta$-Lie algebra
\[
\widehat{\EEnd_A}\cong \prod_{(g,n) \in \NNs} A^{\odot n}\subset
\K[[V\oplus s V^*]][[t]]
\ ,\]
which is a sub-algebra of the quantized version of the Weyl algebra.
Notice that in the cohomological
situation (like for example in~\cite{Kontsevich97, Willwacher15}), one considers the odd
symplectic manifold structure on $V\oplus s^{-1} V^*$
induced by a
degree $+1$ pairing. In this case, one gets a usual (unshifted) modular operad of endomorphisms,
where the composition and contractions maps have degree $+1$. Its totalisation is an
unshifted $\Delta$-Lie algebra which is actually a Batalin--Vilkovisky subalgebra of the quantized algebra of polyvector fields on $V$.
\end{example}

\begin{lemma}
The assignment
\[  \left(\big(\C, d_\C, \delta^j_i, \chi_{ij}\big), \big(\P, d_\P, \circ^j_i, \xi_{ij}\big)\right) \mapsto
\Hom \left(\C, \P\right)\coloneq \left(
\left\{\Hom\left(
\C_g(n), \P_g(n)\right)\right\}_{g,n}, \partial,\bigcirc^j_i, \Xi_{ij}
\right)
\]
defines a functor
\[
(\mathsf{shifted} \ \mathsf{modular} \ \mathsf{cooperads})^{\mathsf{op}} \times
\mathsf{modular} \ \mathsf{operads}
\to
\mathsf{shifted} \ \mathsf{modular} \ \mathsf{operads}~,
\]
where
$\Sy_n$  acts on $\Hom\left(\C_g(n), \P_g(n)\right)$  by conjugation, the  differential $\partial$
is given by \[\partial(f)\coloneq d_\P \circ f - (-1)^{|f|}f \circ d_\C\ ,\]
the partial compositions maps are given by
\[\bigcirc_{i}^j(f\otimes g)\coloneq (-1)^{|f|+|g|}
\circ^j_i\circ (f\otimes g)\circ \delta^j_i\ ,
\]
 and the contraction maps are given by
 \[\Xi_{ij}(f)\coloneq(-1)^{|f|} \, \xi_{ij}\circ f\circ \chi_{ij}\ .\]
\end{lemma}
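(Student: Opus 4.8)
The plan is to verify directly that the quintuple $\big(\Hom(\C,\P), \partial, \bigcirc_i^j, \Xi_{ij}\big)$ satisfies the axioms of a shifted modular operad, and that the assignment is functorial. Conceptually this is a convolution construction: each operation on $\Hom(\C,\P)$ is a sandwich obtained by pre-composing with a cooperation of $\C$ and post-composing with an operation of $\P$, so the relations to be checked are forced by combining the $\G$-relations \eqref{Rel1}--\eqref{Rel3} satisfied by $\P$ with the $\G^{\ac}$-relations \eqref{Rel1'}--\eqref{Rel3'} satisfied by $\C$; the compatibility of these two sets of relations is exactly what the Koszul duality $\kappa\colon\G^{\ac}\to\G$ encodes.

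First I would dispatch the routine structural points. The conjugation action makes $\{\Hom(\C_g(n),\P_g(n))\}_{(g,n)\in\NNs}$ a stable $\Sy$-module over the same stability range $\NNs$, and the three new maps all have degree $-1$: since $\delta_i^j$ and $\chi_{ij}$ carry degree $-1$ while $\circ_i^j$ and $\xi_{ij}$ carry degree $0$, each sandwich acquires exactly one degree $-1$. The relation $\partial^2=0$ and the fact that $\partial$ is a derivation of $\bigcirc_i^j$ and $\Xi_{ij}$ are the standard convolution computations, using $d_\P^2=0=d_\C^2$ together with the compatibility of $d_\P$ with $\circ_i^j,\xi_{ij}$ and of $d_\C$ with $\delta_i^j,\chi_{ij}$; the sign $-(-1)^{|f|}$ in the definition of $\partial$, as well as the prefactors $(-1)^{|f|+|g|}$ and $(-1)^{|f|}$, are precisely the signs that make the Koszul contributions cancel. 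Finally, $\Sy$-equivariance of $\bigcirc_i^j$ and $\Xi_{ij}$ follows from the equivariance of the four structure maps of $\C$ and $\P$ and the definition of the conjugation action.

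The heart of the argument is the verification of the three families of relations. For each one I would expand both sides through the definitions, commute the cooperations $\delta,\chi$ to the right past $f,g,h$ and the operations $\circ,\xi$ to the left (the Koszul sign rule producing signs governed by $|f|,|g|,|h|$), and then apply the matching relation on $\P$ (with no sign) and on $\C$ (with the signs recorded in \eqref{Rel1'}--\eqref{Rel3'}). The content is that, case by case, the accumulated Koszul signs, the explicit prefactors, and the signs from \eqref{Rel1'}--\eqref{Rel3'} combine to reproduce exactly the prescribed behaviour of a shifted modular operad: \eqref{Rel1} and the third relation of \eqref{Rel3} hold strictly, whereas \eqref{Rel2} and the remaining three relations of \eqref{Rel3} acquire the extra minus sign. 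I expect this sign bookkeeping to be the main obstacle --- in particular keeping track of the signs generated when the degree $-1$ maps $\delta_i^j,\chi_{ij}$ are moved past $f,g,h$ and when two of them are interchanged --- since a single misplaced sign would turn a strict relation into a signed one or conversely. Organizing the computation according to the position of the distinguished indices $i,j,k,l$, exactly the case distinctions already present in \eqref{Rel1}--\eqref{Rel3} and \eqref{Rel1'}--\eqref{Rel3'}, keeps the number of cases finite and under control.

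It remains to check functoriality, which is immediate. A morphism $\phi\colon\C'\to\C$ of shifted modular cooperads and a morphism $\psi\colon\P\to\P'$ of modular operads induce the map $f\mapsto \psi\circ f\circ\phi$ from $\Hom(\C,\P)$ to $\Hom(\C',\P')$; since $\phi$ commutes with $\delta_i^j,\chi_{ij}$ and $\psi$ commutes with $\circ_i^j,\xi_{ij}$, this induced map commutes with $\partial$, $\bigcirc_i^j$ and $\Xi_{ij}$, hence is a morphism of shifted modular operads, and functoriality in both variables follows.
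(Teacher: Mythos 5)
Your proposal is correct and follows the same route as the paper, which simply asserts that the statement ``immediately follows from the defining relations'': you spell out the routine convolution verification (degree count, derivation properties, the combination of relations \eqref{Rel1}--\eqref{Rel3} on $\P$ with \eqref{Rel1'}--\eqref{Rel3'} on $\C$, and functoriality) that the paper leaves implicit. The target sign pattern you quote for the shifted modular operad axioms is the correct one, so there is nothing to add.
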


\begin{proof}
This immediately follows from the defining relations.
\end{proof}

\begin{definition}[Convolution shifted modular operad]
We call the shifted modular operad $\Hom(\C, \P)$ described above the \emph{convolution shifted modular operad}.
\end{definition}

\begin{definition}[Convolution algebra]\label{def:ConDeltaLie}
The \emph{convolution shifted $\Delta$-Lie algebra} associated to
a shifted modular cooperad $\C$ and a modular operad $\P$ is
the totalisation $\widehat{\Hom}(\C, \P)$ of the convolution shifted modular operad $\Hom(\C, \P)$\ , i.e.\
\[\widehat{\Hom}(\C, \P)\coloneq\big(\widehat{\Hom}_{\Syg} \left(\C, \P\right), \d, \Delta, \{\ , \,\}\big)\ ,\]
where \[\widehat{\Hom}_{\Syg} \left(\C, \P\right)\coloneq\prod_{(g,n)\in \NNs} \Hom_{\Sy_n}\left(
\C_g(n), \P_g(n)\right)\ .\]
\end{definition}

Notice that
all operations of $\widehat{\Hom}(\C, \P)$ preserve the arity. The differential and the bracket also preserve the genus
 and the operator $\Delta$ increases the genus by $1$.

\begin{remark}
The arity- and genus-wise linear dual $\Frob^*$ of the Frobenius modular operad produces a modular cooperad which satisfies the following universal property: any shifted modular operad $\P$ is canonically isomorphic to the shifted convolution modular operad $\P\cong \Hom \left(\Frob^*, \P\right)$ and its totalisation  is canonically isomorphic to the convolution algebra
$\whP\cong \widehat{\Hom}_{\Syg} \left(\Frob^*, \P\right)$\ .
So there is no restriction to consider only convolution operads and algebras.
\end{remark}

\begin{definition}[Twisting morphism]
Solutions of degree $0$ to the master equation \eqref{eq:MaEq} in the convolution algebra $\widehat{\Hom}(\C, \P)$
 are called  \emph{twisting morphisms} from $\C$ to $\P$; their set is denoted by $\Tw(\C, \P)$.
\end{definition}

\begin{proposition}[{\cite[Theorem~1]{Barannikov07}}]\label{prop:MorphTw}
The set of morphisms of modular operads $\Omega  \C \to \P$ is in a natural bijection with the set of twisting morphisms in $\widehat{\Hom}(\C, \P)$:
\[
\Hom_{\mathsf{mod}\,  \mathsf{op}}(\Omega \C, \P)\cong \Tw(\C, \P)\ .
\]
\end{proposition}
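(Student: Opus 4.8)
The plan is to use the freeness of the cobar construction together with the universal property of the monad of graphs, which is exactly the modular-operadic incarnation of the classical twisting-morphism bijection. Forgetting the differential, $\Omega\C=\big(\G(\C),d_1+d_2\big)$ is the free modular operad on the stable $\Sy$-module $\C$ (this is the identification $\mathbb{G}_0(\M)\cong\G(\M)$ recalled earlier). Hence, by the adjunction between the free-modular-operad functor $\G$ and the forgetful functor to $\Smod$, a morphism of \emph{graded} modular operads $\Phi\colon\G(\C)\to\P$ is the same datum as a degree-preserving morphism of stable $\Sy$-modules $\alpha\colon\C\to\P$, that is a degree $0$ element $\alpha\in\widehat{\Hom}_{\Syg}(\C,\P)$. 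So I would first record the bijection $\{\text{graded morphisms }\G(\C)\to\P\}\cong\widehat{\Hom}_{\Syg}(\C,\P)_0$, writing $\Phi_\alpha$ for the unique morphism freely extending $\alpha$; the $\Sy_n$-invariance condition cut out by $\widehat{\Hom}_{\Syg}$ is precisely what is needed for $\alpha$ to extend to an honest (equivariant) morphism.

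The second step is to determine which $\Phi_\alpha$ are moreover compatible with the differentials, i.e.\ which are morphisms of dg modular operads. I would study the obstruction $D\coloneqq\Phi_\alpha\circ(d_1+d_2)-d_\P\circ\Phi_\alpha$. Since $d_1+d_2$ is a derivation of $\G(\C)$ and $d_\P$ is a derivation of $\P$ while $\Phi_\alpha$ is a morphism, a one-line check on partial compositions and contractions shows that $D$ is a $\Phi_\alpha$-derivation from the free modular operad $\G(\C)$ to $\P$. Such a derivation along a morphism is completely determined by its restriction to the generating $\Sy$-module $\C$; therefore $\Phi_\alpha$ is a morphism of dg modular operads if and only if $D|_\C=0$. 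This reduces the whole statement to a computation on generators.

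The third step is that computation. The restriction of $d_1$ to $\C$ is the internal differential $d_\C$, while the restriction of $d_2$ to $\C$ is given by the partial decomposition maps $\delta_i^j$, landing in the weight-two part of $\G(\C)$, together with the expansion maps $\chi_{ij}$, which create a single loop. Composing with $\Phi_\alpha$ and applying the structure maps $\circ_i^j$ and $\xi_{ij}$ of $\P$, the internal part reproduces $\partial\alpha=d_\P\alpha-\alpha\, d_\C$, the $\chi$-part reproduces $\Delta\alpha=\sum_{i,j}\xi_{ij}\circ\alpha\circ\chi_{ij}$, and the $\delta$-part reproduces $\tfrac12\{\alpha,\alpha\}=\tfrac12\sum_{i,j}\circ_i^j\circ(\alpha\otimes\alpha)\circ\delta_i^j$. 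The factor $\tfrac12$ appears because the bracket is a symmetric product on $\g^{\odot2}$ whereas the cobar decomposition ranges over the two labellings of the new pair of vertices by $\alpha$, which are identified. Thus $D|_\C=0$ is, up to the global signs fixed by the conventions for $\partial$, $\Delta$ and $\{\ ,\,\}$, exactly the master equation $\partial\alpha+\Delta\alpha+\tfrac12\{\alpha,\alpha\}=0$ of \eqref{eq:MaEq}, i.e.\ the condition $\alpha\in\Tw(\C,\P)$. Naturality in $\C$ and $\P$ is then automatic since each step is functorial.

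The main obstacle I anticipate is bookkeeping rather than conceptual: one must match the Koszul signs carried by the cobar differential (originating from the degree $1$ edges of $\G^{\ac}$ and the desuspension defining the shifted convolution $\Delta$-Lie structure) with the signs placed by hand in the definitions of $\partial$, $\bigcirc_i^j$ and $\Xi_{ij}$, and confirm that the symmetric partial decomposition accounts precisely for the $\tfrac12$ in the shifted Jacobi term. A secondary point requiring care is to verify that the freely extended $\Phi_\alpha$ genuinely respects the contraction maps and the equivariance constraints of a modular operad, so that the first-step bijection is with morphisms of modular operads and not merely of underlying graded $\Sy$-modules.
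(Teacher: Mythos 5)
Your argument is correct and is essentially the paper's own proof: the paper simply cites \cite[Proposition~11.3.1]{LodayVallette12} as working \emph{mutatis mutandis} for groupoid-colored operads, and the standard proof given there is exactly your free-generators argument (graded morphisms out of the quasi-free $\Omega\C$ correspond to degree $0$ equivariant maps $\C\to\P$, and compatibility with differentials, being a derivation condition along the morphism, reduces to the master equation on generators). You have merely written out explicitly the adaptation that the paper leaves implicit.
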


\begin{proof}
The proof on the level of operads given in~\cite[Proposition~11.3.1]{LodayVallette12} works \emph{mutatis mutandis} for groupoid-colored operads. This result falls into the general pattern of~\cite[Theorem~7.5.3]{KaufmannWard17}.
\end{proof}

\subsection{Deformation theory of cohomological field theories}\label{subsec:DefCohFT}
We are  interested in the case of the deformation theory of (homotopy) cohomological field theories, that we now treat in detail.

\begin{convention}
From now until the end of \cref{subsec:PZ}, we will be working only with $\Z2$ gradings in order to be consistent with the standard conventions about CohFTs.
\end{convention}

\begin{proposition}\label{lem:DefComp}
Let $(A, d_A, \langle\ ,\, \rangle)$
be a dg symmetric vector space.
The $\Z2$-graded
convolution algebra associated to $\P=\EEnd_A$ and $\C=\B H_\bullet\big(\oM\big)$  is isomorphic to
the $\Delta$-Lie algebra
\[\g_A\coloneq
  \left(
\widehat{\mathbb{G}}_{\od}\big(\Hbul\big(\oM\big)\big)(A), \mathrm{d}, \Delta, \{\ ,\,\}
\right)\ ,
\]
where
\[
\widehat{\mathbb{G}}_{\od}\big(\Hbul\big(\oM\big)\big)(A)\coloneqq
\prod_{(g,n)\in\NNs} \ \prod_{\gamma\in \Ga_g(n)} \left( \gamma\big(\Hbul\big(\oM\big)\big) \otimes A^{\otimes n} \right)~,
\]
with $\gamma\big(\Hbul\big(\oM\big)\big)$  is given by~\eqref{eq:monadG} and all edges of $\gamma$
of odd degree. In other words,  $\g_A$ is spanned by isomorphism classes of stable graphs with vertices labeled by cohomology classes of $\oM$ and leaves labeled by elements of $A$, e.g.\
  \[\vcenter{\hbox{\begin{tikzpicture}[scale=0.8]

	\draw[thick] (-1, 1.3) to [out=0,in=135] (1,0);
	\draw[thick] (-1, -1.3) to [out=0,in=225] (1,0);
	\draw[thick] (-1, 1.3) to [out=245,in=115] (-1,-1.3);
	\draw[thick] (-1, 1.3) to [out=295,in=65] (-1,-1.3);

	\draw[thick]  (1.7,-0.4) arc [radius=0.4, start angle=270, end angle= 450];
	\draw[thick] (1, 0) to [out=315,in=180] (1.7,-0.4);
	\draw[thick] (1, 0) to [out=45,in=180] (1.7,0.4);

	\draw[thick]  (-1.7,-1.7) arc [radius=0.4, start angle=270, end angle= 90];
	\draw[thick] (-1, -1.3) to [out=225,in=0] (-1.7,-1.7);
	\draw[thick] (-1, -1.3) to [out=135,in=0] (-1.7,-0.9);

	\draw[thick] (-1, 1.3) -- (-1, 2) node[above]  {\scalebox{0.8}{$a_3$}};
	\draw[thick] (-1, 1.3) -- (-1.5, 1.8) node[above left]  {\scalebox{0.8}{$a_1$}};
	\draw[thick] (-1, 1.3) -- (-0.5, 1.8) node[above right]  {\scalebox{0.8}{$a_7$}};
	\draw[thick] (-1, -1.3) -- (-1, -2) node[below]  {\scalebox{0.8}{$a_2$}};
	\draw[thick] (-1, -1.3) -- (-0.5, -1.8) node[below right]  {\scalebox{0.8}{$a_5$}};
	\draw[thick] (1, 0) -- (1, 0.7) node[above]  {\scalebox{0.8}{$a_4$}};
	\draw[thick] (1, 0) -- (1, -0.7) node[below]  {\scalebox{0.8}{$a_6$}};

	\draw[fill=white, thick] (1,0) circle [radius=10pt];
	\draw[fill=white, thick] (-1,1.3) circle [radius=10pt];
	\draw[fill=white, thick] (-1,-1.3) circle [radius=10pt];

	\node at (1,0) {\scalebox{1}{$\mu_2$}};
	\node at (-1,1.3) {\scalebox{1}{$\mu_1$}};
	\node at (-1,-1.3) {\scalebox{1}{$\mu_3$}};
	\end{tikzpicture}}}~.
\]

The differential $\d=\d_A-\d_1-\d_2$ is the difference between the extension $\d_A$ of the
differential of $A$ and the sum of the differential $\d_1$ producing
loops under the
expansion maps of the modular cooperad
$\Hbul\big(\oM\big)$ applied at each vertex and the differential $\d_2$ splitting vertices into two along a new edge under the partial decomposition maps of $\Hbul\big(\oM\big)$.
The operator $\Delta$ amounts to considering any pair of elements $a_i, a_j \in A$ labeling pairs of leaves of a graph, compute their pairing $\langle a_i, a_j \rangle\in \mathbb{k}$ and create a tadpole at their place.
\[
\vcenter{\hbox{\begin{tikzpicture}[scale=0.8]
	\draw[thick] (-1, 1.3) to [out=0,in=135] (1,0);
	\draw[thick] (-1, -1.3) to [out=0,in=225] (1,0);
	\draw[thick] (-1, 1.3) to [out=245,in=115] (-1,-1.3);
	\draw[thick] (-1, 1.3) to [out=295,in=65] (-1,-1.3);

	\draw[thick]  (1.7,-0.4) arc [radius=0.4, start angle=270, end angle= 450];
	\draw[thick] (1, 0) to [out=315,in=180] (1.7,-0.4);
	\draw[thick] (1, 0) to [out=45,in=180] (1.7,0.4);

	\draw[thick]  (-1.7,-1.7) arc [radius=0.4, start angle=270, end angle= 90];
	\draw[thick] (-1, -1.3) to [out=225,in=0] (-1.7,-1.7);
	\draw[thick] (-1, -1.3) to [out=135,in=0] (-1.7,-0.9);

	\draw[thick] (-1, 1.3) -- (-1, 2) node[above]  {\scalebox{0.8}{$a_3$}};
	\draw[thick] (-1, 1.3) -- (-1.5, 1.8) node[above left]  {\scalebox{0.8}{$a_1$}};
	\draw[thick] (-1, 1.3) -- (-0.5, 1.8) node[above right]  {\scalebox{0.8}{$a_7$}};
	\draw[thick] (-1, -1.3) -- (-1, -2) node[below]  {\scalebox{0.8}{$a_2$}};
	\draw[thick] (-1, -1.3) -- (-0.5, -1.8) node[below right]  {\scalebox{0.8}{$a_5$}};
	\draw[thick] (1, 0) -- (1, 0.7) node[above]  {\scalebox{0.8}{$a_4$}};
	\draw[thick] (1, 0) -- (1, -0.7) node[below]  {\scalebox{0.8}{$a_6$}};

	\draw[fill=white, thick] (1,0) circle [radius=10pt];
	\draw[fill=white, thick] (-1,1.3) circle [radius=10pt];
	\draw[fill=white, thick] (-1,-1.3) circle [radius=10pt];

	\node at (1,0) {\scalebox{1}{$\mu_2$}};
	\node at (-1,1.3) {\scalebox{1}{$\mu_1$}};
	\node at (-1,-1.3) {\scalebox{1}{$\mu_3$}};
	\end{tikzpicture}}}
\quad  \mapsto\quad \langle a_4, a_6 \rangle
  \vcenter{\hbox{\begin{tikzpicture}[scale=0.8]
	\draw[thick] (-1, 1.3) to [out=0,in=135] (1,0);
	\draw[thick] (-1, -1.3) to [out=0,in=225] (1,0);
	\draw[thick] (-1, 1.3) to [out=245,in=115] (-1,-1.3);
	\draw[thick] (-1, 1.3) to [out=295,in=65] (-1,-1.3);

	\draw[thick]  (1.7,-0.4) arc [radius=0.4, start angle=270, end angle= 450];
	\draw[thick] (1, 0) to [out=315,in=180] (1.7,-0.4);
	\draw[thick] (1, 0) to [out=45,in=180] (1.7,0.4);

	\draw[thick]  (1.9,-0.6) arc [radius=0.6, start angle=270, end angle= 450];
	\draw[thick] (1, 0) to [out=270,in=180] (1.9,-0.6);
	\draw[thick] (1, 0) to [out=90,in=180] (1.9,0.6);

	\draw[thick]  (-1.7,-1.7) arc [radius=0.4, start angle=270, end angle= 90];
	\draw[thick] (-1, -1.3) to [out=225,in=0] (-1.7,-1.7);
	\draw[thick] (-1, -1.3) to [out=135,in=0] (-1.7,-0.9);

	\draw[thick] (-1, 1.3) -- (-1, 2) node[above]  {\scalebox{0.8}{$a_3$}};
	\draw[thick] (-1, 1.3) -- (-1.5, 1.8) node[above left]  {\scalebox{0.8}{$a_1$}};
	\draw[thick] (-1, 1.3) -- (-0.5, 1.8) node[above right]  {\scalebox{0.8}{$a_7$}};
	\draw[thick] (-1, -1.3) -- (-1, -2) node[below]  {\scalebox{0.8}{$a_2$}};
	\draw[thick] (-1, -1.3) -- (-0.5, -1.8) node[below right]  {\scalebox{0.8}{$a_5$}};

	\draw[fill=white, thick] (1,0) circle [radius=10pt];
	\draw[fill=white, thick] (-1,1.3) circle [radius=10pt];
	\draw[fill=white, thick] (-1,-1.3) circle [radius=10pt];

	\node at (1,0) {\scalebox{1}{$\mu_2$}};
	\node at (-1,1.3) {\scalebox{1}{$\mu_1$}};
	\node at (-1,-1.3) {\scalebox{1}{$\mu_3$}};
	\end{tikzpicture}}}
	\]
The bracket $\{\gamma ,\zeta\}$ of two such graphs $\gamma$ and $\zeta$ is equal to the sum over the pairs of elements of $A$, one $a_i$ from $\gamma$ and one $a_j$ from $\zeta$, of the pairing $\langle a_i, a_j \rangle$ and the graph created by the grafting the two associated edges together.
\end{proposition}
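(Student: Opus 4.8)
\emph{The plan} is to unfold both defining data and then match the underlying graded space, the differential $\d$, and the two operations $\Delta$ and $\{\ ,\,\}$ one at a time, the sign and edge-parity bookkeeping being the delicate part. First I would use the cofree description $\G^{\ac}(\M)\cong \mathbb{G}^c_1(\M)$ to identify $\C\coloneqq\B H_\bullet\big(\oM\big)$, as a graded stable $\Sy$-module, with stable graphs carrying edges of degree $1$ and vertices labeled by $H_\bullet\big(\oM\big)$; by \cref{def:BarCons} its differential is $d_2$ alone, since the internal differential $d_1$ vanishes ($H_\bullet\big(\oM\big)$ has zero differential). On the other side $\P\coloneqq\EEnd_A$ has $\P_g(n)=A^{\otimes n}$ with composition and contraction maps of degree $0$. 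Then I would compute the underlying space of $\widehat{\Hom}(\C,\P)=\prod_{(g,n)}\Hom_{\Sy_n}\big(\C_g(n),A^{\otimes n}\big)$: since $H_\bullet\big(\oM\big)$ is finite dimensional in each bidegree, $\Hom\big(H_\bullet(\oM_{g(v),n(v)}),-\big)\cong \Hbul(\oM_{g(v),n(v)})\otimes -$. Distributing this over the vertices of each graph, commuting the cohomological labels past the target $A^{\otimes n}$, and taking $\Sy_n$-invariants (equivalently coinvariants, in characteristic $0$) produces exactly $\widehat{\mathbb{G}}_{\od}\big(\Hbul(\oM)\big)(A)$, with vertices labeled by cohomology classes and leaves by elements of $A$. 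Dualizing the degree-$1$ edges of $\C$ turns them into degree $-1$, hence odd, edges, which matches the $\Z2$-graded convention in force and explains the subscript $\od$.

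Next I would read off the two brackets from the convolution formulas of \cref{def:ConDeltaLie} together with the explicit structure maps of $\EEnd_A$. For the bracket $\{f,g\}=\sum_{i,j}\bigcirc_i^j(f\otimes g)$, post-composition with $\circ_i^j$ of $\EEnd_A$ pairs the $i$-th entry of one factor with the $j$-th of the other via $\langle\ ,\,\rangle$ and grafts the remaining legs, while pre-composition with the decomposition map $\delta_i^j$ of $\C$ distributes the incoming corolla; on the graph side this is precisely: choose a leaf of $\gamma$ and a leaf of $\zeta$, apply the pairing, and glue the two graphs along the new edge. Likewise $\Delta(f)=\sum_{i,j}\Xi_{ij}(f)$ post-composes with the contraction $\xi_{ij}$, which pairs two entries of $A^{\otimes n}$ and deletes them, while pre-composition with the expansion map $\chi_{ij}$ of $\C$ adjoins an internal edge; this is the displayed ``pair two legs and create a tadpole'' operation, raising the total genus by $1$.

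Finally, for the differential $\partial(f)=d_\P\circ f-(-1)^{|f|}f\circ d_\C$, the first term is the extension $\d_A$ of the differential of $A$ to the labeled graphs, since $d_\P=d_A$ on $\EEnd_A$. The second term pre-composes with $d_\C=d_2$, the bar differential that contracts internal edges of $\C$ via the composition and contraction maps of $H_\bullet\big(\oM\big)$; dualized on cohomology-labeled graphs, these become the expansion of edges, splitting into the loop-creating piece $\d_1$ (dual to the contraction maps) and the vertex-splitting piece $\d_2$ (dual to the partial compositions), both governed by the shifted modular cooperad structure of $\Hbul\big(\oM\big)$. Together with the sign $-(-1)^{|f|}$ this gives $\d=\d_A-\d_1-\d_2$, and the functoriality established in \cref{lem:ModtoDeltaLie} then guarantees that all the shifted $\Delta$-Lie relations hold and that the filtration $\F$ is carried along, so that the identification is an isomorphism of complete shifted $\Delta$-Lie algebras.

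I expect the main obstacle to be the sign and parity bookkeeping: reconciling the Koszul signs from permuting the graded cohomology labels and the elements of $A$, the explicit signs built into the convolution formulas and into the contraction and composition maps of $\EEnd_A$, and the passage to the induced $\Z2$-grading, all while keeping the graph automorphisms compatible with the $\Sy_n$-invariants in the totalisation. A secondary but genuine point is verifying that the two summands of the dualized bar differential $d_2$ separate cleanly into the loop-creating $\d_1$ and the vertex-splitting $\d_2$ exactly as described, rather than mixing under the identification.
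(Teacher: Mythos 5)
Your proposal is correct and follows essentially the same route as the paper, which simply observes that the statement unfolds from the description of the bar construction below \cref{def:BarCons}, that dualizing turns the degree-$1$ edges into odd edges, and that the minus signs in $\d=\d_A-\d_1-\d_2$ come from taking the linear dual of the bar differential. Your write-up is a faithful (and more explicit) expansion of that argument, including the finite-dimensionality point needed to replace $\Hom\big(H_\bullet(\oM_{g,n}),-\big)$ by $\Hbul(\oM_{g,n})\otimes-$, which the paper leaves implicit.
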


\begin{proof}
This follows directly form the description of the bar construction given below \cref{def:BarCons}.
So the edges should receive degree $-1$, that is an odd degree in the $\Z2$-grading.  The minus
signs in the differential $\d=\d_A-\d_1-\d_2$
come
pfrom the fact that we consider the linear dual of the bar construction.
\end{proof}

\begin{remark}\label{rem:FunctGA}
The assignment $A \mapsto \g_A$ is a functor from the
category
of dg symmetric vector spaces
to the category of shifted dg  $\Delta$-Lie algebras.
\end{remark}

The underlying space $\widehat{\mathbb{G}}_{\od}\big(\Hbul\big(\oM\big)\big)(A)$ of graphs admits several gradings: the arity (number of elements of $A$), the total genus (the sum of the genus of the graph with the genera of the elements labeling the vertices), the weight (number of vertices), the homological degree (the total degree of the various labeling elements) which is here either even or odd, etc. The master equation becomes
\[ \d_A\alpha - \d_1 \alpha - \d_2 \alpha +\Delta \alpha + {\textstyle \frac12} \{\alpha, \alpha\}=0\ ,\]
where $\d_A$, $\d_2$, and $\{\ ,\,\}$ preserve the genus and where $\d_2$ and $\Delta$ increases it by $1$. \cref{prop:MorphTw} shows that solutions to this master equation correspond bijectively to homotopy cohomological field theories structures on $A$.

\begin{corollary}\label{cor:CohFTTFTtree}\leavevmode
\begin{enumerate}
\item The solutions $\alpha$ concentrated in weight $1$ without tadpoles, that is $\alpha\in \Hbul\big(\oM\big)(A)$, coincide with cohomological field theories without unit.

\item If moreover, $\alpha$ in concentrated in $H^0\big(\oM\big)(A)$, one gets a topological field theory.

\item The solutions concentrated in weight $1$ and total genus $0$, i.e.\ $\alpha\in \Hbul\big(\oM_0\big)(A)$ are \emph{tree level} cohomological field theories, in the terminology of~\cite{Manin99}.
\end{enumerate}
\end{corollary}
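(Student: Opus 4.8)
The plan is to work inside $\MC(\g_A)$ for the $\Delta$-Lie algebra $\g_A$ of \cref{lem:DefComp}, which by the discussion following \cref{prop:MorphTw} is the set of homotopy CohFT structures, and to single out the special solutions by exploiting the auxiliary gradings on $\widehat{\mathbb{G}}_{\od}\big(\Hbul(\oM)\big)(A)$ recalled right after that lemma: the \emph{weight} (number of vertices), the \emph{total genus}, and the number of tadpoles. The key observation is that each operation in the master equation \eqref{eq:MaEq} has definite behaviour with respect to these gradings. Indeed $\d_A$ preserves weight, genus and the tadpole-free locus; both $\d_2$ and $\frac12\{\ ,\,\}$ raise the weight by one while preserving the total genus (they produce two-vertex graphs joined by a single new edge, of genus $g'+g''$); and both $\d_1$ and $\Delta$ keep the weight equal to one but raise the total genus by one and create a tadpole. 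So for $\alpha$ concentrated in weight $1$ and free of tadpoles, i.e. $\alpha\in\Hbul(\oM)(A)=\prod_{(g,n)}\big(H^\bullet(\oM_{g,n})\otimes A^{\otimes n}\big)^{\Sy_n}$, the equation \eqref{eq:MaEq} splits into three independent homogeneous components.

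Next I would read off these components. The tadpole-free weight-$1$ part is $\d_A\alpha=0$, saying that $\alpha$, viewed as a map of stable $\Sy$-modules $H_\bullet(\oM)\to\EEnd_A$, is a chain map. The weight-$2$ part is $\d_2\alpha=\frac12\{\alpha,\alpha\}$, whose left-hand side uses the partial decomposition maps of the cooperad $\Hbul(\oM)$ and whose right-hand side uses the partial compositions $\circ_i^j$ of $\EEnd_A$; component by component over the genus and arity of the resulting two-vertex tree this is exactly $\alpha(\mu\circ_i^j\nu)=\alpha(\mu)\circ_i^j\alpha(\nu)$. The weight-$1$ tadpole part is $\d_1\alpha=\Delta\alpha$, where $\d_1$ uses the expansion (residue) maps of $\Hbul(\oM)$ and $\Delta$ uses the contraction $\xi_{ij}$ of $\EEnd_A$; matching the component of total genus $G$ and arity $N$, i.e. $\d_1\alpha_{G,N}$ against $\Delta\alpha_{G-1,N+2}$, yields precisely $\alpha(\xi_{ij}\mu)=\xi_{ij}(\alpha\mu)$. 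Together these say exactly that $\alpha$ commutes with the differentials, the partial compositions and the contractions, i.e. that $\alpha$ is a morphism of modular operads $H_\bullet(\oM)\to\EEnd_A$, which is by definition a CohFT; conversely every CohFT manifestly satisfies the three relations. This proves (1).

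For (2) I would further restrict to $\alpha\in H^0(\oM)(A)$, using that $\oM_{g,n}$ is connected, so $H^0(\oM_{g,n})$ is one-dimensional and spanned by the unit. The morphism then factors through the degree-$0$ part $H_0(\oM)$, which as a modular operad is isomorphic to $\Frob$, whence $\alpha$ amounts to a morphism $\Frob\to\EEnd_A$, that is a commutative Frobenius algebra, equivalently a two-dimensional topological field theory. For (3) I would instead restrict to total genus $0$: the corollas carry classes in $H^\bullet(\oM_{0,n})$, and in the master equation the two genus-raising operations $\d_1$ and $\Delta$ drop out (they would land in genus $1$), leaving only $\d_A\alpha=0$ and $\d_2\alpha=\frac12\{\alpha,\alpha\}$. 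These are the chain-map and $\circ_i^j$-compatibility conditions for a morphism of cyclic operads $H_\bullet(\oM_0)\to\EEnd_A$, which is precisely a tree-level CohFT in the sense of Manin.

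The routine-but-delicate part, which I expect to be the main obstacle, is the term-by-term identification in the second paragraph. One has to check that the summand of $\d_1$ (resp. $\d_2$) built from the expansion (resp. partial decomposition) maps of the cooperad $\Hbul(\oM)$ is genuinely the linear dual of the contraction $\xi_{ij}$ (resp. composition $\circ_i^j$) of the operad $H_\bullet(\oM)$ --- that is, that the residue maps are dual to the boundary gluings --- and that the $\Z2$-graded Koszul signs coming from the odd-degree edges and from the passage to the linear dual of the bar construction (\cref{lem:DefComp}) conspire to turn \eqref{eq:MaEq} into the unsigned morphism relations. Once this is verified for a single corolla it propagates verbatim, so the remaining work is purely organizational.
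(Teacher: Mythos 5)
Your proof is correct and follows essentially the same route as the paper's: the paper likewise splits the master equation by weight into $-\d_1\alpha+\Delta\alpha=0$ (weight $1$) and $-\d_2\alpha+\tfrac12\{\alpha,\alpha\}=0$ (weight $2$) and identifies these with Relations~(4.8) and~(4.7) of Manin for a CohFT without unit, dispatching parts (2) and (3) as immediate from the definitions. One small bookkeeping correction: $\d_1$ does not raise the total genus --- it trades one unit of internal genus at a vertex for the unit of $b_1$ carried by the newly created tadpole, so it preserves the total genus and simply vanishes on elements concentrated in genus $0$ (there being no genus $-1$); only $\Delta$ genuinely raises the total genus by $1$. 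This does not affect your argument, since the decomposition you actually exploit is by weight and by presence of tadpoles, and the conclusion needed for (3) --- that both $\d_1\alpha$ and $\Delta\alpha$ drop out of the genus-$0$ equation --- still holds.
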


\begin{proof}\leavevmode
\begin{enumerate}

\item Note that the master equation splits into two according to the weight: the equation
$-\d_1 \alpha +\Delta \alpha =0$, which lives in weight $1$, and the equation
$-\d_2 \alpha  + \frac12 \{\alpha, \alpha\}=0$, which lives in weight $2$. These are the two
relations defining the notion of a cohomological field theory without unit: the former is
Relation~(4.8) of~\cite[Chapter~III]{Manin99} and the latter is Relation~(4.7) of \emph{op.\ cit.}.

\item[(2)-(3)] These are straightforward from the definitions.
\end{enumerate}
\end{proof}

Recall that any shifted dg Lie algebra can be twisted by any
Maurer--Cartan element $\varphi$.
In the present case, it gives the shifted dg  Lie algebra
\[
\g^\varphi_A\coloneq
 \left(
\widehat{\mathbb{G}}_{\od}\big(\Hbul\big(\oM\big)\big)(A), d^\varphi, \{\ ,\,\}
\right)\ ,
\]
where $d^\varphi\coloneq \d +\Delta +\{\varphi, -\}$\ .

\begin{definition}[Homology groups of a (homotopy) CohFT]
The \emph{homology groups} of a (homotopy) CohFT $(A, \varphi)$ are defined
as the homology
$H_\bullet\left(\, \g_A^\varphi\right)$ with respect to the twisted differential $d^\varphi$\ .
\end{definition}

One can now adapt all the general methods of deformation theories controlled by dg Lie algebras, see~\cite[Section~12.2]{LodayVallette12} for instance, in order to study the deformation properties of homotopy CohFTs with their homology groups.

\begin{definition}[Deformations]
For any local algebra $\R\cong\K \oplus \m$ with residue field $\K$ and for any homotopy cohomological field theory $(A, \varphi)$, with $\varphi\in \MC\left(\g_A\right)$, an \emph{$\R$-deformation} of $\varphi$ is an $\R$-linear homotopy cohomological field theory $\Phi$ on $A\otimes \R$ which is equal to $\varphi$ modulo $\m$. We denote the set of such deformations by $\mathrm{Def}_\varphi(\R)$.
\end{definition}

An \emph{infinitesimal deformation} is a deformation over the algebra
$\K[\varepsilon]/(\varepsilon^2)$
of dual numbers.
A \emph{formal deformation} is a deformation over the algebra
$\K[[\hbar]]$
of formal power series.

\begin{lemma}\label{lem:DEF}
The set of $\R$-deformations of a homotopy CohFT $\varphi$ is in natural bijection with the following set of solutions to the Maurer--Cartan equation:
\[\mathrm{Def}_\varphi(\R)\cong \MC\left(
\g_A^\varphi\otimes \m
\right)\ .\]
\end{lemma}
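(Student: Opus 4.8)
The plan is to reduce this statement to the standard fact that Maurer--Cartan elements in a complete shifted dg Lie algebra are preserved and controlled under base change along a local Artinian (or complete local) algebra. The key observation is that the whole excerpt has already assembled the necessary package: by \cref{lem:DefComp} the relevant structure is the complete shifted $\Delta$-Lie algebra $\g_A$, and after twisting by $\varphi$ we obtain the complete shifted dg Lie algebra $\g_A^\varphi = (\widehat{\mathbb{G}}_{\od}(\Hbul(\oM))(A), d^\varphi, \{\ ,\,\})$ with $d^\varphi = \d + \Delta + \{\varphi, -\}$. The completeness comes from the decreasing filtration $\F$ established in \cref{lem:ModtoDeltaLie}, which is exactly what makes the twisting operation and all the formal manipulations below well defined.

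First I would unfold the definitions on both sides. On the left, an $\R$-deformation $\Phi$ is an $\R$-linear homotopy CohFT on $A \otimes \R$ reducing to $\varphi$ modulo $\m$; by \cref{prop:MorphTw} (the bijection $\Hom_{\mathsf{mod}\,\mathsf{op}}(\Omega\C,\P)\cong\Tw(\C,\P)$), such a structure is the same as a twisting morphism, i.e.\ a degree-$0$ solution of the master equation \eqref{eq:MaEq} in the $\R$-linear convolution algebra, which by \cref{lem:DefComp} is $\g_A \,\widehat{\otimes}\, \R \cong \g_A \oplus (\g_A \,\widehat\otimes\, \m)$. So the data of $\Phi$ is a solution $\Phi \in \MC(\g_A\,\widehat\otimes\,\R)$ whose reduction mod $\m$ equals $\varphi$. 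Writing $\Phi = \varphi + \beta$ with $\beta \in \g_A \,\widehat\otimes\, \m$ of degree $0$, the next step is to substitute this into the master equation $\d\Phi + \Delta\Phi + \tfrac12\{\Phi,\Phi\} = 0$ and expand. Using that $\varphi$ itself satisfies the master equation and that $\{-,-\}$ is symmetric, the cross terms collect into $\{\varphi,\beta\}$, so the equation for $\beta$ becomes $\d\beta + \Delta\beta + \{\varphi,\beta\} + \tfrac12\{\beta,\beta\} = 0$, that is $d^\varphi \beta + \tfrac12\{\beta,\beta\} = 0$. This is precisely the Maurer--Cartan equation in the twisted algebra $\g_A^\varphi \,\widehat\otimes\, \m$, giving the desired bijection $\mathrm{Def}_\varphi(\R) \cong \MC(\g_A^\varphi \otimes \m)$.

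The verification that this correspondence is natural and bijective is then formal: it is the inverse pair $\Phi \mapsto \Phi - \varphi$ and $\beta \mapsto \varphi + \beta$, each respecting the master/Maurer--Cartan equations by the computation above, and each manifestly compatible with morphisms of the local ring $\R$. I would emphasize that all these manipulations are legitimate because $\m$ is pronilpotent and $\g_A$ is complete with respect to $\F$, so the bracket terms $\{\beta,\beta\}$ and the twisted differential converge in $\g_A\,\widehat\otimes\,\m$; this is where one invokes \cite[Chapter~1]{DotsenkoShadrinVallette18}. The main (and essentially only) subtlety is bookkeeping rather than conceptual: one must check that forming $\R$-linear structures commutes with the totalisation $\widehat{(-)}$ and the completed tensor product in the correct way, so that an $\R$-deformation genuinely corresponds to an element of $\g_A\,\widehat\otimes\,\R$ rather than some naive tensor product, and that the reduction-mod-$\m$ condition matches the statement ``$\Phi$ equals $\varphi$ modulo $\m$.'' Since $\R$ is finite-dimensional in the Artinian case (and a filtered limit of such in general) and $A$ has the tensor structure already used throughout, this is routine, and I expect no genuine obstacle beyond carefully tracking the $\Z2$-graded signs under the running \emph{Convention}.
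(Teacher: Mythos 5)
Your proposal is correct and follows essentially the same route as the paper: identify the $\R$-linear convolution algebra with $\g_A\otimes\R\cong\g_A\oplus\g_A\otimes\m$, write $\Phi=\varphi+\bar\Phi$, and expand the master equation using that $\varphi$ is itself a solution to land on the Maurer--Cartan equation $d^\varphi\bar\Phi+\tfrac12\{\bar\Phi,\bar\Phi\}=0$ in $\g_A^\varphi\otimes\m$. The extra remarks on completeness and naturality are consistent with, if more verbose than, the paper's argument.
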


\begin{proof}
The $\R$-linear convolution algebra associated to $A\otimes \R$ is isomorphic to $\g \otimes \R$.
Thus a homotopy cohomological field theory structure $\Phi$ on $A\otimes \R$ which is equal to $\varphi$ modulo $\m$ amounts to a Maurer--Cartan element of $\g_A \otimes \R$ whose restriction to $\g$ is equal to $\varphi$. Under the isomorphism
$\g_A\otimes \R\cong \g_A \oplus \g_A \otimes \m$, the element $\Phi=\varphi+\bar{\Phi}$ satisfies the master equation of $\g\otimes \R$ if and only if $\bar{\Phi}$ satisfies
\[
d \left(\bar{\Phi}\right) + \left\{\varphi, \bar{\Phi}\right\}+{\textstyle \frac12} \left\{\bar{\Phi}, \bar{\Phi}\right\}= d^\varphi\left(\bar{\Phi}\right) + {\textstyle \frac12} \left\{\bar{\Phi}, \bar{\Phi}\right\}=0
\ , \]
which is the Maurer--Cartan equation in $\g_A^\varphi\otimes \m$\ .
\end{proof}

Let us first consider the case of CohFTs on a dg vector space $A$ with
the trivial differential $\d_A=0$.
We say that an element of the twisted convolution algebra
$\g_A=\mathbb{G}_{\od}\big(\Hbul\big(\oM\big)\big)(A)$ has \emph{syzygy degree} $n$
if it belongs to the subspace $\g_A^{[n]}$ spanned by labeled graphs with $n$ edges.
With respect to the syzygy degree, the space $\g_A$ forms a cohomological non-negatively graded
chain complex.
Maurer--Cartan elements $\varphi$ of syzygy degree $0$ define CohFT structures on $A$.

\medskip

An infinitesimal deformation of a CohFT $\varphi$,
called a ``first order deformation'' in~\cite{PZ18},
is an element $\lambda\in (\g_A)^{[0]}_\ev$ satisfying
\[\d\lambda+\Delta \lambda+\{\varphi, \lambda\}=0\ .
\]
This equation is equivalent to the two equations
$\d_1 \lambda =\Delta \lambda$ of weight 1 and
$\d_2 \lambda=   \{\varphi, \lambda\}$ of weight 2,
which are respectively Equation~(iiq) and Equation~(iir) in~\cite[Section~2.1]{PZ18}.

\begin{proposition}[Infinitesimal deformations of CohFTs]
For any cohomological field theory $\varphi$ on a graded symmetric vector space $(A, \langle\ ,\, \rangle)$, there is a  canonical bijection
\[
\mathrm{Def}_\varphi\left(\K[\varepsilon]/\left(\varepsilon^2\right)\right)\cong Z^{[0]}\left(\,\g_A^\varphi\right)\ .
\]
\end{proposition}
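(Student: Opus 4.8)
The plan is to combine \cref{lem:DEF} with the elementary fact that over the dual numbers $\R=\K[\varepsilon]/(\varepsilon^2)$ the master equation collapses onto its linearisation, and then to account correctly for the syzygy grading. First I would record that, since $\varphi$ is a CohFT, it is concentrated in syzygy degree $0$ (weight one, no tadpoles) by the first item of \cref{cor:CohFTTFTtree}, and that deforming $\varphi$ inside CohFTs means looking for $\Phi=\varphi+\lambda\varepsilon$ again concentrated in syzygy degree $0$; as $\varphi$ already is, this forces $\lambda\in(\g_A)^{[0]}_{\ev}$, the even part being imposed by the requirement that the Maurer--Cartan element $\Phi$ have degree $0$ together with the fact that $\varepsilon$ is even. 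By \cref{lem:DEF} (with $\m=\K\varepsilon$, so that $\m^2=0$), such a $\Phi$ is a deformation of $\varphi$ precisely when $\lambda\varepsilon$ solves the Maurer--Cartan equation in $\g_A^\varphi\otimes\K\varepsilon$, namely
\[
d^\varphi(\lambda)\,\varepsilon+{\textstyle\frac12}\{\lambda,\lambda\}\,\varepsilon^2=0\ .
\]
Because $\varepsilon^2=0$ the quadratic bracket term disappears, so the equation is equivalent to the single linear condition $d^\varphi(\lambda)=0$.

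It then remains to identify this condition with membership in $Z^{[0]}(\g_A^\varphi)$. Here I would use that the twisted differential $d^\varphi=\d+\Delta+\{\varphi,-\}$ raises the syzygy degree by one: indeed $\d_1$, $\d_2$ and $\Delta$ each create one new edge, and $\{\varphi,-\}$ grafts along a new edge onto the syzygy-degree-$0$ element $\varphi$. Hence for $\lambda\in(\g_A)^{[0]}_{\ev}$ the equation $d^\varphi(\lambda)=0$ is exactly the cocycle condition in syzygy degree $0$, i.e.\ $\lambda\in Z^{[0]}(\g_A^\varphi)$. Conversely, any such cocycle produces a genuine CohFT deformation $\Phi=\varphi+\lambda\varepsilon$: its constant term is the master equation satisfied by $\varphi$, its $\varepsilon$-linear term is $d^\varphi(\lambda)=0$, and its $\varepsilon^2$-term vanishes, while $\Phi$ visibly stays in syzygy degree $0$. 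The assignment $\lambda\leftrightarrow\varphi+\lambda\varepsilon$ is manifestly natural in $A$, giving the asserted canonical bijection.

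The only genuinely delicate point is the bookkeeping of the two independent gradings. The nilpotency $\varepsilon^2=0$ is what kills the nonlinear bracket and turns the master equation into a linear cocycle equation; separately, the CohFT hypothesis (via \cref{cor:CohFTTFTtree}) is what confines the first-order term to syzygy degree $0$ and thereby yields $Z^{[0]}(\g_A^\varphi)$ rather than the full space $Z(\g_A^\varphi)_{\ev}$ of even $d^\varphi$-cocycles, the latter also containing classes of higher syzygy degree which correspond to genuinely homotopy, rather than strict, CohFT deformations. Everything else reduces to checking parities, which is routine.
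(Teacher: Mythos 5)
Your argument is correct and follows the paper's route exactly: the paper's entire proof is the single sentence that this is a direct application of \cref{lem:DEF}, and you simply spell out what that application amounts to over the dual numbers, where $\m^2=0$ kills the bracket term and reduces the Maurer--Cartan equation to the cocycle condition $d^\varphi(\lambda)=0$. Your closing remark on why one lands in $Z^{[0]}\left(\g_A^\varphi\right)$ rather than in the full space of even $d^\varphi$-cocycles --- namely the implicit restriction to strict, syzygy-degree-$0$ deformations, consistent with \cref{cor:CohFTTFTtree} and with the displayed equations from~\cite{PZ18} just above the proposition --- correctly supplies the one point the paper leaves tacit.
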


\begin{proof}
This is a direct application of \cref{lem:DEF}.
\end{proof}

\begin{proposition}[Obstructions to formal deformations of CohFTs]\label{prop:Obs}
Let  $\varphi$ be a  cohomological field theory on a graded vector symmetric space $(A, \langle\ ,\, \rangle)$.
When $H^{[1]}\left(\,\g_A^\varphi\right)=0$, any even cocycle of $Z^{[0]}\left(\,\g_A^\varphi\right)$ extends to a formal deformation of  the CohFT $\varphi$.
\end{proposition}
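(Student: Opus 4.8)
The plan is to realise a formal deformation as a Maurer--Cartan element of $\g_A^\varphi \otimes \hbar\K[[\hbar]]$ via \cref{lem:DEF} (applied to the local algebra $\R = \K[[\hbar]]$ with maximal ideal $\m = \hbar\K[[\hbar]]$) and to construct it order by order in $\hbar$ by the classical obstruction-theoretic argument for Maurer--Cartan elements in a shifted dg Lie algebra. Writing $\bar\Phi = \sum_{n\geq 1}\hbar^n\lambda_n$, the Maurer--Cartan equation $d^\varphi\bar\Phi + \tfrac12\{\bar\Phi,\bar\Phi\} = 0$ decomposes, coefficient by coefficient in $\hbar$, into the recursive system
\[
d^\varphi\lambda_n = -\tfrac12\sum_{\substack{i+j=n\\ i,j\geq 1}}\{\lambda_i,\lambda_j\} =: O_n\ , \qquad n \geq 1\ .
\]
The order $n=1$ equation reads $d^\varphi\lambda_1 = 0$, which is solved by setting $\lambda_1 = \lambda$, the given even cocycle of syzygy degree $0$; note that $O_n$ depends only on $\lambda_1,\ldots,\lambda_{n-1}$, so one may hope to solve the system by induction on $n$.

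Suppose even elements $\lambda_1,\ldots,\lambda_{n-1}$ of syzygy degree $0$ solving the first $n-1$ equations have been built. Since the bracket grafts a single new edge, the obstruction $O_n$ is a sum of brackets of syzygy-degree-$0$ elements and therefore lies in $\g_A^{[1]}$; it is moreover odd, the bracket having degree $-1$. The key point is that $O_n$ is a cocycle. As $d^\varphi = \d + \Delta + \{\varphi,-\}$ is a derivation of the symmetric bracket of the shifted dg Lie algebra $\g_A^\varphi$, and the $\lambda_i$ are even, one gets
\[
d^\varphi O_n = -\sum_{i+j=n}\{d^\varphi\lambda_i,\lambda_j\}
= \tfrac12\sum_{p+q+j=n}\big\{\{\lambda_p,\lambda_q\},\lambda_j\big\}\ ,
\]
after substituting the lower-order equations $d^\varphi\lambda_i = O_i$. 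This triple sum of nested brackets, symmetrised over the three indices, vanishes by the shifted Jacobi identity, exactly as in the standard verification that obstructions to Maurer--Cartan elements are closed.

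With $O_n \in Z^{[1]}(\g_A^\varphi)$ thus established, the hypothesis $H^{[1]}(\g_A^\varphi) = 0$ forces $O_n$ to be a coboundary of an element of $\g_A^{[0]}$: indeed $d^\varphi$ raises the syzygy degree by one, so the space of coboundaries in syzygy degree $1$ equals $d^\varphi\big(\g_A^{[0]}\big)$, and the vanishing of the cohomology gives $Z^{[1]} = d^\varphi\big(\g_A^{[0]}\big)$. Choosing a preimage $\lambda_n \in \g_A^{[0]}$ with $d^\varphi\lambda_n = O_n$, which is automatically even and of syzygy degree $0$, preserves the inductive hypotheses and completes the recursion. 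The resulting series $\bar\Phi \in \g_A^\varphi\otimes\hbar\K[[\hbar]]$ is then a Maurer--Cartan element extending $\lambda$, hence a formal deformation of $\varphi$ by \cref{lem:DEF}; no convergence issue arises, $\bar\Phi$ being a formal power series in $\hbar$.

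The main technical obstacle is the cocycle verification of $O_n$ above: one must track carefully the Koszul and shift signs produced by the degree $-1$ symmetric bracket in the $\Z2$-graded setting, even though the eventual cancellation is entirely governed by the shifted Jacobi identity. Everything else is a routine adaptation of the deformation theory controlled by a dg Lie algebra.
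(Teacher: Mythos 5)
Your proof is correct and follows essentially the same route as the paper's: induction on the order of the formal parameter, with the obstruction $\Phi_n=\tfrac12\sum_{k}\{\varphi_k,\varphi_{n-k}\}$ shown to be a closed element of syzygy degree $1$ and then killed using the hypothesis $H^{[1]}\left(\g_A^\varphi\right)=0$. Your additional details --- the reduction to Maurer--Cartan elements of $\g_A^\varphi\otimes\hbar\K[[\hbar]]$ via \cref{lem:DEF}, the Jacobi-identity verification that the obstruction is a cocycle, and the (correct) observation that this obstruction is odd --- simply flesh out what the paper compresses into ``a direct computation''.
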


\begin{proof}
A series $\varphi+\sum_{n\geqslant 1} \varphi_n t^n$ is a formal deformation of $\varphi$ if and only if
\begin{equation}\label{eq:FormalDef}
\underbrace{d \left(\varphi_n\right) + \left\{\varphi_n, \varphi\right\}}_{=d^\varphi\left(\varphi_n\right)}+
\underbrace{{\textstyle \frac12}\sum_{k=1}^{n-1} \left\{\varphi_k, \varphi_{n-k}\right\}}_{\coloneq \Phi_{n}}=0 \ ,
\end{equation}
for any $n\geqslant 1$. Let $\varphi_1\in Z^{[0]}\left(\,\g_A^\varphi\right)$ be an even cocycle, so it satisfies \cref{eq:FormalDef} for $n=1$ since $\Phi_{1}=0$. Suppose now that there exist elements $\varphi_1, \ldots, \varphi_{n-1}\in (\g_A)^{[0]}$ which satisfy \cref{eq:FormalDef} up to $n-1$. A direct computation shows that $\Phi_{n}$ is an even cocycle, that is $d^\varphi\left(\Phi_{n}\right)=0$, which concludes the proof.
\end{proof}

Let us now consider the case when the underlying differential $\d_A$ of $A$ is not necessarily
trivial. In this case, each syzygy degree summand $(\g_A)^{[n]}$ is a chain complex with respect to
$\d_A$\ .

\begin{proposition}[Obstructions for homotopy CohFTs]
Let $(A, d_A, \langle\ ,\, \rangle)$ be a dg symmetric vector space  with  $\varphi\in (\g_A)^{[0]}_\ev$ such that $\d_A(\varphi)=0$. When $H_\od\left((\g_A)^{[n]}, \d_A\right)=0$, for any $n\geqslant 1$, then $\varphi$ extends to a homotopy CohFT.
\end{proposition}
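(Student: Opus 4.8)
The plan is to realise a homotopy CohFT extending $\varphi$ as a solution of the master equation in the shifted $\Delta$-Lie algebra $\g_A$ of \cref{lem:DefComp}, equivalently as a Maurer--Cartan element of the associated shifted dg Lie algebra $(\g_A, d, \{\ ,\,\})$ with total differential $d \coloneqq \d + \Delta = \d_A - \d_1 - \d_2 + \Delta$; by \cref{prop:MorphTw} such elements are precisely the homotopy CohFT structures on $A$. I would solve $d\alpha + \tfrac{1}{2}\{\alpha,\alpha\} = 0$ by induction on the syzygy degree, building $\alpha = \varphi + \sum_{n\geqslant 1}\alpha^{[n]}$ with $\alpha^{[n]} \in (\g_A)^{[n]}_\ev$ and $\alpha^{[0]} = \varphi$. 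The governing bookkeeping observation is that $\d_A$ preserves the syzygy degree, whereas each of $\d_1$, $\d_2$, $\Delta$ and the bracket $\{\ ,\,\}$ raises it by exactly one; hence, writing $\delta \coloneqq -\d_1 - \d_2 + \Delta$, one has $d = \d_A + \delta$ with $\d_A$ of syzygy degree $0$ and $\delta$ of syzygy degree $+1$.

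Extracting the syzygy degree $N$ part of the master equation, the level $N$ equation becomes
\[
\d_A \alpha^{[N]} = \theta_N \coloneqq -\delta\alpha^{[N-1]} - \tfrac{1}{2}\sum_{p+q=N-1}\{\alpha^{[p]},\alpha^{[q]}\},
\]
where $\theta_N$ depends only on $\alpha^{[0]},\dots,\alpha^{[N-1]}$. Since $\delta$ and the bracket are odd operators and the $\alpha^{[n]}$ are even, $\theta_N$ lies in $(\g_A)^{[N]}_\od$. Assuming inductively that $\alpha^{[0]},\dots,\alpha^{[N-1]}$ have been constructed so that the master equation holds in every syzygy degree $\leqslant N-1$ (the base case being the hypothesis $\d_A\varphi = 0$), it remains to produce an even $\alpha^{[N]}$ with $\d_A\alpha^{[N]} = \theta_N$. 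By the assumption $H_\od\big((\g_A)^{[N]},\d_A\big) = 0$ it suffices to verify that $\theta_N$ is a $\d_A$-cocycle.

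The heart of the argument is this cocycle claim, which I would establish by the Bianchi identity of Maurer--Cartan theory, exactly as in the proof of \cref{prop:Obs} but with $\d_A$ as base differential and the syzygy degree as filtration. Set $\beta \coloneqq \sum_{n=0}^{N-1}\alpha^{[n]}$ and let $R \coloneqq d\beta + \tfrac{1}{2}\{\beta,\beta\}$ be its curvature. Inspecting syzygy degrees, the component $R^{[M]}$ coincides with the level $M$ master equation for every $M \leqslant N-1$, hence vanishes by the induction hypothesis, while $R^{[N]} = -\theta_N$. The Bianchi identity $dR + \{\beta, R\} = 0$, valid in any shifted dg Lie algebra, then gives the conclusion: in its syzygy degree $N$ part, every summand of $\{\beta, R\}$ involves some $R^{[q]}$ with $q \leqslant N-1$ (because the bracket raises the syzygy degree by one), and all such terms vanish, leaving only $\d_A R^{[N]} = 0$, i.e. $\d_A\theta_N = 0$. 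The main obstacle is precisely this accounting of the syzygy-degree shifts contributed by $\delta$ and by $\{\ ,\,\}$, which is what forces every bracket contribution in the Bianchi identity to drop out and isolates the single term $\d_A\theta_N$.

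With $\theta_N$ an odd $\d_A$-cocycle in syzygy degree $N$ and $H_\od\big((\g_A)^{[N]},\d_A\big)=0$, there is an even primitive $\alpha^{[N]}$ with $\d_A\alpha^{[N]} = \theta_N$, completing the inductive step. Finally I would address convergence: for each fixed arity $n$ and total genus $g$ the set $\Ga_g(n)$ of stable graphs is finite, so that the corresponding component of $\g_A$ is a finite product and, via $b_1(\gamma)\leqslant g$ together with the stability bound on the number of vertices, the number of edges of any contributing graph is bounded. Thus only finitely many syzygy degrees contribute to any fixed $(g,n)$-component, the series $\alpha = \varphi + \sum_{n\geqslant 1}\alpha^{[n]}$ is a well-defined element of $\g_A$, and by construction it solves the master equation and restricts to $\varphi$ in syzygy degree $0$. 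By \cref{prop:MorphTw} it is therefore a homotopy CohFT extending $\varphi$, as claimed.
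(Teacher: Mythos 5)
Your proof is correct and follows essentially the same route as the paper's: induction on the syzygy degree, solving $\d_A\alpha^{[n]}=\theta_n$ at each stage using the vanishing of $H_\od\big((\g_A)^{[n]},\d_A\big)$. The paper dismisses the cocycle claim $\d_A\theta_n=0$ as ``a direct computation''; your Bianchi-identity packaging of it, your correct identification of $\theta_n$ as \emph{odd} (the paper's ``even'' there is a typo), and your convergence remark via the finiteness of $\Ga_g(n)$ and the bound on edge numbers are all sound additions to the same argument.
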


\begin{proof}
The method is similar to the one of \cref{prop:Obs}.
A homotopy CohFT is a Maurer--Cartan of the form $\sum_{n\geqslant 0} \varphi^{[n]}$, where $\varphi^{[n]}\in (\g_A)^{[n]}_\ev$; equivalently they satisfy the equation
\begin{equation}\label{eq:HoCohFT}
\d_A\left(\varphi^{[n]}\right)+
\underbrace{(-\d_1-\d_2+\Delta)\left(\varphi^{[n-1]}\right)+
{\textstyle \frac12}\sum_{k=1}^{n-1} \left\{\varphi^{[k]}, \varphi^{[n-1-k]}\right\}}_{\Phi^{[n]}\coloneq}=0
\ ,
\end{equation}
for any $n\geqslant 0$. When $\varphi$ is an even cycle of $(\g_A)^{[0]}$ with respect to $\d_A$, the element $\varphi^{[0]}\coloneq \varphi$ satisfies \cref{eq:HoCohFT} for $n=0$.
Suppose now that there exist elements $\varphi^{[k]}\in (\g_A)^{[k]}_\ev$, for $0\leqslant k \leqslant n-1$,  which satisfy \cref{eq:HoCohFT} up to $n-1$. A direct computation shows that $\Phi^{[n]}$ is an even cycle with respect to $\d_A$, that is $\d_A\left(\Phi^{[n]}\right)=0$, which concludes the proof.
\end{proof}

In order to get homological interpretation of the deformation theory of homotopy CohFTs, we go back by considering the convolution algebra $\g$ with its underlying homological degree.

\begin{proposition}[Infinitesimal deformations of homotopy CohFTs]
For any homotopy cohomological field theory $\varphi$ on a dg symmetric vector space $(A, d_A, \langle\ ,\, \rangle)$, there is a canonical bijection
\[\mathrm{Def}_\varphi\left(\K[\varepsilon]/\left(\varepsilon^2\right)\right)\cong Z_\ev\left(\g_A^\varphi\right)\ .\]
\end{proposition}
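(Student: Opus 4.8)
The plan is to reduce the statement to \cref{lem:DEF} and then to unwind the Maurer--Cartan equation over the algebra of dual numbers, where the nilpotency $\varepsilon^2=0$ annihilates the quadratic term. First I would invoke \cref{lem:DEF} with $\R=\K[\varepsilon]/(\varepsilon^2)$ and maximal ideal $\m=\K\varepsilon$, which already provides a canonical bijection $\mathrm{Def}_\varphi(\R)\cong \MC\left(\g_A^\varphi\otimes \m\right)$. Since $\m$ is one-dimensional, spanned by $\varepsilon$ placed in even (internal) degree, every element of $\g_A^\varphi\otimes \m$ is uniquely of the form $\bar{\Phi}=\lambda\otimes\varepsilon$ for some $\lambda\in\g_A^\varphi$, and the degree-$0$ constraint in the definition of $\MC$ translates, under the $\Z2$-grading convention in force, into $\lambda\in\left(\g_A^\varphi\right)_\ev$.

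Next I would evaluate the Maurer--Cartan equation $d^\varphi(\bar{\Phi})+\tfrac12\{\bar{\Phi},\bar{\Phi}\}=0$ on such an element. Using $\R$-linearity of the twisted differential one gets $d^\varphi(\bar{\Phi})=d^\varphi(\lambda)\otimes\varepsilon$, while the bracket term becomes $\tfrac12\{\bar{\Phi},\bar{\Phi}\}=\tfrac12\{\lambda,\lambda\}\otimes\varepsilon^2=0$ precisely because $\varepsilon^2=0$ in the ring of dual numbers. Hence the Maurer--Cartan equation collapses to the single linear equation $d^\varphi(\lambda)=0$. Together with the parity constraint this says exactly that $\lambda$ is an even cycle for $d^\varphi$, so $\MC\left(\g_A^\varphi\otimes\m\right)\cong Z_\ev\left(\g_A^\varphi\right)$; composing with the bijection of the first step yields the claimed canonical identification, under which a deformation $\varphi+\lambda\varepsilon$ corresponds to $\lambda$.

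There is no serious obstacle here: the content is entirely formal once \cref{lem:DEF} is available, and the only points deserving care are bookkeeping ones. Concretely, I would verify that the convolution bracket $\{\ ,\,\}$ and the twisted differential $d^\varphi$ extend $\R$-(bi)linearly to $\g_A^\varphi\otimes\R$, so that the computation above is legitimate, and I would confirm that the $\Z2$-grading convention is applied consistently, so that ``degree $0$'' in the definition of $\MC$ is read as ``even'' and matches the $Z_\ev$ on the right-hand side. No completeness or filtration subtleties intervene, since $\m$ is nilpotent of square zero and the Maurer--Cartan series therefore truncates automatically after its linear term.
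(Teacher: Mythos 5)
Your proposal is correct and follows exactly the route the paper takes: it invokes \cref{lem:DEF} with $\R=\K[\varepsilon]/(\varepsilon^2)$ and observes that the quadratic term of the Maurer--Cartan equation vanishes since $\varepsilon^2=0$, leaving the cocycle condition $d^\varphi(\lambda)=0$ on even elements. The paper compresses all of this into the single phrase ``direct application of \cref{lem:DEF}''; your write-up simply makes the unwinding explicit.
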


\begin{proof}
This is a direct application of \cref{lem:DEF}.
\end{proof}

\begin{proposition}[Obstructions for formal deformations of homotopy CohFTs]
Let $\varphi$ be a homotopy cohomological field theory on a chain complex $(A, d_A, \langle\ ,\, \rangle)$.
When $H_\od\left(\g_A^\varphi\right)=0$, any even cycle of $Z_\ev\left(\,{\g}_A^\varphi\right)$ extends to a formal deformation of the homotopy CohFT $\varphi$.
\end{proposition}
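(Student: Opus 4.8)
The plan is to mimic the obstruction-theoretic argument of \cref{prop:Obs}, now organised by the homological $\Z2$-grading rather than by the syzygy filtration. First I would invoke \cref{lem:DEF} with the local algebra $\R=\Khbar$ and maximal ideal $\m=\hbar\Khbar$, which identifies formal deformations of $\varphi$ with Maurer--Cartan elements of $\g_A^\varphi\otimes\m$. Writing such an element as $\bar\Phi=\sum_{n\geqslant 1}\varphi_n\hbar^n$ and expanding the Maurer--Cartan equation $d^\varphi(\bar\Phi)+{\textstyle\frac12}\{\bar\Phi,\bar\Phi\}=0$ in powers of $\hbar$, one obtains for each $n\geqslant 1$ the equation
\[
d^\varphi(\varphi_n)+\Phi_n=0\ ,\qquad \Phi_n\coloneqq {\textstyle\frac12}\sum_{k=1}^{n-1}\{\varphi_k,\varphi_{n-k}\}\ ,
\]
exactly as in \cref{eq:FormalDef}. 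Since we look for an even deformation, each $\varphi_n$ is required to be even; as $\{\ ,\,\}$ has degree $-1$, the obstruction term $\Phi_n$ is then odd.

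I would then argue by induction on $n$. For $n=1$ one has $\Phi_1=0$, so the order-one equation reads $d^\varphi(\varphi_1)=0$ and is satisfied by the given even cycle $\varphi_1\in Z_\ev(\g_A^\varphi)$. Assuming even elements $\varphi_1,\dots,\varphi_{n-1}$ solving the equations up to order $n-1$ have been produced, the only thing left to check is that the odd element $\Phi_n$ is a $d^\varphi$-cycle; granting this, the hypothesis $H_\od(\g_A^\varphi)=0$ supplies an even element $\chi$ with $\Phi_n=d^\varphi(\chi)$, and setting $\varphi_n\coloneqq -\chi$ closes the induction.

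The heart of the matter, and the step I expect to require the most care, is the verification that $d^\varphi(\Phi_n)=0$. Here I would use that $d^\varphi=\d+\Delta+\{\varphi,-\}$ is a derivation of the symmetric bracket (the twist of a shifted dg Lie algebra by a Maurer--Cartan element) together with the induction hypothesis $d^\varphi(\varphi_k)=-{\textstyle\frac12}\sum_{j=1}^{k-1}\{\varphi_j,\varphi_{k-j}\}$. Because all the $\varphi_k$ are even, the Koszul signs governing both the derivation rule and the symmetry $\{x,y\}=(-1)^{|x||y|}\{y,x\}$ become trivial, so that
\[
d^\varphi(\Phi_n)=\sum_{k=1}^{n-1}\{d^\varphi(\varphi_k),\varphi_{n-k}\}=-{\textstyle\frac12}\sum_{a+b+c=n}\{\{\varphi_a,\varphi_b\},\varphi_c\}\ ,
\]
the last sum running over all triples $a,b,c\geqslant 1$. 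Grouping these triple brackets according to the unordered index multiset and invoking the shifted Jacobi identity (again with all signs trivial on even inputs) shows that each group vanishes, whence $d^\varphi(\Phi_n)=0$. This is the classical fact that, in the dg Lie algebra $s^{-1}\g_A^\varphi$, the $n$-th obstruction to extending a Maurer--Cartan element is automatically a cycle; the only genuine subtlety is the sign bookkeeping of the shifted conventions, which the evenness of the $\varphi_k$ renders harmless.
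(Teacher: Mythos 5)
Your argument is exactly the paper's: the proof given there simply says it is similar to that of \cref{prop:Obs}, namely expand the Maurer--Cartan equation in powers of the formal parameter via \cref{lem:DEF}, observe that the $n$th obstruction $\Phi_n$ is an odd $d^\varphi$-cycle (the classical fact for twisted Maurer--Cartan elements, which you correctly verify with the shifted Jacobi identity), and kill it using $H_\od\left(\g_A^\varphi\right)=0$. Your write-up just makes explicit the ``direct computation'' that the paper leaves implicit, and the parity bookkeeping is correct.
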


\begin{proof}
This proof is similar to the one of \cref{prop:Obs}.
\end{proof}

To go even further in the deformation properties of homotopy CohFTs,
one would need to introduce a suitable notion of morphism and $\infty$-morphism in the homotopy case, see~\cite[Section~12.2.10]{LodayVallette12} for more details in the case of operads.

\subsection{Pandharipande--Zvonkine's construction}\label{subsec:PZ}
In this section, we cast Pandharipande--Zvon\-kine's construction of cohomological field theories with possibly non-tautological classes~\cite{PZ18} in the general context of deformation of (homotopy) CohFTs. This method allows us to introduce a wider class of examples.

\medskip

Motivated by the Frobenius algebra structure on the cohomology $H^\bullet(X, \Co)$ of a genus $m$ curve, we consider the following $\ZZ$-graded vector space
\[
A_{-1}\coloneqq \K\{ b_1, \ldots, b_m\}\ , \quad
A_{0}\coloneqq \K\{ a, d\}\ , \quad
A_{1}\coloneqq \K\{ c_1, \ldots, c_m\}\ ,
\]
endowed with the following non-degenerate symmetric pairing
\[
\langle b_i, c_i \rangle \coloneqq 1 \quad \text{and} \quad
\langle a, d \rangle \coloneqq 1 \ .
\]

We consider the following even degree element in $\widehat{\mathbb{G}}_\od\big(H^\bullet\big(\oM\big)\big)(A)$:
\begin{equation}\label{eqn:alpha}
\alpha\coloneqq \sum_{n\geqslant 3}
\frac{1}{(n-1)!}
	\vcenter{\hbox{\begin{tikzpicture}[scale=1]
	\draw[thick] (0:0) -- (0:0.7);
	\draw[thick] (45:0) -- (45:0.7);
	\draw[thick] (90:0) -- (90:0.7) ;
	\draw[thick] (135:0) -- (135:0.7);
	\draw[thick] (180:0) -- (180:0.7);
	\draw[thick] (225:0) -- (225:0.7);
	\draw[thick] (270:0) -- (270:0.7);
	\draw[thick] (315:0) -- (315:0.7);
	\draw[fill=white, thick] (0,0) circle [radius=11pt];
	\node at (0,0) {\scalebox{1}{$\mathbb{1}_{0,n}$}};
	\node at (0:0.9) {\scalebox{0.9}{$d$}};
	\node at (45:0.9) {\scalebox{0.9}{$d$}};
	\node at (90:0.9) {\scalebox{0.9}{$a$}};
	\node at (135:0.9) {\scalebox{0.9}{$d$}};
	\node at (180:0.9) {\scalebox{0.9}{$d$}};
	\end{tikzpicture}}}
+\sum_{\substack{n\geqslant 3\\ i=1, \ldots, m}}
\frac{1}{(n-2)!}
\vcenter{\hbox{\begin{tikzpicture}[scale=1]
	\draw[thick] (0:0) -- (0:0.7);
	\draw[thick] (45:0) -- (45:0.7);
	\draw[thick] (90:0) -- (90:0.7) ;
	\draw[thick] (135:0) -- (135:0.7);
	\draw[thick] (180:0) -- (180:0.7);
	\draw[thick] (225:0) -- (225:0.7);
	\draw[thick] (270:0) -- (270:0.7);
	\draw[thick] (315:0) -- (315:0.7);
	\draw[fill=white, thick] (0,0) circle [radius=11pt];
	\node at (0,0) {\scalebox{1}{$\mathbb{1}_{0,n}$}};
	\node at (0:0.9) {\scalebox{0.9}{$d$}};
	\node at (45:0.9) {\scalebox{0.9}{$b_i$}};
	\node at (90:0.9) {\scalebox{0.9}{$c_i$}};
	\node at (135:0.9) {\scalebox{0.9}{$d$}};
	\node at (180:0.9) {\scalebox{0.9}{$d$}};
	\end{tikzpicture}}}
+(2-2m)\sum_{n\geqslant 1} \frac{1}{n!}
\vcenter{\hbox{\begin{tikzpicture}[scale=1]
	\draw[thick] (0:0) -- (0:0.7);
	\draw[thick] (45:0) -- (45:0.7);
	\draw[thick] (90:0) -- (90:0.7) ;
	\draw[thick] (135:0) -- (135:0.7);
	\draw[thick] (180:0) -- (180:0.7);
	\draw[thick] (225:0) -- (225:0.7);
	\draw[thick] (270:0) -- (270:0.7);
	\draw[thick] (315:0) -- (315:0.7);
	\draw[fill=white, thick] (0,0) circle [radius=11pt];
	\node at (0,0) {\scalebox{1}{$\mathbb{1}_{1,n}$}};
	\node at (0:0.9) {\scalebox{0.9}{$d$}};
	\node at (45:0.9) {\scalebox{0.9}{$d$}};
	\node at (90:0.9) {\scalebox{0.9}{$d$}};
	\node at (135:0.9) {\scalebox{0.9}{$d$}};
	\node at (180:0.9) {\scalebox{0.9}{$d$}};
	\end{tikzpicture}}}\ ,
\end{equation}
where $\mathbb{1}_{g,n}$ stands for the generator of $H^0\big(\overline{\mathcal{M}}_{g,n}\big)$\ .

\begin{remark}
There is slight discrepancy between the present presentation and~\cite{PZ18}, since
in \emph{op.\ cit.} the authors consider rather $\alpha\in H^\bullet\big(\oM\big)(A^*)$\ .
\end{remark}

\begin{lemma}[{\cite[Proposition~8]{PZ18}}]\label{lem:PZalpha}
The element $\alpha$ is solution to the master equation \eqref{eq:MaEq}:
\[ \d \alpha + \Delta \alpha + {\textstyle \frac12} \{\alpha, \alpha\}=0\ .\]
\end{lemma}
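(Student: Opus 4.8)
The plan is to exploit the fact that $\alpha$ is concentrated in weight one: every summand in \eqref{eqn:alpha} is a one-vertex graph, and each vertex carries a fundamental class $\mathbb{1}_{g,n}\in H^0\big(\oM_{g,n}\big)$. Since the differential $\d_A$ of $A$ vanishes, the master equation $-\d_1\alpha-\d_2\alpha+\Delta\alpha+\tfrac12\{\alpha,\alpha\}=0$ separates according to the number of vertices exactly as in the proof of \cref{cor:CohFTTFTtree}: the operators $\d_1$ and $\Delta$ preserve the vertex count while raising the genus, whereas $\d_2$ and the bracket $\{\ ,\,\}$ create a second vertex. I would thus reduce the claim to the two independent identities $\d_1\alpha=\Delta\alpha$ in weight one and $\d_2\alpha=\tfrac12\{\alpha,\alpha\}$ in weight two, which are precisely the genus-reducing and separating factorisation axioms (4.8) and (4.7) of \cite[Chapter~III]{Manin99} for a non-unital CohFT.

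The second step is to recognise what $\alpha$ actually encodes. Because every vertex is labelled by a degree-zero class, $\alpha$ describes a two-dimensional topological field theory by item (2) of \cref{cor:CohFTTFTtree}, and its data is read off from $(A,\langle\ ,\,\rangle)$ regarded as the cohomology Frobenius algebra $H^\bullet(X,\Co)$ of a genus-$m$ curve $X$: here $a$ is the unit $1\in H^0$, $d$ the point class in $H^2$, and $b_1,\dots,b_m,c_1,\dots,c_m$ a symplectic basis of $H^1$ for the Poincar\'e pairing $\langle b_i,c_i\rangle=\langle a,d\rangle=1$. Under the identification $A\cong A^*$ given by this pairing, the three families of terms in \eqref{eqn:alpha} are exactly the dualised genus-$0$ and genus-$1$ correlators of this TQFT, the factorials $1/(n-1)!$, $1/(n-2)!$ and $1/n!$ being the orders of the symmetry groups permuting the repeated $d$-legs.

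I would then verify the two identities directly on this Frobenius algebra. The weight-two equation $\d_2\alpha=\tfrac12\{\alpha,\alpha\}$ says that restricting a correlator to a separating boundary divisor factorises as a product of two correlators joined through the Casimir $\sum_\lambda e_\lambda\otimes e^\lambda$ of the pairing; for a TQFT this is equivalent to associativity together with the Frobenius relation $\langle xy,z\rangle=\langle x,yz\rangle$, both immediate for the cup product, the factor $\tfrac12$ compensating the sum over ordered pairs of grafted legs. The weight-one equation $\d_1\alpha=\Delta\alpha$ is the genus-reducing axiom: applying $\Delta$ to the genus-$0$ part of $\alpha$ self-contracts this Casimir at a single vertex, and for $H^\bullet(X,\Co)$ the resulting scalar is the super-dimension $\dim H^{\mathrm{ev}}-\dim H^{\mathrm{od}}=2-2m=\chi(X)$, which is exactly the coefficient in front of the genus-$1$ term $\mathbb{1}_{1,n}$; matching this with the residue term $\d_1$ along the non-separating divisor is the one computation specific to this example.

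I expect the main obstacle to be purely one of bookkeeping rather than of geometry. In the $\Z2$-graded setting the odd generators $b_i,c_i$ contribute Koszul signs whenever legs are permuted, paired, or grafted, and these must be reconciled both with the odd degree assigned to the edges of the graph complex and with the global sign incurred by passing to the linear dual of the bar construction, as recorded in \cref{lem:DefComp}. Keeping the factorial normalisations, the leg-symmetrisations, and all of these signs coherent between the two sides of each identity is the delicate part; once the conventions are pinned down the identities are the classical axioms of the Frobenius algebra $H^\bullet(X,\Co)$, so that one may alternatively just transcribe \cite[Proposition~8]{PZ18} after passing from their $A^*$-conventions to ours.
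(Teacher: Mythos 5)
Your plan is correct, but it is organized rather differently from the paper's argument, which simply writes out the five contributions $\d_A\alpha$, $-\d_1\alpha$, $-\d_2\alpha$, $\Delta\alpha$, $\tfrac12\{\alpha,\alpha\}$ as explicit sums of decorated graphs and exhibits the pairwise cancellations. Your route --- splitting the master equation by vertex count into $\d_1\alpha=\Delta\alpha$ and $\d_2\alpha=\tfrac12\{\alpha,\alpha\}$ as in \cref{cor:CohFTTFTtree}, then recognising $\alpha$ as the system of correlators of the Frobenius algebra $H^\bullet(X,\Co)$ and invoking the classical equivalence between Frobenius algebras and algebras over $H_0\big(\oM\big)$ --- is more conceptual: it explains, rather than merely verifies, the coefficient $2-2m$ as the superdimension $\chi(X)$ arising from the self-contraction of the Casimir, and it correctly isolates that as the only example-specific computation. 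What the paper's brute-force cancellation buys instead is a demonstration of the graph formalism of \cref{subsec:DefCohFT}, which is its stated reason for re-proving the result of \cite{PZ18}.

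One point you must make explicit for the argument to close. The element \eqref{eqn:alpha} has no components of total genus at least $2$, whereas the TQFT attached to a Frobenius algebra has correlators in every genus; the two agree only because the handle element $H=\sum_\lambda e_\lambda e^\lambda=(2-2m)\,d$ satisfies $H^2=0$, so that all genus $\geqslant 2$ correlators of $H^\bullet(X,\Co)$ vanish and $\alpha$ really is the \emph{complete} TQFT rather than a truncation of one. Equivalently, in the graph picture the genus-two component of the master equation holds because $d$ is isotropic: both $\Delta$ applied to the $\mathbb{1}_{1,n}$-term and the bracket of two $\mathbb{1}_{1,n}$-terms produce the factor $\langle d,d\rangle=0$. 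A truncation of a TQFT with non-vanishing higher-genus correlators would \emph{not} solve the master equation, so this check is not optional. With that supplement, and the sign bookkeeping you already anticipate (it is precisely the odd pairing on $H^1$ that turns the trace $2+2m$ into the supertrace $2-2m$), your proof goes through and matches the cancellations displayed in the paper.
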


In other words, the element $\alpha$ defines a topological field theory and thus a
CohFT.

\begin{proof}
Even though a proof of this fact is given in \emph{op.\ cit.}, we give here another one
which uses the algebraic structure of
our
deformation theory. Let us illustrate various cancellations in
$\d \alpha + \Delta \alpha + {\textstyle \frac12} \{\alpha, \alpha\}$:
\begin{align*}
\d_A \alpha=&0\ ,\\
-\d_1\alpha=&-\sum_{n\geqslant 3} {\textstyle \frac{2-2m}{2(n-2)!}}
\vcenter{\hbox{\begin{tikzpicture}[scale=0.9]
	\draw[thick] (90:0) -- (90:0.7) ;
	\draw[thick] (135:0) -- (135:0.7);
	\draw[thick] (180:0) -- (180:0.7) ;
	\draw[thick] (225:0) -- (225:0.7);
	\draw[thick] (270:0) -- (270:0.7);
	\draw[thick] (0.7,-0.4) arc [radius=0.4, start angle=270, end angle= 450];
	\draw[thick] (0, 0) to [out=315,in=180] (0.7,-0.4);
	\draw[thick] (0, 0) to [out=45,in=180] (0.7,0.4);
	\draw[fill=white, thick] (0,0) circle [radius=11pt];
	\node at (0,0) {\scalebox{1}{$\mathbb{1}_{0,n}$}};
	\node at (270:0.9) {\scalebox{0.9}{$d$}};
	\node at (90:0.9) {\scalebox{0.9}{$d$}};
	\node at (135:0.9) {\scalebox{0.9}{$d$}};
	\node at (225:0.9) {\scalebox{0.9}{$d$}};
	\end{tikzpicture}}}\ ,\\
-\d_2\alpha=&
-\sum_{k,l\geqslant 3} {\textstyle \frac{1}{(k-2)!(l-1)!}}\,
\vcenter{\hbox{\begin{tikzpicture}[scale=0.9]
	\draw[thick] (0:0) -- (0:2.7);
	\draw[thick] (45:0) -- (45:0.7);
	\draw[thick] (90:0) -- (90:0.7) ;
	\draw[thick] (135:0) -- (135:0.7);
	\draw[thick] (180:0) -- (180:0.7);
	\draw[thick] (225:0) -- (225:0.7);
	\draw[thick] (270:0) -- (270:0.7);
	\draw[thick] (315:0) -- (315:0.7);
	\draw[thick] (2,0) -- (2.5,0.5);
	\draw[thick] (2,0) -- (2,0.7);
	\draw[thick] (2,0) -- (1.5,0.5);
	\draw[thick] (2,0) -- (1.3,0);
	\draw[thick] (2,0) -- (1.5,-0.5);
	\draw[thick] (2,0) -- (2,-0.7);
	\draw[thick] (2,0) -- (2.5,-0.5);
	\draw[fill=white, thick] (0,0) circle [radius=11pt];
	\draw[fill=white, thick] (2,0) circle [radius=11pt];
	\node at (0,0) {\scalebox{1}{$\mathbb{1}_{0,k}$}};
	\node at (2,0) {\scalebox{1}{$\mathbb{1}_{0,l}$}};
	\node at (45:0.9) {\scalebox{0.9}{$d$}};
	\node at (315:0.9) {\scalebox{0.9}{$d$}};
	\node at (90:0.9) {\scalebox{0.9}{$a$}};
	\node at (135:0.9) {\scalebox{0.9}{$d$}};
	\node at (1.33,0.67) {\scalebox{0.9}{$d$}};
	\node at (1.36,-0.64) {\scalebox{0.9}{$d$}};
	\node at (2,-0.9) {\scalebox{0.9}{$d$}};
	\node at (2,0.9) {\scalebox{0.9}{$d$}};
\end{tikzpicture}}}
-\sum_{\substack{k,l\geqslant 3\\  i=1, \ldots, m}}
{\textstyle\frac{1}{(k-2)!(l-2)!}}\,
\vcenter{\hbox{\begin{tikzpicture}[scale=0.9]
	\draw[thick] (0:0) -- (0:2.7);
	\draw[thick] (45:0) -- (45:0.7);
	\draw[thick] (90:0) -- (90:0.7) ;
	\draw[thick] (135:0) -- (135:0.7);
	\draw[thick] (180:0) -- (180:0.7);
	\draw[thick] (225:0) -- (225:0.7);
	\draw[thick] (270:0) -- (270:0.7);
	\draw[thick] (315:0) -- (315:0.7);
	\draw[thick] (2,0) -- (2.5,0.5);
	\draw[thick] (2,0) -- (2,0.7);
	\draw[thick] (2,0) -- (1.5,0.5);
	\draw[thick] (2,0) -- (1.3,0);
	\draw[thick] (2,0) -- (1.5,-0.5);
	\draw[thick] (2,0) -- (2,-0.7);
	\draw[thick] (2,0) -- (2.5,-0.5);
	\draw[fill=white, thick] (0,0) circle [radius=11pt];
	\draw[fill=white, thick] (2,0) circle [radius=11pt];
	\node at (0,0) {\scalebox{1}{$\mathbb{1}_{0,k}$}};
	\node at (2,0) {\scalebox{1}{$\mathbb{1}_{0,l}$}};
	\node at (45:0.9) {\scalebox{0.9}{$d$}};
	\node at (315:0.9) {\scalebox{0.9}{$d$}};
	\node at (90:0.9) {\scalebox{0.9}{$c_i$}};
	\node at (135:0.9) {\scalebox{0.9}{$d$}};
	\node at (1.33,0.67) {\scalebox{0.9}{$d$}};
	\node at (1.36,-0.64) {\scalebox{0.9}{$d$}};
	\node at (2,0.9) {\scalebox{0.9}{$b_i$}};
	\node at (2.67,0.67) {\scalebox{0.9}{$d$}};
\end{tikzpicture}}}\\
&-\sum_{\substack{k,l\geqslant 3\\  i=1, \ldots, m}}
{\textstyle\frac{1}{(k-3)!(l-1)!}}\,
\vcenter{\hbox{\begin{tikzpicture}[scale=0.9]
	\draw[thick] (0:0) -- (0:2.7);
	\draw[thick] (45:0) -- (45:0.7);
	\draw[thick] (90:0) -- (90:0.7) ;
	\draw[thick] (135:0) -- (135:0.7);
	\draw[thick] (180:0) -- (180:0.7);
	\draw[thick] (225:0) -- (225:0.7);
	\draw[thick] (270:0) -- (270:0.7);
	\draw[thick] (315:0) -- (315:0.7);
	\draw[thick] (2,0) -- (2.5,0.5);
	\draw[thick] (2,0) -- (2,0.7);
	\draw[thick] (2,0) -- (1.5,0.5);
	\draw[thick] (2,0) -- (1.3,0);
	\draw[thick] (2,0) -- (1.5,-0.5);
	\draw[thick] (2,0) -- (2,-0.7);
	\draw[thick] (2,0) -- (2.5,-0.5);
	\draw[fill=white, thick] (0,0) circle [radius=11pt];
	\draw[fill=white, thick] (2,0) circle [radius=11pt];
	\node at (0,0) {\scalebox{1}{$\mathbb{1}_{0,k}$}};
	\node at (2,0) {\scalebox{1}{$\mathbb{1}_{0,l}$}};
	\node at (45:0.9) {\scalebox{0.9}{$d$}};
	\node at (315:0.9) {\scalebox{0.9}{$d$}};
	\node at (90:0.9) {\scalebox{0.9}{$c_i$}};
	\node at (135:0.9) {\scalebox{0.9}{$b_i$}};
	\node at (180:0.9) {\scalebox{0.9}{$d$}};
	\node at (1.33,0.67) {\scalebox{0.9}{$d$}};
	\node at (1.36,-0.64) {\scalebox{0.9}{$d$}};
	\node at (2,0.9) {\scalebox{0.9}{$d$}};
	\node at (2,-0.9) {\scalebox{0.9}{$d$}};
\end{tikzpicture}}}
-\sum_{\substack{k\geqslant 3,\\ l\geqslant 1}}
{\textstyle\frac{2-2m}{(k-1)!(l-1)!}}\,
\vcenter{\hbox{\begin{tikzpicture}[scale=0.9]
	\draw[thick] (0:0) -- (0:2.7);
	\draw[thick] (45:0) -- (45:0.7);
	\draw[thick] (90:0) -- (90:0.7) ;
	\draw[thick] (135:0) -- (135:0.7);
	\draw[thick] (180:0) -- (180:0.7);
	\draw[thick] (225:0) -- (225:0.7);
	\draw[thick] (270:0) -- (270:0.7);
	\draw[thick] (315:0) -- (315:0.7);
	\draw[thick] (2,0) -- (2.5,0.5);
	\draw[thick] (2,0) -- (2,0.7);
	\draw[thick] (2,0) -- (1.5,0.5);
	\draw[thick] (2,0) -- (1.3,0);
	\draw[thick] (2,0) -- (1.5,-0.5);
	\draw[thick] (2,0) -- (2,-0.7);
	\draw[thick] (2,0) -- (2.5,-0.5);
	\draw[fill=white, thick] (0,0) circle [radius=11pt];
	\draw[fill=white, thick] (2,0) circle [radius=11pt];
	\node at (0,0) {\scalebox{1}{$\mathbb{1}_{0,k}$}};
	\node at (2,0) {\scalebox{1}{$\mathbb{1}_{1,l}$}};
	\node at (45:0.9) {\scalebox{0.9}{$d$}};
	\node at (315:0.9) {\scalebox{0.9}{$d$}};
	\node at (90:0.9) {\scalebox{0.9}{$d$}};
	\node at (270:0.9) {\scalebox{0.9}{$d$}};
	\node at (1.33,0.67) {\scalebox{0.9}{$d$}};
	\node at (1.36,-0.64) {\scalebox{0.9}{$d$}};
	\node at (2,-0.9) {\scalebox{0.9}{$d$}};
	\node at (2,0.9) {\scalebox{0.9}{$d$}};
\end{tikzpicture}}}\ ,\\
\Delta\alpha=&
\sum_{n\geqslant 3}
{\textstyle\frac{1}{(n-2)!}}
\vcenter{\hbox{\begin{tikzpicture}[scale=0.9]
	\draw[thick] (90:0) -- (90:0.7) ;
	\draw[thick] (135:0) -- (135:0.7);
	\draw[thick] (180:0) -- (180:0.7) ;
	\draw[thick] (225:0) -- (225:0.7);
	\draw[thick] (270:0) -- (270:0.7);
	\draw[thick] (0.7,-0.4) arc [radius=0.4, start angle=270, end angle= 450];
	\draw[thick] (0, 0) to [out=315,in=180] (0.7,-0.4);
	\draw[thick] (0, 0) to [out=45,in=180] (0.7,0.4);
	\draw[fill=white, thick] (0,0) circle [radius=11pt];
	\node at (0,0) {\scalebox{1}{$\mathbb{1}_{0,n}$}};
	\node at (270:0.9) {\scalebox{0.9}{$d$}};
	\node at (90:0.9) {\scalebox{0.9}{$d$}};
	\node at (135:0.9) {\scalebox{0.9}{$d$}};
	\node at (225:0.9) {\scalebox{0.9}{$d$}};
	\end{tikzpicture}}}
-\sum_{n\geqslant 3}
{\textstyle\frac{m}{(n-2)!}}
\vcenter{\hbox{\begin{tikzpicture}[scale=0.9]
	\draw[thick] (90:0) -- (90:0.7) ;
	\draw[thick] (135:0) -- (135:0.7);
	\draw[thick] (180:0) -- (180:0.7) ;
	\draw[thick] (225:0) -- (225:0.7);
	\draw[thick] (270:0) -- (270:0.7);
	\draw[thick] (0.7,-0.4) arc [radius=0.4, start angle=270, end angle= 450];
	\draw[thick] (0, 0) to [out=315,in=180] (0.7,-0.4);
	\draw[thick] (0, 0) to [out=45,in=180] (0.7,0.4);
	\draw[fill=white, thick] (0,0) circle [radius=11pt];
	\node at (0,0) {\scalebox{1}{$\mathbb{1}_{0,n}$}};
	\node at (270:0.9) {\scalebox{0.9}{$d$}};
	\node at (90:0.9) {\scalebox{0.9}{$d$}};
	\node at (135:0.9) {\scalebox{0.9}{$d$}};
	\node at (225:0.9) {\scalebox{0.9}{$d$}};
	\end{tikzpicture}}}\ ,\\
\frac{1}{2}[\alpha,\alpha]=&
\sum_{k,l\geqslant 3}
{\textstyle\frac{1}{(k-2)!(l-1)!}}\,
\vcenter{\hbox{\begin{tikzpicture}[scale=0.9]
	\draw[thick] (0:0) -- (0:2.7);
	\draw[thick] (45:0) -- (45:0.7);
	\draw[thick] (90:0) -- (90:0.7) ;
	\draw[thick] (135:0) -- (135:0.7);
	\draw[thick] (180:0) -- (180:0.7);
	\draw[thick] (225:0) -- (225:0.7);
	\draw[thick] (270:0) -- (270:0.7);
	\draw[thick] (315:0) -- (315:0.7);
	\draw[thick] (2,0) -- (2.5,0.5);
	\draw[thick] (2,0) -- (2,0.7);
	\draw[thick] (2,0) -- (1.5,0.5);
	\draw[thick] (2,0) -- (1.3,0);
	\draw[thick] (2,0) -- (1.5,-0.5);
	\draw[thick] (2,0) -- (2,-0.7);
	\draw[thick] (2,0) -- (2.5,-0.5);
	\draw[fill=white, thick] (0,0) circle [radius=11pt];
	\draw[fill=white, thick] (2,0) circle [radius=11pt];
	\node at (0,0) {\scalebox{1}{$\mathbb{1}_{0,k}$}};
	\node at (2,0) {\scalebox{1}{$\mathbb{1}_{0,l}$}};
	\node at (45:0.9) {\scalebox{0.9}{$d$}};
	\node at (315:0.9) {\scalebox{0.9}{$d$}};
	\node at (90:0.9) {\scalebox{0.9}{$a$}};
	\node at (135:0.9) {\scalebox{0.9}{$d$}};
	\node at (1.33,0.67) {\scalebox{0.9}{$d$}};
	\node at (1.36,-0.64) {\scalebox{0.9}{$d$}};
	\node at (2,-0.9) {\scalebox{0.9}{$d$}};
	\node at (2,0.9) {\scalebox{0.9}{$d$}};
\end{tikzpicture}}}
+
\sum_{\substack{k,l\geqslant 3\\  i=1, \ldots, m}}
{\textstyle\frac{1}{(k-3)!(l-1)!}}\,
\vcenter{\hbox{\begin{tikzpicture}[scale=0.9]
	\draw[thick] (0:0) -- (0:2.7);
	\draw[thick] (45:0) -- (45:0.7);
	\draw[thick] (90:0) -- (90:0.7) ;
	\draw[thick] (135:0) -- (135:0.7);
	\draw[thick] (180:0) -- (180:0.7);
	\draw[thick] (225:0) -- (225:0.7);
	\draw[thick] (270:0) -- (270:0.7);
	\draw[thick] (315:0) -- (315:0.7);
	\draw[thick] (2,0) -- (2.5,0.5);
	\draw[thick] (2,0) -- (2,0.7);
	\draw[thick] (2,0) -- (1.5,0.5);
	\draw[thick] (2,0) -- (1.3,0);
	\draw[thick] (2,0) -- (1.5,-0.5);
	\draw[thick] (2,0) -- (2,-0.7);
	\draw[thick] (2,0) -- (2.5,-0.5);
	\draw[fill=white, thick] (0,0) circle [radius=11pt];
	\draw[fill=white, thick] (2,0) circle [radius=11pt];
	\node at (0,0) {\scalebox{1}{$\mathbb{1}_{0,k}$}};
	\node at (2,0) {\scalebox{1}{$\mathbb{1}_{0,l}$}};
	\node at (45:0.9) {\scalebox{0.9}{$d$}};
	\node at (315:0.9) {\scalebox{0.9}{$d$}};
	\node at (90:0.9) {\scalebox{0.9}{$c_i$}};
	\node at (135:0.9) {\scalebox{0.9}{$b_i$}};
	\node at (180:0.9) {\scalebox{0.9}{$d$}};
	\node at (1.33,0.67) {\scalebox{0.9}{$d$}};
	\node at (1.36,-0.64) {\scalebox{0.9}{$d$}};
	\node at (2,0.9) {\scalebox{0.9}{$d$}};
	\node at (2,-0.9) {\scalebox{0.9}{$d$}};
\end{tikzpicture}}}\\
&\sum_{\substack{k,l\geqslant 3\\  i=1, \ldots, m}}
{\textstyle\frac{1}{(k-2)!(l-2)!}}\,
\vcenter{\hbox{\begin{tikzpicture}[scale=0.9]
	\draw[thick] (0:0) -- (0:2.7);
	\draw[thick] (45:0) -- (45:0.7);
	\draw[thick] (90:0) -- (90:0.7) ;
	\draw[thick] (135:0) -- (135:0.7);
	\draw[thick] (180:0) -- (180:0.7);
	\draw[thick] (225:0) -- (225:0.7);
	\draw[thick] (270:0) -- (270:0.7);
	\draw[thick] (315:0) -- (315:0.7);
	\draw[thick] (2,0) -- (2.5,0.5);
	\draw[thick] (2,0) -- (2,0.7);
	\draw[thick] (2,0) -- (1.5,0.5);
	\draw[thick] (2,0) -- (1.3,0);
	\draw[thick] (2,0) -- (1.5,-0.5);
	\draw[thick] (2,0) -- (2,-0.7);
	\draw[thick] (2,0) -- (2.5,-0.5);
	\draw[fill=white, thick] (0,0) circle [radius=11pt];
	\draw[fill=white, thick] (2,0) circle [radius=11pt];
	\node at (0,0) {\scalebox{1}{$\mathbb{1}_{0,k}$}};
	\node at (2,0) {\scalebox{1}{$\mathbb{1}_{0,l}$}};
	\node at (45:0.9) {\scalebox{0.9}{$d$}};
	\node at (315:0.9) {\scalebox{0.9}{$d$}};
	\node at (90:0.9) {\scalebox{0.9}{$c_i$}};
	\node at (135:0.9) {\scalebox{0.9}{$d$}};
	\node at (1.33,0.67) {\scalebox{0.9}{$d$}};
	\node at (1.36,-0.64) {\scalebox{0.9}{$d$}};
	\node at (2,0.9) {\scalebox{0.9}{$b_i$}};
	\node at (2.67,0.67) {\scalebox{0.9}{$d$}};
\end{tikzpicture}}}
+\sum_{\substack{k\geqslant 3,\\ l\geqslant 1}}
{\textstyle\frac{2-2m}{(k-1)!(l-1)!}}\,
\vcenter{\hbox{\begin{tikzpicture}[scale=0.9]
	\draw[thick] (0:0) -- (0:2.7);
	\draw[thick] (45:0) -- (45:0.7);
	\draw[thick] (90:0) -- (90:0.7) ;
	\draw[thick] (135:0) -- (135:0.7);
	\draw[thick] (180:0) -- (180:0.7);
	\draw[thick] (225:0) -- (225:0.7);
	\draw[thick] (270:0) -- (270:0.7);
	\draw[thick] (315:0) -- (315:0.7);
	\draw[thick] (2,0) -- (2.5,0.5);
	\draw[thick] (2,0) -- (2,0.7);
	\draw[thick] (2,0) -- (1.5,0.5);
	\draw[thick] (2,0) -- (1.3,0);
	\draw[thick] (2,0) -- (1.5,-0.5);
	\draw[thick] (2,0) -- (2,-0.7);
	\draw[thick] (2,0) -- (2.5,-0.5);
	\draw[fill=white, thick] (0,0) circle [radius=11pt];
	\draw[fill=white, thick] (2,0) circle [radius=11pt];
	\node at (0,0) {\scalebox{1}{$\mathbb{1}_{0,k}$}};
	\node at (2,0) {\scalebox{1}{$\mathbb{1}_{1,l}$}};
	\node at (45:0.9) {\scalebox{0.9}{$d$}};
	\node at (315:0.9) {\scalebox{0.9}{$d$}};
	\node at (90:0.9) {\scalebox{0.9}{$d$}};
	\node at (270:0.9) {\scalebox{0.9}{$d$}};
	\node at (1.33,0.67) {\scalebox{0.9}{$d$}};
	\node at (1.36,-0.64) {\scalebox{0.9}{$d$}};
	\node at (2,-0.9) {\scalebox{0.9}{$d$}};
	\node at (2,0.9) {\scalebox{0.9}{$d$}};
\end{tikzpicture}}}\ .
\end{align*}
\end{proof}

Associated to any cohomology class $\Lambda\in H^\bullet\big(\overline{\mathcal{M}}_{h,m}\big)$ whose degree has the same parity as $m$, we consider the following degree $0$ element of $\widehat{\mathbb{G}}_\od\big(H^\bullet\big(\oM\big)\big)(A)$:
\begin{equation}\label{eq:alphapluslambda}
\lambda\coloneqq \sum_{k\geqslant m}
\frac{1}{(k-m)!}
	\vcenter{\hbox{\begin{tikzpicture}[scale=1]
	\draw[thick] (0:0) -- (0:0.7);
	\draw[thick] (45:0) -- (45:0.7);
	\draw[thick] (90:0) -- (90:0.7) ;
	\draw[thick] (135:0) -- (135:0.7);
	\draw[thick] (180:0) -- (180:0.7);
	\draw[thick] (225:0) -- (225:0.7);
	\draw[thick] (270:0) -- (270:0.7);
	\draw[thick] (315:0) -- (315:0.7);
	\draw[fill=white, thick] (0,0) circle [radius=11pt];
	\node at (0,0) {\scalebox{1}{$\Lambda_k$}};
	\node at (0:0.9) {\scalebox{0.9}{$d$}};
	\node at (45:0.95) {\scalebox{0.9}{$c_1$}};
	\node at (135:0.95) {\scalebox{0.9}{$c_m$}};
	\node at (180:0.9) {\scalebox{0.9}{$d$}};
	\end{tikzpicture}}}\ ,
\end{equation}
where the notation $\Lambda_k\coloneqq p_k^*(\Lambda)$ stands for the cohomology class defined by the map
 $p_k : \overline{\mathcal{M}}_{h,k} \to \overline{\mathcal{M}}_{h,m}$ which forgets $k-m$ marked points.

\begin{definition}[Minimal class]
A cohomology class $\Lambda$ is called \emph{minimal} when it is a \emph{primitive element} in the modular cooperad
$H^\bullet\big(\overline{\mathcal{M}}\big)$: this means that its image under
any expansion map $\chi_{ij} : H^\bullet\big(\overline{\mathcal{M}}_{g+1,n-2}\big) \to H^\bullet\big(\overline{\mathcal{M}}_{g,n}\big)$ is trivial and that its image under
any partial decomposition map
$\delta_{i}^j : H^\bullet\big(\overline{\mathcal{M}}_{g+g',n+n'-2}\big) \to
H^\bullet\big(\overline{\mathcal{M}}_{g,n}\big)
\otimes H^\bullet\big(\overline{\mathcal{M}}_{g',n'}\big)$ is trivial.
\end{definition}

\begin{theorem}[{\cite[Theorem~5]{PZ18}}]\label{thm:PZalphalambda}
If  $\Lambda\in H^\bullet\big(\overline{\mathcal{M}}_{h,m}\big)$ is a is minimal
cohomology class, the element $\alpha+\lambda$ is a
solution to the master equation \eqref{eq:MaEq}:
 \[\ \d (\alpha+\lambda) + \Delta (\alpha+\lambda) + {\textstyle \frac12}
   \{\alpha+\lambda, \alpha+\lambda\}=0\ .
 \]
\end{theorem}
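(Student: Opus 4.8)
The plan is to exploit that $\alpha$ is already a solution of the master equation (\cref{lem:PZalpha}) and to reduce the statement to a linear condition on $\lambda$. Expanding the master equation for $\alpha+\lambda$ and using that the bracket is symmetric on even elements, one gets
\[
\d(\alpha+\lambda)+\Delta(\alpha+\lambda)+{\textstyle\frac12}\{\alpha+\lambda,\alpha+\lambda\}
= \big(\d\alpha+\Delta\alpha+{\textstyle\frac12}\{\alpha,\alpha\}\big)+d^\alpha\lambda+{\textstyle\frac12}\{\lambda,\lambda\},
\]
where $d^\alpha=\d+\Delta+\{\alpha,-\}$ is the differential of the $\alpha$-twisted shifted dg Lie algebra $\g^\alpha_A$. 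The first bracketed term vanishes by \cref{lem:PZalpha}, so the theorem is equivalent to the single equation $d^\alpha\lambda+\frac12\{\lambda,\lambda\}=0$, that is, to $\lambda$ being a Maurer--Cartan element of $\g^\alpha_A$.

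First I would dispose of the ``diagonal'' contributions. Every leaf of $\lambda$ is labelled by one of the vectors $c_1,\dots,c_m$ or $d$, and the pairing of $A$ is nonzero only on the pairs $(b_i,c_i)$ and $(a,d)$; hence the pairing of any two labels occurring in $\lambda$ vanishes. This forces $\{\lambda,\lambda\}=0$ and $\Delta\lambda=0$, while $\d_A\lambda=0$ because $A$ carries the zero differential. In the notation of \cref{lem:DefComp} the equation thus collapses to $\d_1\lambda+\d_2\lambda=\{\alpha,\lambda\}$. For the loop term $\d_1\lambda$, the relevant expansion map sends $\Lambda_k=p_k^*\Lambda$ to the pullback of $\Lambda$ along a self-gluing map precomposed with the forgetful map $p_k$; commuting these two (forgetting the $d$-marked points first, which keeps $\overline{\mathcal{M}}_{h-1,m+2}$ stable since $2h+m>2$) identifies it with the pullback of the expansion $\chi_{ij}\Lambda$, which is zero because $\Lambda$ is minimal. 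Therefore $\d_1\lambda=0$, and it remains to prove $\d_2\lambda=\{\alpha,\lambda\}$.

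The core of the argument is a boundary analysis of $\Lambda_k=p_k^*\Lambda$. The term $\d_2\lambda$ is the sum of the restrictions of $\Lambda_k$ to the separating boundary divisors of $\overline{\mathcal{M}}_{h,k}$. If both sides of such a divisor stay stable after forgetting the $d$-marked points, the restriction equals a pullback of a partial decomposition $\delta_i^j\Lambda$, which vanishes by minimality. The only surviving contributions come from divisors where the forgetful map contracts a genus-$0$ rational tail, namely a tail carrying either only $d$-legs or a single $c_i$-leg together with $d$-legs; on such a tail $\Lambda_k$ restricts to the unit class $\mathbb{1}_{0,n}$ times a $\Lambda$-pullback on the complementary vertex. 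I would then compute $\{\alpha,\lambda\}$ and check that it reproduces exactly these two families: only the labels $a$ and $b_i$ of $\alpha$ pair nontrivially with the labels $d$ and $c_i$ of $\lambda$, so only the two genus-$0$ families of $\alpha$ contribute; the $a$--$d$ gluings with the term $\mathbb{1}_{0,n}(a,d,\dots,d)$ produce the $d$-tails, and the $b_i$--$c_i$ gluings with the term $\mathbb{1}_{0,n}(d,b_i,c_i,d,\dots)$ produce the $c_i$-tails. Consistently, the genus-$1$ family $\mathbb{1}_{1,n}(d,\dots,d)$ of $\alpha$ contributes nothing, matching the fact that a genus-$1$ tail remains stable and is killed by minimality.

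The hard part will be the combinatorial and sign bookkeeping in the identification $\d_2\lambda=\{\alpha,\lambda\}$: one must verify that the factorials $\frac1{(k-m)!}$, $\frac1{(n-1)!}$ and $\frac1{(n-2)!}$, together with the symmetrisation over the indistinguishable $d$-legs and the number of ways of distributing legs onto the tail, match term-by-term on the two sides, and that the Koszul signs produced by the odd-degree edges agree. This is the same type of explicit, graph-by-graph verification as in the proof of \cref{lem:PZalpha}. The one genuinely geometric input that must be handled with care is that forgetting the $d$-points contracts precisely the genus-$0$ tails and transports a lone $c_i$-marking across the destabilised $\overline{\mathcal{M}}_{0,2}$ onto the neighbouring vertex, which is exactly what pairs the $c_i$-tail terms of $\d_2\lambda$ with the $b_i$--$c_i$ gluings of $\{\alpha,\lambda\}$.
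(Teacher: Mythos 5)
Your proposal is correct and follows essentially the same route as the paper: reduce to $\d_1\lambda+\d_2\lambda=\{\alpha,\lambda\}$ using the isotropy of $\K\{c_1,\dots,c_m,d\}$ (which kills $\Delta\lambda$ and $\{\lambda,\lambda\}$), then use minimality of $\Lambda$ to show that only the restrictions of $\Lambda_k$ to boundary divisors contracted by $p_k$ survive, and match these genus-$0$ tail contributions against the $a$--$d$ and $b_i$--$c_i$ gluings in $\{\alpha,\lambda\}$. The factorial and sign bookkeeping you defer is precisely what the paper's displayed graph computation of $\{\alpha,\lambda\}$ and $\d_2\lambda$ carries out, and it does work out term by term.
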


In other words, the element $\alpha+\lambda$ defines a CohFT.

\begin{proof}
Since $\alpha$ is a CohFT, we have
\begin{align*}
\d (\alpha+\lambda) + \Delta (\alpha+\lambda) + {\textstyle \frac12} \{\alpha+\lambda, \alpha+\lambda\}=
\underbrace{\d \alpha + \Delta \alpha + {\textstyle \frac12} \{\alpha, \alpha\}}_{=0}+
\d \lambda + \Delta \lambda +  \{\alpha, \lambda\}+ {\textstyle \frac12} \{\lambda, \lambda\}\ .
\end{align*}
Since the sub-space $ \K\{ a, b_1, \ldots, b_m\}$ is isotropic, we have
$\Delta \lambda=0$ and $\{\lambda, \lambda\}=0$. It remains to show that $-\d \lambda = \d_1 \lambda +\d_2 \lambda=\{\alpha, \lambda\}$ to conclude the proof.
Let us first compute $\{\alpha, \lambda\}=$
\begin{align*}
&
\sum_{\substack{n\geqslant 3\\ k\geqslant m}}
\left\{
{\textstyle \frac{1}{(n-1)!}}
	\vcenter{\hbox{\begin{tikzpicture}[scale=0.9]
	\draw[thick] (0:0) -- (0:0.7);
	\draw[thick] (45:0) -- (45:0.7);
	\draw[thick] (90:0) -- (90:0.7) ;
	\draw[thick] (135:0) -- (135:0.7);
	\draw[thick] (180:0) -- (180:0.7);
	\draw[thick] (225:0) -- (225:0.7);
	\draw[thick] (270:0) -- (270:0.7);
	\draw[thick] (315:0) -- (315:0.7);
	\draw[fill=white, thick] (0,0) circle [radius=11pt];
	\node at (0,0) {\scalebox{1}{$\mathbb{1}_{0,n}$}};
	\node at (0:0.9) {\scalebox{0.9}{$d$}};
	\node at (45:0.9) {\scalebox{0.9}{$d$}};
	\node at (90:0.9) {\scalebox{0.9}{$a$}};
	\node at (135:0.9) {\scalebox{0.9}{$d$}};
	\node at (180:0.9) {\scalebox{0.9}{$d$}};
	\end{tikzpicture}}}
+\sum_{i=1, \ldots, m}
{\textstyle \frac{1}{(n-2)!}}
\vcenter{\hbox{\begin{tikzpicture}[scale=0.9]
	\draw[thick] (0:0) -- (0:0.7);
	\draw[thick] (45:0) -- (45:0.7);
	\draw[thick] (90:0) -- (90:0.7) ;
	\draw[thick] (135:0) -- (135:0.7);
	\draw[thick] (180:0) -- (180:0.7);
	\draw[thick] (225:0) -- (225:0.7);
	\draw[thick] (270:0) -- (270:0.7);
	\draw[thick] (315:0) -- (315:0.7);
	\draw[fill=white, thick] (0,0) circle [radius=11pt];
	\node at (0,0) {\scalebox{1}{$\mathbb{1}_{0,n}$}};
	\node at (0:0.9) {\scalebox{0.9}{$d$}};
	\node at (45:0.9) {\scalebox{0.9}{$b_i$}};
	\node at (90:0.9) {\scalebox{0.9}{$c_i$}};
	\node at (135:0.9) {\scalebox{0.9}{$d$}};
	\node at (180:0.9) {\scalebox{0.9}{$d$}};
	\end{tikzpicture}}}
+{\textstyle { \frac{2-2m}{n!}}}
\vcenter{\hbox{\begin{tikzpicture}[scale=0.9]
	\draw[thick] (0:0) -- (0:0.7);
	\draw[thick] (45:0) -- (45:0.7);
	\draw[thick] (90:0) -- (90:0.7) ;
	\draw[thick] (135:0) -- (135:0.7);
	\draw[thick] (180:0) -- (180:0.7);
	\draw[thick] (225:0) -- (225:0.7);
	\draw[thick] (270:0) -- (270:0.7);
	\draw[thick] (315:0) -- (315:0.7);
	\draw[fill=white, thick] (0,0) circle [radius=11pt];
	\node at (0,0) {\scalebox{1}{$\mathbb{1}_{1,n}$}};
	\node at (0:0.9) {\scalebox{0.9}{$d$}};
	\node at (45:0.9) {\scalebox{0.9}{$d$}};
	\node at (90:0.9) {\scalebox{0.9}{$d$}};
	\node at (135:0.9) {\scalebox{0.9}{$d$}};
	\node at (180:0.9) {\scalebox{0.9}{$d$}};
\end{tikzpicture}}},
{\textstyle \frac{1}{(k-m)!}}
	\vcenter{\hbox{\begin{tikzpicture}[scale=0.9]
	\draw[thick] (0:0) -- (0:0.7);
	\draw[thick] (45:0) -- (45:0.7);
	\draw[thick] (90:0) -- (90:0.7) ;
	\draw[thick] (135:0) -- (135:0.7);
	\draw[thick] (180:0) -- (180:0.7);
	\draw[thick] (225:0) -- (225:0.7);
	\draw[thick] (270:0) -- (270:0.7);
	\draw[thick] (315:0) -- (315:0.7);
	\draw[fill=white, thick] (0,0) circle [radius=11pt];
	\node at (0,0) {\scalebox{1}{$\Lambda_k$}};
	\node at (0:0.9) {\scalebox{0.9}{$d$}};
	\node at (45:0.95) {\scalebox{0.9}{$c_1$}};
	\node at (135:0.95) {\scalebox{0.9}{$c_m$}};
	\node at (180:0.9) {\scalebox{0.9}{$d$}};
	\end{tikzpicture}}}
\right\}\\
&=
\sum_{\substack{n\geqslant 3\\ k\geqslant m}}
{\textstyle \frac{1}{(n-1)!(k-m-1)!}}\
\vcenter{\hbox{\begin{tikzpicture}[scale=0.9]
	\draw[thick] (0:0) -- (0:2.7);
	\draw[thick] (45:0) -- (45:0.7);
	\draw[thick] (90:0) -- (90:0.7) ;
	\draw[thick] (135:0) -- (135:0.7);
	\draw[thick] (180:0) -- (180:0.7);
	\draw[thick] (225:0) -- (225:0.7);
	\draw[thick] (270:0) -- (270:0.7);
	\draw[thick] (315:0) -- (315:0.7);
	\draw[thick] (2,0) -- (2.5,0.5);
	\draw[thick] (2,0) -- (2,0.7);
	\draw[thick] (2,0) -- (1.5,0.5);
	\draw[thick] (2,0) -- (1.3,0);
	\draw[thick] (2,0) -- (1.5,-0.5);
	\draw[thick] (2,0) -- (2,-0.7);
	\draw[thick] (2,0) -- (2.5,-0.5);
	\draw[fill=white, thick] (0,0) circle [radius=11pt];
	\draw[fill=white, thick] (2,0) circle [radius=11pt];
	\node at (0,0) {\scalebox{1}{$\mathbb{1}_{0,n}$}};
	\node at (2,0) {\scalebox{1}{$\Lambda_k$}};
	\node at (45:0.9) {\scalebox{0.9}{$d$}};
	\node at (315:0.9) {\scalebox{0.9}{$d$}};
	\node at (90:0.9) {\scalebox{0.9}{$d$}};
	\node at (270:0.9) {\scalebox{0.9}{$d$}};
	\node at (1.33,0.67) {\scalebox{0.9}{$c_m$}};
	\node at (2.67,0.67) {\scalebox{0.9}{$c_1$}};
	\node at (2.9,0) {\scalebox{0.9}{$d$}};
	\node at (1.36,-0.64) {\scalebox{0.9}{$d$}};
\end{tikzpicture}}}
+\sum_{{\substack{
n\geqslant 3\\
i=1, \ldots, m\\
k\geqslant m}}}
{\textstyle \frac{1}{(n-2)!(k-m)!}}\
\vcenter{\hbox{\begin{tikzpicture}[scale=1]
	\draw[thick] (0:0) -- (0:2.7);
	\draw[thick] (45:0) -- (45:0.7);
	\draw[thick] (90:0) -- (90:0.7) ;
	\draw[thick] (135:0) -- (135:0.7);
	\draw[thick] (180:0) -- (180:0.7);
	\draw[thick] (225:0) -- (225:0.7);
	\draw[thick] (270:0) -- (270:0.7);
	\draw[thick] (315:0) -- (315:0.7);
	\draw[thick] (2,0) -- (2.5,0.5);
	\draw[thick] (2,0) -- (2,0.7);
	\draw[thick] (2,0) -- (1.5,0.5);
	\draw[thick] (2,0) -- (1.3,0);
	\draw[thick] (2,0) -- (1.5,-0.5);
	\draw[thick] (2,0) -- (2,-0.7);
	\draw[thick] (2,0) -- (2.5,-0.5);
	\draw[fill=white, thick] (0,0) circle [radius=11pt];
	\draw[fill=white, thick] (2,0) circle [radius=11pt];
	\node at (0,0) {\scalebox{1}{$\mathbb{1}_{0,n}$}};
	\node at (2,0) {\scalebox{1}{$\Lambda_k$}};
	\node at (45:0.9) {\scalebox{0.9}{$c_i$}};
	\node at (315:0.9) {\scalebox{0.9}{$d$}};
	\node at (90:0.9) {\scalebox{0.9}{$d$}};
	\node at (270:0.9) {\scalebox{0.9}{$d$}};
	\node at (135:0.9) {\scalebox{0.9}{$d$}};
	\node at (1.33,0.67) {\scalebox{0.9}{$c_{i-1}$}};
	\node at (2,-0.9) {\scalebox{0.9}{$c_m$}};
	\node at (2,0.9) {\scalebox{0.9}{$c_1$}};
	\node at (2.67,0.67) {\scalebox{0.9}{$d$}};
	\node at (2.67,-0.67) {\scalebox{0.9}{$d$}};
	\node at (1.36,-0.64) {\scalebox{0.9}{$c_{i+1}$}};
\end{tikzpicture}}}\ .
\end{align*}
The minimality of $\Lambda$ implies that the restriction of $\Lambda_k$ to a
boundary
divisor in $\overline{\mathcal{M}}_{h,k}$ is non-trivial if and only if the image of this divisor under the projection $p_k$ is the whole space  $\overline{\mathcal{M}}_{h,m}$. It is the case only if the generic point of the divisor is represented by a two-component curve with one component of genus $0$, with at most one point labeled by $c_i$, for $i=1, \dots, m$, on that component (cf.~\cite[Section~1.5]{PZ18}).
This immediately implies that
 $\d_1(\lambda)=0$ and that
\[
\d_2(\lambda)=\sum_{\substack{n\geqslant 3\\ k\geqslant m}}
{\textstyle \frac{1}{(n-1)!(k-m-1)!}}\
\vcenter{\hbox{\begin{tikzpicture}[scale=0.9]
	\draw[thick] (0:0) -- (0:2.7);
	\draw[thick] (45:0) -- (45:0.7);
	\draw[thick] (90:0) -- (90:0.7) ;
	\draw[thick] (135:0) -- (135:0.7);
	\draw[thick] (180:0) -- (180:0.7);
	\draw[thick] (225:0) -- (225:0.7);
	\draw[thick] (270:0) -- (270:0.7);
	\draw[thick] (315:0) -- (315:0.7);
	\draw[thick] (2,0) -- (2.5,0.5);
	\draw[thick] (2,0) -- (2,0.7);
	\draw[thick] (2,0) -- (1.5,0.5);
	\draw[thick] (2,0) -- (1.3,0);
	\draw[thick] (2,0) -- (1.5,-0.5);
	\draw[thick] (2,0) -- (2,-0.7);
	\draw[thick] (2,0) -- (2.5,-0.5);
	\draw[fill=white, thick] (0,0) circle [radius=11pt];
	\draw[fill=white, thick] (2,0) circle [radius=11pt];
	\node at (0,0) {\scalebox{1}{$\mathbb{1}_{0,n}$}};
	\node at (2,0) {\scalebox{1}{$\Lambda_k$}};
	\node at (45:0.9) {\scalebox{0.9}{$d$}};
	\node at (315:0.9) {\scalebox{0.9}{$d$}};
	\node at (90:0.9) {\scalebox{0.9}{$d$}};
	\node at (270:0.9) {\scalebox{0.9}{$d$}};
	\node at (1.33,0.67) {\scalebox{0.9}{$c_m$}};
	\node at (2.67,0.67) {\scalebox{0.9}{$c_1$}};
	\node at (2.9,0) {\scalebox{0.9}{$d$}};
	\node at (1.36,-0.64) {\scalebox{0.9}{$d$}};
\end{tikzpicture}}}
+\sum_{{\substack{
n\geqslant 3\\
i=1, \ldots, m\\
k\geqslant m}}}
{\textstyle \frac{1}{(n-2)!(k-m)!}}\
\vcenter{\hbox{\begin{tikzpicture}[scale=1]
	\draw[thick] (0:0) -- (0:2.7);
	\draw[thick] (45:0) -- (45:0.7);
	\draw[thick] (90:0) -- (90:0.7) ;
	\draw[thick] (135:0) -- (135:0.7);
	\draw[thick] (180:0) -- (180:0.7);
	\draw[thick] (225:0) -- (225:0.7);
	\draw[thick] (270:0) -- (270:0.7);
	\draw[thick] (315:0) -- (315:0.7);
	\draw[thick] (2,0) -- (2.5,0.5);
	\draw[thick] (2,0) -- (2,0.7);
	\draw[thick] (2,0) -- (1.5,0.5);
	\draw[thick] (2,0) -- (1.3,0);
	\draw[thick] (2,0) -- (1.5,-0.5);
	\draw[thick] (2,0) -- (2,-0.7);
	\draw[thick] (2,0) -- (2.5,-0.5);
	\draw[fill=white, thick] (0,0) circle [radius=11pt];
	\draw[fill=white, thick] (2,0) circle [radius=11pt];
	\node at (0,0) {\scalebox{1}{$\mathbb{1}_{0,n}$}};
	\node at (2,0) {\scalebox{1}{$\Lambda_k$}};
	\node at (45:0.9) {\scalebox{0.9}{$c_i$}};
	\node at (315:0.9) {\scalebox{0.9}{$d$}};
	\node at (90:0.9) {\scalebox{0.9}{$d$}};
	\node at (270:0.9) {\scalebox{0.9}{$d$}};
	\node at (135:0.9) {\scalebox{0.9}{$d$}};
	\node at (1.33,0.67) {\scalebox{0.9}{$c_{i-1}$}};
	\node at (2,-0.9) {\scalebox{0.9}{$c_m$}};
	\node at (2,0.9) {\scalebox{0.9}{$c_1$}};
	\node at (2.67,0.67) {\scalebox{0.9}{$d$}};
	\node at (2.67,-0.67) {\scalebox{0.9}{$d$}};
	\node at (1.36,-0.64) {\scalebox{0.9}{$c_{i+1}$}};
\end{tikzpicture}}}\ .
\]
\end{proof}

This result means that $\lambda$ is an infinitesimal \emph{and} global deformation of the
CohFT $\alpha$. The isotropic property of $c_1, \ldots, c_m, d$ is responsible for
vanishing of all obstructions
since it implies $\Delta(\lambda)=0$ and $\{\lambda, \lambda\}=0$.

\medskip

The Pandharipande--Zvonkine construction can be generalized as follows.
Let $A=\oplus_{i=0}^D A_i=H^\bullet(X,\K)$ be the $\ZZ$-graded cohomology algebra of a compact
manifold $X$ of even dimension $D>0$. It is a Frobenius algebra with respect to the Poincar\'e
pairing $\langle\ ,\, \rangle$ which has degree $-D$.

Let $r>0$ be the minimal possible positive degree such that $A_r\not=0$. Let
$a\in A_0$ be the unit of $A$, let $\K\{ c_1, \ldots, c_m\}$ be a subspace of $A_r$, isotropic
with respect to the Poincar\'e pairing, and let $d\in A_D,  b_1, \ldots, b_m\in A_{D-r}$
be a  collection of classes dual
to $\{a, c_1,\ldots, c_m \}$, i.e.\
$\langle a,d \rangle=\langle b_i,c_i \rangle=1$ for $i=1,\ldots, m$.
Finally, let  $\K\{ e_1, \ldots, e_s\}$ be the orthogonal complement to the subspace
$\K\{a, b_1, c_1, \ldots, b_m,c_m,d\}$.

\medskip

We consider the even degree element $\alpha\in \widehat{\mathbb{G}}_\od\big(H^\bullet\big(\oM\big)\big)(A)$ defined by
\begin{align*}
\alpha\coloneqq &\sum_{n\geqslant 3}
\frac{1}{(n-1)!}
	\vcenter{\hbox{\begin{tikzpicture}[scale=1]
	\draw[thick] (0:0) -- (0:0.7);
	\draw[thick] (45:0) -- (45:0.7);
	\draw[thick] (90:0) -- (90:0.7) ;
	\draw[thick] (135:0) -- (135:0.7);
	\draw[thick] (180:0) -- (180:0.7);
	\draw[thick] (225:0) -- (225:0.7);
	\draw[thick] (270:0) -- (270:0.7);
	\draw[thick] (315:0) -- (315:0.7);
	\draw[fill=white, thick] (0,0) circle [radius=11pt];
	\node at (0,0) {\scalebox{1}{$\mathbb{1}_{0,n}$}};
	\node at (0:0.9) {\scalebox{0.9}{$d$}};
	\node at (45:0.9) {\scalebox{0.9}{$d$}};
	\node at (90:0.9) {\scalebox{0.9}{$a$}};
	\node at (135:0.9) {\scalebox{0.9}{$d$}};
	\node at (180:0.9) {\scalebox{0.9}{$d$}};
	\end{tikzpicture}}}
+\sum_{\substack{n\geqslant 3 \\ i=1, \ldots, m}}
\frac{1}{(n-2)!}
\vcenter{\hbox{\begin{tikzpicture}[scale=1]
	\draw[thick] (0:0) -- (0:0.7);
	\draw[thick] (45:0) -- (45:0.7);
	\draw[thick] (90:0) -- (90:0.7) ;
	\draw[thick] (135:0) -- (135:0.7);
	\draw[thick] (180:0) -- (180:0.7);
	\draw[thick] (225:0) -- (225:0.7);
	\draw[thick] (270:0) -- (270:0.7);
	\draw[thick] (315:0) -- (315:0.7);
	\draw[fill=white, thick] (0,0) circle [radius=11pt];
	\node at (0,0) {\scalebox{1}{$\mathbb{1}_{0,n}$}};
	\node at (0:0.9) {\scalebox{0.9}{$d$}};
	\node at (45:0.9) {\scalebox{0.9}{$b_i$}};
	\node at (90:0.9) {\scalebox{0.9}{$c_i$}};
	\node at (135:0.9) {\scalebox{0.9}{$d$}};
	\node at (180:0.9) {\scalebox{0.9}{$d$}};
	\end{tikzpicture}}}
+\mathrm{sdim}(A)
                  \sum_{n\geqslant 1} \frac{1}{n!}
\vcenter{\hbox{\begin{tikzpicture}[scale=1]
	\draw[thick] (0:0) -- (0:0.7);
	\draw[thick] (45:0) -- (45:0.7);
	\draw[thick] (90:0) -- (90:0.7) ;
	\draw[thick] (135:0) -- (135:0.7);
	\draw[thick] (180:0) -- (180:0.7);
	\draw[thick] (225:0) -- (225:0.7);
	\draw[thick] (270:0) -- (270:0.7);
	\draw[thick] (315:0) -- (315:0.7);
	\draw[fill=white, thick] (0,0) circle [radius=11pt];
	\node at (0,0) {\scalebox{1}{$\mathbb{1}_{1,n}$}};
	\node at (0:0.9) {\scalebox{0.9}{$d$}};
	\node at (45:0.9) {\scalebox{0.9}{$d$}};
	\node at (90:0.9) {\scalebox{0.9}{$d$}};
	\node at (135:0.9) {\scalebox{0.9}{$d$}};
	\node at (180:0.9) {\scalebox{0.9}{$d$}};
	\end{tikzpicture}}} \\
&+
\sum_{\substack {n\geqslant 3 ,\  t \geqslant 2 \\ n\geqslant t}}
\frac{\langle f_t^*\cdots f_1^*,d\rangle }{(n-t)!|\mathrm{Aut}(f_1,\dots,f_t)|}
\vcenter{\hbox{\begin{tikzpicture}[scale=1]
		\draw[thick] (0:0) -- (0:0.7);
		\draw[thick] (45:0) -- (45:0.7);
		\draw[thick] (90:0) -- (90:0.7) ;
		\draw[thick] (135:0) -- (135:0.7);
		\draw[thick] (180:0) -- (180:0.7);
		\draw[thick] (225:0) -- (225:0.7);
		\draw[thick] (270:0) -- (270:0.7);
		\draw[thick] (315:0) -- (315:0.7);
		\draw[fill=white, thick] (0,0) circle [radius=11pt];
		\node at (0,0) {\scalebox{1}{$\mathbb{1}_{0,n}$}};
		\node at (0:0.9) {\scalebox{0.9}{$d$}};
		\node at (45:0.9) {\scalebox{0.9}{$f_t$}};
		\node at (90:0.9) {\scalebox{0.9}{$\ldots$}};
		\node at (135:0.9) {\scalebox{0.9}{$f_1$}};
		\node at (180:0.9) {\scalebox{0.9}{$d$}};
		\end{tikzpicture}}}\ ,
\end{align*}
where
$\mathrm{sdim}(A)=\dim A_{\mathrm{even}}- \dim A_{\mathrm{odd}}$ is the superdimension of $A$
(i.e.\ the Euler characteristic of $X$)
and where the last sum runs over all possible tuples of basis vectors $f_1,\dots,f_t\in
\{b_1, \ldots, b_m, e_1,\ldots, e_s\}$ such that the sum of the
original cohomological gradings of $f_1,\dots,f_t$ is equal to $(t-1)D$. By
$|\mathrm{Aut}(f_1,\dots,f_t)|$ we denote the order of the automorphism group of the tuple
$f_1,\dots,f_t$.

\begin{lemma}
This element $\alpha$ is a solution to the master equation \eqref{eq:MaEq}. In other words, it is a topological field theory and thus a cohomological field theory.
\end{lemma}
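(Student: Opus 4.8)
The plan is to avoid any direct manipulation of the generating series and instead recognize $\alpha$ as the two-dimensional topological field theory attached to a genuine commutative Frobenius algebra. Since every vertex of $\alpha$ is labelled by a fundamental class $\mathbb{1}_{g,n}\in H^0(\oM_{g,n})$ and $\alpha$ is concentrated in weight $1$, \cref{cor:CohFTTFTtree}(2) applies: solving the master equation $\d\alpha+\Delta\alpha+\frac12\{\alpha,\alpha\}=0$ for such an element is \emph{equivalent} to the data being a topological field theory, that is (recall that $H^0(\oM)\cong\Frob$ and that $\Frob$-algebras are exactly commutative Frobenius algebras) a commutative Frobenius algebra structure on $(A,\langle\ ,\,\rangle)$. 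Concretely, the master equation splits by weight into $\Delta\alpha=\d_1\alpha$ (the genus/handle axiom) and $\frac12\{\alpha,\alpha\}=\d_2\alpha$ (the associativity--gluing axiom), and these are precisely the conditions that the assignment $\mathbb{1}_{g,n}\mapsto\tau_{g,n}\in\EEnd_A(g,n)=A^{\otimes n}$ read off from $\alpha$ commute with the operadic compositions $\circ_i^j$ and contractions $\xi_{ij}$ of $\EEnd_A$.

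The core of the argument is the identification of this Frobenius algebra: I claim $\alpha$ is nothing but the TFT of $\big(H^\bullet(X,\K),\cup,\int_X\big)$, with $a=\mathbf 1$ the unit, $\int_X$ the counit, and $\langle\ ,\,\rangle$ the Poincar\'e pairing. The one subtlety is a duality: since the leaves carry elements of $A$ while (as noted in the Remark preceding \cref{lem:PZalpha}) the correlators naturally take arguments in $A^*$, a vertex labelled by $(v_1,\dots,v_n)$ records the correlator $\int_X (\text{handle class})^{g}\cup v_1^*\cup\cdots\cup v_n^*$. Under this dictionary one checks that the first three summands of $\alpha$ reproduce, respectively, the genus-$0$ correlators $\int_X d\cup a^{\cup(n-1)}=\langle a,d\rangle$, the products $b_i\cup c_i=d$, and the genus-one correlators $\int_X(\text{handle class})\cup a^{\cup n}$; in particular the coefficient $\mathrm{sdim}(A)$ is exactly $\int_X$ of the handle class $\sum_\mu e_\mu\cup e^\mu$, i.e.\ the super-Euler characteristic of $X$. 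Most tellingly, the degree constraint $\sum_i\deg f_i=(t-1)D$ on the fourth summand is precisely the condition $\sum_i\deg f_i^*=D$ for the $t$-fold cup product $f_1^*\cup\cdots\cup f_t^*$ to land in the top cohomology $A_D$, so that the coefficient $\langle f_t^*\cdots f_1^*,d\rangle=\int_X f_1^*\cup\cdots\cup f_t^*$ is nonzero; these terms therefore encode exactly the higher cup products forced by associativity, with the weights $\tfrac1{(n-t)!}$ and $1/|\mathrm{Aut}(f_1,\dots,f_t)|$ being the symmetry factors attached to the $(n-t)$ unit insertions and to repeated labels.

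With the identification in hand, the master equation follows from the fact that $\big(H^\bullet(X),\cup,\int_X\big)$ is a bona fide commutative Frobenius algebra: (graded-)commutativity and associativity of the cup product, symmetry and non-degeneracy of the Poincar\'e pairing, and the compatibility $\int_X(x\cup y)\cup z=\int_X x\cup(y\cup z)$ are exactly the weight-$1$ and weight-$2$ equations above. The main obstacle will therefore not be conceptual but a careful bookkeeping step: one must verify that the explicit series $\alpha$ reproduces every correlator $\int_X(\text{handle class})^{g}\cup v_1^*\cup\cdots\cup v_n^*$, tracking (i) the $\Z2$-graded Koszul signs produced when reordering the odd classes among $\{b_i,c_i,e_j\}$ and, crucially, in the supertrace that yields $\mathrm{sdim}(A)$, and (ii) the matching of the automorphism and factorial factors with the symmetry of each correlator. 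I would organise this exactly as in the proof of \cref{lem:PZalpha}, computing $\d_1\alpha$, $\d_2\alpha$, $\Delta\alpha$ and $\tfrac12\{\alpha,\alpha\}$ separately and pairing up the resulting graphs; the only new feature relative to that lemma is the presence of the classes $e_j$ and of the higher products $f_1\cup\cdots\cup f_t$, whose contributions cancel by the same top-degree and associativity mechanism.
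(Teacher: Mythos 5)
Your route is genuinely different from the paper's: the paper simply observes that, for degree reasons, the left-hand side of the master equation can only be non-trivial in genera $0$ and $1$, and then repeats the explicit graph-by-graph cancellation of \cref{lem:PZalpha} \emph{mutatis mutandis}. Reducing instead to the statement that weight-one, $H^0$-concentrated solutions are exactly $\Frob$-algebra structures (\cref{cor:CohFTTFTtree}(2)), i.e.\ commutative Frobenius algebras, and then exhibiting the relevant Frobenius algebra, is a legitimate and more conceptual strategy. The problem is that the step you treat as a dictionary --- ``$\alpha$ is nothing but the TFT of $(H^\bullet(X),\cup,\int_X)$'' --- is the entire content of the lemma, and as stated it is not correct in general. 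The coefficient of a tuple $(v_1,\dots,v_n)$ in the cup-product TFT is $\int_X v_1^\vee\cup\dots\cup v_n^\vee$ (Poincar\'e-dual basis), and one must check that \emph{every} tuple with a non-vanishing such correlator appears in one of the four families defining $\alpha$, with the right coefficient. A degree count shows the only family not obviously covered is $(c_i,c_j,d,\dots,d)$, which can carry the non-zero correlator $\pm\langle b_i,b_j\rangle$ exactly when $D=2r$; this family is absent from $\alpha$. So the identification holds only if the dual collection $\{b_i\}$ is itself isotropic --- a hypothesis the statement does not impose. This is not a cosmetic point: for $A=H^\bullet(S^2\times S^2)$ with $c_1=x$ and the admissible choice $b_1=y+\lambda x$, $\lambda\neq 0$, the trilinear form read off from $\alpha_{0,3}$ gives $T(a,b_1,b_1)=4\lambda\neq 2\lambda=\langle b_1,b_1\rangle$ and the induced product is not associative, so $\alpha$ is then not even a solution of the master equation. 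Any proof along your lines must therefore first pin down the normalization $\langle b_i,b_j\rangle=0$ (automatic unless $D=2r$, and always achievable by the hyperbolic-basis correction $b_i\mapsto b_i-\tfrac12\sum_k\langle b_i,b_k\rangle c_k$) and then carry out the full enumeration of non-zero correlators --- which is precisely the degree bookkeeping that constitutes the paper's argument.

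A secondary issue is internal to your write-up: having reduced the lemma to the folk equivalence between $2$d TFTs and commutative Frobenius algebras, the remaining verification is a purely linear-algebraic matching of correlators (no $\d_1,\d_2,\Delta,\{\,,\}$ involved), yet your last paragraph proposes to ``organise this exactly as in the proof of \cref{lem:PZalpha}, computing $\d_1\alpha$, $\d_2\alpha$, $\Delta\alpha$ and $\tfrac12\{\alpha,\alpha\}$ separately.'' That is the paper's direct proof, not the Frobenius-algebra proof; as written the proposal hedges between the two and cashes out neither. Commit to one: either prove the correlator identification (with the isotropy hypothesis made explicit and the signs for odd classes tracked, including in the supertrace giving $\mathrm{sdim}(A)$), after which the master equation is free, or drop the identification and do the graph cancellations directly as in \cref{lem:PZalpha}.
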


\begin{proof}
Notice that, for  degree reasons coming from
the natural cohomological degree of the algebra $A$,
the left-hand side of the master equation can be non-trivial only in genera $0$ and $1$.
The proof that thus defined $\alpha$ is a solution to the master equation repeats \emph{mutatis mutandis} the proof of Lemma~\ref{lem:PZalpha}.
\end{proof}

\begin{theorem}
Let $A$ and $r$ be as above and
let $\Lambda\in H^\bullet\big(\overline{\mathcal{M}}_{h,m}\big)$ be a minimal class
whose degree has the same parity as $rm$.
Consider
the even degree element $\lambda\in \widehat{\mathbb{G}}_\od\big(H^\bullet\big(\oM\big)\big)(A)$ defined by Equation~\eqref{eq:alphapluslambda}. Then $\alpha+\lambda$ is a solution to the master equation \eqref{eq:MaEq}, which means that $\alpha+\lambda$ defines a CohFT.
\end{theorem}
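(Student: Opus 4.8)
The plan is to follow the template of the proof of \cref{thm:PZalphalambda}, the only genuinely new input being the treatment of the fourth summand of $\alpha$. Since the preceding lemma already gives $\d\alpha+\Delta\alpha+\tfrac12\{\alpha,\alpha\}=0$, expanding the master equation for $\alpha+\lambda$ and discarding the $\alpha$-part reduces the claim to
\[
\d\lambda+\Delta\lambda+\{\alpha,\lambda\}+\tfrac12\{\lambda,\lambda\}=0 .
\]
First I would check that $\K\{c_1,\dots,c_m,d\}$ is isotropic for the degree $-D$ Poincaré pairing: $\langle c_i,c_j\rangle=0$ since $\K\{c_1,\dots,c_m\}\subset A_r$ is isotropic, and $\langle c_i,d\rangle=\langle d,d\rangle=0$ for degree reasons because $r>0$ and $D>0$. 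As all legs of $\lambda$ carry labels from this subspace, both $\Delta\lambda=0$ and $\{\lambda,\lambda\}=0$; and since $A=H^\bullet(X)$ has zero differential, $\d_A\lambda=0$. The identity therefore collapses to $\d_1\lambda+\d_2\lambda=\{\alpha,\lambda\}$, exactly as in \emph{op.\ cit.}

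I would then compute the left-hand side from the minimality of $\Lambda$. As $\Lambda$ is primitive, $\Lambda_k=p_k^*\Lambda$ restricts nontrivially to a boundary divisor of $\overline{\mathcal{M}}_{h,k}$ only when that divisor dominates $\overline{\mathcal{M}}_{h,m}$ under $p_k$, i.e.\ when the genus-$0$ component that splits off carries at most one of the marked points labelled by a $c_i$, all its other points being forgotten and hence labelled $d$. This forces $\d_1\lambda=0$ and exhibits $\d_2\lambda$ as two families of two-vertex graphs joining a $\Lambda_k$-vertex to an $\mathbb{1}_{0,n}$-vertex whose legs are either all labelled $d$, or all labelled $d$ save for a single $c_i$ --- verbatim the output of \cref{thm:PZalphalambda}.

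On the right-hand side, the only nonzero graftings of a leg of $\alpha$ against a leg of $\lambda$ are $a\leftrightarrow d$ and $b_i\leftrightarrow c_i$, every other pairing between $\{c_i,d\}$ and $\{a,b_i,c_i,d,e_k\}$ vanishing by isotropy, orthogonality or degree. Grafting the $a$ of the first summand yields the all-$d$ bubble family, and grafting the distinguished $b_i$ of the second summand yields the single-$c_i$ bubble family; together these reproduce $\d_2\lambda$ precisely as before. The genus-$1$ (third) summand contributes nothing, since its legs are all labelled $d$ and $\langle d,-\rangle$ kills every leg of $\lambda$.

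The main obstacle is the fourth summand. A label $f_j=b_i$ occurring inside a tuple $(f_1,\dots,f_t)$ can also be grafted against $c_i$, and this produces two-vertex graphs whose $\mathbb{1}_{0,n}$-bubble still carries the surviving $f$'s (classes drawn from the $b$'s and $e$'s), which never occur in $\d_2\lambda$. The whole content of the theorem beyond \cref{thm:PZalphalambda} is that these contributions cancel. The hard part will be to organize this cancellation: one expects that summing the scalar weights --- each a genus-$0$ correlator of the Frobenius algebra $A$, nonzero only under the defining constraint $\sum_j\deg f_j=(t-1)D$ --- over all tuples and over the dual pairs $(b_i,c_i)$, and keeping track of the symmetry factors $1/((n-t)!\,|\mathrm{Aut}(f_1,\dots,f_t)|)$ together with the Koszul signs of the odd edges, makes the surviving-$f$ graphs vanish by the associativity and completeness relations of $A$. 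The parity hypothesis $\deg\Lambda\equiv rm\pmod 2$ is what places $\lambda$, and hence all graphs above, in the $\Z2$-degree in which this bookkeeping is consistent. I expect this step to proceed \emph{mutatis mutandis} as in the cancellations that prove $\alpha$ itself solves \eqref{eq:MaEq}, and to be the only place where real computation is required.
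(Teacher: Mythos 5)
You have correctly reproduced everything that genuinely transfers from the proof of \cref{thm:PZalphalambda}: the reduction to $\d\lambda+\Delta\lambda+\{\alpha,\lambda\}+\tfrac12\{\lambda,\lambda\}=0$, the vanishing of $\Delta\lambda$ and $\{\lambda,\lambda\}$ from the isotropy of $\K\{c_1,\dots,c_m,d\}$, the computation of $\d_1\lambda$ and $\d_2\lambda$ from minimality, the matching of the first two summands of $\alpha$ against $\d_2\lambda$, and the vanishing of the genus-one summand's contribution. (For what it is worth, the paper's own proof consists of the single sentence ``repeats \emph{mutatis mutandis}'', so you have probed further than the text does.) But your proposal is not yet a proof: the step you yourself flag as ``the only place where real computation is required'' --- the cancellation of the terms obtained by grafting a $b_i$-leg of the fourth summand of $\alpha$ against a $c_i$-leg of $\lambda$ --- is exactly the new content of the statement relative to \cref{thm:PZalphalambda}, and you neither carry it out nor identify the mechanism that would make it work.

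Your expectation that this step goes through ``mutatis mutandis as in the cancellations that prove $\alpha$ itself solves the master equation'' is the weak point. For a tuple $(f_1,\dots,f_t)$ containing $f_j=b_i$, the offending two-vertex graph carries the residual labels $g_1,\dots,g_{t-1}\in\{b_\bullet,e_\bullet\}$ on its genus-zero vertex and enters $\{\alpha,\lambda\}$ with coefficient proportional to $\int_X g_{t-1}^*\cdots g_1^*\,c_i$, since $b_i^*=c_i$. As you observe, no such graph occurs in $\d_2\lambda$ or anywhere else in the equation, so these coefficients must vanish. For $t=2$ this is immediate: the coefficient reduces to $\langle g_1^*,c_i\rangle$ with $g_1^*\in\K\{c_1,\dots,c_m\}\oplus\K\{e_1,\dots,e_s\}$, which is zero by isotropy and orthogonality --- and $t=2$ is the only case occurring when $X$ is a surface, which is why nothing of the sort appears in the original argument. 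For $t\geqslant 3$, however, the coefficient is the $b_i$-component of a cup product of at least two elements of $\K\{c_\bullet\}\oplus\K\{e_\bullet\}$, and this is not killed by the stated hypotheses: already for $X=T^2\times T^2$ with $c_1=\alpha_1$ one can choose $e$-basis vectors whose Poincar\'e duals multiply to something pairing nontrivially with $c_1$ (e.g.\ $\beta_1\cdot\alpha_2\beta_2$). So the proof requires either an argument for why these contributions nonetheless vanish or cancel, or an additional constraint on the subspace $\K\{c_\bullet\}\oplus\K\{e_\bullet\}$; until that is supplied, the gap you located is real and unfilled.
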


\begin{proof}
The proof of this statement repeats \emph{mutatis mutandis} the proof of Theorem~\ref{thm:PZalphalambda}.
\end{proof}

\subsection{Quantum homotopy CohFT and the Buryak--Rossi functor}\label{subsec:Quantum}
In order to study formal deformations, one works with $\K[[\hbar]]$-extensions of the shifted Lie algebra whose Maurer--Cartan elements encode morphisms of modular operads. There is however another way to extend the present theory from the ground field $\K$ to the ground ring $\K[[\hbar]]$, which does not amount to extending the structure freely to $\K[[\hbar]]$-linear combinations. This gives rise to a new notion of \emph{quantum homotopy CohFT}.
Remarkably enough, quantum CohFTs
have already appeared in literature, in the work of Buryak and Rossi~\cite{BuryakRossi2016}
on quantization of Dubrovin--Zhang integrable hierarchies associated to CohFTs.
Moreover, the key
step in their quantization procedure after translating it
to our language appears to be a functor from the Deligne groupoid of homotopy CohFTs to
the Deligne groupoid of quantum homotopy CohFT.

\begin{definition}[{$\K[[\hbar]]$-extensions}]
The \emph{$\K[[\hbar]]$-extension} of a modular operad
\[\P=\big(\{\P_g(n)\}_{g,n}, d_\P,\allowbreak \xi_{ij}, \circ^j_i\big)\] defined over $\K$ is the modular operad
\[\P[[\hbar]]\coloneq\big(\{\P_g(n)[[\hbar]]\}_{g,n}, d_\P,
\xi_{ij},
\circ^j_i\big)\]
defined over $\K[[\hbar]]$ by $\K[[\hbar]]$-linear extensions of its structure maps.
In a similar way, we consider the $\K[[\hbar]]$-extensions of shifted modular operads, (shifted) modular cooperads, and complete shifted $\Delta$-Lie algebras. In this latter case, we get
\[(\g[[\hbar]], \F[[\hbar]], \d,\Delta, \{\ , \,\})\ .\]
\end{definition}

One can also consider the following ``quantum versions'' which are not free $\Khbar$-extensions.

\begin{definition}[Quantum versions]
The \emph{quantum (shifted) modular operad} associated to a (shifted) modular operad $\P=\big(
\{\P_g(n)\}_{g,n}, d_\P, \xi_{ij}, \circ^j_i\big)$ defined over $\K$ is the (shifted) modular operad
\[\P^\hbar\coloneq\big(\{\P_g(n)[[\hbar]]\}_{g,n}, d_\P, \hbar\,\xi_{ij}, \circ^j_i \big)\]
defined over $\K[[\hbar]]$ by $\K[[\hbar]]$-linear extensions of its structure maps.
In the same way, the \emph{quantum shifted $\Delta$-Lie algebra} associated to a shifted $\Delta$-Lie algebra
$(\g, \d,\Delta, \{\ , \,\})$
defined over $\K$ is the shifted $\Delta$-Lie algebra
\[\g^\hbar\coloneq(\g[[\hbar]], \d,\hbar\Delta, \{\ , \,\})\]
defined over $\K[[\hbar]]$ by $\K[[\hbar]]$-linear extensions of its structure maps.
\end{definition}

This procedure defines two functors from shifted modular operads (resp.\ complete shifted $\Delta$-Lie algebras) over $\K$ to shifted modular operads
(resp.\ complete shifted $\Delta$-Lie algebras)
over $\K[[\hbar]]$. These functors commute with the totalisation functor of \cref{lem:ModtoDeltaLie}:
\[
\vcenter{\hbox{
\begin{tikzcd}
\mathsf{shifted} \ \mathsf{modular} \ \mathsf{operads} \ \mathsf{over} \ {\K}
\arrow[d, "(-)^\hbar"]
\arrow[r, "\widehat{(-)}"]
 &
 \mathsf{complete} \ \mathsf{shifted} \ \Delta\textsf{-}\mathsf{Lie}\ \mathsf{algebras}  \ \mathsf{over} \ {\K}
\arrow[d, "(-)^\hbar"]
 \\
\mathsf{shifted} \ \mathsf{modular} \ \mathsf{operads}  \ \mathsf{over} \ {\Khbar}
\arrow[r, "\widehat{(-)}"]
&
 \mathsf{complete} \ \mathsf{shifted} \ \Delta\textsf{-}\mathsf{Lie}\ \mathsf{algebras}  \ \mathsf{over} \ {\Khbar}
\end{tikzcd}
}}
\]
since $\left(\prod_{(g,n) \in \NNs} \P_g(n)^{\Sy_n}\right)[[\hbar]]\cong \prod_{(g,n) \in \NNs} \left(\P_g(n)^{\Sy_n}[[\hbar]]\right)$~.
Their composition
produces a complete shifted $\Delta$-Lie algebra over $\Khbar$
which we call
the \emph{quantum totalisation} of a shifted modular operad and denote by~$\widehat{\P}^\hbar$~.

\begin{definition}[Quantum master equation]
The \emph{quantum master equation}
is the equation
\begin{equation}\label{eq:QME}
\d \alpha + \hbar\Delta \alpha + {\textstyle \frac12} \{\alpha, \alpha\}=0
 \end{equation}
in a shifted $\Delta$-Lie algebra over $\Khbar$ .
We only consider its degree $0$ (or even degree) solutions.
\end{definition}

In the case of a quantum shifted $\Delta$-Lie algebra $\gth$, the quantum master equation is nothing but the Maurer--Cartan equation of the associated shifted Lie algebra
$ \big(\gh, \d+\hbar\Delta, \{\ , \,\}\big)$
over $\Khbar$, that we still denote by $\gth$ by a slight abuse of notation. The associated set of solutions  is denoted by $\MC\big(\gth\big)$~.

\medskip

The quantum shifted modular operad associated to a convolution shifted modular operad is given by the following lemma.

\begin{lemma}\label{lem:QuantumConv}
There is an isomorphism of shifted modular operads over $\Khbar$:
\[
\Hom_{\K} \left(\C, \P\right)^\hbar\cong \Hom_{\Khbar} \left(\C[[\hbar]], \P^\hbar\right)\ ,
\]
natural in shifted modular cooperads $\C$ and modular operads $\P$ over $\K$~.
\end{lemma}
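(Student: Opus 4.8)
The plan is to exhibit the isomorphism already on the level of the underlying stable $\Sy$-modules and then to check that it intertwines the two shifted modular operad structures, the crucial point being that the quantization procedure inserts exactly one factor of $\hbar$ in the same place on each side. First I would construct, for each $(g,n)\in\NNs$, the natural $\Khbar$-linear map
\[\iota_{g,n}\colon \Hom_\K\!\big(\C_g(n),\P_g(n)\big)[[\hbar]]\longrightarrow \Hom_{\Khbar}\!\big(\C_g(n)[[\hbar]],\P_g(n)[[\hbar]]\big),\qquad \textstyle\sum_k f_k\hbar^k\longmapsto\big(c\mapsto \sum_k f_k(c)\hbar^k\big),\]
extended $\Khbar$-linearly and $\hbar$-adically continuously. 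Its inverse sends a map $\phi$ to the power series whose $k$-th coefficient is $c\mapsto[\hbar^k]\phi(c)$; this is well defined because the target $\P_g(n)[[\hbar]]=\prod_k\P_g(n)$ is a product and $\Hom_\K(\C_g(n),-)$ commutes with products, so that $\phi$ is recovered from its $\hbar$-coefficients. (Equivalently, in the situations relevant to our applications each $\C_g(n)$ is finite dimensional, whence $\C_g(n)[[\hbar]]$ is a free $\Khbar$-module on the same basis and the identification is immediate for \emph{all} $\Khbar$-linear maps.) Since $\iota_{g,n}$ is manifestly compatible with the conjugation $\Sy_n$-actions on both sides, the collection $\{\iota_{g,n}\}$ is an isomorphism of stable $\Sy$-modules over $\Khbar$.

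Next I would verify that $\iota$ transports the structure maps. As the internal differentials of $\C$ and $\P$ are altered neither by the $\Khbar$-linear extension nor by the quantization $(-)^\hbar$, both convolution differentials are given by the same formula $\partial(f)=d_\P\circ f-(-1)^{|f|}f\circ d_\C$; likewise the partial composition maps $\bigcirc_i^j(f\otimes g)=(-1)^{|f|+|g|}\,\circ_i^j\circ(f\otimes g)\circ\delta_i^j$ coincide, because neither $\circ_i^j$ nor $\delta_i^j$ is affected by quantization. The only place where $\hbar$ enters is the contraction map. On the left-hand side the quantum shifted modular operad $\Hom_\K(\C,\P)^\hbar$ has contraction maps $\hbar\,\Xi_{ij}$ by the very definition of $(-)^\hbar$. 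On the right-hand side the contraction of $\P^\hbar$ is $\hbar\,\xi_{ij}$, so the induced convolution contraction is
\[\Xi_{ij}^{\P^\hbar}(f)=(-1)^{|f|}\,(\hbar\,\xi_{ij})\circ f\circ\chi_{ij}=\hbar\,(-1)^{|f|}\,\xi_{ij}\circ f\circ\chi_{ij}=\hbar\,\Xi_{ij}(f),\]
the scalar $\hbar$ factoring out by $\Khbar$-linearity. Thus the two contraction maps agree under $\iota$, and $\iota$ is an isomorphism of shifted modular operads over $\Khbar$.

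Finally, naturality in $\C$ and $\P$ follows because $\iota$ is assembled from pre- and post-composition, which are functorial in the cooperad and in the operad respectively, so a morphism $\C\to\C'$ (resp.\ $\P\to\P'$) induces the expected commuting square on both sides. The one genuinely delicate point—and the step I expect to be the main obstacle—is the identification of the underlying modules, i.e.\ the interchange of $\Hom$ with $(-)[[\hbar]]$: for infinite-dimensional $\C_g(n)$ the map $\iota_{g,n}$ fails to surject onto \emph{all} $\Khbar$-linear maps, and one must either restrict to $\hbar$-adically continuous homomorphisms (which is the correct notion in the complete filtered setting used throughout the paper) or invoke the arity-wise finite-dimensionality available in our examples. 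Once this identification is secured, the remaining verifications reduce to the routine sign- and $\hbar$-bookkeeping carried out above.
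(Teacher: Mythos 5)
Your proof is correct and takes essentially the same route as the paper's: both rest on the natural identifications $\Hom_{\Khbar}\left(\C_g(n)[[\hbar]],\P_g(n)[[\hbar]]\right)\cong\Hom_{\K}\left(\C_g(n),\P_g(n)[[\hbar]]\right)\cong\Hom_{\K}\left(\C_g(n),\P_g(n)\right)[[\hbar]]$, followed by the observation that these preserve the structure maps, with the single factor of $\hbar$ entering only through the contraction $\Xi_{ij}$ on both sides. You additionally spell out the explicit inverse, the sign and $\hbar$ bookkeeping, and the continuity caveat for infinite-dimensional $\C_g(n)$ that the paper leaves implicit, but the argument is the same.
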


\begin{proof}
The proof relies  on the natural isomorphisms
\[
\Hom_{\Khbar} \left(\C_g(n)[[\hbar]], \P_g(n)[[\hbar]]\right)\cong
\Hom_{\K} \left(\C_g(n), \P_g(n)[[\hbar]]\right)
\cong
\Hom_{\K} \left(\C_g(n), \P_g(n)\right)[[\hbar]]\ ,
\]
which  preserves the respective structure maps.
\end{proof}

As in the classical case given in \cref{prop:MorphTw},
the set of Maurer--Cartan elements of the quantum convolution algebra associated to ${\Hom}_{\K}(\C, \P)$
 encodes morphisms from a suitable cobar construction.

\begin{proposition}\label{prop:QuantMorphTw}
There is natural bijection:
\[
\MC\left(\widehat{\Hom}_{\K}(\C, \P)^\hbar
\right)
\cong
\Hom_{\mathsf{mod}\,  \mathsf{op}_{\Khbar}}\left(\Omega \C[[\hbar]], \P^\hbar\right)
\ .
\]
\end{proposition}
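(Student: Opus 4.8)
The plan is to deduce the statement from \cref{lem:QuantumConv}, the commutation of the quantization functor $(-)^\hbar$ with the totalisation functor $\widehat{(-)}$, and the $\Khbar$-linear version of Barannikov's bijection of \cref{prop:MorphTw}. First I would apply the totalisation functor to the isomorphism of shifted modular operads over $\Khbar$ provided by \cref{lem:QuantumConv}, namely $\Hom_{\K}(\C,\P)^\hbar \cong \Hom_{\Khbar}(\C[[\hbar]], \P^\hbar)$. Since the commutative square displayed above \cref{prop:QuantMorphTw} shows that $(-)^\hbar$ commutes with $\widehat{(-)}$, this yields an isomorphism of complete shifted $\Delta$-Lie algebras over $\Khbar$:
\[
\widehat{\Hom}_{\K}(\C,\P)^\hbar \cong \widehat{\Hom}_{\Khbar}\left(\C[[\hbar]], \P^\hbar\right)\ .
\]

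The crucial bookkeeping step is to check that under this isomorphism the quantum $\Delta$-Lie structure on the left, whose distinguished operator is $\hbar\Delta$, is carried to the intrinsic convolution $\Delta$-Lie structure on the right. This reduces to the definition of the quantum modular operad $\P^\hbar$, whose contraction maps are those of $\P$ multiplied by $\hbar$. Tracing the formula $\Xi_{ij}(f)=(-1)^{|f|}\,\xi_{ij}\circ f\circ \chi_{ij}$ for the contraction maps of the convolution operad, one sees that the operator $\Delta=\sum_{i,j}\Xi_{ij}$ built from $\P^\hbar$ on $\widehat{\Hom}_{\Khbar}\left(\C[[\hbar]], \P^\hbar\right)$ equals $\hbar$ times the operator built from $\P$, whereas the differential $\d$ and the bracket $\{\ ,\,\}$ acquire no power of $\hbar$. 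Consequently the ordinary master equation \eqref{eq:MaEq} in $\widehat{\Hom}_{\Khbar}\left(\C[[\hbar]], \P^\hbar\right)$ is, when rewritten in terms of the classical operators, exactly the quantum master equation \eqref{eq:QME} in $\widehat{\Hom}_{\K}(\C,\P)^\hbar$. Restricting to degree $0$ (or even) solutions we therefore obtain
\[
\MC\left(\widehat{\Hom}_{\K}(\C,\P)^\hbar\right)=\Tw_{\Khbar}\left(\C[[\hbar]], \P^\hbar\right)\ .
\]

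Finally I would invoke the $\Khbar$-linear analogue of \cref{prop:MorphTw}. Since $\K$ has characteristic $0$ and $\Khbar$ is a complete local $\K$-algebra, the convolution algebra $\widehat{\Hom}_{\Khbar}\left(\C[[\hbar]], \P^\hbar\right)$ is complete with respect to the filtration $\F[[\hbar]]$ of \cref{lem:ModtoDeltaLie}, so the proof of \cref{prop:MorphTw} (following~\cite[Proposition~11.3.1]{LodayVallette12}) carries over \emph{mutatis mutandis} over the ground ring $\Khbar$, giving a natural bijection
\[
\Tw_{\Khbar}\left(\C[[\hbar]], \P^\hbar\right)\cong \Hom_{\mathsf{mod}\,\mathsf{op}_{\Khbar}}\left(\Omega\, \C[[\hbar]], \P^\hbar\right)\ ,
\]
where $\Omega\, \C[[\hbar]]$ is the cobar construction over $\Khbar$ of the $\Khbar$-extended cooperad $\C[[\hbar]]$. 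Composing the two identifications yields the claim, and naturality in $\C$ and $\P$ is inherited from the naturality asserted in \cref{lem:QuantumConv} and in \cref{prop:MorphTw}. The only point requiring genuine care is the second paragraph: the entire content of the quantum formalism is that quantizing the operad by inserting $\hbar$ on the contraction maps alone is matched on the Lie-theoretic side by replacing the master equation with the quantum master equation, so one must verify that every remaining structure map is transported with no extraneous powers of $\hbar$.
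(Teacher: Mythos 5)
Your proposal is correct and follows exactly the route the paper takes: the paper's proof is the single sentence ``This is a direct corollary of \cref{prop:MorphTw} and \cref{lem:QuantumConv}'', and your argument simply makes explicit the bookkeeping that this corollary relies on, namely that the intrinsic contraction operator of the convolution algebra $\widehat{\Hom}_{\Khbar}\left(\C[[\hbar]], \P^\hbar\right)$ is $\hbar$ times the classical one, so the ordinary master equation there coincides with the quantum master equation in $\widehat{\Hom}_{\K}(\C,\P)^\hbar$. The verification in your second paragraph is precisely the content the authors leave implicit, and it is carried out correctly.
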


\begin{proof}
This is a direct corollary of \cref{prop:MorphTw} and \cref{lem:QuantumConv}.
\end{proof}

\begin{definition}[Quantum homotopy CohFT]\label{def:qhCohFT}
A \emph{quantum homotopy CohFT}, or $\text{\it q-CohFT}_\infty$ for short, is a         ($\ZZ$-graded)
dg symmetric vector space
$(A, d_A, \langle\ ,\, \rangle)$
equipped with an even
morphism of $\Z2$-graded modular operads over $\Khbar$
 \[
 \Omega \B  H_\bullet\big(\oM\big)[[\hbar]]\to \EEnd_A^\hbar\ .
 \]
 \end{definition}

We are going to show that such elaborate structures appear functorially from CohFTs, see below, and that they admit a non trivial action of the prounipotent Grothendieck--Teichm\"uller group, see the next section.
Since $A[[\hbar]]^{\otimes_{\Khbar} n}\cong \big(A^{\otimes_{\K} n}\big)[[\hbar]]$~, a quantum homotopy CohFT amounts to a certain $\Khbar$-linear structure on $A[[\hbar]]$~.

\begin{proposition}\label{prop:QCohFTMC}
The data of a quantum homotopy CohFT is equivalent to the data of a solution to the quantum master equation of the
quantum  shifted $\Delta$-Lie algebra
\[
\g_A^\hbar\cong \left(
\widehat{\mathbb{G}}_\od\big(\Hbul\big(\oM\big)\big)(A)[[\hbar]], \mathrm{d}, \hbar\, \Delta, \{\ ,\,\}
\right)\ .
\]
\end{proposition}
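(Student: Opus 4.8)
The plan is to realize quantum homotopy CohFT structures as Maurer--Cartan elements through the quantum form of the cobar/twisting-morphism adjunction, and then to recognize the ambient convolution algebra as $\g_A^\hbar$. First I would unfold \cref{def:qhCohFT}: a quantum homotopy CohFT on $(A, d_A, \langle\ ,\,\rangle)$ is by definition an even morphism of $\Z2$-graded modular operads over $\Khbar$
\[
\Omega\B H_\bullet(\oM)[[\hbar]] \longrightarrow \EEnd_A^\hbar\ .
\]
Taking $\C=\B H_\bullet(\oM)$ and $\P=\EEnd_A$ in \cref{prop:QuantMorphTw} produces a natural bijection between the set of such morphisms and the Maurer--Cartan set of the quantum convolution algebra,
\[
\Hom_{\mathsf{mod}\, \mathsf{op}_{\Khbar}}\big(\Omega\B H_\bullet(\oM)[[\hbar]],\, \EEnd_A^\hbar\big)
\cong
\MC\big(\widehat{\Hom}_\K(\B H_\bullet(\oM),\, \EEnd_A)^\hbar\big)\ .
\]

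Next I would identify this quantum convolution $\Delta$-Lie algebra with $\g_A^\hbar$. \cref{lem:DefComp} supplies an isomorphism of shifted $\Delta$-Lie algebras between the classical convolution algebra $\widehat{\Hom}_\K(\B H_\bullet(\oM), \EEnd_A)$ and $\g_A$; applying the quantum functor $(-)^\hbar$ to this isomorphism yields
\[
\widehat{\Hom}_\K(\B H_\bullet(\oM),\, \EEnd_A)^\hbar \cong \g_A^\hbar\ .
\]
Here I would legitimize the passage to quantum versions: by \cref{lem:QuantumConv} together with the commuting square of the totalisation and quantum functors displayed above (and the totalisation functor of \cref{lem:ModtoDeltaLie}), the quantum convolution $\Delta$-Lie algebra is genuinely the quantum totalisation of the convolution modular operad, so that $(-)^\hbar$ can be transported along the isomorphism of \cref{lem:DefComp}.

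The final step is a matter of definition: the Maurer--Cartan elements of the quantum shifted $\Delta$-Lie algebra $\g_A^\hbar$ are exactly the degree $0$ solutions of the quantum master equation $\d\alpha + \hbar\Delta\alpha + \tfrac12\{\alpha,\alpha\}=0$, as recorded in the remark following the definition of the quantum master equation. Chaining the two natural bijections then yields the asserted equivalence.

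The only genuinely delicate point is the compatibility of \cref{lem:DefComp} with the quantum functor, namely checking that the factor of $\hbar$ introduced by $(-)^\hbar$ attaches precisely to the operator $\Delta$ and not to the bracket or the internal differential. This holds because in $\EEnd_A^\hbar$ only the contraction maps $\xi_{ij}$ are rescaled by $\hbar$, and under the dictionary of \cref{lem:DefComp} the contraction maps induce exactly $\Delta$ on the totalisation, whereas the partial compositions inducing $\{\ ,\,\}$ and the differential $\d$ remain untouched; I would record this bookkeeping explicitly to conclude.
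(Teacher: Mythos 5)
Your proposal is correct and follows exactly the paper's own route: the paper's proof is a one-line citation of \cref{def:qhCohFT}, \cref{prop:QuantMorphTw}, and \cref{lem:DefComp}, which is precisely the chain of identifications you spell out. Your extra bookkeeping that the factor of $\hbar$ attaches only to $\Delta$ (because only the contraction maps $\xi_{ij}$ are rescaled in $\EEnd_A^\hbar$) is a welcome elaboration of what the paper leaves implicit, not a deviation.
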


\begin{proof}
It follows from the above definition, \cref{prop:QuantMorphTw}, and \cref{lem:DefComp}.
\end{proof}

\cref{prop:QCohFTMC} allows us to think of the datum of a quantum homotopy CohFT as a series of even degree
\[\alpha^\hbar=\alpha^{(0)}+ \alpha^{(1)}\hbar+\cdots+\alpha^{(k)} \hbar^k+\cdots\ ,\]
with $\alpha^{(k)}\in \widehat{\mathbb{G}}_\od\big(\Hbul\big(\oM\big)\big)(A)$, satisfying
\[ \d_A\alpha^{(k)} - \d_1 \alpha^{(k)} - \d_2 \alpha^{(k)} +\Delta \alpha^{(k-1)} +
{\textstyle \frac12}
\sum_{l+m=k}
\left\{\alpha^{(l)}, \alpha^{(m)}\right\}=0\ ,\]
for any $k\geqslant 0$, under the convention $\alpha^{(-1)}=0$~.
In a similar way, one defines a \emph{quantum CohFT} as an  even graded morphism
 $H_\bullet\big(\oM\big)[[\hbar]]\to \EEnd_A^\hbar$
of modular operads over $\Khbar$~. In terms of the description as a series, a quantum
CohFT is a quantum homotopy CohFT $\alpha^\hbar$ where each $\alpha^{(k)}$ is a
combination of one-vertex graphs.

\medskip

Let $\lambda_0=1,\lambda_1,\dots,\lambda_{g}\in \Hbul\big(\oM_{g,n}\big)$ be the Chern
classes of the Hodge bundle $\mathbb{E}_g \to \oM_{g,n}$~.
We denote by
$\lambda^\hbar$ the element of $H^{\bullet}\big(\oM_{g,n}\big)[\hbar]$ defined by
\[\lambda^\hbar\coloneq \lambda_{g}+ \lambda_{g-1}\hbar+\cdots+ \lambda_{1}\hbar^{g-1}+\lambda_{0}\hbar^{g}\ .\]
(we intentionally omit $g$ and $n$ in the notation for $\lambda^\hbar$, since it will be always clear on which space this class is taken). The main property of the class $\lambda^\hbar$ can be formulated as follows.
\begin{lemma}[\cite{Mumford1983}] \label{lem:mumford}
	In the modular cooperad
	$\Hbul\big(\overline{\mathcal{M}}\big)[\hbar]$,
	the cohomology classes $\lambda^\hbar$ satisfy  the following identities:
\begin{itemize}
	\item[$\diamond$] the image of $\lambda^\hbar$ under
	any expansion map \[\chi_{ij} : \Hbul\big(\overline{\mathcal{M}}_{g+1,n-2}\big)[\hbar] \to \Hbul\big(\overline{\mathcal{M}}_{g,n}\big)[\hbar]\] is equal to~$\hbar\lambda^\hbar$~;
	\item[$\diamond$]  the image of $\lambda^\hbar$ under
	any
	partial decomposition map
	\[\delta_{i}^j : \Hbul\big(\overline{\mathcal{M}}_{g+g',n+n'-2}\big)[\hbar] \to
	\Hbul\big(\overline{\mathcal{M}}_{g,n}\big)[\hbar]
	\otimes \Hbul\big(\overline{\mathcal{M}}_{g',n'}\big)[\hbar]\] is equal to $\lambda^\hbar\otimes \lambda^\hbar$~.
\end{itemize}
\end{lemma}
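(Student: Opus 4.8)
The plan is to reduce both identities to a single geometric input --- the restriction of the Hodge bundle to the boundary divisors of $\oM$ --- and then to perform an elementary Chern-polynomial computation, keeping careful track of the formal variable $\hbar$. The starting observation is that $\lambda^\hbar=\sum_{i=0}^{g}\lambda_i\,\hbar^{g-i}$ is precisely the Chern polynomial $\sum_{i=0}^g c_i(\mathbb{E}_g)\,\hbar^{g-i}$ of the Hodge bundle, evaluated at $\hbar$. The second ingredient is that the shifted modular cooperad structure on $\Hbul\big(\oM\big)$ is, by construction, dual to the modular operad structure on $H_\bullet\big(\oM\big)$ induced by the gluing maps recalled in the first example above; since those gluing maps are proper, dualizing their homological pushforwards produces cohomological pullbacks. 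Concretely, the expansion map $\chi_{ij}$ is the pullback $\xi_{ij}^{*}$ along the self-gluing map $\xi_{ij}\colon \oM_{g,n}\to\oM_{g+1,n-2}$, and the partial decomposition map $\delta_i^j$ is, up to the K\"unneth isomorphism, the pullback along the gluing map $\mathrm{gl}\colon\oM_{g,n}\times\oM_{g',n'}\to\oM_{g+g',n+n'-2}$. The desuspension and sign conventions built into the shifted cooperad act only on the formal edge-grading and are immaterial here, since every class involved is even and the claimed identities are sign-free.

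Next I would invoke Mumford's description of how $\mathbb{E}$ restricts to these strata \cite{Mumford1983}. For a \emph{separating} node, a global section of the dualizing sheaf of $C_1\cup_{\mathrm{pt}}C_2$ has vanishing residue at the node on each side (the total residue on a connected curve being zero), hence restricts to a pair of regular differentials; this yields the external Whitney splitting $\mathrm{gl}^{*}\mathbb{E}_{g+g'}\cong \mathbb{E}_{g}\boxplus\mathbb{E}_{g'}$. For a \emph{non-separating} node, a section is a differential of the third kind on the normalization with opposite simple residues at the two glued points, so the residue map produces the short exact sequence $0\to\mathbb{E}_{g}\to\xi_{ij}^{*}\mathbb{E}_{g+1}\to\mathcal{O}\to 0$ with trivial quotient line bundle. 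In particular $c\big(\xi_{ij}^{*}\mathbb{E}_{g+1}\big)=c(\mathbb{E}_{g})$ and $c\big(\mathrm{gl}^{*}\mathbb{E}_{g+g'}\big)=c(\mathbb{E}_{g})\cdot c(\mathbb{E}_{g'})$.

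The remaining step is bookkeeping. In the separating case the Whitney formula gives $c_i\big(\mathbb{E}_g\boxplus\mathbb{E}_{g'}\big)=\sum_{a+b=i}\lambda_a\otimes\lambda_b$, so that
\[
\delta_i^j(\lambda^\hbar)=\sum_{a=0}^{g}\sum_{b=0}^{g'}\lambda_a\otimes\lambda_b\;\hbar^{(g-a)+(g'-b)}=\Big(\textstyle\sum_{a=0}^g\lambda_a\hbar^{g-a}\Big)\otimes\Big(\textstyle\sum_{b=0}^{g'}\lambda_b\hbar^{g'-b}\Big)=\lambda^\hbar\otimes\lambda^\hbar .
\]
In the non-separating case the short exact sequence forces $c_i\big(\xi_{ij}^{*}\mathbb{E}_{g+1}\big)=\lambda_i$ for $0\le i\le g$ and $c_{g+1}\big(\xi_{ij}^{*}\mathbb{E}_{g+1}\big)=0$, since $c(\mathbb{E}_g)$ has degree at most $g$; hence
\[
\chi_{ij}(\lambda^\hbar)=\sum_{i=0}^{g}\lambda_i\,\hbar^{\,g+1-i}=\hbar\sum_{i=0}^{g}\lambda_i\hbar^{g-i}=\hbar\,\lambda^\hbar .
\]

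The genuinely nontrivial content is the geometric input of the second paragraph --- the restriction of $\mathbb{E}$ to the separating and non-separating boundary divisors --- which is exactly Mumford's \cite{Mumford1983}; everything afterwards is formal manipulation of Chern polynomials in the $\hbar$-graded ring $\Hbul\big(\oM\big)[\hbar]$. I therefore expect the only point requiring care to be the clean identification of the abstract cooperad maps $\chi_{ij}$ and $\delta_i^j$ with the Gysin pullbacks $\xi_{ij}^{*}$ and $\mathrm{gl}^{*}$, so as to be sure that the shifted-cooperad conventions contribute no spurious factors; because $\lambda^\hbar$ is even and both target identities carry no signs, this identification reduces entirely to the pullback computation above.
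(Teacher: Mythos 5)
Your proposal is correct: the paper gives no proof of this lemma beyond the citation to Mumford, and your argument supplies precisely the content of that reference --- the Whitney splitting $\mathrm{gl}^{*}\mathbb{E}_{g+g'}\cong\mathbb{E}_g\boxplus\mathbb{E}_{g'}$ at a separating node and the extension $0\to\mathbb{E}_g\to\xi_{ij}^{*}\mathbb{E}_{g+1}\to\mathcal{O}\to 0$ at a non-separating one --- followed by the routine Chern-polynomial bookkeeping in $\hbar$, which checks out. Your identification of $\chi_{ij}$ and $\delta_i^j$ with the gluing-map pullbacks is also consistent with how the paper uses these cooperad structure maps elsewhere (e.g.\ in the definition of minimal classes and in the Manin--Zograf conditions), so no spurious signs arise for these even classes.
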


To any element $\alpha\in \g_A$, we associate  the element $\BR(\alpha)\in \g_A^\hbar$ defined by
the following rule: we multiply the cohomology classes labeling each vertex by the corresponding $\lambda^\hbar$ and the whole graph by $\hbar^{b_1}$, where $b_1$ is the first Betti number of the graph. For instance, we have
  \[\BR\colon
  \vcenter{\hbox{\begin{tikzpicture}[scale=1]

	\draw[thick] (-1, 1.3) to [out=0,in=135] (1,0);
	\draw[thick] (-1, -1.3) to [out=0,in=225] (1,0);
	\draw[thick] (-1, 1.3) to [out=245,in=115] (-1,-1.3);
	\draw[thick] (-1, 1.3) to [out=295,in=65] (-1,-1.3);

	\draw[thick]  (1.7,-0.4) arc [radius=0.4, start angle=270, end angle= 450];
	\draw[thick] (1, 0) to [out=315,in=180] (1.7,-0.4);
	\draw[thick] (1, 0) to [out=45,in=180] (1.7,0.4);

	\draw[thick]  (-1.7,-1.7) arc [radius=0.4, start angle=270, end angle= 90];
	\draw[thick] (-1, -1.3) to [out=225,in=0] (-1.7,-1.7);
	\draw[thick] (-1, -1.3) to [out=135,in=0] (-1.7,-0.9);

	\draw[thick] (-1, 1.3) -- (-1, 2) node[above]  {\scalebox{0.8}{$a_3$}};
	\draw[thick] (-1, 1.3) -- (-1.5, 1.8) node[above left]  {\scalebox{0.8}{$a_1$}};
	\draw[thick] (-1, 1.3) -- (-0.5, 1.8) node[above right]  {\scalebox{0.8}{$a_7$}};
	\draw[thick] (-1, -1.3) -- (-1, -2) node[below]  {\scalebox{0.8}{$a_2$}};
	\draw[thick] (-1, -1.3) -- (-0.5, -1.8) node[below right]  {\scalebox{0.8}{$a_5$}};
	\draw[thick] (1, 0) -- (1, 0.7) node[above]  {\scalebox{0.8}{$a_4$}};
	\draw[thick] (1, 0) -- (1, -0.7) node[below]  {\scalebox{0.8}{$a_6$}};

	\draw[fill=white, thick] (1,0) circle [radius=10pt];
	\draw[fill=white, thick] (-1,1.3) circle [radius=10pt];
	\draw[fill=white, thick] (-1,-1.3) circle [radius=10pt];

	\node at (1,0) {\scalebox{1}{$\mu_2$}};
	\node at (-1,1.3) {\scalebox{1}{$\mu_1$}};
	\node at (-1,-1.3) {\scalebox{1}{$\mu_3$}};
	\end{tikzpicture}}}
\qquad \mapsto \qquad
  \vcenter{\hbox{\begin{tikzpicture}[scale=1]

		\draw[thick] (-1, 1.3) to [out=0,in=135] (1,0);
		\draw[thick] (-1, -1.3) to [out=0,in=225] (1,0);
		\draw[thick] (-1, 1.3) to [out=245,in=115] (-1,-1.3);
		\draw[thick] (-1, 1.3) to [out=295,in=65] (-1,-1.3);

		\draw[thick]  (1.7,-0.4) arc [radius=0.4, start angle=270, end angle= 450];
		\draw[thick] (1, 0) to [out=315,in=180] (1.7,-0.4);
		\draw[thick] (1, 0) to [out=45,in=180] (1.7,0.4);

		\draw[thick]  (-1.7,-1.7) arc [radius=0.4, start angle=270, end angle= 90];
		\draw[thick] (-1, -1.3) to [out=225,in=0] (-1.7,-1.7);
		\draw[thick] (-1, -1.3) to [out=135,in=0] (-1.7,-0.9);

		\draw[thick] (-1, 1.3) -- (-1, 2) node[above]  {\scalebox{0.8}{$a_3$}};
		\draw[thick] (-1, 1.3) -- (-1.5, 1.8) node[above left]  {\scalebox{0.8}{$a_1$}};
		\draw[thick] (-1, 1.3) -- (-0.5, 1.8) node[above right]  {\scalebox{0.8}{$a_7$}};
		\draw[thick] (-1, -1.3) -- (-1, -2) node[below]  {\scalebox{0.8}{$a_2$}};
		\draw[thick] (-1, -1.3) -- (-0.5, -1.8) node[below right]  {\scalebox{0.8}{$a_5$}};
		\draw[thick] (1, 0) -- (1, 0.7) node[above]  {\scalebox{0.8}{$a_4$}};
		\draw[thick] (1, 0) -- (1, -0.7) node[below]  {\scalebox{0.8}{$a_6$}};

		\draw[fill=white, thick] (1,0) circle [radius=12pt];
		\draw[fill=white, thick] (-1,1.3) circle [radius=12pt];
		\draw[fill=white, thick] (-1,-1.3) circle [radius=12pt];

		\node at (1,0) {\scalebox{0.8}{$\mu_2\lambda^\hbar$}};
		\node at (-1,1.3) {\scalebox{0.8}{$\mu_1\lambda^\hbar$}};
		\node at (-1,-1.3) {\scalebox{0.8}{$\mu_3\lambda^\hbar$}};
		\end{tikzpicture}}}\ \, \hbar^4
\]

\begin{lemma}\label{lem:BRmorph}
The above assignment
defines a morphism of complete $\Z2$-graded shifted dg Lie algebras:
\[\BR\ : \ \left(\g_A[[\hbar]], \d+\Delta, \{\ , \,\}\right) \to \g_A^\hbar=
\left(\g_A[[\hbar]], \d+\hbar\Delta, \{\ , \,\}\right)\ .\]
\end{lemma}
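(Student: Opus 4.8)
The plan is to verify that $\BR$ is a filtered, degree-preserving chain map that also preserves the brackets; since all structure maps are $\Khbar$-linear it suffices to check everything on a single decorated graph $\gamma$. I write $\BR(\gamma)=\hbar^{b_1(\gamma)}\gamma^{\lambda}$, where $\gamma^{\lambda}$ denotes $\gamma$ with each vertex cohomology class $\mu_v$ replaced by the cup product $\mu_v\lambda^\hbar$ and $b_1(\gamma)$ is the first Betti number. Every $\lambda_k$ lies in even cohomological degree, so $\lambda^\hbar$ is even and commutes past all decorations without Koszul signs; hence $\BR$ is $\Z2$-degree preserving. As multiplication by $\lambda^\hbar$ and by $\hbar^{b_1}$ changes neither the total genus $g$ nor the arity $n$, the map $\BR$ preserves the filtration $\F$ and is therefore continuous, so it makes sense on the completion.

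First I would dispose of the two operations that ignore the cohomology decorations. The extended differential $\d_A$ acts only on the $A$-labels, leaving both $b_1$ and every vertex class untouched, so $\BR\d_A=\d_A\BR$ tautologically. For the bracket, the description in \cref{lem:DefComp} shows that $\{\gamma,\zeta\}$ grafts a single new edge joining a leaf of $\gamma$ to a leaf of $\zeta$; since this merges two disjoint connected graphs into one it creates no cycle, whence $b_1(\{\gamma,\zeta\})=b_1(\gamma)+b_1(\zeta)$, and the grafting leaves all vertex classes unchanged. Consequently $\hbar^{b_1(\gamma)}\hbar^{b_1(\zeta)}=\hbar^{b_1(\{\gamma,\zeta\})}$ and $\{\gamma,\zeta\}^\lambda=\{\gamma^\lambda,\zeta^\lambda\}$, giving $\BR\{\gamma,\zeta\}=\{\BR\gamma,\BR\zeta\}$.

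Next I would treat the two pieces $\d_1,\d_2$ of the graph differential, which by \cref{lem:DefComp} act on the labels through the structure maps of the modular cooperad $\Hbul(\oM)$. The key inputs are that these restriction maps to boundary strata are \emph{morphisms of algebras} for the cup product (the Hopf structure), together with \cref{lem:mumford}. For $\d_2$, splitting a vertex applies a partial decomposition map $\delta_i^j$ to its label; multiplicativity and the identity $\delta_i^j(\lambda^\hbar)=\lambda^\hbar\otimes\lambda^\hbar$ give $\delta_i^j(\mu_v\lambda^\hbar)=\sum(\mu_{v}'\lambda^\hbar)\otimes(\mu_v''\lambda^\hbar)$, so the two new vertices already carry the correct $\lambda^\hbar$-factors; as splitting a vertex along one edge preserves $b_1$, no power of $\hbar$ is gained and $\BR\d_2=\d_2\BR$. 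For $\d_1$, creating a loop applies an expansion map $\chi_{ij}$ to the label and raises $b_1$ by $1$; multiplicativity and $\chi_{ij}(\lambda^\hbar)=\hbar\lambda^\hbar$ yield $\chi_{ij}(\mu_v\lambda^\hbar)=\hbar\,\chi_{ij}(\mu_v)\lambda^\hbar$, so the extra $\hbar$ produced by Mumford's relation on the $\d_1\BR$-side is exactly the factor recorded on the $\BR\d_1$-side by the jump $b_1\mapsto b_1+1$. Hence $\BR\d_1=\d_1\BR$, and altogether $\BR\d=\d\BR$ for $\d=\d_A-\d_1-\d_2$.

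The crux is the comparison of $\Delta$ with $\hbar\Delta$, and it explains the quantum twist of the differential. By its description in \cref{lem:DefComp}, $\Delta$ pairs two external $A$-legs of the \emph{connected} graph $\gamma$, records their pairing, and inserts the resulting tadpole edge, leaving every cohomology class $\mu_v$ untouched. Because $\gamma$ is connected, adding this edge always raises $b_1$ by $1$; yet, in contrast with $\d_1$, no $\lambda^\hbar$-class is fed through an expansion map, so no compensating $\hbar$ is produced. Thus $\BR(\Delta\gamma)=\hbar^{b_1(\gamma)+1}(\Delta\gamma)^\lambda$ while $\Delta\BR(\gamma)=\hbar^{b_1(\gamma)}(\Delta\gamma)^\lambda$, whence $\BR\,\Delta=\hbar\,\Delta\,\BR$. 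Combining with the previous paragraph gives $\BR(\d+\Delta)=\d\BR+\hbar\Delta\BR=(\d+\hbar\Delta)\BR$, which together with bracket-preservation completes the argument. The only genuine difficulty is this bookkeeping of $\hbar$-powers: one must recognise that the $b_1$-grading encoded by $\BR$ supplies, for the loop-creating operations, precisely the factor of $\hbar$ that Mumford's relation attaches to $\chi_{ij}$ but that $\Delta$---bypassing the Hodge classes entirely---fails to produce, which is exactly why the target differential must carry $\hbar\Delta$ rather than $\Delta$.
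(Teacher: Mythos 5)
Your proof is correct and follows essentially the same route as the paper's: it rests on the Hopf property of $\Hbul\big(\oM\big)$ together with \cref{lem:mumford}, and tracks the first Betti number through each of $\d_A$, $\{\ ,\,\}$, $\d_1$, $\d_2$, and $\Delta$ exactly as the paper does. Your treatment is somewhat more explicit about the $b_1$-bookkeeping (e.g.\ noting that the bracket merges two connected graphs without creating a cycle), but no new idea is involved.
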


\begin{proof}
Since the Chern classes have even degrees,  the map $\BR$ is a linear map of even degree. First, one can
see that the map $\BR$ commutes with the differential $\d_A$ and the shifted Lie bracket $\{\ , \,\}$ since they change neither the labels of the vertices nor  the Betti number of graphs.
Second, we note that $\BR \Delta = \hbar \Delta \BR$~. Indeed, the operator $\Delta$ does not change the labels of the vertices and increases the first Betti number of the graph by $1$, so we need an extra factor of $\hbar$.

\medskip

Since $\Hbul\big(\overline{\mathcal{M}}\big)[\hbar]$ is a modular operad arising from a
topological modular operad, it is a Hopf modular operad, that is, its expansion maps and its
partial decomposition maps commute with the
multiplication
of cohomology classes.
The differential $\d_2$ does not change the Betti number of the graph as it amounts to applying the partial decomposition maps $\delta_{i}^j$~. By the above remark and Lemma~\ref{lem:mumford}, it commutes with the map $\BR$~.
The differential $\d_1$ amounts to applying the expansion maps  $\chi_{ij}$, so it
increases the Betti number of the graph by $1$~.
By Lemma~\ref{lem:mumford}, it generates the factor $\hbar\lambda^\hbar$ on the corresponding vertex, and we gain an extra power of $\hbar$ in front of the graph. Hence, $\mathrm{BR}$ commutes with $\d_2$ as well.
\end{proof}

Let us recall
(see e.g.~\cite[Chapter~2]{DotsenkoShadrinVallette18})
that any complete $\Z2$-graded shifted dg Lie algebra $(\g, \F, \d, \{\ , \,\})$
admits a \emph{gauge group}
$(\F_1\g_{\od}, \BCH, 0)$,
i.e.\ its operation is defined by the Baker--Campbell--Hausdorff formula and $0$ is the identity element.
The gauge group
acts on the set of  Maurer--Cartan elements by the formula
\[\alpha+\sum_{k\geqslant 1} \frac{1}{k!} \left(
\ad_\xi^k(\alpha)+\ad_\xi^{k-1}(\d \xi)\right)\ , \]
for $\alpha \in \MC(\g)$ and $\xi\in \F_1\g_\od$.
The groupoid whose objects are Maurer--Cartan elements and whose morphisms are   the actions of the gauge group is called the \emph{Deligne groupoid}.

\begin{theorem}
The map $\BR$ induces a functor from the Deligne groupoid of homotopy CohFTs over $\K$ to
the Deligne groupoid of quantum homotopy CohFTs over $\Khbar$~, which we call the \emph{Buryak--Rossi functor}.
\end{theorem}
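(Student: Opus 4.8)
The plan is to deduce the statement from the general fact that a morphism of complete shifted differential graded Lie algebras induces a functor between the associated Deligne groupoids, applied to the morphism $\BR$ established in \cref{lem:BRmorph}. By \cref{prop:QCohFTMC}, a quantum homotopy CohFT on $(A, d_A, \langle\ ,\,\rangle)$ is exactly a Maurer--Cartan element of the quantum shifted dg Lie algebra $\g_A^\hbar=\big(\g_A[[\hbar]], \d+\hbar\Delta, \{\ ,\,\}\big)$, while (by \cref{lem:DefComp} together with \cref{prop:MorphTw}) a homotopy CohFT over $\K$ is a Maurer--Cartan element of $\big(\g_A, \d+\Delta, \{\ ,\,\}\big)$; the two Deligne groupoids appearing in the statement are precisely those attached to these shifted dg Lie algebras, with morphisms given by the gauge action recalled just above.

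First I would record the general principle. Let $f\colon \mathfrak h \to \mathfrak k$ be a morphism of complete shifted dg Lie algebras, so that $f$ is filtration-preserving, intertwines the differentials and commutes with the bracket. On objects, $f$ sends Maurer--Cartan elements to Maurer--Cartan elements, since $d_{\mathfrak k} f(\alpha)+\tfrac12\{f(\alpha),f(\alpha)\}=f\big(d_{\mathfrak h}\alpha+\tfrac12\{\alpha,\alpha\}\big)=f(0)=0$. On morphisms, filtration and degree preservation restrict $f$ to a map $\F_1 \mathfrak h_\od \to \F_1 \mathfrak k_\od$ which is a homomorphism for the $\BCH$ group structure, the latter being defined solely through the bracket; and the very same compatibilities (linearity, intertwining of differentials, commutation with the bracket) imply that $f$ carries the gauge action of $\xi$ on $\alpha$ to the gauge action of $f(\xi)$ on $f(\alpha)$. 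Hence $f$ sends gauge-equivalent Maurer--Cartan elements to gauge-equivalent ones, compatibly with composition, i.e.\ it defines a functor of Deligne groupoids. (Note that, these data being unchanged under restriction of scalars, the Deligne groupoid does not depend on whether we regard $\mathfrak k$ over $\K$ or over $\Khbar$.)

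Next I would factor $\BR$ appropriately. Viewing a $\K$-element as a constant power series is a $\K$-linear morphism of complete shifted dg Lie algebras $\big(\g_A,\d+\Delta,\{\ ,\,\}\big)\hookrightarrow\big(\g_A[[\hbar]],\d+\Delta,\{\ ,\,\}\big)$, and composing it with $\BR$ from \cref{lem:BRmorph} produces a morphism of complete shifted dg Lie algebras from $\big(\g_A,\d+\Delta,\{\ ,\,\}\big)$ to $\g_A^\hbar$. By the previous paragraph this composite induces the desired functor: on objects it sends a homotopy CohFT $\varphi$ to the quantum homotopy CohFT $\BR(\varphi)$, and on morphisms it sends a gauge symmetry implemented by $\xi\in\F_1(\g_A)_\od$ to the one implemented by $\BR(\xi)$.

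The genuine content has already been isolated in \cref{lem:BRmorph}, where the Mumford relations of \cref{lem:mumford} and the Hopf structure of $\Hbul\big(\oM\big)$ guarantee that $\BR$ intertwines $\d+\Delta$ with $\d+\hbar\Delta$ and commutes with the bracket; granting that lemma, the present theorem is formal and I anticipate no serious obstacle. The one point worth checking is that $\BR$ preserves the filtration $\F$, so that it is a morphism of \emph{complete} shifted dg Lie algebras and therefore acts on the gauge groups --- but this is immediate, since $\BR$ operates vertex- and graph-wise and alters neither the arity $n$ nor the total genus $g$, hence preserves the quantities $n+2g-2$ that define $\F$.
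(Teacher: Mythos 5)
Your proposal is correct and follows essentially the same route as the paper: both factor the map as the inclusion $\big(\g_A,\d+\Delta,\{\ ,\,\}\big)\hookrightarrow\big(\g_A[[\hbar]],\d+\Delta,\{\ ,\,\}\big)$ followed by the morphism $\BR$ of \cref{lem:BRmorph}, and then invoke the general fact that morphisms of complete shifted dg Lie algebras induce functors between Deligne groupoids. Your additional verification that $\BR$ preserves the filtration is a sensible explicit check of a point the paper leaves implicit.
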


\begin{proof}
This an immediate consequence of the fact that any morphism of complete (shifted) dg Lie algebras induces a functor between the two associated Deligne groupoids.
We consider first the natural inclusion $\left(\g_A, \d+\Delta, \{\ , \,\}\right) \hookrightarrow \left(\g_A[[\hbar]], \d+\Delta, \{\ , \,\}\right)$ of complete shifted dg Lie algebras over~$\K$~, and then the morphism $\BR$ of complete shifted dg Lie algebras over $\K[[\hbar]]$ by \cref{lem:BRmorph}. The composite of the two associated functors between Deligne groupoids produces the functor mentioned in the statement.
\end{proof}

\begin{remark} The Buryak--Rossi functor is used in~\cite{BuryakRossi2016} in the following special
case. Let
$\alpha\coloneq \{\alpha_{g,n}\}_{(g,n)\in \NN^2}$ be a (strict) CohFT, that is a Maurer-Cartan element in $\g_A$ supported on
one-vertex graphs without tadpoles. They consider a system of classes
$\left\{\alpha_{g,n}\lambda^\hbar\right\}_{(g,n)\in \NN^2}$~.
In our language, they get a Maurer-Cartan element of $\g_A^\hbar$, that is a  quantum
CohFT.
As we mentioned above, it is the key step in their approach to quantization of integrable
hierarchies of topological type.
\end{remark}

\section{Universal deformation group}\label{sec:GTgroup}

We begin this section by introducing the operad $\Gra$ of
all natural operations
on the totalisation of a (shifted) modular operad; this operad, spanned by graphs, is the connected
part of the operad $\mathrm{Gra}^\circlearrowleft_0$ of Kontsevich and Willwacher. It naturally
contains the operad
$\S \Delta \Lie$ of shifted $\Delta$-Lie algebras and we define the \emph{universal deformation
 group} of morphisms of modular operads as the even homology groups of the deformation complex of
the
inclusion \[\S \Delta \Lie \hookrightarrow \Gra .\]  This deformation complex is isomorphic to
a quantum extension of the Kontsevich graph complex introduced by Merkulov--Willwacher in
\cite{MW14}.
So we can use their method
to prove that the universal
deformation group contains the prounipotent Grothendieck--Teichm\"uller group and that it acts
functorially on the
moduli spaces
of Maurer--Cartan elements of the quantum
totalisations
of modular operads. We give a bit more details about the material of \emph{op.~cit.\ } using the language and the tools of the pre-Lie deformation theory~\cite{DSV16} this time.
In the end, this allows us to reach
one of the main goals of the present paper: to define an
action of the prounipotent Grothendieck--Teichm\"uller group on the moduli spaces of gauge
equivalence classes of quantum homotopy cohomological field theories.

\subsection{The operad of connected graphs}\label{subsec:CGRa}

\begin{definition}[The connected graphs operad]
The operad $\Gra$ is spanned by connected graphs with
numbered vertices and
edges of homological degree $-1$; loops
(edges connecting a vertex to itself)
are allowed. The partial composition product $\upsilon_1\circ_k\upsilon_2$ amounts to first inserting the graph $\upsilon_2$ at the $k$th vertex of $\upsilon_1$, then relabelling accordingly the vertices, and finally considering the sum of all the possible ways to reconnect the edges in $\upsilon_1$ originally plugged to the vertex $k$, to any possible vertex of $\upsilon_2$.

\[
\vcenter{\hbox{\begin{tikzpicture}[scale=0.7]
	 \draw[thick] (0,0.5)--(0,2);

	\draw[thick]  (-0.4,2.7) arc [radius=0.4, start angle=180, end angle= 0];
	\draw[thick] (0, 2) to [out=135,in=270] (-0.4,2.7);
	\draw[thick] (0, 2) to [out=45,in=270] (0.4,2.7);

	 \draw[fill=white, thick] (0,0.5) circle [radius=10pt];
 	 \draw[fill=white, thick] (0,2) circle [radius=10pt];

 	\node at (0,0.5) {\scalebox{1}{$1$}};
 	\node at (0,2) {\scalebox{1}{$2$}};

	\node (n) at (0,-0.5) {};
	\end{tikzpicture}}}
\ \circ_1 \
	\vcenter{\hbox{\begin{tikzpicture}[scale=0.7]
	 \draw[thick] (-1,0.5)--(0,2)--(1, 0.5);

	 \draw[fill=white, thick] (-1,0.5) circle [radius=10pt];
	 \draw[fill=white, thick] (1,0.5) circle [radius=10pt];
 	 \draw[fill=white, thick] (0,2) circle [radius=10pt];

 	\node at (-1,0.5) {\scalebox{1}{$2$}};
 	\node at (1,0.5) {\scalebox{1}{$3$}};
 	\node at (0,2) {\scalebox{1}{$1$}};
	\end{tikzpicture}}}
\ = \
\vcenter{\hbox{\begin{tikzpicture}[scale=0.7]
	 \draw[thick] (-1,-1)--(0,0.5)--(1, -1);
 	 \draw[thick] (0,0.5)--(0,2);

	\draw[thick]  (-0.4,2.7) arc [radius=0.4, start angle=180, end angle= 0];
	\draw[thick] (0, 2) to [out=135,in=270] (-0.4,2.7);
	\draw[thick] (0, 2) to [out=45,in=270] (0.4,2.7);

	 \draw[fill=white, thick] (-1,-1) circle [radius=10pt];
	 \draw[fill=white, thick] (1,-1) circle [radius=10pt];
 	 \draw[fill=white, thick] (0,2) circle [radius=10pt];
  	 \draw[fill=white, thick] (0,0.5) circle [radius=10pt];

 	\node at (-1,-1) {\scalebox{1}{$2$}};
 	\node at (1,-1) {\scalebox{1}{$3$}};
 	\node at (0,2) {\scalebox{1}{$4$}};
 	\node at (0,0.5) {\scalebox{1}{$1$}};

	\node (n) at (0,-2) {};
	\end{tikzpicture}}}
\ + \
\vcenter{\hbox{\begin{tikzpicture}[scale=0.7]
	 \draw[thick] (0,2)--(0,0.5)--(1.5,2)--(1.5,0.5);

	\draw[thick]  (-0.4,2.7) arc [radius=0.4, start angle=180, end angle= 0];
	\draw[thick] (0, 2) to [out=135,in=270] (-0.4,2.7);
	\draw[thick] (0, 2) to [out=45,in=270] (0.4,2.7);

	 \draw[fill=white, thick] (0,0.5) circle [radius=10pt];
 	 \draw[fill=white, thick] (0,2) circle [radius=10pt];
	 \draw[fill=white, thick] (1.5,0.5) circle [radius=10pt];
  	 \draw[fill=white, thick] (1.5,2) circle [radius=10pt];

 	\node at (0,0.5) {\scalebox{1}{$2$}};
 	\node at (0,2) {\scalebox{1}{$4$}};
 	\node at (1.5,0.5) {\scalebox{1}{$3$}};
 	\node at (1.5,2) {\scalebox{1}{$1$}};

	\node (n) at (0,-0.5) {};
	\end{tikzpicture}}}
\ + \
\vcenter{\hbox{\begin{tikzpicture}[scale=0.7]
	 \draw[thick] (0,2)--(0,0.5)--(-1.5,2)--(-1.5,0.5);

	\draw[thick]  (-0.4,2.7) arc [radius=0.4, start angle=180, end angle= 0];
	\draw[thick] (0, 2) to [out=135,in=270] (-0.4,2.7);
	\draw[thick] (0, 2) to [out=45,in=270] (0.4,2.7);

	 \draw[fill=white, thick] (0,0.5) circle [radius=10pt];
 	 \draw[fill=white, thick] (0,2) circle [radius=10pt];
	 \draw[fill=white, thick] (-1.5,0.5) circle [radius=10pt];
  	 \draw[fill=white, thick] (-1.5,2) circle [radius=10pt];

 	\node at (0,0.5) {\scalebox{1}{$3$}};
 	\node at (0,2) {\scalebox{1}{$4$}};
 	\node at (-1.5,0.5) {\scalebox{1}{$2$}};
 	\node at (-1.5,2) {\scalebox{1}{$1$}};

	\node (n) at (0,-0.5) {};
\end{tikzpicture}}} \]
\end{definition}

\begin{remark}
This operad is  a connected  and homological version of an operad introduced by M. Kontsevich
\cite{Kontsevich93, Kontsevich97} in order to encode the natural operations on polyvector fields on
affine manifolds.
Kontsevich's operad
plays a key role in T. Willwacher's theory~\cite{Willwacher15},
where it is denoted by  $\mathrm{Gra}^\circlearrowleft_0$.
More precisely, we have
$\mathrm{Gra}^\circlearrowleft_0\cong \Com\circ \Gra$, where the operad structure on the right-hand
side is given by a suitable distributive law. Notice however the following slight
difference
between the two approaches: in Kontsevich and Willwacher version, the edges receive homological
degree $+1$, that is cohomological degree $-1$ in \emph{op.\ cit.}.
The conceptual reason lies in the origin of these operads: Kontsevich's operad encodes all the
universal operations of a cohomological object (polyvector fields) whereas our operad encodes all
the universal operations of a homological object (homology of topological modular operads).
\end{remark}

\begin{proposition}\label{prop:GraAction} Let $\P$ be
a
shifted modular operad.
\begin{enumerate}
\item The operad of graphs $\Gra$ acts naturally on the totalisation $\whP$; this algebra structure
is
given by a morphism of operads
$\Psi : \Gra \to \End_{\whP}$\ .
\item The assignment
\[
\Delta \mapsto
\rule{0pt}{20pt}
\vcenter{\hbox{\begin{tikzpicture}[scale=0.7]
	\draw[thick]  (-0.4,2.7) arc [radius=0.4, start angle=180, end angle= 0];
	\draw[thick] (0, 2) to [out=135,in=270] (-0.4,2.7);
	\draw[thick] (0, 2) to [out=45,in=270] (0.4,2.7);
 	 \draw[fill=white, thick] (0,2) circle [radius=10pt];
 	\node at (0,2) {\scalebox{1}{$1$}};
	\node (n) at (0,1.5) {};
\end{tikzpicture}}}
\quad \text{and}\quad
\{\, , \} \mapsto
	\vcenter{\hbox{\begin{tikzpicture}[scale=0.7]
	 \draw[thick] (0,0)--(1.5, 0);

	 \draw[fill=white, thick] (0,0) circle [radius=10pt];
	 \draw[fill=white, thick] (1.5,0) circle [radius=10pt];

 	\node at (0,0) {\scalebox{1}{$1$}};
 	\node at (1.5,0) {\scalebox{1}{$2$}};
	\end{tikzpicture}}}
 \]
defines a morphism of operads $\Theta : \S \Delta \Lie \to \Gra$.
\item The shifted $\Delta$-Lie algebra structure on $\whP$ is equal to the
composite of the above two morphisms of operads
\[
\vcenter{\hbox{
\begin{tikzcd}
\S \Delta \Lie
\arrow[rr, "\Phi"] \arrow[dr, "\Theta"']
& & \End_{\whP}\ .
 \\
&\Gra  \arrow[ur, "\Psi"']  &
\end{tikzcd}
}}
\]

\end{enumerate}
\end{proposition}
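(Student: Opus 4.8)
The plan is to establish the three assertions in order, with essentially all of the work concentrated in part (1); parts (2) and (3) are then short checks.

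For part (1), I would first make explicit the operation attached to a graph. Given a connected graph $\gamma$ with vertices numbered $1,\dots,k$ and elements $\mu_1,\dots,\mu_k\in\whP$, label vertex $i$ by $\mu_i$ and evaluate $\gamma$ by processing its edges one at a time: for each edge joining two distinct vertices apply a partial composition $\circ_i^j$, for each loop apply a contraction $\xi_{ij}$, and in both cases sum over all ways of matching the half-edges incident to a vertex with the inputs of its labelling element. Since every edge carries homological degree $-1$ and each of $\circ_i^j,\xi_{ij}$ has degree $-1$ in a shifted modular operad, a graph with $e$ edges yields an operation of degree $-e$, which is consistent. Independence of the order in which edges are contracted is exactly the content of the (signed) relations \eqref{Rel1}--\eqref{Rel3}, so the assignment is well defined; equivariance of the structure maps shows that it is $\Sy_n$-equivariant and lands in the invariants $\P_g(n)^{\Sy_n}$, hence in $\whP$. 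The only substantial point is that $\Psi$ is a morphism of operads.

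To prove $\Psi(\gamma_1\circ_k\gamma_2)=\Psi(\gamma_1)\circ_k\Psi(\gamma_2)$ I would unwind both sides. On the right one first evaluates $\gamma_2$, producing an element of $\whP$ whose inputs are indexed by the legs of $\gamma_2$, and then feeds it into the $k$th slot of the evaluation of $\gamma_1$; consequently the edges of $\gamma_1$ originally incident to vertex $k$ get composed, via the summation over all inputs built into $\Psi(\gamma_1)$, against the inputs of this composite element, i.e.\ against \emph{every} vertex of $\gamma_2$. This is precisely the combinatorial rule defining $\gamma_1\circ_k\gamma_2$ in $\Gra$, namely reconnecting in all possible ways the edges at vertex $k$ to the vertices of $\gamma_2$. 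Term by term, the equality then reduces to the associativity and commutation relations \eqref{Rel1}--\eqref{Rel3} of a shifted modular operad, which is exactly what makes the repeated applications of $\circ_i^j$ and $\xi_{ij}$ agree. \textbf{This compatibility, together with the Koszul sign bookkeeping forced by the odd degree of the edges, is the main obstacle}; it is the connected, homological avatar of the Kontsevich--Willwacher action recalled in the remark following the definition of $\Gra$.

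For part (2), write $T$ for the one-vertex tadpole and $E$ for the two-vertex edge, and verify that $\Theta$ respects the presentation of $\S\Delta\Lie$. The relation $\Delta^2=0$ becomes $T\circ_1 T$, a single vertex carrying two loops, which vanishes in $\Gra$ because transposing two odd-degree loops is an orientation-reversing automorphism. The Jacobi identity for $\{\ ,\,\}$ holds because $E$ generates the standard shifted Lie structure on connected graphs, the three cyclic terms being supported on the path on three vertices and cancelling. The derivation relation $\Theta\big(\Delta\{x,y\}\big)=\Theta\big(\{\Delta x,y\}\big)\pm\Theta\big(\{x,\Delta y\}\big)$ follows by expanding $T\circ_1 E$, whose surviving terms put a loop on one endpoint of the edge, and comparing with $E\circ_1 T+E\circ_2 T$; the would-be double-edge contributions again vanish by the odd-edge symmetry. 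Hence $\Theta$ is a well-defined morphism of operads.

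For part (3), all three of $\Phi,\Theta,\Psi$ are morphisms of operads and $\S\Delta\Lie$ is generated by $\Delta$ and $\{\ ,\,\}$, so it suffices to compare $\Psi\Theta$ with $\Phi$ on these generators. By construction $\Psi(\Theta(\Delta))=\Psi(T)$ is the operation $\mu\mapsto\sum_{i,j}\xi_{ij}(\mu)$ and $\Psi(\Theta(\{\ ,\,\}))=\Psi(E)$ is the operation $(\mu,\nu)\mapsto\sum_{i,j}\circ_i^j(\mu\otimes\nu)$; these are precisely the operator $\Delta$ and the bracket $\{\ ,\,\}$ on $\whP$ produced in \cref{lem:ModtoDeltaLie}. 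Therefore $\Psi\Theta=\Phi$, as claimed.
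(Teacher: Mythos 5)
Your proposal is correct and follows essentially the same route as the paper: the action of a graph is defined edge-by-edge via the partial compositions and contractions (equivalently, via the bracket and $\Delta$ of \cref{lem:ModtoDeltaLie}), the operad-morphism property is matched against the reconnection rule defining composition in $\Gra$, part (2) is checked on the generators using the vanishing of graphs with repeated odd edges, and part (3) is a comparison on generators. You merely spell out the steps the paper dismisses as straightforward, including the correct observation that double edges and double loops vanish by the odd-degree symmetry.
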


\begin{proof}\leavevmode
\begin{enumerate}
\item A connected graph $\upsilon\in \Gra(n)$ is
a collection of edges
$\{\mathrm{t}_{ij}\}$ where $i,j$
are labels of
two vertices, which can be the same.
The action of $\upsilon$ on $n$ elements $\gamma_1, \ldots, \gamma_n$ of $\whP$ amounts to applying the bracket $\{\gamma_i, \gamma_j\}$ for any edge ${\mathrm{t}_{ij}}$ of $\upsilon$, with $i\neq j$, and to applying the operator $\Delta(\gamma_i)$ for any loop ${\mathrm{t}_{ii}}$ of $\upsilon$. It is straightforward to see that this defines a $\Gra$-algebra structure.

\item Since the operad $\S \Delta \Lie$ is generated by $\Delta$ and $\{\, , \}$,
we only need to verify that the operations corresponding to
the one-loop graph and the one-edge graph satisfy the square-zero relation, the
derivation relation, and the shifted Jacobi relation.
This is straightforward.

\item This follows from the
definitions of morphisms $\Psi$ and $\Theta$.
\end{enumerate}
\end{proof}

\begin{example}\label{ex:CGRABV}
\cref{prop:GraAction} lifts the shifted $\Delta$-Lie algebra structure on the totalisation of the endomorphism modular operad of \cref{ex:TPoly}. In the case of the odd affine symplectic manifold $V\oplus sV^*$, it endows
$\K[[V\oplus s V^*]][[t]]$ with a canonical $\Gra$-algebra structure. Setting $t=0$ and considering
the cohomological degree convention,
we obtain
the action of  the version of $\Gra$ defined with edges of degree $+1$ on polyvector fields of affine manifolds due to Kontsevich~\cite{Kontsevich97}.
\end{example}

\begin{remark}\label{rem:ConceptualApproach}
There is actually a heuristic reason for the operad $\Gra$ to act on the totalisation of
a shifted modular
operad and for being the operad of all its natural operations. As explained in
 \cref{subsec:ModOp}, the notion of a modular operad is encoded by the monad of graphs whose
 composition product is given by the insertion of graphs. When one forgets which input is which
 under the totalisation functor, it is natural to expect that graphs without leaves act on it. The
 operadic composition product of these graphs is then given by the insertion of such graphs,
 thereby producing the  operad $\Gra$. Such a pattern can already be seen on the level of operads, which are algebras over the monad of rooted trees with leaves equipped with the insertion of trees. The operad encoding all the natural operations acting on the totalisation of an operad is the rooted tree operad, which is spanned by rooted trees without leaves with composition maps given by the insertion of such trees.
In the case of classical operads, the analogue of the operad $\S \Delta \Lie$ which encodes the
minimal structure required to write down the Maurer--Cartan equation is the operad
$\mathrm{pre}\textrm{-}\mathrm{Lie}$ encoding pre-Lie algebras. In this case, the two operads of
all operations (rooted trees) and of the minimal set of operations
($\mathrm{pre}\textrm{-}\mathrm{Lie}$) are isomorphic~\cite{ChapotonLivernet01}. This is not the
case here and it is precisely this discrepancy that will give rise to a universal deformation group, 
see~\cref{sec:UDGGTgroup}.
\end{remark}

\subsection{Cycle action}\label{subsec:CycleAct}
Similarly to
how it was done in \cref{subsec:DefCom}, the
space of equivariant maps
\[\widehat{\Hom}_{\Sy} \left(\CC, \PP\right)\coloneq\prod_{n\geqslant 1} \Hom_{\Sy_n}\left(
\CC(n), \PP(n)\right)\]
from
any cooperad to an operad can be equipped with a pre-Lie algebra operation $\star$ which induces a
Lie algebra bracket denoted by $[\, ,]$
(see~\cite[Section~6.4]{LodayVallette12} and~\cite{DSV16} for more details).

\begin{lemma}\label{lemma:CompletePreLie}
The convolution dg pre-Lie algebra associated to any operad $\PP$ and to the Koszul dual cooperad
$\CC=(\S\Delta\Lie)^{\ac}$  is the dg pre-Lie algebra
\[\aa_\PP\coloneq\left(\widehat{\Hom}_{\Sy} \big((\S\Delta\Lie)^{\ac}, \PP\big), \partial, \star \right)\cong
\Big( \prod_{\substack{n\geqslant 1\\ k\geqslant 0}}\PP(n)^{\Sy_n} \hbar^k, \partial, \star \Big)\ ,
\]
where the
pre-Lie
product is the $\K[[\hbar]]$-extension of the pre-Lie product on the totalisation of the operad
$\PP$, with  $\hbar$ of degree $0$.
\end{lemma}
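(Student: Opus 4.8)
The plan is to reduce the statement to an explicit computation of the Koszul dual cooperad $\CC=(\SDL)^{\ac}$, because the pre-Lie product $\star$ and the differential $\partial$ on $\widehat{\Hom}_{\Sy}(\CC,\PP)$ are built, respectively, from the infinitesimal decomposition of $\CC$ composed with the infinitesimal composition of $\PP$, and from $d_\PP$ together with the internal differential of $\CC$ (see \cite[Section~6.4]{LodayVallette12} and \cite{DSV16}). Since $\SDL$ is a quadratic operad with vanishing differential, the cooperad $\CC$ carries no internal differential, so $\partial$ is automatically induced by $d_\PP$ alone and extended $\K[[\hbar]]$-linearly; everything then hinges on the shape of $\CC$ and of its infinitesimal decomposition.

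First I would identify $\SDL$ as a distributive-law composite. Writing $\mathcal{D}$ for the unary operad $\K[\Delta]/(\Delta^2)$ of a square-zero degree $-1$ operator, the defining relations of a shifted $\Delta$-Lie algebra say exactly that $\Delta$ is a square-zero derivation of the shifted bracket; this is a Koszul distributive law between $\SL$ and $\mathcal{D}$, giving $\SDL\cong\SL\circ\mathcal{D}$ with $\mathcal{D}$ decorating inputs. Dualizing reverses the order, so $(\SDL)^{\ac}\cong\mathcal{D}^{\ac}\circ(\SL)^{\ac}$. Now $(\SL)^{\ac}(n)$ is the one-dimensional trivial $\Sy_n$-representation (up to the operadic suspension), while $\mathcal{D}$, being the algebra of dual numbers, is Koszul with Koszul dual the divided-power coalgebra; in characteristic $0$ this has basis $\{\hbar^{(k)}\}_{k\geqslant 0}$ and coproduct $\hbar^{(k)}\mapsto\sum_{a+b=k}\hbar^{(a)}\otimes\hbar^{(b)}$. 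As $\mathcal{D}^{\ac}$ is unary, I obtain $(\SDL)^{\ac}(n)\cong\bigoplus_{k\geqslant 0}\K\,\hbar^{(k)}\otimes(\SL)^{\ac}(n)$, a trivial $\Sy_n$-representation in each $\hbar$-weight $k$, with the stack of dual $\Delta$'s placed at the root. A degree count (each $\Delta$ has degree $-1$, compensated by the Koszul shift) shows $\hbar$ has degree $0$. Passing to $\Sy_n$-equivariant maps into $\PP(n)$ and taking the product over $n$ then gives the underlying graded space $\prod_{n\geqslant 1,\,k\geqslant 0}\PP(n)^{\Sy_n}\hbar^k$.

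Next I would match the pre-Lie products. In $\hbar$-weight $0$ this is the classical computation: the infinitesimal decomposition of $(\SL)^{\ac}$ is dual to the shifted Lie composition, so by the standard pre-Lie convolution theory \cite{DSV16,LodayVallette12} the product on $\widehat{\Hom}_{\Sy}((\SL)^{\ac},\PP)\cong\prod_n\PP(n)^{\Sy_n}$ is the totalisation pre-Lie product $f\star g=\sum_i f\circ_i g$. For general weights, the root divided powers are simply transported through the infinitesimal decomposition, whose $\mathcal{D}^{\ac}$-component is the divided-power coproduct above; hence $f$ of weight $a$ and $g$ of weight $b$ pair only against the summand $\hbar^{(a)}\otimes\hbar^{(b)}$ of $\hbar^{(a+b)}$, producing a result of weight $a+b$. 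Renaming $\hbar^{(k)}=:\hbar^k$ turns $\hbar^{(a)}\otimes\hbar^{(b)}\mapsto\hbar^{(a+b)}$ into $\hbar^a\cdot\hbar^b=\hbar^{a+b}$, i.e.\ the product is exactly the $\K[[\hbar]]$-bilinear extension of the totalisation pre-Lie product, with $\hbar$ of degree $0$, yielding the asserted isomorphism of dg pre-Lie algebras.

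The main obstacle will be the normalization in the $\hbar$-direction: I must check that the divided-power coproduct of $\mathcal{D}^{\ac}$, read in the correct basis, produces the coefficient-$1$ multiplication $\hbar^a\cdot\hbar^b=\hbar^{a+b}$ rather than a binomial one — this is precisely the characteristic-$0$ passage between divided and ordinary powers — and, relatedly, that the dual of the derivation distributive law makes the root $\Delta^{\vee}$-stack interact with the shifted-Lie infinitesimal decomposition only through this additive splitting of weights, with no sign subtleties from the operadic suspension.
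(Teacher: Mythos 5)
Your proposal is correct and follows essentially the same route as the paper: both compute $(\SDL)^{\ac}$ via the distributive law as $\T^c(s\Delta)\circ\Com^*$ (your $\mathcal{D}^{\ac}\circ(\SL)^{\ac}$), identify the unary cofactor as the tensor/divided-power coalgebra on a degree-$0$ generator $\hbar=s\Delta$, and read off that the convolution pre-Lie product is the $\K[[\hbar]]$-extension of the totalisation product. Your worry about binomial coefficients resolves exactly as you suspect, since the deconcatenation coproduct on $\T^c(s\Delta)$ has all coefficients equal to $1$, which is the content of the paper's remark that composing with the element dual to $s\Delta$ at any place produces the same result.
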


\begin{proof}
This follows from
the computation of the Koszul dual cooperad:
\[(\S\Delta\Lie)^{\ac} \cong \T^c(s\Delta)\circ \Com^* \cong \T^c(s\Delta)\otimes \Com^*\ ,\]
where $s\Delta$ is an
element of arity $1$ and degree $0$.
The Koszul dual operad is given by the distributive law such that the composite at any place with the element dual to $s\Delta$ produces the same result. This shows that the pre-Lie product on $\aa_\PP$ is the $\K[[\hbar]]$-extension of the pre-Lie product from~$\PP$.
\end{proof}

\begin{remark}\label{rem:CohoConv}
In order to encode all the natural operations acting on an ``unshifted'' modular operad, we should
rather consider the alternative
version of
the operad $\Gra$, with the cohomological degree convention, that is with edges of degree $+1$.
In this case, one should also consider the ``unshifted'' version of the operad $\S \Delta \Lie$.
Then, the homological degree of an element $\gamma \hbar^k$ of $\aa_{\Gra}$ would be equal to
$-2(v-1)+e-2k$, where the number of vertices of the graph $\gamma$ is denoted by $v$ and
the number of edges by $e$. In this way, we recover exactly the cohomological degree
$2(v-1)-e+2k$ of~\cite{Kontsevich97, Willwacher15} and the dg Lie algebra of~\cite{MW14}.
\end{remark}

We denote by
\[\ba_\PP\coloneq\left(\widehat{\Hom}_{\Sy} \left(\overline{(\S\Delta\Lie)}^{\ac}, \PP\right)\right)\cong
\prod_{\substack{n\geqslant 1, \ k\geqslant 0 \\(n,k)\neq (1,0)}}\PP(n)^{\Sy_n} \hbar^k\ ,
\]
the complete dg pre-Lie subalgebra of $\aa_\PP$ consisting of elements of positive weight, where the weight is equal to the power of $\hbar$ plus the arity minus 1.
The set of solutions to the associated Maurer--Cartan equation $\partial \phi + \phi \star \phi=0$
 is denoted by $\MC(\ba_\PP)$; it is in natural bijection with the set of morphisms of operads $\Omega \big((\S\Delta\Lie)^{\ac}\big) \to \PP$. For instance, for any  shifted modular operad  $\P$ , we denote by
$\phi\in \ba_{\End_{\whP}}$ the Maurer--Cartan element corresponding to
\[\Omega \big((\S\Delta\Lie)^{\ac}\big) \xrightarrow{\sim} \S\Delta\Lie \xrightarrow{\Phi} \End_{\whP}  \]
and by
$\theta \in \ba_\Gra$ the Maurer--Cartan element corresponding to
\[\Omega \big((\S\Delta\Lie)^{\ac}\big) \xrightarrow{\sim} \S\Delta\Lie \xrightarrow{\Theta} \Gra  \ .\]
We use the same notations for any $\Gra$-algebra structure $\Psi : \Gra \to \End_\g$, where the induced
shifted $\Delta$-Lie algebra  structure given by $\Phi\coloneq \Psi\circ \Theta$ correspond to a Maurer--Cartan element $\phi$ of $\ba_{\End_\g}$.

\begin{lemma}\label{lem:TW}
Any morphism of operads $\Psi : \Gra \to \End_\g$ defining a $\Gra$-algebra structure on $\g$
 induces a morphism of dg Lie algebras
\[\Psi_* :
\ba_{\Gra}
\to
\ba_{\End_{\g}}\]
and thus a morphism of twisted dg Lie algebras
\[\Psi_* :
\ba_{\Gra}^\theta
\to
\ba_{\End_{\g}}^\phi\ .\]
\end{lemma}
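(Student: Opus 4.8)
The plan is to define $\Psi_*$ as post-composition and to read the statement off the bifunctoriality of the convolution (pre-)Lie algebra construction recalled before \cref{lemma:CompletePreLie}: it is contravariant in the cooperad and covariant in the operad. Concretely, for $f \in \ba_\Gra = \widehat{\Hom}_{\Sy}\big(\overline{(\S\Delta\Lie)}^{\ac}, \Gra\big)$ I set $\Psi_*(f) \coloneq \Psi \circ f$. Since $\Psi$ is a morphism of operads, each arity component $\Psi(n) \colon \Gra(n) \to \End_\g(n)$ is $\Sy_n$-equivariant of degree $0$, so $\Psi \circ f$ is again equivariant with $|\Psi \circ f| = |f|$; thus $\Psi_*$ is a degree $0$ linear map. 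As $\Psi$ preserves arity and does not involve $\hbar$, post-composition preserves the weight (the power of $\hbar$ plus the arity minus one) and hence the filtration defining the completions, so $\Psi_*$ is well defined as a map $\ba_\Gra \to \ba_{\End_\g}$.

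Next I would verify that $\Psi_*$ respects the algebraic structure. The convolution pre-Lie product $f \star g$ is built by applying the infinitesimal decomposition of the cooperad $\overline{(\S\Delta\Lie)}^{\ac}$, then $f$ and $g$, and finally the partial composition of the target operad. Because $\Psi$ intertwines the partial compositions of $\Gra$ and $\End_\g$ --- which is precisely what being a morphism of operads means --- one obtains $\Psi_*(f) \star \Psi_*(g) = \Psi \circ (f \star g) = \Psi_*(f \star g)$, and antisymmetrizing gives $\Psi_*[f,g] = [\Psi_*(f), \Psi_*(g)]$. Compatibility with the differential $\partial$ of the convolution complex (whose two summands post-compose with the internal differential of the operad and pre-compose with that of the cooperad) follows from $\Psi$ being a chain map, i.e.\ a morphism of \emph{dg} operads: the cooperad term is untouched by post-composition while $d_{\End_\g} \circ \Psi = \Psi \circ d_\Gra$ handles the operad term. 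This establishes that $\Psi_* \colon \ba_\Gra \to \ba_{\End_\g}$ is a morphism of dg Lie algebras, proving the first assertion.

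For the twisted statement I would use that $\Psi_*$ sends $\theta$ to $\phi$. Under the bijection between Maurer--Cartan elements of $\ba_\PP$ and operad morphisms $\Omega\big((\S\Delta\Lie)^{\ac}\big) \to \PP$ recalled just above the lemma, post-composition by $\Psi$ corresponds to $\Psi_*$; since $\phi$ is by definition associated with $\Phi = \Psi \circ \Theta$ and $\theta$ with $\Theta$ (see \cref{prop:GraAction}), this yields $\Psi_*(\theta) = \phi$. Equivalently and more concretely, one checks $\Psi_*(\theta) = \phi$ by evaluating on the generators $\Delta$ and $\{\,,\}$ using \cref{prop:GraAction}. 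The conclusion is then the general principle that a morphism $F$ of dg Lie algebras carrying a Maurer--Cartan element $\theta$ to $\phi = F(\theta)$ induces a morphism of the twisted algebras, since $F\big((\partial + [\theta,-])x\big) = \partial F(x) + [\phi, F(x)] = (\partial + [\phi,-])F(x)$. Applying this to $F = \Psi_*$ gives $\Psi_* \colon \ba_\Gra^\theta \to \ba_{\End_\g}^\phi$.

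I expect no genuine obstacle here: the content is formal, amounting to functoriality of the convolution bracket in the operad variable together with the naturality of twisting by a Maurer--Cartan element. The only care needed is bookkeeping --- confirming that post-composition respects the $\hbar$-weight and the completion, and that no spurious Koszul signs arise (none do, as $\Psi$ has degree $0$) --- together with spelling out the identification $\Psi_*(\theta) = \phi$, which is where \cref{prop:GraAction} enters.
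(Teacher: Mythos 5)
Your proposal is correct and follows essentially the same route as the paper: the first assertion is the functoriality of the convolution dg (pre-)Lie algebra in the operad entry, and the second follows from the general fact that a morphism of dg Lie algebras sending $\theta$ to $\phi=\Psi_*(\theta)$ induces a morphism of the twisted algebras. The paper simply cites these general principles, whereas you unpack them explicitly; the content is the same.
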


\begin{proof}
The first assertion follows from the general theory of operadic twisting morphisms~\cite[Section~6.4]{LodayVallette12}:
the convolution dg Lie algebra is functorial in the second entry. The second
assertion is a well known fact: any morphism of dg Lie algebras sends
Maurer--Cartan elements to Maurer--Cartan elements and it induces a morphism between the
corresponding twisted dg Lie algebras, see~\cite[Chapter~3,Section~5]{DotsenkoShadrinVallette18} for
instance. We apply this here to $\Psi_*(\theta)=\theta\circ \Psi=\phi$\ .
\end{proof}

The twisted dg Lie algebra
$\ba_{\Gra}^\theta$ controls the deformation theory of the morphism of operads $\Theta$: $\mathrm{Def}\big(\S \Delta \Lie \xrightarrow{\Theta} \Gra\big)$, see~\cite{MerkulovVallette09I}.

\begin{remark}
Notice that the two twisted differentials $\partial^\theta$ and $\partial^\phi$ increase the weight grading by~$1$.
\end{remark}

\begin{convention}
From now on, like in \cref{subsec:DefCohFT},
we drop the $\ZZ$-grading and pass to the $\Z2$-grading,
that is we consider finite sums of even elements or finite sums of odd elements.
For instance, Maurer-Cartan elements
in dg pre-Lie algebras will  be elements of odd degree and Maurer--Cartan elements in shifted $\Delta$-Lie algebras will be elements of even degree.
\end{convention}

\begin{corollary}\label{cor:MorphBCH}
There is a morphism of groups
\[\Psi_* :
\Big(Z_\ev\Big(
\ba_\Gra^\theta
\Big) , \BCH, 0\Big) \to
\Big(Z_\ev
\Big(
\ba_{\End_{\g}}^\phi\Big),
 \BCH, 0\Big)\ ,\]
where $\BCH$ stands for the Baker--Campbell--Hausdorff formula.
\end{corollary}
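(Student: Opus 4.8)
The plan is to obtain this as a direct consequence of \cref{lem:TW}: that lemma already furnishes a morphism of twisted dg Lie algebras $\Psi_* : \ba_\Gra^\theta \to \ba_{\End_\g}^\phi$, and the corollary is then the general fact that a morphism of complete dg Lie algebras induces a morphism between the associated $\BCH$-groups on even cycles. So the work is essentially to assemble the three standard ingredients: that $\Psi_*$ restricts to even cycles, that even cycles carry the $\BCH$-group structure, and that a Lie morphism respects $\BCH$.

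First I would check that $\Psi_*$ restricts to even cycles. The map $\Psi_*$ has even degree, hence preserves the $\Z2$-parity, and by \cref{lem:TW} it intertwines the twisted differentials, $\Psi_*\circ\partial^\theta=\partial^\phi\circ\Psi_*$. It therefore carries an even $\partial^\theta$-cycle to an even $\partial^\phi$-cycle, yielding a well-defined map $Z_\ev(\ba_\Gra^\theta)\to Z_\ev(\ba_{\End_\g}^\phi)$.

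Next I would record that each side is genuinely a group. The weight (the $\hbar$-power plus arity minus one) is additive under the Lie bracket, $[F_N,F_M]\subseteq F_{N+M}$, and is everywhere positive on $\ba_\Gra$ and $\ba_{\End_\g}$, so both are complete pronilpotent Lie algebras; by the preceding remark the twisted differentials raise the weight by one and thus preserve this filtration. The even cycles form a complete Lie subalgebra: they are stable under the bracket because $\partial$ is a derivation, $\partial[x,y]=[\partial x,y]\pm[x,\partial y]=0$ for cycles $x,y$, and they inherit the complete filtration. On such a Lie algebra the $\BCH$ series converges and equips $(Z_\ev,\BCH,0)$ with a prounipotent group structure, see~\cite[Chapter~1]{DotsenkoShadrinVallette18}.

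Finally, the homomorphism property is formal. Since $\Psi_*$ is a Lie algebra morphism preserving the weight filtration---an operad morphism preserves arities and leaves $\hbar$ untouched---it commutes termwise with the convergent $\BCH$ series, so $\Psi_*(\BCH(x,y))=\BCH(\Psi_* x,\Psi_* y)$, and it sends the neutral element $0$ to $0$. The only genuinely non-formal ingredient is the completeness that makes $\BCH$ converge; everything else is the functoriality of $\BCH$ with respect to morphisms of complete Lie algebras, so I do not anticipate a real obstacle here.
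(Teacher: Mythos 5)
Your proposal is correct and follows essentially the same route as the paper: the paper's proof likewise observes that one must restrict to positive-weight elements so that the $\BCH$ series converges, and then invokes \cref{lem:TW} to say that the morphism of (twisted) dg Lie algebras integrates to a morphism of groups. Your write-up simply makes explicit the standard verifications (parity and cycle preservation, completeness, functoriality of $\BCH$) that the paper leaves implicit.
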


\begin{proof}
We have to restrict to positive weight elements for the Baker--Campbell--Hausdorff formula to converge. The rest is a direct corollary of \cref{lem:TW}: the morphism of Lie algebras integrates to a morphism of groups.
\end{proof}

Let us recall from~\cite[Chapter~10]{LodayVallette12} that, given a dg module $\g$,
a  Maurer--Cartan elements $\alpha$ of the dg pre-Lie algebra
$\ba_{\End_{\g}}$ corresponds to a $(\SDL)_\infty$-algebra structures on $\g$.
Such a structure amounts to a collection of operations
\[\left\{
\ell^k_n : \g^{\odot n}\to \g
\right\}_{n\geqslant 1, k\geqslant 0}\]
of degree $-1$  (odd degree here)
satisfying the $\K[[\hbar]]$-extension of the relations of a shifted $\Lie_\infty$-algebra, see~\cite[Chapter~3, Section~4]{DotsenkoShadrinVallette18} for instance.
Given now two such
$(\SDL)_\infty$-algebra structures $\alpha$ and $\beta$ on $\g$,
a degree $0$ (even $\Z2$-degree here) element $f\in \aa_{\End_{\g}}$ satisfying
the equation $\partial f= f\star \alpha-\beta \circledcirc f$ corresponds to a $(\SDL)_\infty$-morphism from $\alpha$ to $\beta$.
Such a data amounts to a collection
\[\left\{
f^k_n : \g^{\odot n}\to \g
\right\}_{n\geqslant 1, k\geqslant 0}\] such that
$\left|f^k_n\right|=n+k-1$ and satisfying the $\K[[\hbar]]$-extension of the relations of
a morphism of shifted $\Lie_\infty$-algebras.
We call \emph{$(\SDL)_\infty$-isotopies}, the $(\SDL)_\infty$-morphisms
whose first component $f^0_1=\id$ is the identity map. As usual, the invertible $(\SDL)_\infty$-morphisms are the ones for which the first component $f^0_1$ is invertible.

\begin{lemma}\label{lem:PreLieExp}
Let $\phi\in \MC(\ba_{\End_{\g}})$ be an $(\SDL)_\infty$-algebra structure on $\g$.
The pre-Lie exponential $e^\xi$ of any
$\xi\in Z_\ev
\Big(
\ba_{\End_{\g}}^\phi
\Big)$
is an $(\SDL)_\infty$-isotopy of the $(\SDL)_\infty$-algebra structures $\phi$ on $\g$.
\end{lemma}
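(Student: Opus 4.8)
The plan is to verify, against the dictionary recalled just above the statement, that $e^\xi$ is an even element of $\aa_{\End_\g}$ whose first component is $\id$ and which solves the $(\SDL)_\infty$-morphism equation with source and target both equal to $\phi$; by that dictionary this is precisely the assertion that $e^\xi$ is an $(\SDL)_\infty$-isotopy of the structure $\phi$. Concretely, writing $f\coloneqq e^\xi$, I must check two things: (i) $f^0_1 = \id$, and (ii) the morphism equation $\partial f = f \star \phi - \phi \cc f$.

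Condition (i) is immediate. Since $\xi$ lies in $\ba_{\End_\g}^\phi$, it has strictly positive weight, so the pre-Lie exponential $e^\xi = \id + \xi + \tfrac12\,\xi\star\xi + \cdots$ converges in the complete pre-Lie algebra $\aa_{\End_\g}$, and every summand beyond $\id$ has positive weight. Hence the weight-zero (arity-one, $\hbar^0$) component of $e^\xi$ is exactly $\id$, that is $f^0_1 = \id$; moreover $e^\xi$ is even because $\xi$ is.

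For condition (ii) I would argue by a flow argument inside the complete pre-Lie algebra. Recall that $W(t)\coloneqq e^{t\xi}$ is characterized, with the standard convention, as the unique solution of $\tfrac{d}{dt}W(t) = W(t)\star\xi$ with $W(0)=\id$. Introduce the defect
\[ F(t)\coloneqq \partial W(t) - W(t)\star\phi + \phi \cc W(t)\ . \]
At $t=0$ one has $F(0) = \partial\,\id - \id\star\phi + \phi\cc\id = 0$, using $\partial\,\id = 0$, $\id\star\phi = \phi$, and $\phi\cc\id = \phi$. Differentiating $F$, substituting $\dot W = W\star\xi$, and invoking the Leibniz rule for $\partial$ with respect to $\star$, the cycle relation $\partial^\phi\xi = 0$ (which unfolds to $\partial\xi = \xi\star\phi - \phi\star\xi$ since $\xi$ is even), and the Maurer--Cartan relation $\partial\phi + \phi\star\phi = 0$, the plan is to reorganize $\dot F(t)$ into a linear homogeneous expression in $F(t)$, of the form $\dot F = F\star\xi$ up to signs. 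By completeness of $\aa_{\End_\g}$, such an equation with $F(0)=0$ admits only the zero solution, so $F\equiv 0$; evaluating at $t=1$ yields the morphism equation (ii), completing the proof.

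The main obstacle is exactly this reorganization of $\dot F$ into a closed linear equation: differentiating the term $\phi\cc W(t)$ inserts $\dot W = W\star\xi$ into the slots of $\phi$, and expressing the outcome through $F(t)$ and $F(t)\star\xi$ requires the distributivity relations between the one-slot product $\star$ and the all-slots substitution $\cc$ of the convolution pre-Lie algebra, i.e.\ the genuine non-associative bookkeeping underlying $(\SDL)_\infty$-morphisms. This is precisely the content of the pre-Lie deformation theory of~\cite{DSV16}, which may instead be invoked directly. Indeed, linearizing $\phi\cc f$ at $f=\id$ returns $\phi\star\xi$, so the first-order isotopy equation reads $\partial\xi = \xi\star\phi - \phi\star\xi$, which is precisely the cycle condition $\partial^\phi\xi = 0$; thus $\xi\in Z_\ev(\ba_{\End_\g}^\phi)$ is the tangent vector at $\id$ to the group of $(\SDL)_\infty$-isotopies of $\phi$, and the statement is the integration of this infinitesimal symmetry by the pre-Lie exponential, as furnished by the machinery of~\cite{DSV16}.
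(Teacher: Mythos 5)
Your proposal is correct in substance and, in the end, lands on the same reference the paper itself leans on, but the route you sketch before getting there is genuinely different and is left incomplete at its decisive step. The paper's proof does not run a flow argument on the defect $F(t)=\partial W(t)-W(t)\star\phi+\phi\cc W(t)$; instead it computes the \emph{gauge action} of $e^\xi$ on $\phi$ via the differential trick of~\cite[Section~5]{DSV16}, namely $e^\xi\cdot\phi=e^{\ad_\xi}(\phi)=\phi$ because $\partial^\phi(\xi)=0$, and then quotes \cite[Theorem~3]{DSV16}, which says precisely that $e^\xi$ is an $\infty$-isotopy from $\phi$ to its gauge transform $e^\xi\cdot\phi$. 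That two-line argument is what your last paragraph is gesturing at, but ``the statement is the integration of this infinitesimal symmetry'' is not yet a proof: integrating a tangent vector to the isotopy group at $\id$ only yields an isotopy of $\phi$ to \emph{itself} once you know (a) that the pre-Lie exponential is a group isomorphism from the gauge group onto the group of $\infty$-isotopies (\cite[Theorem~2]{DSV16}) and (b) that under this isomorphism the target of the isotopy $e^\xi$ is the gauge transform $e^\xi\cdot\phi$, which you must then compute to be $\phi$ using $\partial^\phi\xi=0$. You should name these two facts explicitly rather than appeal to ``the machinery''. As for your direct ODE verification: the part $\tfrac{d}{dt}\bigl(\partial W-W\star\phi\bigr)=(\partial W-W\star\phi)\star\xi$ does close up using the right-symmetry of the pre-Lie associator and $\partial\xi=\xi\star\phi-\phi\star\xi$, but the term $\tfrac{d}{dt}(\phi\cc W)$ requires the compatibility identity between $\star$ and the all-slots substitution $\cc$, and this is exactly the nontrivial content you have not supplied; if you want the self-contained flow proof you must carry out that computation, otherwise the honest proof is the paper's: gauge-fix $\phi$ and invoke \cite{DSV16}. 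Your verification that $f^0_1=\id$ and that $e^\xi$ is even and convergent is correct and is a point the paper leaves implicit.
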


\begin{proof}
The pre-Lie exponential $e^\xi$ lives in the pre-Lie algebra $\aa_{\End_{\g}}$, which is complete by \cref{lemma:CompletePreLie}.
Let $\xi$ be an element of $\ba_{\End_{\g}}\cap \ker \partial^\phi$. Using the differential trick from~\cite[Section~5]{DSV16},  the  gauge action of
$e^\xi$ on $\phi$ is given by
\[
e^\xi\cdot \phi = e^{\mathrm{ad}_\xi}(\phi)=\phi
\]
since $\partial^\phi(\xi)=[\phi, \xi]=0$. So $e^\xi$ is an $\infty$-isotopy from the
$(\SDL)_\infty$-algebra structures $\phi$ on $\g$
 to itself by ~\cite[Theorem~3]{DSV16}.
\end{proof}

Let us denote by $\gh\coloneq \g\otimes \K[[\hbar]]$ the $\K[[\hbar]]$-extension of the complete dg module $\g$\ .

\begin{lemma}\label{lem:DeltaLietoLie}
The assignment
\begin{align*}
\begin{array}{crcl}
\Sigma : & \left(\g, \F, \left\{ \ell_n^k \right\}_{n\geqslant 1, k\geqslant 0}\right) &\mapsto&
\left(\gh, \F[[\hbar]], \left\{
\ell_n \coloneqq \sum_{k\geqslant 0} \hbar^k \ell^k_n
\right\}_{n\geqslant 1}
\right)\\
&\rule{0pt}{12pt}\left\{
f^k_n
\right\}_{n\geqslant 1, k\geqslant 0} &\mapsto &
\left\{
f_n \coloneqq \sum_{k\geqslant 0}\hbar^k f^k_n
\right\}_{n\geqslant 1}
\end{array}
\end{align*}
defines a functor from the category of complete shifted $(\SDL)_\infty$-algebras over $\K$ to the category of complete shifted $\Lie_\infty$-algebras over $\K[[\hbar]]$~.
\end{lemma}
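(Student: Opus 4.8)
The plan is to exhibit $\Sigma$ as the repackaging of structure maps furnished by the pre-Lie isomorphism of \cref{lemma:CompletePreLie}, and then to read off functoriality from its $\K[[\hbar]]$-linearity.

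First I would record the conceptual meaning of the Koszul dual computation $(\SDL)^{\ac}\cong\T^c(s\Delta)\otimes\Com^*$ used in the proof of \cref{lemma:CompletePreLie}: the arity-one tower $\T^c(s\Delta)$ is exactly what the powers of $\hbar$ keep track of, whereas the tensor factor $\Com^*$ is the part responsible for the (shifted) $\Lie_\infty$-structure. Accordingly, \cref{lemma:CompletePreLie} presents $\aa_{\End_\g}$ as the $\K[[\hbar]]$-extension of the pre-Lie algebra underlying the totalisation of $\End_\g$, with $\hbar$ central of degree $0$ and with differential, pre-Lie product, and bracket all extended $\K[[\hbar]]$-linearly. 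Since Maurer--Cartan elements of the totalisation pre-Lie algebra of $\End_\g$ over $\K$ are precisely shifted $\Lie_\infty$-structures on $\g$ (the classical convolution description, see \cite{LodayVallette12,DSV16}), Maurer--Cartan elements of its $\K[[\hbar]]$-extension over $\K[[\hbar]]$ are precisely shifted $\Lie_\infty$-structures on $\gh$.

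Next, for the assignment on objects I would use that the equation $\partial\phi+\phi\star\phi=0$ is $\K[[\hbar]]$-linear: a solution $\phi=\sum_{n,k}\ell^k_n\hbar^k$ in $\aa_{\End_\g}$ over $\K$ is the same datum as the Maurer--Cartan element $\sum_n\ell_n$, with $\ell_n=\sum_{k\geqslant 0}\hbar^k\ell^k_n$, of the $\K[[\hbar]]$-extension over $\K[[\hbar]]$, hence a complete shifted $\Lie_\infty$-structure on $\gh$ (completeness being inherited from the filtration $\F[[\hbar]]$). This is $\Sigma$ on objects. For morphisms, the defining equation $\partial f=f\star\alpha-\beta\cc f$ of an $(\SDL)_\infty$-morphism is likewise $\K[[\hbar]]$-linear termwise, so under the same identification the components $\{f^k_n\}$ assemble into $\{f_n=\sum_{k\geqslant 0}\hbar^k f^k_n\}$ solving the corresponding equation over $\K[[\hbar]]$, i.e.\ a shifted $\Lie_\infty$-morphism from $\Sigma(\alpha)$ to $\Sigma(\beta)$; isotopies ($f^0_1=\id$) and invertible $\infty$-morphisms are preserved because the condition on the first component is untouched by the repackaging.

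Functoriality is then formal: the composite of two $\infty$-morphisms is given by the universal formula encoded by the decomposition maps of $(\SDL)^{\ac}$, which is $\K$-linear and therefore survives the $\K[[\hbar]]$-linear extension, so $\Sigma$ preserves composites, and the $\infty$-morphism with $f^0_1=\id$ and all higher components zero is sent to the identity. The hard part --- indeed the only genuine computation --- is to check that the isomorphism of \cref{lemma:CompletePreLie} intertwines the two differentials and respects the Koszul signs after passing to the $\Z2$-grading fixed in the Convention above, so that the $(\SDL)_\infty$- and $(\SDL)_\infty$-morphism relations match the shifted $\Lie_\infty$- and $\Lie_\infty$-morphism relations over $\K[[\hbar]]$ order by order in $\hbar$; this same bookkeeping is what accounts for the degree shift recorded in $|f^k_n|=n+k-1$.
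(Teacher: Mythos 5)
Your proof is correct and follows essentially the same route as the paper: both reduce the statement to the convolution pre-Lie algebra picture of \cref{lemma:CompletePreLie}, identify $\aa_{\End_\g}$ as the $\K[[\hbar]]$-extension of the convolution pre-Lie algebra $\prod_{n}\End_\g(n)^{\Sy_n}$ governing shifted $\Lie_\infty$-structures, and observe that the reindexing $\sum_k\hbar^k(-)$ is a morphism of pre-Lie algebras (the paper phrases this as the map induced by the operad morphism $\T(s\Delta)\circ\Com\to\Com$ collapsing $(s\Delta)^k\otimes\mu\mapsto\mu$), so Maurer--Cartan elements and the equations defining $\infty$-morphisms are preserved. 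The only cosmetic difference is that the paper derives the pre-Lie morphism from a (co)operad map rather than checking $\K[[\hbar]]$-linearity of the equations by hand, which makes the sign/degree bookkeeping you flag at the end automatic.
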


\begin{proof}
The $\K[[\hbar]]$-extension of $\g$ ensures that the various sums make sense. To establish all the algebraic relations, it is enough to analyze the two corresponding convolution algebras.
On the one hand, the complete convolution pre-Lie algebra encoding the category of  complete $(\SDL)_\infty$-algebras is $\aa_{\End_\g}$~.
On the other hand,
the complete convolution pre-Lie algebra encoding the category of complete $(\SL)_\infty$-algebras is equal to
\[\b_{\End_\g}\coloneq\left(\widehat{\Hom}_{\Sy} \big((\S\Lie)^{\ac}, \PP\big), \partial, \star \right)\cong
\Big( \prod_{n\geqslant 1}\End_\g(n)^{\Sy_n}, \partial, \star \Big)\ ,
\]
The argument given in the proof of \cref{lemma:CompletePreLie} shows that $\aa_{\End_\g}$ is the $\K[[\hbar]]$-extension of $\b_{\End_\g}$~. More precisely, since $(\S\Lie)^{\ac}\cong \Com^*$, the morphism of operads
$\T(s\Delta)\circ \Com\to \Com$ which sends $(s\Delta)^k\otimes \mu$ to $\mu$ induces a morphism from the former pre-Lie algebra to the latter pre-Lie algebra;  explicitly, this morphism of pre-Lie algebras
amounts to taking the sum over $k$.
This shows that Maurer--Cartan elements are sent to Maurer--Cartan elements and that
 complete shifted $(\SDL)_\infty$-algebras  are sent complete shifted $\Lie_\infty$-algebras.
The argument is similar for the $\infty$-morphisms.
\end{proof}

The restriction of the functor $\Sigma$ to complete shifted $\Delta$-Lie algebras
\[
\big(\g, \F, \d, \Delta, \{\ , \,\}\big) \mapsto \gth=\big(\g[[\hbar]], \F[[\hbar]], \d+\hbar\Delta, \{\ , \,\}\big)
\]
is the quantum functor of \cref{subsec:Quantum}
from the subcategory of complete shifted $\Delta$-Lie algebras over $\K$ to the subcategory of complete shifted $\Lie$-algebras over $\K[[\hbar]]$. This functor preserves
the  respective $\infty$-morphisms.
It also induces a group morphism between the respective notions of $\infty$-isotopies.

\medskip

Recall that there is an action of the group of $\Lie_\infty$-automorphisms
on the set of Maurer--Cartan elements of a complete shifted $\Lie_\infty$-algebra  given by
\[
f\cdot \alpha \coloneqq \sum_{n\geqslant 1} {\textstyle \frac{1}{n!}} f_n(\alpha, \ldots, \alpha)
\ ,\]
see~\cite[Chapter~3, Section~5]{DotsenkoShadrinVallette18} for more details including conceptual explanations.
This is always well defined since the Maurer--Cartan elements are supposed to live in the layer $\F_1$ of the complete filtration. Recall that, in the quantum complete shifted $\Lie$-algebra $\gth$, we consider Maurer--Cartan elements in $(\F_1\, \g)[[\hbar]]$, that is solution to the \emph{quantum} master equation
\[
\d \alpha + \hbar\Delta \alpha + {\textstyle \frac12} \{\alpha, \alpha\}=0\ .
 \]

\begin{proposition}\label{prop:CycleAction}
The composite of the above-mentioned group morphisms defines a group action
\[
Z_\ev\Big(
\aa_\Gra^{\theta}\Big)
\xrightarrow{\Psi_*}
Z_\ev\Big(
\aa_{\End_{\g}}^\phi\Big)
 \xrightarrow{e}
\S\Delta\Lie_\infty\textrm{-}\mathrm{Aut}\big(\g\big)
\xrightarrow{\sum}
\S\Lie_\infty\textrm{-}\mathrm{Aut}\big(\gth\big)\to
\Aut\big(
\MC\big(\gth\big)
\big)
\ ,\]
   functorial in complete $\Gra$-algebras $\Psi : \Gra \to \End_\g$. It is
explicitly given by
\[\lambda \cdot \alpha \coloneqq \sum_{\substack{n\geqslant 1\\ k\geqslant 0}}
{\textstyle \frac{1}{n!}}
\left(e^{\Psi(\lambda)}\right)^k_n(\alpha, \ldots, \alpha)\, \hbar^k\ .\]
\end{proposition}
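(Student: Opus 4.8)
The plan is to take the displayed composite at face value: it is a chain of four maps, each already identified in the preceding results as a morphism of groups, and a group action is by definition a group morphism with target $\Aut\big(\MC(\gth)\big)$. So I would first check that each arrow is a group morphism, then use that a composite of group morphisms is a group morphism, and finally unwind the chain to recover the closed formula for $\lambda\cdot\alpha$.

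For the four arrows: the first, $\Psi_*$, is exactly \cref{cor:MorphBCH}, a morphism of $\BCH$-groups on the positive-weight even cocycles (it is here that one restricts to $\ba$ so that $\BCH$ converges). The second, $e$, is the pre-Lie exponential of \cref{lem:PreLieExp}, which sends an even $\partial^\phi$-cocycle to an $(\SDL)_\infty$-isotopy of the fixed structure $\phi$; that it intertwines $\BCH$ with composition of $\infty$-isotopies is the standard statement that the pre-Lie exponential identifies the $\BCH$-group of a complete pre-Lie algebra with its group of gauge symmetries, via the differential trick of \cite{DSV16}. The third, $\sum$, is the restriction to isotopies of the functor $\Sigma$ of \cref{lem:DeltaLietoLie}; since $\Sigma$ preserves $\infty$-morphisms and their composition, it induces a group morphism $\S\Delta\Lie_\infty\textrm{-}\mathrm{Aut}(\g)\to\S\Lie_\infty\textrm{-}\mathrm{Aut}(\gth)$. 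The last arrow is the standard action of the group of $\Lie_\infty$-automorphisms of a complete shifted $\Lie_\infty$-algebra on its set of Maurer--Cartan elements, recalled just above from \cite[Chapter~3]{DotsenkoShadrinVallette18}; the defining sum converges because elements of $\MC(\gth)$ lie in the first filtration layer.

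Composing these four group morphisms gives a group morphism $Z_\ev(\ba_\Gra^\theta)\to\Aut\big(\MC(\gth)\big)$, which is precisely the asserted group action; functoriality in $\Psi$ holds because every arrow other than $\Psi_*$ is a universal operadic construction and $\Psi_*$ is natural in $\Psi$ by \cref{lem:TW}. The explicit formula I would obtain by tracking a single $\lambda$: under $\Psi_*$ it becomes the push-forward cocycle $\Psi_*(\lambda)$, abbreviated $\Psi(\lambda)$ in the displayed formula; under $e$ it becomes the isotopy with $(n,k)$-components $\big(e^{\Psi(\lambda)}\big)^k_n$; under $\Sigma$ these assemble into the $\Lie_\infty$-isotopy with $n$-ary part $\sum_{k\geqslant0}\hbar^k\big(e^{\Psi(\lambda)}\big)^k_n$; and feeding the latter into $f\cdot\alpha=\sum_{n\geqslant1}\frac1{n!}f_n(\alpha,\dots,\alpha)$ reproduces the stated formula. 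The only point needing genuine care, and the main obstacle, is to ensure that all four infinite summations ($\BCH$, the pre-Lie exponential, the $\Sigma$-assembly, and the action) converge simultaneously; this follows from the completeness of $\aa_{\End_\g}$ recorded in \cref{lemma:CompletePreLie}, the positive-weight restriction encoded in $\ba$, and the fact that Maurer--Cartan elements lie in $\F_1$, but one must check that these conditions are transported intact along the entire chain.
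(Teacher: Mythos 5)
Your proposal follows the paper's proof essentially step for step: compose the group morphisms supplied by \cref{cor:MorphBCH}, by \cref{lem:PreLieExp} together with the fact that the pre-Lie exponential is a group morphism onto $\infty$-isotopies, by \cref{lem:DeltaLietoLie}, and by the standard action of shifted $\Lie_\infty$-isotopies on Maurer--Cartan elements, then unwind the chain to get the closed formula. The one point you elide is that the stated domain is $Z_\ev\big(\aa_\Gra^{\theta}\big)$ rather than $\ba_\Gra^{\theta}$; the paper bridges this by observing that every even cycle of $\aa_\Gra^{\theta}$ has trivial arity-one component (the one-vertex edgeless graph is not a $\partial^\theta$-cocycle and the tadpole is odd), hence no weight-zero part, so the $\BCH$ formula genuinely converges on $Z_\ev\big(\aa_\Gra^{\theta}\big)$ --- your parenthetical ``one restricts to $\ba$'' does not quite account for this, since no restriction is being imposed in the statement.
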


\begin{proof}
This follows automatically
by applying first \cref{cor:MorphBCH}, then \cref{lem:PreLieExp} and finally the fact that the pre-Lie exponential defines a group morphism from the gauge group with the BCH formula to the group of $\infty$-isotopies
\cite[Theorem~2]{DSV16}.
Notice that any even cycle of $\aa_\Gra^{\theta}$ has trivial arity~$1$ component, so $Z_\ev\Big(
\aa_\Gra^{\theta}\Big)
$ contains no element of weight $0$. It thus forms a well-defined group under the BCH formula.
Finally, one uses \cref{lem:DeltaLietoLie} and the action of shifted $\Lie_\infty$-morphisms on Maurer--Cartan elements.
\end{proof}

\cref{prop:CycleAction} applies to modular operads to provide us with a functorial action of the group $Z_\ev\Big(
\aa_\Gra^{\theta}\Big)$ on Maurer--Cartan elements of the quantum totalisation
$\whP^\hbar$ of  shifted modular operads.

\begin{remark}\label{rem:CommPeS}
Starting from the pre-Lie algebra $\aa_\Gra$, the three maps $\Psi_*$, $e$, and $\Sigma$ commute in all possible ways since $\Psi_*$ and $\Sigma$ respond respectively to pushing along a morphism of operad and pulling back along a morphism of cooperads and since the pre-Lie exponential $e$ is a universal construction for pre-Lie algebras.
Notice also that the image of the set of elements of arity greater or equal to $2$ of $\aa_\Gra$ under the pre-Lie exponential map is equal to the set of group-like elements $\{\1\}\times \prod_{n\geqslant 2, k\geqslant 0}\Gra(n)^{\Sy_n} \hbar^k$, which is endowed with the group structure given by $\circledcirc$, by~\cite[Lemma~1]{DSV16}.
\end{remark}

\begin{remark}
Without passing to the $\Z2$-grading,
the above-mentioned result would be empty since the summand of homological degree 0  of $\aa_\Gra$ is trivial.
\end{remark}

\subsection{Homology action}\label{subsec:HoAction}
We will now look at the action from~\cref{prop:CycleAction} at the level of homology.
\medskip

Any  $\Gra$-algebra structure on $\g$ contains a  shifted $\Delta$-Lie algebra structure
by Point~(2) of \cref{prop:GraAction},
which gives rise to a quantum complete  shifted Lie algebra $\gth$ by \cref{lem:DeltaLietoLie}. Using the Baker--Campbell--Hausdorff formula, the set $\F_1 \g_\od[[\hbar]]$ can be given the structure of a gauge group which acts on the set of Maurer--Cartan elements. The corresponding set of equivalence classes, denoted by $\calMC\big(\gth\big)$, is the \emph{moduli space of Maurer--Cartan elements}.

\begin{theorem}\label{cor:MorphBCHhomo}
The group morphism of \cref{prop:CycleAction} induces a group morphism
\[
\left(H_\ev\Big(
\aa_\Gra^{\theta}\Big), \BCH,0\right)  \to
\Aut\left(
\calMC\big(\gth\big)
\right)\]
   functorial in complete $\Gra$-algebras.
\end{theorem}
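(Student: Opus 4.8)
The plan is to upgrade the group morphism of \cref{prop:CycleAction}, which is defined on the even cycles $Z_\ev(\aa_\Gra^\theta)$ and lands in $\Aut\big(\MC(\gth)\big)$, along two independent directions: to descend the target from the set $\MC(\gth)$ of quantum Maurer--Cartan elements to the moduli space $\calMC(\gth)$ of their gauge equivalence classes, and to descend the source from $Z_\ev$ to $H_\ev$. Since the bracket of two even cycles is a cycle and the bracket of a cycle with a boundary is a boundary, the even boundaries $B_\ev(\aa_\Gra^\theta)$ form a normal subgroup of $\big(Z_\ev(\aa_\Gra^\theta),\BCH,0\big)$ and $H_\ev=Z_\ev/B_\ev$ inherits a group structure under $\BCH$; thus the statement reduces to two claims: \textbf{(A)} the action of each even cycle descends to an automorphism of $\calMC(\gth)$, and \textbf{(B)} every even boundary acts trivially on $\calMC(\gth)$.

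For \textbf{(A)}, recall that by construction an even cycle $\lambda$ acts through the $(\SL)_\infty$-isotopy $\Sigma\big(e^{\Psi_*(\lambda)}\big)$ of $\gth$ produced by \cref{lem:PreLieExp} and \cref{lem:DeltaLietoLie}. An $\infty$-morphism of complete shifted $\Lie_\infty$-algebras sends gauge equivalent Maurer--Cartan elements to gauge equivalent ones, i.e.\ it induces a morphism of the associated Deligne groupoids; this is the functoriality of the gauge action recalled in~\cite[Chapter~3]{DotsenkoShadrinVallette18}. Applying this to $\Sigma\big(e^{\Psi_*(\lambda)}\big)$ and to its $\BCH$-inverse, the automorphism of $\MC(\gth)$ attached to $\lambda$ passes to the quotient $\calMC(\gth)$, and \cref{prop:CycleAction} already ensures that this assignment is a group morphism.

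For \textbf{(B)}, let $\lambda=\partial^\theta\mu\in B_\ev(\aa_\Gra^\theta)$. Since $\Psi_*$ is a morphism of twisted dg Lie algebras (\cref{lem:TW}), we have $\Psi_*(\lambda)=\partial^\phi(\nu)$ with $\nu\coloneqq\Psi_*(\mu)$ odd, so it suffices to prove that $e^{\partial^\phi\nu}$ acts trivially on $\calMC(\gth)$. The plan is to interpolate: for $\alpha\in\MC(\gth)$ set $\psi(s)\coloneqq e^{\,s\,\partial^\phi\nu}\cdot\alpha$, a path in $\MC(\gth)$ that is convergent in the complete filtered setting and joins $\psi(0)=\alpha$ to $\psi(1)=e^{\partial^\phi\nu}\cdot\alpha$. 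Its velocity equals the fundamental vector field of the generator $\partial^\phi\nu$ at $\psi(s)$, namely $\tfrac{d}{ds}\psi(s)=\mathrm{ev}_{\psi(s)}\big(\partial^\phi\nu\big)$, where $\mathrm{ev}_\beta(\zeta)\coloneqq\sum_{n\geqslant 1}\tfrac1{n!}\,\Sigma(\zeta)_n(\beta,\dots,\beta)$. I would then show that this velocity is $\partial^{\psi(s)}$-exact, where $\partial^\beta$ denotes the twisted $(\SL)_\infty$-differential of $\gth$ at a Maurer--Cartan element $\beta$; setting $u(s)\coloneqq\mathrm{ev}_{\psi(s)}(\nu)$ then exhibits $\psi$ as an integral curve of the time-dependent infinitesimal gauge action with generator $u(s)$, so that $\psi(0)$ and $\psi(1)$ are gauge equivalent. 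Together with the normality of $B_\ev$ and the group morphism of \cref{prop:CycleAction}, this yields \textbf{(B)}.

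The main obstacle is the exactness computation at the heart of \textbf{(B)}, which rests on the identity $\mathrm{ev}_\beta\big(\partial^\phi\zeta\big)=\pm\,\partial^\beta\big(\mathrm{ev}_\beta\,\zeta\big)$ asserting that $\mathrm{ev}_\beta$ intertwines the twisted differential $\partial^\phi$ on $\aa_{\End_\g}^\phi$ with the twisted differential $\partial^\beta$ of $\gth$. This identity holds \emph{precisely because $\beta$ solves the quantum master equation}, and it is the concrete form of the statement that the deformation complex computes the tangent complex of the moduli space of quantum Maurer--Cartan elements; tracking the $\infty$-components and the Koszul signs through the functor $\Sigma$ of \cref{lem:DeltaLietoLie} is the delicate bookkeeping. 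Finally, functoriality in complete $\Gra$-algebras is inherited from \cref{prop:CycleAction}, since each ingredient---the pushforward $\Psi_*$, the pre-Lie exponential, and the functor $\Sigma$---is natural in the $\Gra$-algebra structure $\Psi$.
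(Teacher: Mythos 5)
Your proposal is correct, and its overall architecture coincides with the paper's: both reduce the statement to the single key point that a $\partial^\theta$-exact even element acts gauge-trivially (equivalently, that homologous even cycles act by gauge-equivalent results), the descent of the target to $\calMC\big(\gth\big)$ being the standard fact that $\infty$-isotopies induce functors on Deligne groupoids. Where you genuinely differ is in the proof of that key point. The paper (following Merkulov--Willwacher) tensors with the Sullivan algebra $\K[t,dt]$: given $\partial^{\phi}f=\lambda-\mu$, it forms the even cycle $\xi=t\lambda+(1-t)\mu-f\,dt$ in $\bb_{\End_{\g[t,dt]}}^{\phi}$, so that $e^{\xi}\cdot\alpha$ is a Maurer--Cartan element of $\g[t,dt]$ interpolating between $e^{\mu}\cdot\alpha$ at $t=0$ and $e^{\lambda}\cdot\alpha$ at $t=1$, and then invokes the equivalence between homotopy and gauge equivalence of Maurer--Cartan elements. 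You instead differentiate the one-parameter family $e^{s\,\partial^{\phi}\nu}\cdot\alpha$ and show that its velocity is exact for the twisted differential at the moving point, exhibiting the path as an integral curve of a time-dependent infinitesimal gauge transformation. These are two descriptions of the same object --- a Maurer--Cartan element of $\g[t,dt]$ is precisely a path of Maurer--Cartan elements together with a primitive of its velocity --- but the paper's packaging buys something concrete: it reuses \cref{lem:PreLieExp} verbatim in the path object and thereby never has to verify the intertwining identity $\mathrm{ev}_{\beta}\big(\partial^{\phi}\zeta\big)=\pm\,\partial^{\beta}\big(\mathrm{ev}_{\beta}\zeta\big)$ that you correctly single out as the crux of your computation. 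That identity is true (it follows from the pre-Lie calculus together with the Maurer--Cartan equation for $\beta$, which cancels the $\zeta\star\phi$ contribution against the precomposition of $\zeta$ with the differential of $\g$), but you leave it as announced bookkeeping; if you wish to avoid carrying it out, the $\K[t,dt]$ device is exactly what makes it unnecessary. Two small points in your favour: you make explicit both the descent of the target to $\calMC\big(\gth\big)$ and the normality of the even boundaries under $\BCH$, which the paper's proof leaves implicit.
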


This statement follows immediately from the following general result, stated without proof in~\cite[Section~3.1]{MW14}. Recall that any  shifted dg  Lie algebra structure $(\g, \d, \{\, , \})$ is encoded by a Maurer--Cartan element $\phi$ concentrated in arity $2$, in the pre-Lie algebra
\[\bb_{\End_\g}\coloneq
  \left(\prod_{n\geqslant 2}\End_\g(n)^{\Sy_n}, \partial, \star \right)\ .\]
The above arguments show that
even cycles $\lambda \in Z_\ev\Big(\bb_{\End_\g}^{\phi}\Big)$ act on
the set of Maurer--Cartan  elements $\alpha$ of the shifted Lie algebra $\g$
by
the formula
\[\lambda \cdot \alpha \coloneqq \sum_{n\geqslant 1}
{\textstyle \frac{1}{n!}} \left(e^{\lambda}\right)_n(\alpha, \ldots, \alpha)\ .\]

\begin{proposition}\label{lem:HoImpliesGauge}
For any Maurer--Cartan element $\alpha$ of a  complete shifted $\Lie_\infty$-algebra $(\g, \d, \phi)$, the actions of two homologous cycles $\lambda$ and $\mu$ with even degree of\, $\bb_{\End_\g}^{\phi}$ produce two gauge equivalent elements:
$\lambda\cdot \alpha \sim \mu \cdot \alpha\ .$
\end{proposition}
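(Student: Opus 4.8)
The plan is to connect the two Maurer--Cartan elements by an explicit path and to prove that this path is a gauge path. Write $\lambda-\mu=\partial^\phi(h)$ with $h$ of odd degree (possible since $\lambda$ and $\mu$ are homologous even cycles), and consider the straight line $\lambda_t:=\mu+t(\lambda-\mu)$ for $t\in[0,1]$. Each $\lambda_t$ is again an even $\partial^\phi$-cycle, so as in \cref{lem:PreLieExp} the pre-Lie exponential $e^{\lambda_t}$ is an $\infty$-isotopy of the shifted $\Lie_\infty$-structure $\phi$, and $\alpha_t:=\sum_{n\geqslant 1}\tfrac{1}{n!}(e^{\lambda_t})_n(\alpha,\dots,\alpha)$ is a family of Maurer--Cartan elements depending polynomially on $t$, with $\alpha_0=\mu\cdot\alpha$ and $\alpha_1=\lambda\cdot\alpha$. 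The single place where the homology hypothesis enters is that $\dot\lambda_t=\lambda-\mu=\partial^\phi(h)$ is $\partial^\phi$-exact for every $t$.

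First I would differentiate the action with respect to $t$. Using the derivation formula for the pre-Lie exponential in the complete pre-Lie algebra $\aa_{\End_\g}$ together with the differential trick of \cite[Section~5]{DSV16} already invoked in \cref{lem:PreLieExp}, the infinitesimal variation $\tfrac{d}{dt}e^{\lambda_t}$ is the \emph{inner}, i.e.\ $\partial^\phi$-exact, infinitesimal isotopy determined by $\dot\lambda_t=\partial^\phi(h)$. Transporting this through the action on $\alpha$, the exactness propagates into the identity $\dot\alpha_t=\d^{\alpha_t}(\eta_t)$, where $\d^{\alpha_t}=\d+\{\alpha_t,-\}+\cdots$ is the twisted differential of the shifted $\Lie_\infty$-structure at the Maurer--Cartan element $\alpha_t$, and $\eta_t\in\F_1\,\g_\od$ is obtained by evaluating the primitive $h$, transported by $e^{\lambda_t}$, on $\alpha_t$. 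Concretely, the key algebraic identity to verify is that the linearised action of a coboundary $\partial^\phi(h)=\partial h+[\phi,h]$ on a Maurer--Cartan element equals the twisted differential of the evaluation $\mathrm{ev}_{\alpha_t}(h)$; this follows from the convolution pre-Lie relations and the $\infty$-isotopy property of $e^{\lambda_t}$.

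Finally I would integrate. The relation $\dot\alpha_t=\d^{\alpha_t}(\eta_t)$ with $\eta_t\in\F_1\,\g_\od$ is exactly the infinitesimal form of the gauge action recalled before \cref{prop:CycleAction}; since the filtration $\F$ is complete, the associated gauge flow integrates (the ordered exponential of $\int_0^t\eta_s\,ds$ converges) and yields a gauge transformation carrying $\alpha_0=\mu\cdot\alpha$ to $\alpha_1=\lambda\cdot\alpha$, whence $\lambda\cdot\alpha\sim\mu\cdot\alpha$. The main obstacle is the second step: upgrading the soft principle ``$\dot\lambda_t$ is exact, hence its action is infinitesimally gauge'' into the precise identity $\dot\alpha_t=\d^{\alpha_t}(\eta_t)$ with an explicit $\eta_t$, which requires carefully differentiating the $\infty$-isotopy action and reassembling the terms produced by $\partial^\phi(h)$ as a single twisted coboundary; the completeness of $\F$ then guarantees that every series involved converges and that the flow integrates. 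Alternatively, one may phrase the last two steps as the assertion that $e^\lambda$ and $e^\mu$ are homotopic $\infty$-isotopies, their logarithms being homologous, and invoke the standard fact that homotopic $\infty$-morphisms induce the same map on gauge-equivalence classes of Maurer--Cartan elements.
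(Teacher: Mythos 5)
Your argument is correct, and it is in essence the paper's argument with the final citation unwound by hand. The paper takes the same path of Maurer--Cartan elements, but packages it at once as a single Maurer--Cartan element in $\g\,\widehat{\otimes}\,\K[t,dt]$: writing $\lambda-\mu=\partial^\phi(f)$, it forms the even cycle $\xi= t\lambda+(1-t)\mu-f\,dt$ in $\bb_{\End_{\g[t,dt]}}^{\phi}$, so that $e^{\xi}\cdot\alpha$ is a Maurer--Cartan element of $\g[t,dt]$ restricting to $\mu\cdot\alpha$ at $t=0$ and $\lambda\cdot\alpha$ at $t=1$; it then cites the equivalence of homotopy equivalence and gauge equivalence for complete shifted $\Lie_\infty$-algebras. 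Your steps two and three — deriving $\dot\alpha_t=\d^{\alpha_t}(\eta_t)$ from the exactness of $\dot\lambda_t$ and integrating the gauge flow — are exactly the proof of that cited implication: the $dt$-component of $e^{\xi}\cdot\alpha$ is your $\eta_t\,dt$, and the Maurer--Cartan equation in $\g[t,dt]$ is precisely the identity $\dot\alpha_t=\d^{\alpha_t}(\eta_t)$ that you single out as the main obstacle. So the Sullivan-algebra formulation buys you that identity for free (the bookkeeping of reassembling the terms of $\partial^\phi(h)$ into a twisted coboundary is absorbed into the statement that $\xi$ is a cycle and $e^{\xi}$ an $\infty$-isotopy), at the cost of invoking an external result; your version is self-contained but must carry out the differentiation of the pre-Lie exponential explicitly. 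Your closing alternative — that $e^{\lambda}$ and $e^{\mu}$ are homotopic $\infty$-isotopies because their logarithms are homologous — is literally the remark the authors make after their proof.
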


\begin{proof}
We consider the Sullivan algebra $\K[t, dt]$ of the $1$-simplex. Notice that under the present  homological degree convention  $|t|$ is even and $|dt|$ is odd, with differential $d(t)=dt$.
Its complete tensor with $\g$, defines a complete shifted $\Lie_\infty$-algebra denoted by $\g[t, dt]\coloneq \g \widehat{\otimes} \K[t, dt]$~.
Let $\lambda,\mu\in Z_\ev\Big(\bb_{\End_\g}^{\phi}\Big)$
be two homologous cycles, that is there exists $f \in \bb_{\End_\g}^{\phi}$ of odd degree satisfying $\partial^{\phi}(f)=\lambda-\mu$.
The unit and the augmentation of $\K[t, dt]$ show that $\g$ is a retract of $\g[t, dt]$~.
By a slight abuse of notations, we use the same letter to depict the extensions of elements from
$\bb_{\End_{\g}}$ to $\bb_{\End_{\g[t,dt]}}$\ .
The even degree element $\xi\coloneq t\lambda+(1-t)\mu-f dt$ is a cycle in $\bb_{\End_{\g[t, dt]}}^\phi$. So its pre-Lie exponential $e^\xi$ is an $\Lie_\infty$-automorphism of $(\g[t, dt], \phi)$. Its action $e^\xi\cdot \alpha$ on $\alpha$ provides us with a  Maurer--Cartan element in $\g[t, dt]$ which is equal to
$e^\mu\cdot \alpha$ at $t=0$ and to $e^\lambda\cdot \alpha$ at $t=1$.
This shows that these two Maurer--Cartan elements are homotopically  equivalent. Since homotopy equivalence is equivalent to  gauge equivalence, see~\cite[Corollary~5.3]{RN19} for instance, this concludes the proof.
\end{proof}

\begin{remark}
This proof shows that  two homologous cycles in
$Z_\ev\Big(\bb_{\End_\g}^{\phi}
\Big)$
 induce two homotopy equivalent $\Lie_\infty$-automorphisms of the shifted $\Lie_\infty$-algebra $\g$ under
the pre-Lie exponential. It was originally proved in~\cite[Lemma~3]{Willwacher14}.  We refer the reader to~\cite{DotsenkoPoncin16, Vallette14} for the various equivalent forms of this latter equivalence relation.
\end{remark}

\begin{remark}
In fact, using computations in pre-Lie and dendriform algebras, it is possible to come up with explicit formulas for the gauge equivalence, and not just prove its existence. We shall address the arising combinatorial phenomena elsewhere.
\end{remark}

\begin{proof}[Proof of \cref{cor:MorphBCHhomo}]
We notice first that the BCH formula passes to homology.
Then since the maps $\Psi_*$, $e$, and $\Sigma$ commute, see \cref{rem:CommPeS}, and since the maps $\Psi_*$ and $\Sigma$ are morphisms of dg pre-Lie algebras, it is enough to prove that the action of
the group $Z_\ev\Big(\bb_{\End_{\gh}}^{\Sigma^\phi}\Big)$ on $\MC\big(\gth\big)$  passes to homology and to gauge equivalence classes respectively. To do so, one needs to prove that, for any Maurer--Cartan element of the quantum shifted Lie algebra $\gth$, the actions of two homologous even cycles of $\bb_{\End_{\gh}}^{\Sigma^\phi}$ produce two gauge equivalent elements.
Denoting by $\varphi$ the arity $2$ part of $\phi$ corresponding only to the $\hbar$-extension of the bracket $\{\, , \}$ of $\gh$,
one can see that the two twisted dg Lie algebras
\[
\bb_{\End_{(\gh, \d)}}^{\Sigma\phi}
=
\bb_{\End_{(\gh, \d+\hbar\Delta)}}^{\varphi}
\]
are equal. \cref{lem:HoImpliesGauge} applied to the complete quantum   shifted Lie algebra  $\gth$ concludes the proof.
\end{proof}

\subsection{Universal  deformation group and the Grothendieck--Teichm\"uller group}\label{sec:UDGGTgroup}

\begin{definition}[Universal  deformation group]
The \emph{universal  deformation group} is the prounipotent group
\[
\mathrm{G}\coloneq \left( H_\ev\Big(
\aa_\Gra^{\theta}\Big),   \mathrm{BCH}, 0
\right)
\]
which integrates
the complete Lie algebra consisting of homology classes represented by graphs with an even number of edges
in the convolution Lie algebra supported by
$\prod_{n\geqslant 1, k\geqslant 0}\Gra(n)^{\Sy_n} \hbar^k$
and twisted by the Maurer--Cartan element
\[
\theta=
	\vcenter{\hbox{\begin{tikzpicture}[scale=0.5]
	 \draw[thick] (0,0)--(1.5, 0);

	 \draw[fill=white, thick] (0,0) circle [radius=10pt];
	 \draw[fill=white, thick] (1.5,0) circle [radius=10pt];
	\end{tikzpicture}}}
	+
	\vcenter{\hbox{\begin{tikzpicture}[scale=0.5]
	\draw[thick]  (-0.4,2.7) arc [radius=0.4, start angle=180, end angle= 0];
	\draw[thick] (0, 2) to [out=135,in=270] (-0.4,2.7);
	\draw[thick] (0, 2) to [out=45,in=270] (0.4,2.7);
 	 \draw[fill=white, thick] (0,2) circle [radius=10pt];
	\node (n) at (0,1.5) {};
\end{tikzpicture}}}\ \hbar\ .
 \]
\end{definition}

The elements of even degree in $\aa_\Gra$ are products of graphs with unlabeled vertices and an even number of edges, indexed by some $\hbar^k$. Since the vertices have odd degree, in order to perform computations, we always first choose a representative of a graph with a total order on its set of vertices.
Under the invariants-coinvariants identification, one also considers the sum of all possible labeling of vertices divided by the order of the symmetry group of the graph, see~\cite[Remark~2.3]{Willwacher15}.
The Lie bracket is equal to the skew-symmetrization of the insertion of a graph at each vertex of another graph and the sum of all the possible ways to attach the edges of this vertex to the first graph.

\begin{example}
The following
is
a good exercise. The tetrahedron
\[\sigma_3\coloneq
	\vcenter{\hbox{\begin{tikzpicture}[scale=0.5]
	 \draw[thick] (0,0)--(2, -1.2)--(0,2)--(-2,-1.2)--(2,-1.2)--(0,0);
	 \draw[thick] (0,0)--(2,-1.2)--(-2,-1.2)--(0,0)--(0,2);
	 \draw[fill=white, thick] (0,0) circle [radius=10pt];
	 \draw[fill=white, thick] (2,-1.2) circle [radius=10pt];
	 \draw[fill=white, thick] (-2,-1.2) circle [radius=10pt];
	 \draw[fill=white, thick] (0,2) circle [radius=10pt];
	\end{tikzpicture}}}
\]
is a cycle, i.e.\
$\partial^\theta(\sigma_3)=0$,
since
\[
\left[	\vcenter{\hbox{\begin{tikzpicture}[scale=0.5]
	 \draw[thick] (0,0)--(1.5, 0);

	 \draw[fill=white, thick] (0,0) circle [radius=10pt];
	 \draw[fill=white, thick] (1.5,0) circle [radius=10pt];
	\end{tikzpicture}}}\
, \sigma_3\right]=0
\qquad \text{and} \qquad
\left[\vcenter{\hbox{\begin{tikzpicture}[scale=0.5]
	\draw[thick]  (-0.4,2.7) arc [radius=0.4, start angle=180, end angle= 0];
	\draw[thick] (0, 2) to [out=135,in=270] (-0.4,2.7);
	\draw[thick] (0, 2) to [out=45,in=270] (0.4,2.7);
 	 \draw[fill=white, thick] (0,2) circle [radius=10pt];
	\node (n) at (0,1.5) {};
\end{tikzpicture}}}
\ , \sigma_3
\right]=0
\ .\]
\end{example}

To understand the universal deformation group in some detail, we need to know the homology of graph complexes which is very difficult to compute, see Section~2 of the ICM article~\cite{Willwacher18} for a survey.
Partial information can be extracted from Willwacher's results, like the main theorem of~\cite{Willwacher15} 
which states that the degree $0$ cohomology group of Kontsevich's graph complex is
the Lie algebra of
the prounipotent Grothendieck--Teichm\"uller group $\GRT_1$.
This
group was introduced by V. Drinfeld in~\cite{Drinfeld90} after
Grothendieck's \emph{Esquisse d'un programme}.
We refer the reader for instance to the survey~\cite{Merkulov19} for full definitions.

\medskip

Following~\cite{Kontsevich97, Willwacher15}, the dg Lie algebra $\aa_\Gra^{\theta}$ can be endowed with the following cohomological degree:
\[\left|\gamma\, \hbar^k\right|\coloneq 2(v-1)-e+2k\ ,\]
for any  graph $\gamma$ with $v$ vertices and $e$ edges, see \cref{rem:CohoConv}. Therefore the universal   deformation group splits with respect to this
degree:
\[
\mathrm{G}=\prod_{l\in \ZZ} \mathrm{G}^l\ , \qquad \text{where} \quad
\mathrm{G}^l\coloneq H^{2l}\Big(\aa_\Gra^{\theta}\Big)\ .\]

\begin{theorem}[{\cite[Proposition~2.1]{MW14}}]\label{thm:GRT1}\leavevmode
\begin{enumerate}
\item
The Grothendieck--Teichm\"uller group $\GRT_1$ sits inside the universal   deformation group as its cohomological degree $0$ part:
\[\mathrm{G}^0 \cong  \GRT_1\ .
\]
\item
The negative cohomological summands of the universal   deformation group are trivial:
\[\mathrm{G}^l=0\ , \quad \text{for}\ \  l<0\ .\]
\end{enumerate}
\end{theorem}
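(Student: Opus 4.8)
The plan is to identify $\aa_\Gra^\theta$ with an $\hbar$-deformation of Willwacher's connected graph complex and to deduce the statement from his computation of graph cohomology \cite{Willwacher15} by a filtration argument. First I would unravel the twisted differential. Writing $\theta=\Gamma_e+\hbar\,\Gamma_\ell$ for the one-edge and one-loop graphs appearing in the definition of $\theta$, the twisted differential splits as $\partial^\theta=[\theta,-]=\delta+\hbar\nabla$, where $\delta=[\Gamma_e,-]$ is the usual graph-complex differential (it both splits a vertex into two joined by an edge and attaches a univalent vertex) and $\nabla=[\Gamma_\ell,-]$ creates a tadpole at a vertex. With the cohomological degree $|\gamma\,\hbar^k|=2(v-1)-e+2k$ recorded just above the theorem, both operators raise the degree by $1$; moreover $\delta$ preserves the power $k$ of $\hbar$ and the first Betti number $b_1=e-v+1$, whereas $\nabla$ raises $b_1$ by $1$ and the extra factor $\hbar$ raises $k$ by $1$. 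Note also that $\hbar$ itself carries cohomological degree $+2$.

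Next I would run the spectral sequence of the decreasing filtration by the power of $\hbar$. This filtration is bounded below (by $k=0$), exhaustive, and complete for the product topology on $\aa_\Gra=\prod_{n,k}\Gra(n)^{\Sy_n}\hbar^k$, so the associated spectral sequence converges. Since $\hbar\nabla$ strictly raises the filtration degree, the differential on the associated graded is $\delta$ alone, whence the $E_1$-page is
\[
E_1\cong H^\bullet\big(\mathsf{GC}^{\mathrm{conn}},\,\delta\big)\otimes\K[\hbar],\qquad |\hbar|=2,
\]
where $\mathsf{GC}^{\mathrm{conn}}$ denotes the connected graph complex. Thus every class on $E_1$ of total degree $d$ has the form $[\gamma]\,\hbar^k$ with $[\gamma]$ a graph-cohomology class of degree $d-2k$.

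The main input is then Willwacher's theorem \cite{Willwacher15}: after the standard reduction of the full connected complex (all valences, tadpoles allowed) to the core complex $\mathsf{GC}_2$, one has $H^0\cong\grt_1$ and $H^{<0}=0$, while the only additional cohomology consists of the loop classes (genuine $n$-gons, $n\equiv 1\bmod 4$, $n\geqslant 5$), which all lie in \emph{positive odd} degrees. Feeding this into $E_1$ and restricting to \emph{even} total degree $2l\leqslant 0$: the summand $[\gamma]\hbar^k$ requires a class of degree $2l-2k$, which is even and $\leqslant 0$, so by Willwacher it forces $k=0$ and either $l=0$, giving exactly $\grt_1$, or $l<0$, giving $0$. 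A parity/degree check shows no higher differential disturbs these positions: $d_r$ raises total degree by $1$ (hence changes parity) and raises $k$ by $r\geqslant 1$, so from the degree-$0$ class it lands in degree $1-2r<0$ where the graph cohomology vanishes, and nothing maps into filtration degree $k=0$ from below. Hence $E_1=E_\infty$ in even degrees $\leqslant 0$, yielding $\mathrm{G}^l=H^{2l}(\aa_\Gra^\theta)=0$ for $l<0$ and an isomorphism of Lie algebras $\mathrm{G}^0\cong\grt_1$ compatible with the graph-insertion bracket; passing to the prounipotent groups they integrate gives $\mathrm{G}^0\cong\GRT_1$.

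The hard part is the imported input itself: Willwacher's identification $H^0(\mathsf{GC}_2)\cong\grt_1$ together with the vanishing $H^{<0}(\mathsf{GC}_2)=0$, and the reduction of the full connected complex (with its low-valence vertices and tadpoles) to $\mathsf{GC}_2$ up to the loop classes. By contrast, the work specific to this theorem is organizational: recognizing the deformation complex of $\SDL\hookrightarrow\Gra$ as this $\hbar$-deformed graph complex, and verifying that the tadpole-creating part $\hbar\nabla$ of the differential, together with the shift of all extra data into the $\hbar$-direction, cannot contribute in non-positive even degrees — which reduces to the single observation that every ``extra'' (loop or tadpole) phenomenon is confined to odd or strictly positive degrees.
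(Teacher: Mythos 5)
Your proposal is correct and follows essentially the same route as the paper: filter $\aa_\Gra^\theta$ by powers of $\hbar$, identify the associated graded differential with the differential of Willwacher's connected graph complex with tadpoles (the paper cites Proposition~3.4 and Theorem~1.1 of \cite{Willwacher15} for $H^{<0}=0$ and $H^0\cong\grt_1$, where you instead pass through $\mathsf{GC}_2$ plus the positive-odd-degree loop classes), and conclude by convergence of the complete spectral sequence together with a degree count showing nothing survives in even degrees $2l\leqslant 0$ except $\grt_1\hbar^0$. The only inessential difference is that the paper pins down the Lie algebra isomorphism $\mathrm{G}^0\cong\grt_1$ via the evaluation morphism $\aa_\Gra^\theta\to\b_\Gra^\vartheta$ at $\hbar=0$, where you assert compatibility with the bracket directly from the $E_1$-degeneration.
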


\begin{proof}
We recall the arguments of the proof of~\cite[Proposition~2.1]{MW14} under the present notations in order to be self-contained since we will use them later on. Notice that the chain complex $\aa_\Gra^{\theta}$ is denoted by
$(\mathsf{fGC_2}[[u]], d+u\Delta)$
in \emph{op.\ cit.}. We consider the $\hbar$-adic  filtration given by
\[\mathrm{F}_p\, \aa_\Gra^{\theta} \coloneq
\prod_{\substack{n\geqslant 1\\ l\geqslant -p}}\Gra(n)^{\Sy_n} \hbar^l
\]
 and  the convolution algebra
\[\b_\Gra^{\vartheta}\coloneq
\left(\prod_{n\geqslant 1}\Gra(n)^{\Sy_n}, \partial^{\vartheta}\right)\ ,\]
where $\vartheta$ stands for the Maurer--Cartan elements $
	\vcenter{\hbox{\begin{tikzpicture}[scale=0.5]
	 \draw[thick] (0,0)--(1.5, 0);

	 \draw[fill=white, thick] (0,0) circle [radius=10pt];
	 \draw[fill=white, thick] (1.5,0) circle [radius=10pt];
	\end{tikzpicture}}}$\ .
One can see that the first page of the associated spectral sequence is given by
\[E^0_{p,q}\cong \left(\prod_{n\geqslant 1}\Gra(n)^{\Sy_n}\right)^{p-q}\hbar^{-p}\ ,\]
for $p\leqslant 0$ and by $E^0_{p,q}=0$ otherwise, with $d^0=\partial^\vartheta$.
Since the chain complex $\b_\Gra^{\vartheta}$ is equal to the chain complex denoted by
$\mathsf{fGC}_{2, conn}^\circlearrowleft$ in~\cite{Willwacher15}, we know by Proposition~3.4 and Theorem~1.1
of \emph{op.\ cit.} that its negatively graded cohomology groups vanish and that its degree $0$
cohomology group is isomorphic, as complete Lie algebra, to the Grothendieck--Teichm\"uller Lie
algebra $\grt_1$\ .
So the second page of the spectral sequence has the following form
\[\begin{tikzpicture}[scale=1]

	\draw[->] (-4,0)--(1,0);
	\draw[->] (0,-3.65)--(0,1);

	\draw[fill=white, white] (0,0) circle [radius=10pt];
	\draw[fill=white, white] (0,-1) circle [radius=10pt];
	\draw[fill=white, white] (0,-2) circle [radius=10pt];
	\draw[fill=white, white] (0,-3) circle [radius=10pt];

 	\node at (0,0) {\scalebox{1}{$\grt_1$}};
 	\node at (-1,-1) {\scalebox{1}{$\grt_1 \hbar$}};
 	\node at (-2,-2) {\scalebox{1}{$\grt_1 \hbar^2$}};
 	\node at (-3,-3) {\scalebox{1}{$\grt_1\hbar^3$}};
 	\node at (0,-1) {\scalebox{1}{$*$}};
 	\node at (0,-2) {\scalebox{1}{$*$}};
 	\node at (-1,-2) {\scalebox{1}{$*$}};
 	\node at (0,-3) {\scalebox{1}{$*$}};
 	\node at (-1,-3) {\scalebox{1}{$*$}};
 	\node at (-2,-3) {\scalebox{1}{$*$}};

 	\node at (0.95,-0.25) {\scalebox{0.8}{$p$}};
 	\node at (0.25,0.95) {\scalebox{0.8}{$q$}};
\end{tikzpicture}.
\]
This shows that the spectral sequence is regular; it is clearly  exhaustive, complete, and bounded above, following definitions from~\cite[Chapter~5]{WeibelBook}.
 We can thus apply the Complete Convergence Theorem~\cite[Theorem~5.5.10]{WeibelBook}, which ensures that the spectral sequence converges to the homology groups of $\aa_\Gra^{\theta}$. This shows the second claim.
In order to fully prove the first claim, one needs to further apply the above arguments to the evaluation map $\aa_\Gra^\theta \to \b_\Gra^\vartheta$ at $\hbar=0$, which is a morphism of dg Lie algebras. This induces a group isomorphism $\mathrm{G}^0\cong \GRT_1$.
\end{proof}

\begin{remark}
So far, very little is known about
$\mathrm{G}^l\coloneq H^{2l}\Big(\aa_\Gra^{\theta}\Big)$, for $l>0$\ .
\end{remark}

When the work on the present project started, its primary goal was to obtain the following result.

\begin{theorem}\label{thm:GRT1Action}
The Grothendieck--Teichm\"uller group $\GRT_1$ acts functorially on the moduli spaces
of gauge equivalence classes of solutions to the quantum master equation of the quantum totalisation of
$\Z2$-graded shifted modular operads.
As a particular case, this includes an action of the group $\GRT_1$ on the moduli spaces
of morphisms of modular operads of the form $\Omega \C[[\hbar]] \to\P^\hbar$, which contains the case of
 the moduli space
of gauge equivalence classes of quantum homotopy cohomological field theories.
\end{theorem}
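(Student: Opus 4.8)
The plan is to assemble the theorem directly from the functorial action of the full universal deformation group established in \cref{cor:MorphBCHhomo}, together with the identification of its degree-$0$ part provided by \cref{thm:GRT1}. First I would specialise \cref{cor:MorphBCHhomo} to the $\Gra$-algebra $\g=\whP$ given by the totalisation of a $\Z2$-graded shifted modular operad $\P$, whose $\Gra$-algebra structure $\Psi:\Gra\to\End_{\whP}$ is the one produced by \cref{prop:GraAction}. Under the functor $\Sigma$ of \cref{lem:DeltaLietoLie}, the underlying shifted $\Delta$-Lie algebra of this $\Gra$-algebra gives rise to the quantum totalisation $\whP^\hbar$, whose Maurer--Cartan elements are, by the very construction of the quantum functor, precisely the solutions of the quantum master equation \eqref{eq:QME}. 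Hence \cref{cor:MorphBCHhomo} already yields a functorial action of the universal deformation group $\mathrm{G}=H_\ev\big(\aa_\Gra^{\theta}\big)$ on the moduli space $\calMC\big(\whP^\hbar\big)$ of gauge equivalence classes of such solutions.

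Next I would restrict this action along the inclusion $\GRT_1\cong \mathrm{G}^0\hookrightarrow \mathrm{G}$. The cohomological grading $|\gamma\,\hbar^k|\coloneq 2(v-1)-e+2k$ is additive under the convolution Lie bracket: inserting one graph into a vertex of another and reconnecting preserves the total edge number while producing $v_1+v_2-1$ vertices, so $|[\gamma_1\hbar^{k_1},\gamma_2\hbar^{k_2}]|=|\gamma_1\hbar^{k_1}|+|\gamma_2\hbar^{k_2}|$. Thus $\aa_\Gra^{\theta}$ is a graded Lie algebra and its degree-$0$ homology $H^0\big(\aa_\Gra^{\theta}\big)$ is a Lie subalgebra. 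By \cref{thm:GRT1} the negative summands $\mathrm{G}^l$ vanish for $l<0$ and $\mathrm{G}^0\cong \GRT_1$; consequently $\mathrm{G}^0$ is a genuine prounipotent subgroup of $\mathrm{G}$ under $\BCH$, and the $\mathrm{G}$-action restricts to a functorial $\GRT_1$-action on $\calMC\big(\whP^\hbar\big)$. This establishes the first assertion.

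For the two particular cases I would simply reidentify the relevant moduli spaces as moduli of quantum Maurer--Cartan elements of convolution algebras. Applying \cref{prop:QuantMorphTw} to a shifted modular cooperad $\C$ and a modular operad $\P$ identifies $\Hom_{\mathsf{mod}\,\mathsf{op}_{\Khbar}}\big(\Omega\C[[\hbar]],\P^\hbar\big)$ with $\MC\big(\widehat{\Hom}_{\K}(\C,\P)^\hbar\big)$, which is the quantum totalisation of the convolution shifted modular operad $\Hom(\C,\P)$; the first part then yields the asserted $\GRT_1$-action on the corresponding moduli space. Specialising further to $\C=\B H_\bullet\big(\oM\big)$ and $\P=\EEnd_A$ and invoking \cref{prop:QCohFTMC} together with the isomorphism of \cref{lem:DefComp} identifies $\calMC\big(\g_A^\hbar\big)$ with the moduli space of quantum homotopy CohFT structures on $A$, giving the final claim; functoriality in $A$ is inherited from \cref{rem:FunctGA}.

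The proof is therefore essentially an assembly of previously established results, and no genuinely new computation is required. The point that deserves the most care is that $\GRT_1$ embeds as an honest \emph{subgroup} of $\mathrm{G}$, rather than merely as a subquotient, so that the action of \cref{cor:MorphBCHhomo} can legitimately be restricted to it: this is exactly where the vanishing $\mathrm{G}^l=0$ for $l<0$ from \cref{thm:GRT1} and the additivity of the grading under the $\BCH$ product are essential, since together they guarantee that the degree-$0$ part splits off as a prounipotent subgroup on which the action is well defined.
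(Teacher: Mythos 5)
Your proposal is correct and follows essentially the same route as the paper, which likewise deduces the first assertion from \cref{thm:GRT1}, \cref{cor:MorphBCHhomo}, and \cref{prop:GraAction}, and the remaining cases from \cref{prop:QuantMorphTw} and \cref{prop:QCohFTMC}; your added justification that $\GRT_1\cong\mathrm{G}^0$ is an honest $\BCH$-subgroup is a welcome elaboration of a point the paper leaves implicit. One small remark: the additivity of the cohomological grading under the convolution bracket already makes $\mathrm{G}^0$ closed under $\BCH$, so the vanishing $\mathrm{G}^l=0$ for $l<0$ is not actually needed for the restriction of the action, only for the identification of the degree-zero part in \cref{thm:GRT1}.
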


\begin{proof}
The first point is a direct corollary of \cref{thm:GRT1}, \cref{cor:MorphBCHhomo}, and \cref{prop:GraAction}.
For the second and third points, one has to further use respectively \cref{prop:QuantMorphTw} and \cref{prop:QCohFTMC}.
\end{proof}

\begin{example}
In the case of the endomorphism modular operad associated to the
odd affine symplectic manifold $A=V\oplus s^{-1} V^*$, see \cref{ex:TPoly} and \cref{ex:CGRABV},  Maurer--Cartan elements of the quantum totalisation algebra are 
\emph{actions} in
the Batalin--Vilkovisky formalism and each of them  endows $A$ with a \emph{quantum BV manifold structure}. So \cref{thm:GRT1Action} recovers the main result of~\cite{MW14} and establishes an action of the Grothendieck--Teichm\"uller group $\GRT_1$ on the moduli spaces of
formal quantum BV manifolds.
\end{example}

\begin{example}
We would like to emphasize the highly interesting case of the deformation theory of the identity map of the homology modular operad $H_\bullet\big(\oM\big)$ of the Deligne--Mumford--Knudsen modular operad.
This latter one is controlled by the convolution shifted $\Delta$-Lie algebra
$\widehat{\Hom}\left(\B H_\bullet\big(\oM\big), H_\bullet\big(\oM\big)\right)$ ,
which is also a $\Gra$-algebra; its underlying space is
\[\prod_{(g,n)\in \NNs} \left(\mathbb{G}_\od\big(\Hbul\big(\oM\big)\big)_g(n)\otimes H_\bullet\big(\oM_{g,n}\big)\right)^{\Sy_n}\ .\]
A shifted version of this deformation complex should encode the deformation theory of the modular operad structure on the homology $H\big(\oM\big)$ of Deligne--Mumford--Knudsen modular operad. Since this latter of is a formal modular operad~\cite{GNPR05}, such a chain complex will control the deformation theory of the Deligne--Mumford--Knudsen modular operad $\oM$ itself.
This would bring as close to Grothendieck's original approach, see the introduction, and it should  lead to a better understanding of the  relationship between Drinfeld's approach of the Grothendieck--Teichm\"uller group $\GRT_1$ in terms of braided monoidal categories and Grothendieck's  approach. This will be studied in future work.
\end{example}

The purpose of the introduction of $\hbar$ and quantum versions  was \emph{a priori} to control possible divergences due to the actions of some infinite series of graphs with the same number of vertices. The following concrete implementation of the proof of \cref{thm:GRT1} shows  that this does not seem to be always mandatory. (One other purpose of this lemma is to allow us to recall from~\cite{MW14}  the construction of representatives of cohomology classes in the complex $\aa_\Gra^{\theta}$ from the ones in the complex $\b_\Gra^{\vartheta}$).

\medskip

\begin{lemma}\label{lem:finite}\leavevmode
\begin{enumerate}
\item Any representative of a degree $0$ cohomological class of $\b_\Gra^{\vartheta}$ which a linear combination of graphs induces a representative of the associated degree $0$ cohomological class of $\aa_\Gra^{\theta}$ which is a polynomial in $\hbar$ with linear combinations of graphs as coefficients.

\item For any $n\geqslant 1$, the component of arity $n$ of the pre-Lie exponential of any such representatives is a polynomial in $\hbar$.
\end{enumerate}
\end{lemma}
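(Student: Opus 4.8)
The plan is to construct the lift by order-by-order obstruction theory and then read off both finiteness statements from the way the two summands of the twisting differential act on the number of vertices of a graph. Write $\partial^\theta=\partial^\vartheta+\hbar\,D$, where $\partial^\vartheta$ is bracketing with the one-edge graph and $D$ is bracketing with the tadpole graph. The key structural fact I will use repeatedly is that $\partial^\vartheta$ raises the number of vertices by exactly one, whereas $D$ preserves the number of vertices (and raises the number of edges by one). Starting from a graph-linear representative $\gamma_0$ of a degree-$0$ class of $\b_\Gra^\vartheta$, I would look for a $\partial^\theta$-cocycle $\widetilde\gamma=\sum_{k\ge0}\gamma_k\hbar^k\in\aa_\Gra^\theta$ lifting it; collecting powers of $\hbar$ in $\partial^\theta\widetilde\gamma=0$ turns this into the recursion $\partial^\vartheta\gamma_k=-D\gamma_{k-1}$ for $k\ge1$, with $\gamma_0$ as the initial datum.

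First I would solve this recursion. From $[\theta,\theta]=0$ one gets $(\partial^\vartheta)^2=0$, $D^2=0$, and $\partial^\vartheta D+D\partial^\vartheta=0$, whence an easy induction shows that each right-hand side $-D\gamma_{k-1}$ is a $\partial^\vartheta$-cocycle. Using the degree formula $|\gamma\hbar^k|=2(v-1)-e+2k$, homogeneity of $\widetilde\gamma$ in degree $0$ forces $\gamma_k$ to sit in cohomological degree $-2k$ in $\b_\Gra^\vartheta$, so for every $k\ge1$ the obstruction $-D\gamma_{k-1}$ lies in a strictly negative degree. By the proof of \cref{thm:GRT1}, the cohomology of $\b_\Gra^\vartheta\cong\mathsf{fGC}^{\circlearrowleft}_{2,\mathrm{conn}}$ vanishes in negative degrees; hence the obstruction is a coboundary and $\gamma_k$ exists at every stage.

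Polynomiality in $\hbar$ is where the vertex count does the work. Because $\partial^\vartheta$ adds exactly one vertex while $D$ adds none, we may solve $\partial^\vartheta\gamma_k=-D\gamma_{k-1}$ by a $\gamma_k$ supported on graphs with at most one fewer vertex than those of $\gamma_{k-1}$ (discarding the higher-vertex part of the $\partial^\vartheta$-kernel); thus $\gamma_0$ involving at most $N$ vertices forces $\gamma_k$ to involve at most $N-k$. It remains to eliminate one-vertex graphs. By the degree formula a degree-$0$ one-vertex graph occurring with $\hbar^k$ is a single vertex carrying $2k$ tadpoles; for $k\ge1$ it has two parallel tadpoles and vanishes, the automorphism transposing them acting by $-1$ on the odd edges, while for $k=0$ it would be the excluded unit. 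So every $\gamma_k$ has zero one-vertex part and involves only graphs with at least two vertices; with $v\le N-k$ this gives $\gamma_k=0$ for $k>N-2$, so $\widetilde\gamma$ is a polynomial in $\hbar$, proving (1).

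For (2) the same two-vertex lower bound per component controls the pre-Lie exponential. A pre-Lie product inserts one graph into a vertex of another, so a product of $m$ components with $v_1,\dots,v_m$ vertices produces graphs with $\sum_i v_i-(m-1)$ vertices; since each $v_i\ge2$, an arity-$n$ (that is, $n$-vertex) output can only come from products of $m\le n-1$ factors. As $\widetilde\gamma$ is a polynomial in $\hbar$ of some degree $d$ by part (1), every factor contributes at most $\hbar^{d}$, so the arity-$n$ part of $e^{\widetilde\gamma}$, being a finite sum of pre-Lie products of at most $n-1$ copies of $\widetilde\gamma$, involves only $\hbar$-powers up to $(n-1)d$ and is a polynomial. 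The one place the whole argument can fail, and hence the main point to secure, is the vanishing of one-vertex graphs carrying at least two tadpoles: it is this orientation-induced vanishing that pins every component of $\widetilde\gamma$ onto at least two vertices and thereby caps the number of factors in the exponential; without it an arity-$1$ output could absorb arbitrarily many factors of $\hbar$. Establishing it, alongside the negative-degree acyclicity of $\b_\Gra^\vartheta$ borrowed from \cref{thm:GRT1}, is the crux of the proof.
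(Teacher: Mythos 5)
Your argument is essentially the paper's own proof: the same order-by-order lift solving $\partial^\vartheta\gamma_k=-\partial^\omega\gamma_{k-1}$ by means of the vanishing of the negative-degree cohomology of $\b_\Gra^{\vartheta}$ imported from the proof of \cref{thm:GRT1}, the same bookkeeping that $\partial^\vartheta$ raises the vertex count by one while bracketing with the tadpole preserves it (so the recursion terminates and yields a polynomial in $\hbar$), and the same two-vertex lower bound on every graph occurring in the representative, which caps at $n-1$ the number of factors that can contribute to the arity-$n$ component of the pre-Lie exponential. Your explicit remark that one may discard the higher-vertex part of the $\partial^\vartheta$-kernel when choosing $\gamma_k$ is a welcome elaboration of the paper's terser ``this procedure decreases the number of vertices.''

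The one step whose justification does not hold up is your treatment of the $k=0$ one-vertex graph. The edgeless one-vertex graph is the operadic unit of $\Gra$ and it is \emph{not} excluded from either $\b_\Gra^{\vartheta}$ or $\aa_\Gra^{\theta}$: only the positive-weight subalgebra $\ba_\Gra$ omits the arity-one, $\hbar^0$ component, and the lemma is stated for $\aa_\Gra^{\theta}$. So ``it would be the excluded unit'' is not a valid reason. The paper instead rules it out via the cocycle condition: the image of this graph under $\partial^\vartheta$ is the one-edge graph on two vertices, and since $\partial^\vartheta$ strictly increases the vertex count, nothing else in $\partial^\vartheta\gamma_0$ can cancel that two-vertex term; hence the one-vertex part of any $\partial^\vartheta$-cocycle vanishes. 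Inserting this one-line argument closes the gap, and the rest of your proof — in particular the vanishing of one-vertex graphs with $2k\geqslant 2$ parallel tadpoles by the odd-edge transposition sign, which you correctly identify as the input that pins every graph onto at least two vertices — goes through as written.
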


\begin{proof}\leavevmode
\begin{enumerate}
\item Let us first repeat the general arguments of~\cite[Remark~2.2]{MW14} which construct representatives of  the homology of
$\aa_\Gra^{\theta}$ from representatives of the homology of $\b_\Gra^{\vartheta}$.
We denote by
\[\omega\coloneq \vcenter{\hbox{\begin{tikzpicture}[scale=0.5]
	\draw[thick]  (-0.4,2.7) arc [radius=0.4, start angle=180, end angle= 0];
	\draw[thick] (0, 2) to [out=135,in=270] (-0.4,2.7);
	\draw[thick] (0, 2) to [out=45,in=270] (0.4,2.7);
 	 \draw[fill=white, thick] (0,2) circle [radius=10pt];
	\node (n) at (0,1.5) {};
\end{tikzpicture}}}\ \]
 the Maurer--Cartan element of $\aa_\Gra$ such that
$\theta=\vartheta+\omega\hbar$\ .
Let $\sigma^{(0)}$ be a representative of cohomological class of degree $0$ in $\b_\Gra^{\vartheta}$. In $\aa_\Gra^{\theta}$, since
$\partial^\vartheta$ and $\partial^\omega$ commute up to sign,
$\partial^\omega\left(
\sigma^{(0)}
\right)$ is a $-1$-cocycle in $\b_\Gra^{\vartheta}$\ .
Since $H^{-1}\left(\b_\Gra^{\vartheta} \right)=0$  by~\cite[Proposition~3.4 and Theorem~1.1]{Willwacher15},
there exists $\sigma^{(1)}$ of cohomological degree $-2$ in $\b_\Gra^{\vartheta}$ such that
$\partial^\vartheta\left(\sigma^{(1)}\right)=-\partial^\omega\left(\sigma^{(0)}\right)$\ .
This gives
\[\partial^\theta\left(\sigma^{(0)}+\sigma^{(1)}\hbar\right) = \partial^\omega\left(\sigma^{(1)}\right)\hbar^2\ .
\]
By induction, let us now suppose that there exist $\sigma^{(2)},\ldots, \sigma^{(n)}$ in $\b_\Gra^{\vartheta}$ of respective cohomological degree $-4, \ldots, -2n$ and such that
\[\partial^\theta\left(
\sigma^{(0)}+\sigma^{(1)}\hbar+\cdots + \sigma^{(n)}\hbar^n
\right)=\partial^\omega\left(\sigma^{(n)}\right)\hbar^{n+1}\ .
\]
This induces that $\partial^\omega\left(
\sigma^{(n)}
\right)$ is a $-2n-1$-cocycle in $\b_\Gra^{\vartheta}$. By the same argument,
there exists $\sigma^{(n+1)}$ of cohomological degree $-2n-2$ in $\b_\Gra^{\vartheta}$ such that $\partial^\vartheta\left(\sigma^{(n+1)}\right)=-\partial^\omega\left(\sigma^{(n)}\right)$\ .
In the end, we get a representative
\[\sigma^\hbar\coloneq \sigma^{(0)}+\sigma^{(1)}\hbar+\cdots + \sigma^{(n)}\hbar^n+\cdots \]
in $\aa_\Gra^{\theta}$ corresponding to the cohomology class represented by
$\sigma^{(0)}$ in $\b_\Gra^{\vartheta}$~.

Since this procedure decreases the number of vertices of the graphs, when $\sigma^{(0)}$ is a linear combination of graphs, the representative $\sigma^\hbar$ is a polynomial in $\hbar$ whose coefficients are finite linear combinations of graphs.

\item Notice that $\Gra(1)$ is two-dimensional and spanned by the two one-vertex graphs without any edge and with a tadpole respectively. The tadpole graph $\omega$ corresponding to this latter graph in $\aa_\Gra^{\theta}$ cannot appear in any representative $\sigma^\hbar$ since it has an odd number of vertices. The one-vertex graph without any edge in $\aa_\Gra^{\theta}$ cannot appear too since its image under the differential $\partial^\vartheta$ is equal to $\vcenter{\hbox{\begin{tikzpicture}[scale=0.5]
	 \draw[thick] (0,0)--(1.5, 0);

	 \draw[fill=white, thick] (0,0) circle [radius=10pt];
	 \draw[fill=white, thick] (1.5,0) circle [radius=10pt];
	\end{tikzpicture}}}$~, which does not live in the image of the differential $\partial^\omega$~.
In the end, any representative $\sigma^\hbar$ produced by the procedure described above involves a finite number of graphs, each with at least 2 vertices.
Therefore, for any $n\geqslant 1$, the component of arity $n$ of its pre-Lie exponential is a polynomial in $\hbar$.
\end{enumerate}
\end{proof}

Let us recall the Deligne--Drinfeld--Ihara conjecture which states that the Grothendieck--Teichm\"uller Lie algebra $\grt_1$ is isomorphic to the following free complete Lie algebra
\[\widehat{\mathrm{Lie}}(\sigma_3, \sigma_5, \sigma_7, \ldots)\cong \grt_1\ .\]
The most important result in this direction (and for us) is  that the former
embeds
into the latter, see~\cite{Brown12}.
Representatives, still denoted  $\sigma_{2m+1}$, for the homology classes corresponding to these generators
in $\b_\Gra^{\vartheta}$
were given in~\cite{RW14}: they are linear combinations of graphs with $2m+2$ vertices and $4m+2$ edges.
The first one is actually equal to the tetrahedron:
\[\sigma_3=	\vcenter{\hbox{\begin{tikzpicture}[scale=0.5]
	 \draw[thick] (0,0)--(2, -1.2)--(0,2)--(-2,-1.2)--(2,-1.2)--(0,0);
	 \draw[thick] (0,0)--(2,-1.2)--(-2,-1.2)--(0,0)--(0,2);
	 \draw[fill=white, thick] (0,0) circle [radius=10pt];
	 \draw[fill=white, thick] (2,-1.2) circle [radius=10pt];
	 \draw[fill=white, thick] (-2,-1.2) circle [radius=10pt];
	 \draw[fill=white, thick] (0,2) circle [radius=10pt];
	\end{tikzpicture}}}\ .\]
More important here, these representatives are closed with respect to the differential $\partial^\omega$, see
\cite[Theorem~1.7, iv)]{RW14}.
So these linear combinations $\sigma_{2m+1}$ of graphs are also representatives of the same generators  in
$H^0\Big(
\aa_\Gra^{\theta}\Big)$~. Their action on any solution to the quantum master equation  $\d\alpha+\hbar\Delta\alpha+\frac12 \{\alpha, \alpha\}=0$ of any quantum shifted Lie algebra $\gth$ is given by
\[
\sigma_{2m+1}\cdot \alpha =
\sum_{n\geqslant 1}
{\textstyle \frac{1}{n!}}
\left(e^{\sigma_{2m+1}}\right)_n(\alpha, \ldots, \alpha)
\ .\]

The two formulas for the universal deformation group action
\[\sigma\cdot \alpha=\sum_{\substack{m\geqslant 0 \\ n\geqslant 1}} {\textstyle \frac{1}{m!n!}} \big(\underbrace{
(\cdots((\sigma \star \sigma) \star \sigma) \cdots )\star \sigma
}_{m \ \text{times}\ \sigma}\big)(n) (\underbrace{\alpha, \cdots, \alpha}_{n \ \text{times}\ \alpha})\ , \]
for $\sigma \in \prod_{n\geqslant 1}\Gra(n)^{\Sy_n}$
closed with respect to the two differentials $\partial^\vartheta$ and $\partial^\omega$,
and for the gauge group action
\[\alpha+\sum_{k\geqslant 1} \frac{1}{k!} \left(
\ad_\xi^k(\alpha)+\ad_\xi^{k-1}(\d \xi+\hbar\Delta \xi)\right)\ , \]
for $\xi\in \F_1\g_\od[[\hbar]]$,
are very different in nature. So one expects that the action of the universal deformation group on moduli spaces of gauge equivalence classes is not trivial.
 In order to prove it formally, it is enough to consider
the simplest possible example of strict quantum CohFTs defined in~\cref{subsec:Quantum}.

\begin{proposition}
The action of the tetrahedron on  quantum homotopy CohFTs is in general not gauge trivial.
\end{proposition}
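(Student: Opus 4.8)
The plan is to prove non-triviality by moving a single explicit point of the moduli space. By \cref{prop:CycleAction} and \cref{cor:MorphBCHhomo} it suffices to produce one strict quantum CohFT $\alpha$ with $\sigma_3\cdot\alpha\not\sim\alpha$ in $\calMC\big(\gth\big)$. I take the simplest case: $A=\K$ with $\langle 1,1\rangle=1$, a strict CohFT $\varphi$, and $\alpha\coloneq\BR(\varphi)$ as in \cref{subsec:Quantum}, which is supported on one-vertex graphs and whose genus-$0$ components are the fundamental classes $\mathbb{1}_{0,n}$ with all legs labelled by $1$. By \cref{lem:finite} and \cite{RW14} the tetrahedron $\sigma_3$ is $\partial^\omega$-closed and represents the generator of $\mathrm{G}^0\cong\grt_1$, so the action is computed by the explicit formula
\[
\sigma_3\cdot\alpha=\sum_{n\geqslant1}\tfrac1{n!}\big(e^{\Psi(\sigma_3)}\big)_n(\alpha,\dots,\alpha)\ .
\]

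Next I isolate the leading correction. Since $\sigma_3\in\Gra(4)$ is four-ary, the pre-Lie powers $\sigma_3\star\sigma_3,\dots$ carry at least seven vertices, and $\alpha$ is concentrated in weight $1$, the unique weight-$4$ term of $\sigma_3\cdot\alpha$ is $\tfrac1{4!}\Psi(\sigma_3)(\alpha,\alpha,\alpha,\alpha)$. By \cref{prop:GraAction} and \cref{lem:DefComp} this grafts four copies of $\alpha$ along the six edges of the tetrahedron, each edge contracting a pair of $A$-labelled legs through the non-degenerate pairing; the graftings raise the first Betti number by $b_1(\sigma_3)=3$ and, as the tetrahedron has no loops, introduce no factor of $\hbar$. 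Choosing the four genus-$0$ trivalent vertices $\mathbb{1}_{0,3}$ contributes, with non-zero coefficient, the closed tetrahedral graph in $\widehat{\mathbb{G}}_\od\big(\Hbul(\oM)\big)(\K)$ of arity $0$, total genus $3$ and $\hbar$-order $0$. This is my detecting term.

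It remains to show this leading correction is not gauge trivial. By the standard leading-term argument in filtered deformation theory, $\sigma_3\cdot\alpha\sim\alpha$ would force the weight-$4$ term above to be a coboundary of the twisted differential $d^\varphi=\d_A-\d_1-\d_2+\hbar\Delta+\{\alpha,-\}$ of \cref{lem:DefComp}. On the detecting stratum (arity $0$, total genus $3$, $\hbar^0$) most pieces are excluded for structural reasons: $\d_A=0$ since $A=\K$; $\hbar\Delta$ only contributes at positive $\hbar$-order; $\d_1$ produces graphs carrying a tadpole, whereas the tetrahedron has none; and $\{\alpha,-\}$ cannot produce a closed graph all of whose vertices have genus $0$, since closing up to arity $0$ through the bracket forces the attached $\alpha$-vertex to have positive internal genus. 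This leaves only the vertex-splitting differential $\d_2$, and non-exactness here is precisely the statement that the tetrahedron is Willwacher's non-zero representative of the $\grt_1$-generator $\sigma_3$ in the graph cohomology underlying \cref{thm:GRT1}.

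The main obstacle is this last non-exactness statement: one must transport the non-vanishing of the tetrahedral class from the graph complex $\b_\Gra^\vartheta$, where it is guaranteed by \cite{Willwacher15} and \cite{Brown12}, to the image stratum of $\g_\K^\varphi$ through the functorial morphism $\Psi_*$ of \cref{lem:TW}, and to verify that $\d_2$ creates no boundary cancelling it. The remaining bookkeeping — the Koszul signs and symmetry factors in $\Psi(\sigma_3)(\alpha^{\otimes4})$, and the harmless higher pre-Lie terms, which sit in strictly higher weight — is routine; the genuine difficulty is to certify the non-exactness of the leading class \emph{directly} on this concrete CohFT rather than only universally.
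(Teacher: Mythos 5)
Your setup, your choice of test object, and your detecting term are exactly the ones the paper uses: a one-dimensional $A$, the quantum CohFT supported on one-vertex graphs, and the closed tetrahedral graph $\tau$ with four $\mathbb{1}_{0,3}$-vertices appearing with nonzero coefficient in the $\hbar^0$, lowest-genus part of $\sigma_3\cdot\alpha-\alpha$. The problem is the final step, which you yourself flag as ``the genuine difficulty'': having reduced to showing that $\tau$ is not of the form $\d_2\xi$, you appeal to the non-vanishing of the tetrahedral class in the universal graph complex $\b_\Gra^{\vartheta}$. That does not close the argument. The map $\Psi_*$ of \cref{lem:TW} is a morphism of dg Lie algebras but is in no way injective on homology, so the non-triviality of $[\sigma_3]$ in $H^0\big(\aa_\Gra^\theta\big)$ says nothing about whether its \emph{image} in the concrete complex $\g_A^\hbar$ is a coboundary --- indeed, if that implication held, the entire proposition would be a formal consequence of \cref{thm:GRT1} and there would be nothing to prove. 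As stated, your proof establishes only that the obstruction lives in a single stratum killed by everything except $\d_2$; it does not establish that $\d_2$ fails to kill it.

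The paper closes precisely this gap with a short combinatorial argument you are missing. Since $\d_2$ splits one vertex into two joined by a new edge, the only graph $\xi$ whose image under $\d_2$ could contain $\tau$ is the graph with two vertices labelled $\mathbb{1}_{0,3}$ and one labelled $\mathbb{1}_{0,4}$ in which two of the edges are parallel; because the $\Sy_3$- and $\Sy_4$-actions on $H^0\big(\overline{\mathcal{M}}_{0,3}\big)$ and $H^0\big(\overline{\mathcal{M}}_{0,4}\big)$ are trivial and the edges carry odd degree, transposing the two parallel edges produces a sign, so this graph is identically zero in $\g^\hbar$ and $\tau$ has no $\d_2$-preimage. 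You need this (or an equivalent direct computation) to finish. Separately, your treatment of the gauge formula is incomplete: you only rule out the single bracket $\{\alpha,-\}$ via an internal-genus count, but the gauge orbit also contains the terms $\ad_\xi^{k}(\alpha)$ and $\ad_\xi^{k-1}(\d\xi+\hbar\Delta\xi)$ for $k\geqslant 2$. The paper disposes of all of these at once by observing that every term $\ad_\xi^k(x)$ with $k\geqslant 1$ is a combination of graphs containing a separating edge (the last grafted one), whereas the tetrahedron has none; you should adopt that argument, since your genus count does not obviously cover, say, $\{\xi,\d\xi\}$.
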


\begin{proof}
Let us consider a one-dimension vector space $A$ with a basis element $a\in A$ and with a scalar product $\langle a, a \rangle = 1$. Consider a quantum CohFT $\alpha$ with the target space $A$ supported only on one-vertex graphs without edges, decorated by $\lambda^\hbar$ (with all leaves decorated by the basis element $a\in A$). The constant term in $\hbar$ of the highest Euler characteristic is
\[
\begin{tikzpicture}[optree, scale=1.1]
    \node{}
      child { node[circ]{$\mathbb{1}_{0,3}$}
        child { node[label=above:$a$]{} edge from parent  }
        child { node[label=above:$a$]{} edge from parent  }
        edge from parent node[label=below:$a$]{}} ;
   \end{tikzpicture}\ ,
   \]
where $\mathbb{1}_{0,3}$ stands for the unit of $H^\bullet\big(\overline{\mathcal{M}}_{0,3}\big)$~.
The lowest genus term constant in $\hbar$ of $\big(\sigma_3\cdot \alpha\big)-\alpha$ is equal to the tetrahedron
\[\tau\coloneq\vcenter{\hbox{\begin{tikzpicture}[scale=0.7]
	 \draw[thick] (0,0)--(2, -1.2)--(0,2)--(-2,-1.2)--(2,-1.2)--(0,0);
	 \draw[thick] (0,0)--(2,-1.2)--(-2,-1.2)--(0,0)--(0,2);
	 \draw[fill=white, thick] (0,0) circle [radius=14pt];
	 \draw[fill=white, thick] (2,-1.2) circle [radius=14pt];
	 \draw[fill=white, thick] (-2,-1.2) circle [radius=14pt];
	 \draw[fill=white, thick] (0,2) circle [radius=14pt];
 	\node at (0,0) {\scalebox{1}{$\mathbb{1}_{0,3}$}};
 	\node at (2,-1.2) {\scalebox{1}{$\mathbb{1}_{0,3}$}};
 	\node at (-2,-1.2) {\scalebox{1}{$\mathbb{1}_{0,3}$}};
 	\node at (0,2) {\scalebox{1}{$\mathbb{1}_{0,3}$}};
	\end{tikzpicture}}}\]
times a combinatorial coefficient not equal to $0$.
Suppose that the element $\sigma_3\cdot \alpha$ is gauge equivalent to $\alpha$, that is  there exists
$\xi\in \F_1\g_\od[[\hbar]]$ such that
\[\sigma_3\cdot \alpha= \alpha+\sum_{k\geqslant 1} \frac{1}{k!} \left(
\ad_\xi^k(\alpha)+\ad_\xi^{k-1}(\d \xi+\hbar\Delta \xi)\right)\ .\]
Notice that, for any $k\geqslant 1$, all the terms $\ad_\xi^k(x)$ are combinations of graphs with
at least one
      separating edge, that is one edge that would produce two disjoint graphs if removed.
Since the tetrahedron contains no separating edge, the term $\tau$ can only appear in
$\d \xi+\hbar\Delta \xi$, and actually in $\d \xi$, since it carries no $\hbar$ term. The terms of
$\xi$ producing $\tau$ under $\d$ are graphs without any leaves. Since $\d_1$ produces a tadpole
and since $\tau$ does not contain any tadpole, $\tau$ should be produced by $\d_2$. Since the
differential $\d_2$ creates one vertex and one
edge,
the only way to have $\tau$ as a term of $\d_2(\xi)$ is for $\xi$ to contain the graph
\[\vcenter{\hbox{\begin{tikzpicture}[scale=0.7]
	 \draw[thick] (0,0)--(-3,0);
	\draw[thick] (-3, 0) to [out=45,in=135] (3,0);
	\draw[thick] (-3, 0) to [out=315,in=215] (3,0);
	\draw[thick] (0, 0) to [out=20,in=160] (3,0);
	\draw[thick] (0, 0) to [out=340,in=200] (3,0);
	 \draw[fill=white, thick] (0,0) circle [radius=14pt];
	 \draw[fill=white, thick] (-3,0) circle [radius=14pt];
	 \draw[fill=white, thick] (3,0) circle [radius=14pt];
 	\node at (0,0) {\scalebox{1}{$\mathbb{1}_{0,3}$}};
 	\node at (-3,0) {\scalebox{1}{$\mathbb{1}_{0,3}$}};
 	\node at (3,0) {\scalebox{1}{$\mathbb{1}_{0,4}$}};
	\end{tikzpicture}}}\ , \]
where $\mathbb{1}_{0,4}$ stands for the unit of $H^\bullet\big(\overline{\mathcal{M}}_{0,4}\big)$. Since the respective actions of $\Sy_3$ and $\Sy_4$ on $H^0\big(\overline{\mathcal{M}}_{0,3}\big)$ and $H^0\big(\overline{\mathcal{M}}_{0,4}\big)$  are trivial and since the degree of edges is odd, the two parallel edges in this graph can be switched generating a sign. This shows that this graph is actually equal to $0$ in $\g^\hbar$ and this concludes the proof.
\end{proof}

\subsection{The genus preserving case}\label{subsec:hbar1}
\cref{lem:finite} and the form of the representatives $\sigma_{2m+1}$ of the
conjectural
generators of the Grothendieck--Teichm\"uller Lie algebra $\grt_1$
seem
to indicate that one can consider the case $\hbar=1$.
From the point of view of the convolution algebra, evaluating at $\hbar=1$ means that we are considering the deformation complex of the morphism of operads
$\S \Lie \to \Gra$ given  by
\[\{\, , \} \mapsto
	\vcenter{\hbox{\begin{tikzpicture}[scale=0.7]
	 \draw[thick] (0,0)--(1.5, 0);

	 \draw[fill=white, thick] (0,0) circle [radius=10pt];
	 \draw[fill=white, thick] (1.5,0) circle [radius=10pt];

 	\node at (0,0) {\scalebox{1}{$1$}};
 	\node at (1.5,0) {\scalebox{1}{$2$}};
	\end{tikzpicture}}}\ .\]
One way to
extend this and
take into account the tadpole is to ``internalize'' it: the arity $1$ element
\[\omega\coloneq \vcenter{\hbox{\begin{tikzpicture}[scale=0.7]
	\draw[thick]  (-0.4,2.7) arc [radius=0.4, start angle=180, end angle= 0];
	\draw[thick] (0, 2) to [out=135,in=270] (-0.4,2.7);
	\draw[thick] (0, 2) to [out=45,in=270] (0.4,2.7);
 	 \draw[fill=white, thick] (0,2) circle [radius=10pt];
 	\node at (0,2) {\scalebox{1}{$1$}};
\end{tikzpicture}}}
\]
of the operad $\Gra$ is an operadic Maurer--Cartan element. So one can twist the operad structure with it to produce  the dg operad
\[\Gra^\omega\coloneq \left(\Gra, d^\omega, \{\circ_i\}\right)\ ,\]
see~\cite[Chapter~4, Section~1]{DotsenkoShadrinVallette18}.
Since the element
$\vcenter{\hbox{\begin{tikzpicture}[scale=0.5]
	 \draw[thick] (0,0)--(1.5, 0);

	 \draw[fill=white, thick] (0,0) circle [radius=10pt];
	 \draw[fill=white, thick] (1.5,0) circle [radius=10pt];

 	\node at (0,0) {\scalebox{0.8}{$1$}};
 	\node at (1.5,0) {\scalebox{0.8}{$2$}};
	\end{tikzpicture}}}$ is closed with respect to the twisted differential $d^\omega$, the morphism of operads referred to above induces another morphism $\varTheta\ : \ \S \Lie \to \Gra^\omega$~.

\begin{lemma}
Let $(\g, \d)$ be a dg $\Gra$-algebra and let $\Delta : \g \to \g$ denote the action of $\omega$ . The same  operations define a dg $\Gra^\omega$-algebra structure on $(\g, \d+\Delta)$ .
\end{lemma}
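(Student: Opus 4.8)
The plan is to read ``the same operations'' literally: the $\Gra$-algebra structure is a morphism of dg operads $\Psi\colon\Gra\to\End_{(\g,\d)}$ (with $\Gra$ carrying the zero differential), and I claim that this very map, now viewed as a map $\Gra^\omega\to\End_{(\g,\d+\Delta)}$, is again a morphism of dg operads. Since the twisted operad $\Gra^\omega$ has the same underlying graded collection and the same partial compositions $\{\circ_i\}$ as $\Gra$, and since twisting modifies only the differential, the compatibility of $\Psi$ with partial compositions is inherited from the hypothesis for free. Hence the whole statement reduces to two assertions: that $(\g,\d+\Delta)$ is a genuine chain complex, and that $\Psi$ intertwines the twisted differential $d^\omega$ with the differential of $\End_{(\g,\d+\Delta)}$.

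First I would check that $\Delta=\Psi(\omega)$ is an arity-one, degree $-1$ Maurer--Cartan element of $\End_{(\g,\d)}$. As $\Psi$ is a dg morphism and $\Gra$ has zero differential, $\partial_\d(\Delta)=\Psi(d_{\Gra}\omega)=0$; moreover $\omega\circ_1\omega=0$ in $\Gra$, because inserting a loop-vertex into a loop-vertex produces a single vertex carrying two loops, and transposing these two odd-degree edges reverses the orientation, so the graph vanishes. Therefore $\partial_\d(\Delta)+\Delta\circ_1\Delta=\Psi(d_{\Gra}\omega+\omega\circ_1\omega)=0$, which is the Maurer--Cartan equation for $\omega$ transported along $\Psi$. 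Since $|\Delta|=-1$, this unwinds to $\d\Delta+\Delta\d+\Delta^2=0$, i.e.\ $(\d+\Delta)^2=0$, so $(\g,\d+\Delta)$ is indeed a dg vector space.

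The remaining point is functoriality of operadic twisting together with an identification of the twisted endomorphism operad. For any morphism of operads $\Psi\colon\mathcal P\to\mathcal Q$ and Maurer--Cartan element $m\in\mathcal P$, the twisted differentials on both sides are obtained from the original ones by inserting $m$, respectively $\Psi(m)$, at the root and, with Koszul signs, at each input; as $\Psi$ commutes with compositions and original differentials, it descends to a dg morphism $\mathcal P^m\to\mathcal Q^{\Psi(m)}$ (see \cite[Chapter~4, Section~1]{DotsenkoShadrinVallette18}). Taking $\mathcal P=\Gra$, $\mathcal Q=\End_{(\g,\d)}$ and $m=\omega$ gives a dg morphism $\Psi\colon\Gra^\omega\to\End_{(\g,\d)}^{\Delta}$. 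It then remains to identify $\End_{(\g,\d)}^{\Delta}$ with $\End_{(\g,\d+\Delta)}$: for $f\in\End_{\g}(n)$ the twisted differential is
\[
\partial_\d(f)+\Delta\circ_1 f-(-1)^{|f|}\sum_{i=1}^n f\circ_i\Delta
=(\d+\Delta)\circ f-(-1)^{|f|}f\circ(\d+\Delta)_n
=\partial_{\d+\Delta}(f),
\]
where $(\d+\Delta)_n=\sum_{i}\id^{\otimes(i-1)}\otimes(\d+\Delta)\otimes\id^{\otimes(n-i)}$. Composing the two identifications yields the asserted $\Gra^\omega$-algebra structure on $(\g,\d+\Delta)$ given by the same operations. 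The only genuine difficulty I anticipate is sign bookkeeping: matching the twisting conventions of \cite{DotsenkoShadrinVallette18} with the Koszul signs in $\End$ so that $d^\omega$ on graphs corresponds term by term to insertion of $\Delta$ at the root and the leaves; the conceptual content is otherwise immediate.
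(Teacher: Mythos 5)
Your argument is correct and is essentially the paper's proof with the cited black boxes opened up: the paper simply invokes the general operadic twisting results of \cite[Chapter~4]{DotsenkoShadrinVallette18} (Maurer--Cartan elements push forward along operad morphisms, twisting is functorial, and $\End_{(\g,\d)}^{\Delta}=\End_{(\g,\d+\Delta)}$), whereas you verify these facts directly, including the observation that $\omega\circ_1\omega=0$ because the two odd tadpoles can be transposed. No gap; the sign bookkeeping you flag is exactly what the cited reference handles.
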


\begin{proof}
This is a direct application of Example 1.2 and Proposition~1.3 of~\cite[Chapter~4]{DotsenkoShadrinVallette18}. Let $\Psi : \Gra \to \End_{(\g, \d)}$ denote the morphism of operads which corresponds to the dg $\Gra$-algebra structure on $\g$ . The image $\Delta=\Psi(\omega)$ is a Maurer--Cartan element of the endomorphism operad of $\g$ and $\Psi$ induces a morphism of dg operads
\[
\varPsi\ : \ \Gra^\omega \to \End^\Delta_{(\g, \d)}=\End_{(\g, \d+\Delta)}
\ .\]
\end{proof}
In the end, the shifted dg  Lie algebra structure $\varPhi$ on $(\g, \d+\Delta)$ is equal to the composite of the two above morphisms of operads:
\[
\vcenter{\hbox{
\begin{tikzcd}
\S  \Lie
\arrow[rr, "\varPhi"] \arrow[dr, "\varTheta"']
& & \End_{(\g, \d+\Delta)}\ .
 \\
&\Gra^\omega  \arrow[ur, "\varPsi"']  &
\end{tikzcd}
}}
\]
At that point, all the arguments of \cref{subsec:CycleAct} apply \emph{mutatis mutandis}; so we get the following analogous versions of \cref{prop:CycleAction} and \cref{cor:MorphBCHhomo} respectively. Recall from the proof of \cref{thm:GRT1} that we denote by $\vartheta$ the Maurer--Cartan element
$
	\vcenter{\hbox{\begin{tikzpicture}[scale=0.5]
	 \draw[thick] (0,0)--(1.5, 0);

	 \draw[fill=white, thick] (0,0) circle [radius=10pt];
	 \draw[fill=white, thick] (1.5,0) circle [radius=10pt];
	\end{tikzpicture}}}$ in $\b_{\Gra^\omega}$; we denote by $\varphi$ the induced Maurer--Cartan in
	$\b_{\End_{(\g, \d+\Delta)}}$, that is the one corresponding to the shifted Lie bracket.

\begin{proposition}\label{prop:ActionBIS}
There are two group actions
\begin{eqnarray*}
&Z_\ev\Big(
\b_{\Gra^\omega}^{\vartheta}\Big)
\xrightarrow{\varPsi_*}
Z_\ev\Big(
\b_{\End_{(\g, \d+\Delta)}}^\varphi\Big)
 \xrightarrow{e}
\S\Lie_\infty\textrm{-}\mathrm{Aut}\big((\g, \d+\Delta)\big)
\to
\Aut\big(
\MC(\g)
\big)
\quad \text{and} &\\
&\left(H_\ev\Big(
\b_{\Gra^\omega}^{\vartheta}\Big), \BCH,0\right)  \to
\Aut\left(
\calMC(\g)
\right) \qquad &
\end{eqnarray*}
both  functorial in complete $\Gra$-algebras $\Psi : \Gra \to \End_\g$ and
explicitly given by
\[\lambda \cdot \alpha \coloneqq \sum_{n\geqslant 1}
{\textstyle \frac{1}{n!}}
\left(e^{\varPsi(\lambda)}\right)_n(\alpha, \ldots, \alpha)\ .\]
\end{proposition}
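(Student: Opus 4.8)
The plan is to transport the entire construction of \cref{subsec:CycleAct} to the present setting, with the operad $\S\Delta\Lie$ replaced by $\S\Lie$, the pair $(\Gra,\theta)$ replaced by the twisted operad $(\Gra^\omega,\vartheta)$, and the quantum functor $\Sigma$ of \cref{lem:DeltaLietoLie} omitted: at $\hbar=1$ the tadpole has been absorbed into the operadic differential $d^\omega$, so the composite $\varPhi=\varPsi\circ\varTheta$ already endows $(\g,\d+\Delta)$ with an honest shifted $\Lie_\infty$-algebra structure over $\K$, without any passage to $\Khbar$.

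First I would record the analog of \cref{lem:TW}. By functoriality of the convolution dg Lie algebra in its target (\cite[Section~6.4]{LodayVallette12}), the morphism of dg operads $\varPsi\colon\Gra^\omega\to\End_{(\g,\d+\Delta)}$ from the preceding lemma induces a morphism of dg Lie algebras $\varPsi_*\colon\b_{\Gra^\omega}\to\b_{\End_{(\g,\d+\Delta)}}$ carrying the Maurer--Cartan element $\vartheta$ to $\varphi$, hence a morphism $\varPsi_*\colon\b_{\Gra^\omega}^\vartheta\to\b_{\End_{(\g,\d+\Delta)}}^\varphi$ of twisted dg Lie algebras. Restricting to even cycles of positive weight (the arity $\geqslant 2$ part) and integrating via the Baker--Campbell--Hausdorff formula, exactly as in \cref{cor:MorphBCH}, yields a group morphism $Z_\ev(\b_{\Gra^\omega}^\vartheta)\to Z_\ev(\b_{\End_{(\g,\d+\Delta)}}^\varphi)$.

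Next I would rerun \cref{lem:PreLieExp} and \cref{prop:CycleAction}. The convolution pre-Lie algebra built from $(\S\Lie)^\ac\cong\Com^*$ is complete (this is the analog of \cref{lemma:CompletePreLie}, now without the $\hbar$-powers), so the pre-Lie exponential $e$ is defined; by the differential trick $e^\xi\cdot\varphi=e^{\ad_\xi}(\varphi)=\varphi$ of \cite[Section~5]{DSV16} together with \cite[Theorem~3]{DSV16}, it sends an even cycle of $\b_{\End_{(\g,\d+\Delta)}}^\varphi$ to an $\S\Lie_\infty$-isotopy of $\varphi$ on $(\g,\d+\Delta)$. Composing with the standard action of $\S\Lie_\infty$-automorphisms on Maurer--Cartan elements produces the first group action, whose value is precisely $\lambda\cdot\alpha=\sum_{n\geqslant 1}\frac{1}{n!}(e^{\varPsi(\lambda)})_n(\alpha,\ldots,\alpha)$; functoriality in $\Psi$ follows from the commutation of $\varPsi_*$ and $e$ as in \cref{rem:CommPeS}. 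For the second action I would apply \cref{lem:HoImpliesGauge} verbatim to the complete shifted $\Lie_\infty$-algebra $(\g,\d+\Delta,\varphi)$: homologous even cycles act by gauge-equivalent automorphisms, so the action descends to $H_\ev(\b_{\Gra^\omega}^\vartheta)$ and to the moduli space $\calMC(\g)$.

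The step deserving the most care---and the only genuinely new input beyond a mechanical translation---is the interplay between the two twisting procedures: I would check that operadic twisting by the tadpole $\omega$ (\cite[Chapter~4]{DotsenkoShadrinVallette18}) commutes with forming the convolution dg Lie algebra, so that $(\b_{\Gra^\omega},\partial^\vartheta)$ is exactly the $\hbar=1$ specialization of $(\aa_\Gra,\partial^\theta)$ with $\theta=\vartheta+\omega\hbar$, and similarly on the endomorphism side. Convergence of the BCH and pre-Lie exponential series, previously ensured by the extra $\hbar$-grading exploited in \cref{thm:GRT1}, must now be argued directly: identifying parallel edges and double tadpoles to zero through the odd-edge Koszul sign (as at the end of \cref{subsec:hbar1}) renders each $\Gra(n)^{\Sy_n}$ finite-dimensional, and since the pre-Lie product strictly raises arity, the arity-$n$ component $(e^{\varPsi(\lambda)})_n$ is a finite combination of graphs for every $n$, so all the sums involved are well defined.
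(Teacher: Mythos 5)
Your proposal matches the paper's own justification, which is simply the remark preceding the statement that ``all the arguments of \cref{subsec:CycleAct} apply \emph{mutatis mutandis}'' once the $\Gra^\omega$-algebra structure on $(\g,\d+\Delta)$ is in place; you carry out exactly that transport (the analogues of \cref{lem:TW}, \cref{lem:PreLieExp}, \cref{prop:CycleAction}, and \cref{lem:HoImpliesGauge}), with the quantum functor $\Sigma$ dropped. Your extra care about convergence at $\hbar=1$ is sound and consistent with what the paper itself records elsewhere (the vanishing of parallel edges by the odd-edge sign in the proof of \cref{prop:h=1Triv}, and the triviality of the arity-one component of even cycles noted in the proof of \cref{prop:CycleAction}).
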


\begin{remark}
One obtains the exact same results by considering, from the beginning, the dg Lie subalgebra of $\aa_\Gra$ consisting of ``genus preserving maps'', that series of elements of the form $\gamma \hbar^k$ where $k$ is equal to the genus of $\gamma$.
\end{remark}

This case actually produces a trivial theory, as the following statement shows.

\begin{proposition}\label{prop:h=1Triv}
The homology of the chain complex $\b_{\Gra^\omega}^{\vartheta}$ is one-dimensional and concentrated in odd degree  with representative given by the tadpole
\[H\Big(\b_{\Gra^\omega}^{\vartheta}\Big)\cong
\left[\vcenter{\hbox{\begin{tikzpicture}[scale=0.5]
	\draw[thick]  (-0.4,2.7) arc [radius=0.4, start angle=180, end angle= 0];
	\draw[thick] (0, 2) to [out=135,in=270] (-0.4,2.7);
	\draw[thick] (0, 2) to [out=45,in=270] (0.4,2.7);
 	 \draw[fill=white, thick] (0,2) circle [radius=10pt];
	\node (n) at (0,1.5) {};
\end{tikzpicture}}} \right]\ .
\]
This implies that the group
\[H_\ev\Big(
\b_{\Gra^\omega}^{\vartheta}\Big)\]
is trivial.
\end{proposition}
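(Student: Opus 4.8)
The plan is to exploit that the differential of $\b_{\Gra^\omega}^{\vartheta}$ splits into two pieces of very different nature. On the space $\prod_{n\geqslant 1}\Gra(n)^{\Sy_n}$ of connected graphs with unlabelled vertices this differential is $D=\partial^{\omega}+\partial^{\vartheta}$, where $\partial^{\vartheta}=[\vartheta,-]$ splits a vertex into two joined by a new edge (raising the number $v$ of vertices by one) while $\partial^{\omega}=[\omega,-]$ adds one new edge, either an ordinary edge between two vertices or a tadpole (keeping $v$ fixed and raising the first Betti number by one). First I would record that the subspace $C_{\geqslant 2}$ of graphs with at least two vertices is a subcomplex, since $D$ can only preserve or increase $v$. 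The quotient complex is one-dimensional, spanned by the tadpole $\omega$ (the single vertex without edges not belonging to the complex, being of weight zero), and it carries the zero induced differential because $\partial^{\omega}\omega=0$ and $\partial^{\vartheta}\omega\in C_{\geqslant 2}$. By the long exact sequence in homology it then suffices to prove that $C_{\geqslant 2}$ is acyclic.

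The key step is a contracting homotopy for $\partial^{\omega}$ that is compatible with connectivity. On the finite-dimensional space of graphs with a fixed vertex set $V$ (at most one edge in each slot and at most one tadpole per vertex, since edges are odd), adding an edge is the Koszul differential $\omega_{0}\wedge-$ on the exterior algebra generated by all edge-slots, $\omega_{0}$ being the sum of all slots. Removing an arbitrary edge would be the naive homotopy, but it does not preserve connectivity; instead I would use the normalised operator $h=\tfrac1v\sum_{x\in V}\iota_{x}$ that removes a tadpole, where $\iota_{x}$ is the contraction dual to the tadpole-slot at $x$. Since each tadpole-slot occurs in $\omega_{0}$ with coefficient one and the cross terms with ordinary edges vanish, one obtains the Clifford-type identity $\partial^{\omega}h+h\partial^{\omega}=\id$ on each fixed-$v$ summand. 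Crucially, deleting a tadpole never disconnects a graph and, when $v\geqslant 2$, never creates an isolated vertex, so $h$ restricts to $C_{\geqslant 2}$.

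To upgrade this from $\partial^{\omega}$ to the full differential $D$, I would run the standard perturbation argument: on $C_{\geqslant 2}$ one has $Dh+hD=\id+N$ with $N=\partial^{\vartheta}h+h\partial^{\vartheta}$ strictly increasing $v$, hence topologically nilpotent on the completed (product) complex. Therefore $\id+N$ is invertible and commutes with $D$, so every $D$-cycle in $C_{\geqslant 2}$ is a $D$-boundary, giving $H(C_{\geqslant 2})=0$. The long exact sequence then yields $H(\b_{\Gra^\omega}^{\vartheta})\cong\K\cdot[\omega]$; and since the tadpole has $v=1$ vertex and $e=1$ edge, its degree $2(v-1)-e=-1$ is odd. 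This establishes the one-dimensionality, the concentration in odd degree, and in particular the triviality of $H_{\ev}(\b_{\Gra^\omega}^{\vartheta})$.

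The main obstacle is exactly the interaction with connectivity: the obvious homotopy for an edge-adding differential (delete an edge) breaks down here, and the entire argument turns on the observation that \emph{tadpole} deletion is a connectivity-preserving contraction whose only defect occurs at the tadpole itself, whose deletion would be the excluded isolated vertex. This is precisely what isolates the single surviving class. A secondary point demanding care is the sign bookkeeping needed to identify $\partial^{\omega}$ with $\omega_{0}\wedge-$ in an $\Sy$-equivariant way and to confirm that the excluded weight-zero vertex is what obstructs full acyclicity.
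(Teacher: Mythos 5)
Your argument has a genuine gap at its central step: the identity $\partial^{\omega}h+h\partial^{\omega}=\id$ with $h$ given by tadpole removal is false, because $\partial^{\omega}=[\omega,-]$ does not create tadpoles. In the commutator, the terms of $\omega\circ_1\Gamma$ in which both half-edges of the loop reattach to the same vertex cancel exactly against the terms of $\sum_i\Gamma\circ_i\omega$; this cancellation is precisely why the paper can assert that $\partial^{\omega}$ preserves the number of tadpoles. What survives is only the sum of new edges joining \emph{distinct} vertices, so in your exterior-algebra picture $\omega_0$ contains no tadpole slots, the contraction $\iota_x$ against a tadpole slot anticommutes with $\omega_0\wedge-$, and one gets $\partial^{\omega}h+h\partial^{\omega}=0$ rather than $\id$. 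Concretely, on connected two-vertex graphs the $v$-preserving part of the differential vanishes identically (adding the edge between the two vertices to a connected graph, which already contains that odd edge, gives zero), so the associated graded of your vertex filtration is far from acyclic in $v\geqslant 2$ and the perturbation argument never starts. Indeed $C_{\geqslant 2}$ is not acyclic for the full differential either: the single edge $\vartheta$ is a $D$-cycle whose only potential preimage is the edgeless one-vertex graph, which you have excluded. This points to a secondary inconsistency in your setup: you keep the tadpole but discard the edgeless one-vertex graph, although both have arity $1$ and weight $0$; the complex in the proposition is $\prod_{n\geqslant 1}\Gra(n)^{\Sy_n}$, and the edgeless vertex is needed exactly to kill the class of $\vartheta$.

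The paper's proof goes a different and genuinely harder route: it splits the complex by the (preserved) number of tadpoles into $N\oplus T$, quotes the acyclicity of the tadpole-free part $N$ for the full differential $\partial^{\vartheta}+\partial^{\omega}$ from Khoroshkin--Willwacher--\v{Z}ivkovi\'c, and computes $H(T)$ by a genus filtration whose first page reduces to Willwacher's identification of the $\partial^{\vartheta}$-homology of the tadpole part with the one-dimensional span of $\omega$. Both inputs are substantial theorems about graph complexes, and no elementary connectivity-preserving contraction of the kind you propose is available to replace them.
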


\begin{proof}
In this proof, we use the homological $\ZZ$-grading.
By a slight abuse of notation, we still denote by $\omega$ the tadpole element in $\b_\Gra$. The twisted dg Lie algebra that we consider is isomorphic to
\[\b_{\Gra^\omega}^{\vartheta}\cong \b_{\Gra}^{\vartheta+\omega}\cong
\left(\prod_{n\geqslant 1}\Gra(n)^{\Sy_n}, \partial^{\vartheta}+\partial^{\omega}\right)\ .\]
Since the two commuting differentials $\partial^{\vartheta}$ and $\partial^{\omega}$ preserve the number of tadpoles, this chain complex is the direct sum $\g\cong N\oplus T$ of its two chain sub-complexes spanned by graphs without tadpoles ($N$) and graphs with at least one tadpole ($T$) respectively. The former is acyclic by~\cite[Corollary~4]{KhorWillZiv2017}. We consider the filtration of $T$ defined by the genus of the graphs: let  $\mathrm{F}_p\, T$ consist of series of graphs of genus greater or equal to $-p$.
The first page $E^0_{p,q}$ of the associated spectral sequence consists of series of  graphs
of genus $-p$ with $-p-q$ edges. Its  differential is equal to $d^0=\partial^\vartheta$. Proposition~3.4 of~\cite{Willwacher15} shows that the homology of $T$ with respect to $\partial^\vartheta$ consists of just one class represented by $\omega$~. In the present terms, this means that $E^1_{p,q}=0$ for all
$p$ and $q$, except for $E^1_{-1,0}=[\omega]$~.
Since $\mathrm{F}_0=T$, this filtration is exhaustive.
Since parallel edges cannot appear due to sign issues, given any number $n$ of vertices, there is an integer $p$ such that all the graphs with $n$ vertices have genus less or equal than $p$.
This shows that the filtration is complete. The vanishing of
the spectral sequence at the second page shows that it is regular. So it converges to its homology by
the Complete Convergence Theorem~\cite[Theorem~5.5.10]{WeibelBook}
and this concludes the proof.
\end{proof}

\section{Givental--Grothendieck--Teichm\"uller group}\label{sec:GGT}

In the previous section, we applied the methods of the operadic deformation theory to the
Kontsevich--Willwacher operad $\Gra$ made up of 
natural operations on the totalisations
$\whP$ of modular operads
to introduce
a universal symmetry group  acting functorially on the moduli space of
Maurer--Cartan elements of $\whP$.
The most interesting example for us here is the convolution algebra $\g_A$
encoding homotopy CohFTs (\cref{subsec:DefCohFT}). In this section, we introduce a bigger operad
$\GGra_A$
of natural operations acting on this more restricted class of algebras.
The elements of the convolution algebra $\g_A$ contain three different types of ingredients:
an underlying graph, elements of $A$, and
cohomology classes of the Deligne--Mumford--Knudsen moduli spaces
$\oM$
 of stable curves.
 As a consequence, we construct the  operad $\GGra_A$ from the connected graphs operad $\Gra$ by adding extra decorations which involve elements of $A$ and extra labels defined using the intersection theory of $\oM$.
 More precisely, recall that there is an extensively studied collection of subalgebras of the cohomology algebras of the moduli spaces of curves closed under the push-forwards with respect to all natural maps: the subalgebras of \emph{tautological classes}, see e.g.~\cite{Faber,ZvonkineSurvey,Schmitt} for a survey. The additive generators of these subalgebras are fully understood: they are represented by the natural strata, which can be considered as the results of the iterative application of the operations $\xi_{ij}$ and $\circ_i^j$, decorated by $\psi$- and $\kappa$-classes in all possible ways. We use these classes
and their
push-forwards with respect to the natural maps to construct the operad $\GGra_A$
which we call
the \emph{tautological graph operad}.
Finally, using the same methods as in \cref{sec:GTgroup} \emph{mutatis mutandis}, we obtain  an even bigger universal
symmetry group
acting on (quantum) homotopy CohFTs.

\medskip

In the case of CohFTs,
our
approach recovers the formulas of the classical Givental group action.
This is quite remarkable since our construction assumes
no \emph{a priori} knowledge of the Givental--Teleman theory.
Moreover, the extension of the Givental group action to \emph{homotopy} CohFTs is new.
The full power of the approach followed here shines through on the level of quantum homotopy CohFTs:
the universal symmetry group arising there is a huge group
which we call
the \emph{Givental--Grothendieck--Teichm\"uller group}
because it contains
both the Givental group \emph{and} the  prounipotent Grothendieck--Teichm\"uller group.
Its complete computation currently seems out of reach and is thus a promising subject for further studies.
As a first step in exploring applications of this group, we were able to express the Buryak--Rossi functor (\cref{subsec:Quantum}) in term of the action of a specific Manin--Zograf element living in the Givental--Grothendieck--Teichm\"uller group; this conceptual interpretation implies nice properties of the Buryak--Rossi functor with respect to Givental type actions.

\begin{remark}
In order to set up the theory developed in this section, we will need to work
under the stronger assumption that the various dg symmetric vector spaces $A$ are finite dimensional and that their pairings are non-degenerate.
The arguments proceed without much change if $A$ is a Hilbert space, and this ensures that interesting infinite-dimensional cases, such as differential forms on a K\"ahler manifold, can be included in this formalism as well.
However, in such cases, there is a nontrivial choice to make in the definition of the unstable components $\EEnd_A(0,2)$ and $\EEnd_A(0,1)$. The same applies if we wish to generalize this construction to the case of a more general target modular operad $\P$: one must find meaningful candidates for the unstable components $\P_0(2)$ and $\P_0(1)$. A more intricate question is how one can generalize the source, that is the cohomology modular cooperad of moduli spaces. We hope to address this question elsewhere.
\end{remark}

\subsection{The tautological graph operad}

Before presenting the new definition, let us start with two general remarks that might clarify our constructions. First of all, the $\Gra$-algebra structure on $\g_A$ uses just the modular operad structure maps. However, a special feature of the convolution algebra $\g_A$, controlling homotopy CohFTs, is the geometric intuition arising from thinking of the arity $n$ in terms of marked points of stable curves. In particular, one has the $\psi$-classes at all individual marked points, and the operators of multiplication by those classes behave well with respect to the natural maps between moduli spaces.
Also, if one allows the operations acting on $\g_A$ to depend on~$A$, some precautions are needed when $A$ has a nontrivial differential: the issue is to ensure that the action of the extended operations are compatible with the differential.
The strategy that we choose is to only consider cycles with respect to the differential of $A$ but one could alternatively have extra differentials arising in some of the formulas below.

\begin{definition}[The extended connected graph operad]
Let $(A, d_A, \langle\ ,\, \rangle)$ be a dg symmetric vector space.
The \emph{extended connected graph operad} $\EGra_A$ is spanned by connected graphs with \emph{usual} edges of homological degree $-1$ and \emph{dashed} edges of homological degree $0$, both including loops, labeled by symmetric elements of $Z_\bullet\left(A^{\otimes 2}\right)\allowbreak[[\psi_1,\psi_2]]$, that is elements invariant under the simultaneous action of the transposition $(1 2)$ on the tensor factors and on the elements $\psi_1$ and $\psi_2$.
Given an edge $e$ between two vertices $v$ and $w$, we denote these labeling elements by $M_{vw}^{(e)}\in Z_\bullet\left(A^{\otimes 2}\right)[[\psi_1,\psi_2]]^{\Sy_2}$, or simply by $M_{vw}$ when there is only one edge between two vertices
\[ \vcenter{\hbox{\begin{tikzpicture}[scale=1]
  \draw[dashed, thick] (-1, 1.3) to [out=0,in=135] (1,0) ;
  \draw[thick] (-1, -1.3) to [out=0,in=225] (1,0);
  \draw[dashed, thick] (-1, 1.3) to [out=245,in=115] (-1,-1.3);
  \draw[thick] (-1, 1.3) to [out=295,in=65] (-1,-1.3);

  \draw[thick]  (1.7,-0.4) arc [radius=0.4, start angle=270, end angle= 450];
  \draw[thick] (1, 0) to [out=315,in=180] (1.7,-0.4);
  \draw[thick] (1, 0) to [out=45,in=180] (1.7,0.4);

  \draw[dashed, thick]  (-1.7,-1.7) arc [radius=0.4, start angle=270, end angle= 90];
  \draw[dashed, thick] (-1, -1.3) to [out=225,in=0] (-1.7,-1.7);
  \draw[dashed, thick] (-1, -1.3) to [out=135,in=0] (-1.7,-0.9);

  \draw[fill=white, thick] (1,0) circle [radius=10pt];
  \draw[fill=white, thick] (-1,1.3) circle [radius=10pt];
  \draw[fill=white, thick] (-1,-1.3) circle [radius=10pt];

  \node at (0.2,1.2) {\scalebox{1}{$M_{12}$}};
  \node at (-1.7,0) {\scalebox{1}{$M_{23}^{(1)}$}};
  \node at (-0.3,0) {\scalebox{1}{$M_{23}^{(2)}$}};
  \node at (-2.5,-1.3) {\scalebox{1}{$M_{33}$}};
  \node at (0.4,-1.1) {\scalebox{1}{$M_{13}$}};
  \node at (2.5,0) {\scalebox{1}{$M_{11}$}};

  \node at (1,0) {\scalebox{1}{$1$}};
  \node at (-1,1.3) {\scalebox{1}{$2$}};
  \node at (-1,-1.3) {\scalebox{1}{$3$}};
    \end{tikzpicture}}}
\]
Like in the case of the connected graph operad $\Gra$, the partial composition products are given by
inserting a graph at a vertex of another one, relabelling the vertices, and considering the sum of all the possible ways to connect the edges, previously attached to the deleted vertex, to the inserted graph.
The homological degree of a labeled graph is equal to the sum of the degrees of the edges plus the degrees of all the labeling elements.
This graded operad has a differential $\dE$ equal to the sum of all the ways to replace a dashed edge labeled by a symmetric element $M_{vw}\in Z_\bullet\left(A^{\otimes 2}\right)[[\psi_1,\psi_2]]^{\Sy_2}$ with a usual edge between the same vertices  labeled by $(\psi_1+\psi_2)M_{vw}$~.
\end{definition}

It is straightforward to see that this defines a dg operad.

\begin{remark}
Notice that parallel edges are a priori allowed here due to their labelings.
However, when $A$ is finite dimensional with a non-degenerate bilinear form $\langle\ ,\, \rangle$,
if we consider a graph made up of usual edges only and decorated by $\mathbb{I}\,\psi_1^0\psi_2^0$, where $\mathbb{I}\in A^{\otimes 2}$ is
the image of the identity map of $A$ under
the pairing $\langle\ ,\, \rangle$, then we cannot have parallel edges for  degree reasons. In this case, we get a graph in $\Gra$; this shows that the connected graph operad embeds naturally into the extended connected graph operad: $\Gra \hookrightarrow \EGra_A$~.
\end{remark}

The next ingredient of our construction amounts to incorporate some new operations of arity one.
The first kind of new arity one operations comes from a particular aspect of the geometry of the moduli spaces of curves. Recall that Manin and Zograf~\cite{ManinZograf2000} considered the vector space $L$ of systems  of classes $\big\{\ell_{g,n}\in \Hbul\big(\oM_{g,n},\mathbb{Q}\big)\big\}_{(g,n)\in \NNs}$ satisfying the properties
\[
\sigma^*\ell_{g,n} = \ell_{g-1,n+2} \quad  \text{and} \quad
\rho^*\ell_{g,n} = \ell_{g',n'+1}\otimes 1 + 1 \otimes \ell_{g-g',n-n'+1}~, \]
where the maps
\[\sigma  =\xi_{n\, n+1}  \colon \oM_{g-1,n+2}\to \oM_{g,n} \quad \text{and} \quad
\rho=\circ_{n'+1}^{n-n'+1} \colon \oM_{g',n'+1}\times \oM_{g-g',n-n'+1}\to\oM_{g,n}
\]
come respectively from the
 the identification of the last two marked points
 and the gluing of
two curves along their last marked points.
 According to \cite[Prop.~8.4]{Teleman2012}, the space $L$ is spanned by the $\kappa$-classes and by the Chern characters of the Hodge bundle $\mathrm{ch}_i$, that is,
 \[
\ell_{g,n} = \left(\sum_{i\geqslant 1} k_i\kappa_i + \sum_{j \geqslant 0} s_{2j+1} \mathrm{ch}_{2j+1}\right)|_{\oM_{g,n}}~,
 \]
for arbitrary choices of  constants $k_i$ and $s_{2j+1}$ in $\mathbb{Q}$. We consider the abelian Lie algebra $L_{MZ}$
made up of \emph{Manin--Zograf elements} that are sums $\ell=E+F$ of series in $e_i$, for $i\geqslant 1$, and in $f_{2j+1}$, for $j\geqslant 0$, respectively:
\[L_{MZ} \coloneq \left\{\ell=E+F=
\sum_{i \geqslant  1} k_ie_i + \sum_{j \geqslant 0} s_{2j+1}f_{2j+1}\ , \ k_i, s_{2j+1} \in \K
\right\}~.\]
\\

The second kind of unary operations correspond to the unstable components of the modular operad $\EEnd_A$, though still mixed with extra datum coming from the geometry of moduli spaces of curves.

\begin{definition}[The Lie algebra of unstable unary operations]
The \emph{Lie algebra of unstable unary operations} $L_{\mathrm{unst}}$ is the graded vector space $\psi Z_\bullet\left(A^{\otimes 2}\right)[[\psi]]\oplus \psi^2Z_\bullet(A)[[\psi]]$ equipped with the standard Lie bracket
 \[
[(R_1,T_1),(R_2,T_2)]\coloneq
\left(R_2\circ_2^1 R_1-(-1)^{|R_1||R_2|}R_1\circ_2^1 R_2,
-R_1\circ_2^1 T_2+(-1)^{|R_2||T_1|}R_2\circ_2^1 T_1\right)~,
 \]
induced by the modular operad structure on $\EEnd_A[[\psi]]$~.
\end{definition}

\begin{remark} \label{rem:relaxRT}
The divisibility by $\psi$ in the first summand and by $\psi^2$ in the second summand do not appear relevant for the definition of the Lie algebra, but becomes crucial for convergence of some of the formulas below, most notably that of Proposition \ref{prop:translation}.

In principle, one could still include constant terms with respect to $\psi$ in the first summand and degree one terms in $\psi$ in the second summand without any problem, but it would change some of the homology computations below, see~\cref{rem:ConstUnstable}.
\end{remark}

Finally, we note that the formula
\begin{equation}\label{eq:ManinZograf}
\ell\cdot(R,T)\coloneq
\big(0,-E(\psi)T \big)~, \text{where}\  \ell=E+F\ \text{and}\ E(\psi)\coloneq \sum_{i \geqslant  1} k_i\psi^i~,
\end{equation}
defines an action of $L_{MZ}$ on the Lie algebra $L_{\mathrm{unst}}$ by pairwise commuting derivations.
This action defines a Lie algebra structure on
 \[
\mathfrak{L}\coloneq L_{MZ}\oplus L_{\mathrm{unst}}~.
 \]
Viewing its universal enveloping algebra $\mathrm{U}(\mathfrak{L})$ as an operad concentrated in arity one, we are now ready to define the protagonist of this section.

\begin{definition}[Tautological graph operad]
The \emph{tautological graph operad} $\GGra_A$ is the quotient of the coproduct of operads $\EGra_A\vee \mathrm{U}(\mathfrak{L})$ by the operadic commutator relations
\begin{gather}\label{eq:DistLawTautGra}
\big[(\ell, R,T),\Gamma\big]=
(\ell, R,T)\circ_1 \Gamma - (-1)^{|(\ell, R,T)||\Gamma|}
\sum_{i=1}^n
\Gamma\circ_i (\ell, R,T)=
R\cdot \Gamma~,
\end{gather}
for $\Gamma\in \EGra_A(n)$,
where $R\cdot\Gamma$ is equal to
the sum over all edges (usual and dashed) connecting two vertices $v$ and $w$ (possibly equal to each other)
of decorated graphs obtained by replacing the decoration $M_{vw}$ by
$-R(\psi_1)\circ_2^1 M_{vw} - (-1)^{|M_{vw}||R|} M_{vw}\circ_2^1 R^{(12)}(\psi_2)$~. Since $R\cdot
(-)$ is an operadic derivation of $\EGra_A$ that commutes with $\dE$, for any $(\ell, R,T)\in
\mathfrak{L}$, this latter differential induces a well-defined differential on $\GGra_A$ that we
denote by $\DE$.

\end{definition}

\begin{notation}
We will use the following suggestive graphical convention to depict the unary elements of the operad $\mathrm{U}(\mathfrak{L})\subset\GGra$ corresponding to the unstable and the Manin--Zograf operations respectively:
\[\ell \longleftrightarrow
\vcenter{\hbox{\begin{tikzpicture}[scale=0.6]
        \draw[fill=white, thick] (0,0) circle [radius=10pt];
        \node at (0,0) {\scalebox{1}{$\ell$}};
        \end{tikzpicture}}}
\ , \quad
R\longleftrightarrow
\sum_{k\geqslant 1}\,
\vcenter{\hbox{\begin{tikzpicture}[scale=0.6]
        \draw[fill=white, thick] (0,0.3) circle [radius=22pt];
        \draw[thick] (0,30pt)--(0,20pt);
        \draw[thick] (0,30pt)--(0,40pt);
        \node at (0,10pt) {\scalebox{1}{$a_1^k\psi^k$}};
        \node at (0,53pt) {\scalebox{1}{$a_2^k$}};
        \end{tikzpicture}}}
\ , \quad
T\longleftrightarrow
\sum_{k\geqslant 2}\,
\vcenter{\hbox{\begin{tikzpicture}[scale=0.6]
        \draw[fill=white, thick] (0,0) circle [radius=22pt];
        \draw[thick] (0,22pt)--(0,12pt);
        \node at (0,2pt) {\scalebox{1}{$a^k\psi^k$}};
        \end{tikzpicture}}}
        \ \  .
\]
\end{notation}

In the finite dimensional and non-degenerate case, the tautological graph operad contains the connected graph operad
\[\Gra \hookrightarrow\EGra_A \hookrightarrow \GGra_A~.\]
The following theorem is a central result which justifies the definition of the tautological operad  $\GGra_A$:
 it gives a precise description of its underlying dg $\mathbb{S}$-module and it shows that it is an operad made up of natural operations, extending the ones of $\Gra$ of \cref{prop:GraAction}, acting on the convolution algebra $\g_A$ controlling the deformation theory of homotopy cohomological field theories.

\begin{theorem}\label{th:GGra0}\leavevmode
\begin{enumerate}
\item The relation~\eqref{eq:DistLawTautGra} of $\GGra_A$ defines a distributive law between $\EGra_A$ and $\mathrm{U}(\mathfrak{L})$; in particular, the underlying dg $\mathbb{S}$-module of $\GGra_A$ is isomorphic to $\EGra_A\circ \mathrm{U}(\mathfrak{L})$.

\item  Convolution algebras $\g_A$ admit a functorial complete $\GGra_A$-algebra structure. It extends the  complete $\Gra$-algebra structure, when $A$ is finite dimensional with a non-degenerate pairing.
\[
\vcenter{\hbox{
\begin{tikzcd}
\Gra
\arrow[rr, "\Psi"] \arrow[dr, hook]
& & \End_{\g_A}
 \\
&\GGra  \arrow[ur]  &
\end{tikzcd}
}}
\]
\end{enumerate}
\end{theorem}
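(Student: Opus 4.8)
The plan is to prove the two assertions in turn, and to use the first one to make the second modular: once the distributive-law decomposition $\GGra_A\cong\EGra_A\circ\mathrm{U}(\mathfrak{L})$ is established, producing a morphism of dg operads $\GGra_A\to\End_{\g_A}$ reduces to giving compatible actions of the two factors $\EGra_A$ and $\mathrm{U}(\mathfrak{L})$ separately, together with a single check of the mixed relation~\eqref{eq:DistLawTautGra}. Throughout I work with the $\Z2$-grading fixed at the start of the section, so all signs are Koszul signs for that grading.

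\emph{Part (1).} I would read~\eqref{eq:DistLawTautGra} as a rewriting rule that moves a unary operation $(\ell,R,T)\in\mathfrak{L}\subset\mathrm{U}(\mathfrak{L})$ from above a graph $\Gamma\in\EGra_A(n)$ to below it, namely
\[
(\ell,R,T)\circ_1 \Gamma = R\cdot\Gamma + (-1)^{|(\ell,R,T)||\Gamma|}\sum_{i=1}^{n}\Gamma\circ_i (\ell,R,T),
\]
and recognise this as a distributive law $\mathrm{U}(\mathfrak{L})\circ\EGra_A\to\EGra_A\circ\mathrm{U}(\mathfrak{L})$ in the sense of the theory of distributive laws between operads, see \cite[Chapter~8]{LodayVallette12}. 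The two coherence axioms reduce to: (a) for each $(\ell,R,T)$ the correction $R\cdot(-)$ is an operadic derivation of $\EGra_A$ commuting with $\dE$, which is the compatibility of the rewriting with the partial compositions of $\EGra_A$ and is a direct Leibniz check (indeed this is already asserted in the definition of $\GGra_A$); and (b) the assignment $(\ell,R,T)\mapsto R\cdot(-)$ is a morphism of Lie algebras $\mathfrak{L}\to\mathrm{Der}(\EGra_A)$, which is the compatibility with the associative product of $\mathrm{U}(\mathfrak{L})$. For (b) one verifies that $[R_1\cdot,R_2\cdot]=\big(R_2\circ_2^1 R_1-(-1)^{|R_1||R_2|}R_1\circ_2^1 R_2\big)\cdot(-)$, which matches exactly the $R$-component of the Lie bracket of $\mathfrak{L}$. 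Since the $\ell$- and $T$-components act trivially by commutator on $\EGra_A$, the distributive law factors through the $R$-component and needs no further relation; the Manin--Zograf action~\eqref{eq:ManinZograf} and the $T$-terms of the bracket are carried entirely inside the $\mathrm{U}(\mathfrak{L})$-factor. The Poincar\'e--Birkhoff--Witt-type theorem for distributive laws then yields the stated isomorphism of dg $\mathbb{S}$-modules $\GGra_A\cong\EGra_A\circ\mathrm{U}(\mathfrak{L})$, with $\DE$ the induced differential, because $R\cdot(-)$ commutes with $\dE$.

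\emph{Part (2).} I would define the action of each type of generator on $\g_A=\widehat{\mathbb{G}}_\od\big(\Hbul(\oM)\big)(A)$ and then invoke part (1). A decorated usual edge of $\EGra_A$ between vertices $v$ and $w$ acts by grafting a leg of the graph at $v$ with a leg of the graph at $w$, exactly as the bracket and $\Delta$ of \cref{lem:DefComp}, but with the Casimir replaced by the label $M_{vw}\in Z_\bullet(A^{\otimes 2})[[\psi_1,\psi_2]]$ and with the powers of $\psi_1,\psi_2$ realised as multiplication by the cotangent $\psi$-classes at the two branches of the new node; specialising to the Casimir $\mathbb{I}$ with no $\psi$-factors recovers the plain $\Gra$-edge, so the action extends $\Psi$ of \cref{prop:GraAction} and the triangle of the statement commutes. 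The unary operations act as an $R$-matrix on the legs (applying $R$ and inserting $\psi$-powers), a translation $T$ (creating new legs decorated by $T$-classes, where divisibility by $\psi^2$ ensures convergence), and a Manin--Zograf element $\ell=E+F$ (multiplying each vertex class by the corresponding $\kappa$-classes and odd Chern characters of the Hodge bundle). By part (1) it suffices to check that each family respects the relations internal to $\EGra_A$ and to $\mathrm{U}(\mathfrak{L})$, and that the mixed relation~\eqref{eq:DistLawTautGra} holds in $\End_{\g_A}$: commuting the $R$-matrix action on a leg past a grafting along a decorated edge should produce precisely the modification $M_{vw}\mapsto -R(\psi_1)\circ_2^1 M_{vw}-(-1)^{|M_{vw}||R|}M_{vw}\circ_2^1 R^{(12)}(\psi_2)$. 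Restricting the edge and translation labels to \emph{cycles} $Z_\bullet(A^{\otimes 2})$ and $Z_\bullet(A)$ guarantees compatibility with $d_A$, and matches $\DE$ (the dashed-to-usual edge rule, carrying the factor $\psi_1+\psi_2$) with the differential of $\End_{\g_A}$; naturality of all structure maps in $A$ gives functoriality.

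\emph{Main obstacle.} The hardest and least formal part is expected to be the verification, at the level of $\End_{\g_A}$, of the mixed relation~\eqref{eq:DistLawTautGra} together with the fact that $\DE$ is a chain map. This is where the intersection theory of $\oM$ genuinely enters, through the behaviour of the $\psi$- and $\kappa$-classes and of the Chern characters of the Hodge bundle under the expansion and partial-decomposition maps of the Hopf modular cooperad $\Hbul(\oM)$ (the same tautological compatibilities underlying \cref{lem:mumford}). By contrast, the sign and $\psi$-power bookkeeping in the derivation and Lie-morphism checks of part (1), while tedious, is routine.
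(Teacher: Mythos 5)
Your proposal follows essentially the same route as the paper: part (1) is the general fact that a Lie algebra acting by operadic derivations yields a distributive law with the semidirect-product normal form $\EGra_A\circ \mathrm{U}(\mathfrak{L})$ (the paper establishes termination and confluence of the rewriting rule via operadic Gr\"obner bases, since the relation~\eqref{eq:DistLawTautGra} is quadratic-\emph{linear} rather than quadratic, but your reduction of the coherence axioms to the derivation property and the Lie-morphism property is the correct content), and part (2) proceeds by defining the actions generator by generator and verifying the commutators using the intersection theory of $\oM$, exactly as in the paper. The one substantive omission is that you never say what a \emph{dashed} edge does on $\g_A$ --- it contracts, acting by the push-forwards $\left(\circ_i^j\right)_*$ and $\left(\xi_{ij}\right)_*$ of the gluing maps --- so the chain-map property of $\DE$ that you rightly single out as the crux is then precisely the excess intersection formula $\left[\d\,,\, {}^h\{\,,\,\}_M\right]=\{\,,\,\}_{(\psi_1+\psi_2)M}$ (and its tadpole analogue), a specific geometric input you should name rather than subsume under the general behaviour of $\psi$- and $\kappa$-classes.
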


\begin{proof}
The first assertion is a particular case of the following general property. Let $\mathfrak{L}$ be a Lie algebra
acting on an operad $\PP$ by operadic derivations, i.e. there is a morphism of Lie algebras
$\rho \colon \mathfrak{L} \to \mathrm{Der}(\PP)$.  We consider the operad $\mathrm{O}$ defined as the quotient of the coproduct $\PP\vee \mathrm{U}(\mathfrak{L})$ by the operadic ideal generated by the relations
\begin{gather}\label{eq:LieRelationDer}\tag{$\ast$}
\big[X, \mu\big]=
X\circ_1 \mu - (-1)^{|X||\mu|}
\sum_{i=1}^n
\mu\circ_i X=
\rho(X)(\mu)~,
\end{gather}
for $X\in \mathfrak{L}$ and $\mu\in \PP(n)$~.
We claim that this relation~\eqref{eq:LieRelationDer} induces a distributive law between $\PP$ and $\mathrm{U}(\mathfrak{L})$.
To this end, we use the straightforward extension of \cite[Proposition~8.6.2]{LodayVallette12} to quadratic-linear presentations of operads.
So we just need to prove that the underlying $\Sy$-module of the operad $\mathrm{O}$ is isomorphic to the composite product $\PP\circ \mathrm{U}(\mathfrak{L})$.
For this, we use the methods of rewriting systems and operadic Gr\"obner bases \cite{BremnerDotsenko16}.
Both $\PP$ and $\mathrm{U}(\mathfrak{L})$ admit canonical quadratic-linear presentations.
For the operad $\PP$, one may choose any linear basis $\PP$ as a set of generators and the multiplication table for relations.
Such a presentation leads to a Gr\"obner basis for any ordering extending the weight grading.
For the enveloping algebra $\mathrm{U}(\mathfrak{L})$, one takes a basis of the vector space $\mathfrak{L}$ as a set of generators, in which case the Gr\"obner basis property for the degree-lexicographic ordering follows from the Jacobi identity.
This implies that the associated presentation of the coproduct $\PP\vee \mathrm{U}(\mathfrak{L})$ leads to a terminating and confluent rewriting system. We choose to interpret relations \eqref{eq:LieRelationDer} as a rewriting rule once read from left to right:
\[X\circ_1 \mu  \mapsto (-1)^{|X||\mu|}\sum_{i=1}^n\mu\circ_i X+\rho(X)(\mu)~. \]
The application of any rewriting rule of the operad $\mathrm{O}$ decreases strictly the lexicographic order on the tuples given by the weight grading, the degree-lexicographic orderings of words of elements from $\mathfrak{L}$ read from the leaves to the root, and minus the sum over the elements of $\mathfrak{L}$ of the distance from that element  to the root.
Since this order is bounded below, the overall rewriting system is terminating, and,
since the Lie algebra $\mathfrak{L}$ is acting on the operad $\PP$ by operadic derivations, it is confluent.  This concludes the fact that we do have a distributive law in this general context.
Finally, it is enough to check that the relations~\eqref{eq:DistLawTautGra} defining the operad $\mathrm{O}=\GGra_A$ is given by an action of the Lie algebra $\mathfrak{L}=L_{MZ}\oplus L_{\mathrm{unst}}$ by operadic derivations of $\PP=\EGra_A$. \\

Regarding the second assertion, we need to define universal actions of decorated graphs and the new unary operations on $\mathfrak{g}_A$, that is two morphisms of dg operads $\EGra_A \to \End_{\g_A}$ and $\mathrm{U}(\mathfrak{L}) \to \End_{\g_A}$~, such that the images of relations~\eqref{eq:DistLawTautGra} vanish under the induced morphism $\EGra_A\vee \mathrm{U}(\mathfrak{L})  \to \End_{\g_A}$~.

\medskip

The first morphism $\EGra_A \to \End_{\g_A}$ is defined as follows. Given $\Gamma\in \EGra_A(n)$ and $\gamma_1, \ldots, \gamma_n \in \g_A$, we first insert every graph $\gamma_i$ at the vertex $i$ of $\Gamma$, for every $i=1, \ldots, n$ and we consider the sum of all the ways to connect the edges (usual or dashed) attached to each vertex $i$ to the the leaves of $\gamma_i$. (When the number of adjacent edges is strictly greater than the number of leaves of the graph from $\g_A$, the result is equal to $0$.) There are now two possibilities.

\medskip

\textsc{Usual edges.} When two vertices are are linked by a usual edge, we keep it but consider the following coefficients and labelings:
\[
    \vcenter{\hbox{\begin{tikzpicture}[scale=1]
        \draw[thick] (0,0)--(0.65, 0);
        \draw[thick] (1,0)--(3, 0);
        \draw[thick] (3.35,0)--(4, 0);

        \draw[thick] (-0.65, 0)--(0,0);
        \draw[thick] (-0.47, -0.47)--(0,0);
        \draw[thick] (-0.47, 0.47)--(0,0);
        \draw[thick] (4.47, -0.47)--(4,0);
        \draw[thick] (4.47, 0.47)--(4,0);

        \draw[fill=white, thick] (0,0) circle [radius=10pt];
        \draw[fill=white, thick] (4,0) circle [radius=10pt];

        \node at (0,0) {\scalebox{1}{$\mu$}};
        \node at (0.49,0.15) {\scalebox{0.8}{$i$}};
        \node at (0.83,0) {\scalebox{1}{$a$}};
        \node at (3.22,0.05) {\scalebox{1}{$a'$}};
        \node at (3.55,0.17) {\scalebox{0.8}{$j$}};
        \node at (4.05,0.05) {\scalebox{1}{$\mu'$}};
        \node at (2,0.25) {\scalebox{1}{$M$}};
        \end{tikzpicture}}}
        \quad
\longmapsto
\quad
\sum_{k,l\geqslant 0} \left\langle a, a_1^{kl}\right\rangle \left\langle a_2^{kl}, a'\right\rangle \ \,
    \vcenter{\hbox{\begin{tikzpicture}[scale=1]
        \draw[thick] (0,0)--(2, 0);

        \draw[thick] (-0.8, 0)--(0,0);
        \draw[thick] (-0.58, -0.58)--(0,0);
        \draw[thick] (-0.58, 0.58)--(0,0);
        \draw[thick] (2.58, -0.58)--(2,0);
        \draw[thick] (2.58, 0.58)--(2,0);
        \draw[fill=white, thick] (0,0) circle [radius=15pt];
        \draw[fill=white, thick] (2,0) circle [radius=15pt];

        \node at (0,0.02) {\scalebox{1}{$\mu\,\psi^k_i$}};
        \node at (1.98,0.05) {\scalebox{1}{$\mu'\psi^l_j$}};
\end{tikzpicture}}}~,
\]
where $i$ (respectively $j$) is the label of both the external edge and the corresponding point of the moduli space of curves and where
\[M=\sum_{k,l\geqslant 0} \left(a_1^{kl}\otimes a_2^{k}\right) \psi_1^k\psi_2^l~.\]
When the usual edge is a loop, the definition is the same except that we now label the vertex by the cohomology class $\mu\,\psi^k_i\,\psi^l_j$~.

\medskip

\textsc{Dashed edges.} When two vertices are are linked by a dashed edge, we contract it and consider the following coefficients and labels:
\[
    \vcenter{\hbox{\begin{tikzpicture}[scale=1]
        \draw[thick] (0,0)--(0.65, 0);
        \draw[thick, dashed] (1,0)--(3, 0);
        \draw[thick] (3.35,0)--(4, 0);

        \draw[thick] (-0.65, 0)--(0,0);
        \draw[thick] (-0.47, -0.47)--(0,0);
        \draw[thick] (-0.47, 0.47)--(0,0);
        \draw[thick] (4.47, -0.47)--(4,0);
        \draw[thick] (4.47, 0.47)--(4,0);

        \draw[fill=white, thick] (0,0) circle [radius=10pt];
        \draw[fill=white, thick] (4,0) circle [radius=10pt];

        \node at (0,0) {\scalebox{1}{$\mu$}};
        \node at (0.49,0.15) {\scalebox{0.8}{$i$}};
        \node at (0.83,0) {\scalebox{1}{$a$}};
        \node at (3.22,0.05) {\scalebox{1}{$a'$}};
        \node at (3.55,0.17) {\scalebox{0.8}{$j$}};
        \node at (4.05,0.05) {\scalebox{1}{$\mu'$}};
        \node at (2,0.25) {\scalebox{1}{$M$}};
        \end{tikzpicture}}}
        \quad
\longmapsto
\quad
\sum_{k,l\geqslant 0} \left\langle a, a_1^{kl}\right\rangle \left\langle a_2^{kl}, a'\right\rangle \ \,
    \vcenter{\hbox{\begin{tikzpicture}[scale=1]

        \draw[thick] (-0.65, 0)--(0,0);
        \draw[thick] (-0.47, -0.47)--(0,0);
        \draw[thick] (-0.47, 0.47)--(0,0);
        \draw[thick] (0.47, -0.47)--(0,0);
        \draw[thick] (0.47, 0.47)--(0,0);

        \draw[fill=white, thick] (0,0) circle [radius=10pt];

        \node at (0,0) {\scalebox{1}{$\nu$}};

\end{tikzpicture}}}\ \ ,
\]
where
\[M=\sum_{k,l\geqslant 0} \left(a_1^{kl}\otimes a_2^{kl}\right) \psi_1^k\psi_2^l
\quad
\text{and} \quad
\nu=\left(\circ_i^j\right)_*\left(\mu\,\psi^k_i \otimes \mu'\psi^l_j\right)~, \]
with $\left(\circ_i^j\right)_*$ the push-forward map on cohomology, see \cite[Section~1.1]{DotsenkoShadrinVallette11}.
When the dashed edge is a loop, the definition is the same except that we now label the vertex by the cohomology class  $\left(\xi_{ij}\right)_*\left(\mu\,\psi^k_i \psi^l_j\right)$~.

\medskip

Since the push-forwards $\left(\circ_i^j\right)_*$ and $\left(\xi_{ij}\right)_*$ of sewing maps commute with each other and commute with the multiplication by $\psi_k$, with $k$ different from $i,j$ (the latter is obvious from the projection formula and the fact $\sigma^*\psi_k = \psi_k$ and $\rho^*\psi_k = \psi_k$, $k=1,\dots,n$), the actions of usual and dashed edges (including loops) commute and the map $\EGra_A  \to \End_{\g_A}$ is well-defined and is a morphism of operads.

\medskip

Since the actions of edges commute, we just need to check the compatibility with the respective differentials for the action of one edge solely, for every type of edges.
Let us use the following notations for the operations on $\g_A$ obtained as images under the morphism $\EGra_A  \to \End_{\g_A}$ of the following simple graphs:
\begin{align*}
\vcenter{\hbox{\begin{tikzpicture}[scale=0.7]
	\draw[thick]  (-0.4,2.7) arc [radius=0.4, start angle=180, end angle= 0];
	\draw[thick] (0, 2) to [out=135,in=270] (-0.4,2.7);
	\draw[thick] (0, 2) to [out=45,in=270] (0.4,2.7);
 	 \draw[fill=white, thick] (0,2) circle [radius=10pt];
 	\node at (0,2) {\scalebox{1}{$1$}};
	\node (n) at (0,1.5) {};
  \node at (0,3.4) {\scalebox{1}{$M$}};
\end{tikzpicture}}}
 \ \,  \longmapsto \ \Delta_M
 \ \ , \quad
 	\vcenter{\hbox{\begin{tikzpicture}[scale=0.7]
	 \draw[thick] (0,0)--(2, 0);
	 \draw[fill=white, thick] (0,0) circle [radius=10pt];
	 \draw[fill=white, thick] (2,0) circle [radius=10pt];
 	\node at (0,0) {\scalebox{1}{$1$}};
 	\node at (2,0) {\scalebox{1}{$2$}};
  \node at (1,0.3) {\scalebox{1}{$M$}};
  	\node (n) at (1,-0.5) {};
	\end{tikzpicture}}}
\ \,  \longmapsto \ \{\, , \}_M
\ \  , \quad
\vcenter{\hbox{\begin{tikzpicture}[scale=0.7]
	\draw[thick, dashed]  (-0.4,2.7) arc [radius=0.4, start angle=180, end angle= 0];
	\draw[thick, dashed] (0, 2) to [out=135,in=270] (-0.4,2.7);
	\draw[thick, dashed] (0, 2) to [out=45,in=270] (0.4,2.7);
 	 \draw[fill=white, thick] (0,2) circle [radius=10pt];
 	\node at (0,2) {\scalebox{1}{$1$}};
	\node (n) at (0,1.5) {};
  \node at (0,3.4) {\scalebox{1}{$M$}};
\end{tikzpicture}}}
 \ \,  \longmapsto \ ^h\!\Delta_{M}
 \ \ , \quad
 	\vcenter{\hbox{\begin{tikzpicture}[scale=0.7]
	 \draw[thick, dashed] (0,0)--(2, 0);
	 \draw[fill=white, thick] (0,0) circle [radius=10pt];
	 \draw[fill=white, thick] (2,0) circle [radius=10pt];
 	\node at (0,0) {\scalebox{1}{$1$}};
 	\node at (2,0) {\scalebox{1}{$2$}};
  \node at (1,0.3) {\scalebox{1}{$M$}};
  	\node (n) at (1,-0.5) {};
	\end{tikzpicture}}}
\ \,  \longmapsto \ ^h\!\{ \, , \}_M
\ \  .
\end{align*}
Since the partial decomposition maps $\delta_i^j$ and $\chi_{ij}$ of the cohomology modular cooperad $\Hbul\big(\oM\big)$ commute with the multiplication by
$\psi_k$, with $k$ different from $i,j$ (which, again, follows from the fact that $\sigma^*\psi_k = \psi_k$ and $\rho^*\psi_k = \psi_k$, $k=1,\dots,n$),
we have
\[\left[\d\,, \Delta_{M}\right] =0
\quad\text{and} \quad
\left[\d\,,\{ \, , \}_M\right] =0\ ,
\]
so  the actions of usual edges commute with the differential $\d=\d_A-\d_1-\d_2$ of $\g$.
The excess intersection formula \cite[Section~A.4, Equation~(11)]{GP03}
 implies the following chain homotopy relations:
\[
\left[\d\, ,\,^h\!\Delta_{M}\right] = \Delta_{(\psi_1+\psi_2)M}
\quad\text{and} \quad
\left[\d\, ,\,^h\!\{ \, , \}_M\right] = \{ \, , \}_{(\psi_1+\psi_2)M}~.
\]
These relations imply precisely that the map $\EGra_A  \to \End_{\g_A}$ is a morphism of dg operads.

\medskip

Let us now define the actions of the unstable and the Manin--Zograf unary operations: $\mathrm{U}(\mathfrak{L}) \to \End_\g$~.

\medskip

\textsc{Unstable unary operations.} The action of an unstable operation
\[\sum_{k\geqslant 1}\,
\vcenter{\hbox{\begin{tikzpicture}[scale=0.6]
        \draw[fill=white, thick] (0,0.3) circle [radius=22pt];
        \draw[thick] (0,30pt)--(0,20pt);
        \draw[thick] (0,30pt)--(0,40pt);
        \node at (0,10pt) {\scalebox{1}{$a_1^k\psi^k$}};
        \node at (0,53pt) {\scalebox{1}{$a_2^k$}};
        \end{tikzpicture}}} \]
of $\psi Z_\bullet(A^{\otimes 2})[[\psi]]$ on a graph $\gamma\in \g$ amounts to summing, over all the leaves of $\gamma$, the graphs obtained by modifying the vertex attached to it as follows
\[
    \vcenter{\hbox{\begin{tikzpicture}[scale=1]
        \draw[thick] (0,0)--(0.65, 0);
        \draw[thick] (-0.65, 0)--(0,0);
        \draw[thick] (-0.47, -0.47)--(0,0);
        \draw[thick] (-0.47, 0.47)--(0,0);

        \draw[fill=white, thick] (0,0) circle [radius=10pt];

        \node at (0,0) {\scalebox{1}{$\mu$}};
        \node at (0.49,0.15) {\scalebox{0.8}{$i$}};
        \node at (0.83,0) {\scalebox{1}{$a$}};
        \end{tikzpicture}}}
        \quad
\longmapsto
\quad
\sum_{k\geqslant 1} \left\langle a, a_1^{k}\right\rangle \ \,
    \vcenter{\hbox{\begin{tikzpicture}[scale=1]

        \draw[thick] (0,0)--(0.8, 0);
        \draw[thick] (-0.8, 0)--(0,0);
        \draw[thick] (-0.58, -0.58)--(0,0);
        \draw[thick] (-0.58, 0.58)--(0,0);

        \draw[fill=white, thick] (0,0) circle [radius=15pt];

        \node at (0,0.02) {\scalebox{1}{$\mu\,\psi^k_i$}};
        \node at (0.67,0.15) {\scalebox{0.8}{$i$}};
        \node at (1.05,0) {\scalebox{1}{$a_2^k$}};
\end{tikzpicture}}}\ \ .
\]
The action of an unstable operation
\[
\sum_{k\geqslant 2}\,
\vcenter{\hbox{\begin{tikzpicture}[scale=0.6]
        \draw[fill=white, thick] (0,0) circle [radius=22pt];
        \draw[thick] (0,22pt)--(0,12pt);
        \node at (0,2pt) {\scalebox{1}{$a^k\psi^k$}};
        \end{tikzpicture}}}
\]
of $\psi^2 Z_\bullet(A)[[\psi]]$ on a graph $\gamma\in \g$ amounts to summing, over all the leaves of $\gamma$, the graphs obtained by modifying the vertex attached to it as follows
\[
    \vcenter{\hbox{\begin{tikzpicture}[scale=1]
        \draw[thick] (0,0)--(0.65, 0);
        \draw[thick] (-0.65, 0)--(0,0);
        \draw[thick] (-0.47, -0.47)--(0,0);
        \draw[thick] (-0.47, 0.47)--(0,0);

        \draw[fill=white, thick] (0,0) circle [radius=10pt];

        \node at (0,0) {\scalebox{1}{$\mu$}};
        \node at (0.49,0.15) {\scalebox{0.8}{$i$}};
        \node at (0.83,0) {\scalebox{1}{$a$}};
        \end{tikzpicture}}}
        \quad
\longmapsto
\quad
\sum_{k\geqslant 2} \left\langle a, a^{k}\right\rangle \ \,
    \vcenter{\hbox{\begin{tikzpicture}[scale=1]
        \draw[thick] (-0.65, 0)--(0,0);
        \draw[thick] (-0.47, -0.47)--(0,0);
        \draw[thick] (-0.47, 0.47)--(0,0);

        \draw[fill=white, thick] (0,0) circle [radius=10pt];

        \node at (0,0) {\scalebox{1}{$\nu$}};
\end{tikzpicture}}}\ \ ,
\]
 with $\nu=\left(\pi_i\right)_*\left(\mu\,\psi^k_i \right)$, where $\pi_i$ is the map that forgets the $i$th point.

 \medskip

 \textsc{Manin--Zograf operations.}
 The action of a Manin--Zograf element
 $\vcenter{\hbox{\begin{tikzpicture}[scale=0.6]
        \draw[fill=white, thick] (0,0) circle [radius=10pt];
        \node at (0,0) {\scalebox{1}{$\ell$}};
        \end{tikzpicture}}}$
of $L_{MZ}$
 on a graph $\gamma\in \g$ amounts to summing, over all the vertices of $\gamma$, the graphs obtained by  multiplying the cohomology class labeling the vertex with the corresponding Manin--Zograf class:
\[
    \vcenter{\hbox{\begin{tikzpicture}[scale=1]
        \draw[thick] (0,0)--(0.65, 0);
        \draw[thick] (-0.65, 0)--(0,0);
        \draw[thick] (-0.47, -0.47)--(0,0);
        \draw[thick] (-0.47, 0.47)--(0,0);

        \draw[fill=white, thick] (0,0) circle [radius=10pt];

        \node at (0,0) {\scalebox{1}{$\mu$}};
        \end{tikzpicture}}}
        \quad
\longmapsto
\quad
    \vcenter{\hbox{\begin{tikzpicture}[scale=1]
        \draw[thick] (0,0)--(0.8, 0);
        \draw[thick] (-0.8, 0)--(0,0);
        \draw[thick] (-0.58, -0.58)--(0,0);
        \draw[thick] (-0.58, 0.58)--(0,0);

        \draw[fill=white, thick] (0,0) circle [radius=15pt];
        \node at (0,0) {\scalebox{1}{$\mu\, \ell_{g,n}$}};
        \end{tikzpicture}}}
\ \ .
\]

\medskip

In order to show that these assignments define a morphism $\mathrm{U}(\mathfrak{L}) \to \End_\g$ of operads, we need to check that the commutator of the actions of any pair of elements of $\mathfrak{L}$ is equal to the action of their Lie bracket. There are 6 cases depending on which type of elements we consider:
first type of unstable operation $R$, second type of unstable operation $T$, or Manin--Zograf element $\ell$.
Let us denote by $\Psi_R$, $\Psi_T$, and $\Psi_\ell$ the respective images in $\End_\g$.
\begin{description}
\item[$\lbrack\Psi_{R_1}, \Psi_{R_2}\rbrack=\Psi_{\lbrack R_1, R_2\rbrack}$] A straightforward computation shows that the commutator of the actions of $R_1$ and $R_2$ is equal to the action of $R_1\circ_1^2 R_2-(-1)^{|R_1||R_2|}R_2\circ_1^2 R_1=[R_1, R_2]$~.

\item[$\lbrack\Psi_{T_1}, \Psi_{T_2}\rbrack=0$] The actions of $T_1$ and $T_2$ commute by
\[
\left(\pi_i\right)_*\left(
\left(\pi_j\right)_* \left(\mu\, \psi_j^k\right)\psi_i^l\right)=
\left(\pi_j\right)_*\left(
\left(\pi_i\right)_* \left(\mu\, \psi_i^l\right)\psi_j^k\right)\]
for $k,l\geqslant 2$ (in fact, even $k,l\geqslant 1$ would be sufficient), which follows from the pull-back formula for the $\psi$-classes $\pi_j^*\psi_i^l = \psi_l - D\cdot \pi^*_j \psi_i^{l-1}$, where $D$ is the divisor represented by two-component curves, with one component of genus $0$ with the points marked by $i$ and $j$ on it, and the other component is of complementary genus with all other marked points on it. We have $\psi_j D = 0$ and thus we obtain the following expression
\[
\left(\pi_i\right)_*\left(
\left(\pi_j\right)_* \left(\mu\, \psi_j^k\right)\psi_i^l\right)=
\left(\pi_i\right)_*\left(
\left(\pi_j\right)_* \left(\mu\, \psi_j^k\pi_j^* \psi_i^l\right)\right) = \left(\pi_{ij}\right)_*\left(
\mu\, \psi_j^k \psi_i^l\right)~,
\]
where $\pi_{ij}$ forgets the $i$th and $j$th marked points, which
is symmetric under the interchange $(i,k)\leftrightarrow (j,l)$~.

\item[$\lbrack\Psi_{\ell_1}, \Psi_{\ell_2}\rbrack=0$] The actions of the Manin--Zograf elements commute by the commutativity of the cohomology algebra.

\item[$\lbrack\Psi_{R}, \Psi_{T}\rbrack=\Psi_{\lbrack R, T\rbrack}$] The commutator of the action of an operation $R$ with the action of an operation $T$ produces vertices labeled by cohomology classes of the form
$-\left(\pi_i\right)_*\left(\mu\, \psi_i^{k+l}\right)$~, which amounts precisely to the action of the bracket $[R, T]=-R\circ_2 ^1 T$~.

\item[$\lbrack\Psi_{R}, \Psi_{\ell}\rbrack=0$] The action of an unstable operation of first type commutes with the action of a Manin--Zograf element by the commutativity of the cohomology algebra.

\item[$\lbrack\Psi_{T}, \Psi_{\ell}\rbrack=\Psi_{\lbrack T, \ell\rbrack}$] The commutator of the actions of $T$ and $\ell=E+F$ is equal to the action of $E(\psi)T$. To see this, recall first that
$\kappa_a-\pi_i^*\kappa_a =  \psi_{i}^a$ and $\mathrm{ch}_{2a-1}-\pi_i^*\mathrm{ch}_{2a-11} = 0$, for $a\geqslant 1$. Then use the projection formula to compute
\[
\left(\pi_i\right)_*\left(\mu\, \ell_{g,n}\, \psi_i^k\right)-
\left(\pi_i\right)_*\left(\mu\, \psi_i^k\right)\ell_{g,n}=
\left(\pi_i\right)_*\left(\mu\, \left(\ell_{g,n}- \pi_i^*\ell_{g,n}\right)\, \psi_i^k\right)=
\left(\pi_i\right)_*\left(\mu\, E(\psi_i)\, \psi_i^k\right)~.
\]
\end{description}

\medskip

We claim that the action of any unary operation from $\mathfrak{L}$ commutes with the differential $\d=\d_A-\d_1-\d_2$ of $\g$.
There are 3 cases depending on which type of unary operation we consider.
\begin{description}
\item[$\lbrack\Psi_{R}, \d\rbrack=0$] It comes from the commutativity of the partial decomposition maps $\delta_i^j$ and $\chi_{ij}$ of the modular cooperad $\Hbul\big(\oM\big)$ with  the multiplication by $\psi_k$ with $k$ different from $i,j$ (recall that $\sigma^*\psi_k = \psi_k$ and $\rho^*\psi_k = \psi_k$, $k=1,\dots,n$).

\item[$\lbrack\Psi_{T}, \d\rbrack=0$]
It comes from the commutativity of the partial decomposition maps $\delta_i^j$ and $\chi_{ij}$ of the modular cooperad $\Hbul\big(\oM\big)$  with the push-forward $\left(\pi_k\right)_*$ of the forgetful map of a point $k$ different from $i,j$, see e.g. the diagram in~\cite[Proof of Equation~(1.8)]{ArbarelloCornalba}.

\item[$\lbrack\Psi_{\ell}, \d\rbrack=0$]
It comes from the commutativity of the partial decomposition maps $\delta_i^j$ and $\chi_{ij}$ of the modular cooperad $\Hbul\big(\oM\big)$  with the multiplication by Manin--Zograf classes $\ell_{g,n}$
(recall that $\sigma^*\ell_{g,n} = \ell_{g-1,n+2}$ and $
\rho^*\ell_{g,n} = \ell_{g',n'+1}\otimes 1 + 1 \otimes \ell_{g-g',n-n'+1}$).
\end{description}
This shows that  $\mathrm{U}(\mathfrak{L}) \to \End_{\g_A}$ is a morphism of dg operads.

\medskip

At this point, we have established a morphism of dg operads $\EGra_A\vee \mathrm{U}(\mathfrak{L}) \to \End_{\g_A}$~. It remains to show that the images of relations~\eqref{eq:DistLawTautGra} vanish. Since the actions of usual and dashed edges commute, it is enough to compute the commutators of the actions of
the three types of new unary operations
with the four operations induces by usual and dashes edges and loops. We use the notation
$R\cdot M \coloneq -R(\psi_1)\circ_2^1 M - (-1)^{|M||R|} M\circ_2^1 R^{(12)}(\psi_2)$~.

\begin{description}
\item[\rm $\lbrack\Psi_{R},  \{ \, , \}_M\rbrack= \{ \, , \}_{R\cdot M}$ \textrm{and} $\lbrack\Psi_{R},  \Delta_M\rbrack= \Delta_{R\cdot M}$] It is a straightforward computation.

\item[\rm $\lbrack\Psi_{R},  ^h\!\{ \, , \}_M\rbrack= ^h\!\{ \, , \}_{R\cdot M}$ \textrm{and} $\lbrack\Psi_{R},  ^h\!\Delta_M\rbrack= ^h\!\Delta_{R\cdot M}$]
It comes from the commutativity between the push-forward of the sewing maps $\circ_i^j$ and $\xi_{ij}$ and the multiplication by $\psi_k$ for $k$ different from $i,j$ (as we explained above, the latter follows from the projection formula and the fact $\sigma^*\psi_k = \psi_k$ and $\rho^*\psi_k = \psi_k$, $k=1,\dots,n$).

\item[\rm $\lbrack\Psi_{T},  \{ \, , \}_M\rbrack=0$ \textrm{and} $\lbrack\Psi_{T},  \Delta_M\rbrack=0$]
It is a straightforward computation.

\item[\rm $\lbrack\Psi_{T},  ^h\!\{ \, , \}_M\rbrack=0$ \textrm{and} $\lbrack\Psi_{T},  ^h\!\Delta_M\rbrack=0$]
It comes from the following commutativity between the push-forward of the sewing maps $\circ_i^j$ and $\xi_{ij}$ and the push-forward of the forgetful map $\pi_p$ at a different point:
\begin{align*}
\left(\pi_p\right)_* \left(\left(\circ_i^j\right)_*\left( \mu\, \psi_i^k \otimes \mu'\, \psi_j^l\right) \psi_p^q\right)
& =
 \left(\circ_i^j\right)_*\left( \left(\pi_p\right)_*\left( \mu\,\psi_p^q\right) \psi_i^k \otimes \mu'\, \psi_j^l\right)~,
\\
\left(\pi_p\right)_* \left(\left(\xi_{ij}\right)_*\left( \mu\, \psi_i^k \, \psi_j^l\right) \psi_p^q\right)
& =
 \left(\xi_{ij}\right)_*\left( \left(\pi_p\right)_*\left( \mu\,\psi_p^q\right) \psi_i^k \, \psi_j^l\right)~
\end{align*}
(here in the first equation the point $p$ is assumed to be on the vertex labeled by $\mu$).

\item[\rm $\lbrack\Psi_{\ell},  \{ \, , \}_M\rbrack=0$ \textrm{and} $\lbrack\Psi_{\ell},  \Delta_M\rbrack=0$]
It is basically the definition of $\Psi_{\ell}$ combined with the following observation; multiplication of a vertex class by $\ell$ commutes with multiplication by $\psi$-classes on the edges by commutativity of the cohomology algebra.

\item[\rm $\lbrack\Psi_{\ell},  ^h\!\{ \, , \}_M\rbrack=0$ \textrm{and} $\lbrack\Psi_{\ell},  ^h\!\Delta_M\rbrack=0$]
It comes from the the projection formula and the basic property of the Manin--Zograf classes with respect to the sewing maps $\circ_i^j$ and $\xi_{ij}$. Namely, with $
(\circ_i^j)^*\ell_{g,n} = \ell_{g',n'+1}\otimes 1 + 1 \otimes \ell_{g-g',n-n'+1}$ and $\xi_{ij}^*\ell_{g,n} = \ell_{g-1,n+2}$ we have:
\begin{align*}
& \left(\circ_i^j\right)_*\left(\ell_{g',n'+1}\mu\otimes \mu' + \mu \otimes \mu'\ell_{g-g',n-n'+1}\right) = \left(\circ_i^j\right)_*\left( (\mu\otimes \mu') \left(\circ_i^j\right)^*\ell_{g,n} \right) = \left(\circ_i^j\right)_*\left( (\mu\otimes \mu') \right)\ell_{g,n}\,, \\
& \left(\xi_{ij}\right)_* \left(\mu\,\ell_{g,n} \right) = \left(\xi_{ij}\right)_* \left(\mu\,\left(\xi_{ij}\right)^*(\ell_{g,n}) \right) = \left(\xi_{ij}\right)_* \left(\mu \right)
\ell_{g,n}\,.
\end{align*}
\end{description}

This concludes the proof that the assignment $\GGra_A\to \End_{\g_A}$ defines a morphism of dg operads.

\medskip

To discuss the functoriality of the present construction, we denote by $\GGra_A$ the tautological graph operad associated to $A$. The assignment $A \mapsto \GGra_A$ defines a functor from the category of
dg symmetric vector spaces
 to the category of dg operads. Any chain map $f \colon A \to B$ which preserves the respective bilinear forms induces a morphism
$\GGra_f \colon \GGra_A \to \GGra_B$
of dg operads
and a chain map
$\g_f \colon \g_A \to \g_B$. The functoriality of the present $\GGra_A$-algebra structure is expressed by the following commutative diagram:
\[
\vcenter{\hbox{
\begin{tikzcd}
\GGra_A \arrow[rr] \arrow[d, "\GGra_f"']
& & \End_{\g_A} \arrow[d, "(\g_f)_*"']
 \\
\GGra_B \arrow[r] & \End_{\g_B} \arrow[r, "(\g_f)^*"'] &  \End^{\g_A}_{\g_B}~.
\end{tikzcd}
}}
\]

When $A$ is finite dimensional with a non-degenerate symmetric bilinear form, the graphs made up of usual edges only decorated by $\mathbb{I}\,\psi_1^0\psi_2^0$, where $\mathbb{I}\in A^{\otimes 2}$ is
the image of the identity map of $A$ under
the pairing $\langle\ ,\, \rangle$, act as in \cref{prop:GraAction} Point~(1). So the present action extends that of the operad $\Gra$:
\[
\vcenter{\hbox{
\begin{tikzcd}
\Gra
\arrow[rr, "\Psi"] \arrow[dr, hook]
& & \End_{\g_A}
 \\
&\GGra_A\ .  \arrow[ur]  &
\end{tikzcd}
}}
\]
\end{proof}

\subsection{The tautological graph complex and the Givental--Teleman theory} \label{sec:OldGivental}
In this section, we apply the methods introduced in \cref{sec:GTgroup}, and more precisely in \cref{subsec:hbar1}, to the operad $\GGra$ instead of $\Gra$ and to the $\GGra_A$-algebra $\g_A$ instead of any $\Gra$-algebra $\g$.

\begin{assumption}\label{ass:fdnd}
From now on and until the end of \cref{sec:GGT}, we will work with the $\mathbb{Z}/2\mathbb{Z}$ grading convention
 and we assume that the dg symmetric vector space $A$ is finite-dimensional and equipped with a non-degenerate pairing.
In this case, we denote by $\mathbb{I}\in A^{\otimes 2}$  the image of the identity map of $A$ under
the pairing. We note that the vector space $Z_\bullet\left(A^{\otimes 2}\right)$ splits as a direct sum of $\K \mathbb{I}$ and its orthogonal complement.
As a consequence, we obtain a splitting
\[\Mrk\coloneq Z_\bullet\left(A^{\otimes 2}\right)[[\psi_1,\psi_2]]^{\Sy_2}
\cong \K \mathbb{I} \oplus \overline{\Mrk}\]
of the space of symmetric elements which we use in edge labels. We shall refer to the usual edges labeled by $\mathbb{I}$ as \emph{standard edges}.
\end{assumption}

We still denote the unary tadpole of  $\Gra\subset\GGra_A$ by
\[\omega\coloneq \vcenter{\hbox{\begin{tikzpicture}[scale=0.7]
	\draw[thick]  (-0.4,2.7) arc [radius=0.4, start angle=180, end angle= 0];
	\draw[thick] (0, 2) to [out=135,in=270] (-0.4,2.7);
	\draw[thick] (0, 2) to [out=45,in=270] (0.4,2.7);
 	 \draw[fill=white, thick] (0,2) circle [radius=10pt];
 	\node at (0,2) {\scalebox{1}{$1$}};
\end{tikzpicture}}}~.
\]
By a slight abuse, we use the same notations
\[
\vcenter{\hbox{
\begin{tikzcd}
\S  \Lie
\arrow[rr, "\varPhi"] \arrow[dr, "\varTheta"']
& & \End_{\left(\g_A, \d+\Delta\right)}\ ,
 \\
&\GGra_A^\omega  \arrow[ur, "\varPsi"']  &
\end{tikzcd}
}}
\]
where the bottom  operad is twisted by the Maurer--Cartan element $\omega$~.
By construction, the  deformation complex $\b_{\GGra_A^\omega}^{\vartheta}$ of the morphism $\varTheta$ carries a complete dg Lie algebra structure.

\begin{definition}[Givental group/Lie algebra]
The induced (complete) Lie algebra
\[\giv\coloneq \left(H_\ev\Big(
\b_{\GGra_A^\omega}^{\vartheta}\Big), [\,,] \right)\]
is called the \emph{Givental Lie algebra}. The group
\[\GIV\coloneq \left( H_\ev\Big(
\b_{\GGra_A^\omega}^{\vartheta}\Big), \BCH,0\right)\]
integrating this Lie algebra is called the \emph{Givental group}.
\end{definition}

\begin{theorem}\label{th:ActOnCohFT}
The Givental group acts functorially
\[\GIV  \to
\Aut\left(
\calMC(\g_A)
\right)\]
on the moduli spaces of  homotopy CohFTs under the pre-Lie exponential formula
\[\lambda \cdot \alpha \coloneqq \sum_{n\geqslant 1}
{\textstyle \frac{1}{n!}}
\left(e^{\varPsi(\lambda)}\right)_n(\alpha, \ldots, \alpha)\ .\]
\end{theorem}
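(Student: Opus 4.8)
The plan is to run the machinery of \cref{subsec:CycleAct} and \cref{subsec:hbar1} verbatim, replacing the operad $\Gra^\omega$ by the tautological operad $\GGra_A^\omega$ and the arbitrary $\Gra$-algebra by the convolution algebra $\g_A$. First I would record that, by \cref{th:GGra0}, the space $\g_A$ carries a functorial complete $\GGra_A$-algebra structure extending the $\Gra$-action, and that the tadpole $\omega$ is an operadic Maurer--Cartan element of $\GGra_A$, its image being the square-zero degree $-1$ operator $\Delta$. Twisting by $\omega$ therefore produces a morphism of dg operads $\varPsi \colon \GGra_A^\omega \to \End_{(\g_A,\, \d+\Delta)}$, and the composite $\varPhi = \varPsi \circ \varTheta$ equips $(\g_A, \d+\Delta)$ with the shifted dg Lie algebra structure encoded by the Maurer--Cartan element $\varphi = \varPsi_*(\vartheta)$ in the convolution pre-Lie algebra $\b_{\End_{(\g_A,\, \d+\Delta)}}$.

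Next I would assemble, exactly as in \cref{prop:ActionBIS}, the chain of maps that produces the action at the level of cycles. The functoriality of the convolution construction in the target (\cref{lem:TW}) gives a morphism of twisted dg Lie algebras $\varPsi_* \colon \b_{\GGra_A^\omega}^{\vartheta} \to \b_{\End_{(\g_A,\, \d+\Delta)}}^{\varphi}$, which restricts to a morphism of groups on even cycles under $\BCH$ (\cref{cor:MorphBCH}). Composing with the pre-Lie exponential, which by \cref{lem:PreLieExp} sends an even $\partial^{\varphi}$-cycle to an $\S\Lie_\infty$-isotopy of $(\g_A, \d+\Delta)$, and then with the standard action of $\S\Lie_\infty$-automorphisms on Maurer--Cartan elements, I obtain the map $Z_\ev(\b_{\GGra_A^\omega}^{\vartheta}) \to \Aut(\MC(\g_A))$ given by the stated pre-Lie exponential formula $\lambda \cdot \alpha = \sum_{n\geqslant 1} \tfrac{1}{n!}(e^{\varPsi(\lambda)})_n(\alpha,\dots,\alpha)$.

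Then I would descend this cycle-level action to homology and to gauge-equivalence classes. Since $(\g_A, \d+\Delta, \{\,,\})$ is a complete shifted Lie algebra, \cref{lem:HoImpliesGauge} applies and shows that homologous even cycles act on any Maurer--Cartan element by gauge-equivalent elements; as $\BCH$ passes to homology, this yields the group morphism $\GIV = (H_\ev(\b_{\GGra_A^\omega}^{\vartheta}), \BCH, 0) \to \Aut(\calMC(\g_A))$, exactly as in \cref{cor:MorphBCHhomo}. Functoriality in $A$ is inherited from the functoriality of the $\GGra_A$-algebra structure established in \cref{th:GGra0} together with the naturality of $\varPsi_*$, of the pre-Lie exponential, and of the gauge action.

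The one genuinely new point, and the step I expect to be the main obstacle, is convergence. Unlike the operad $\Gra$, where even cycles have no arity-one component, the tautological operad contains the full arity-one subalgebra $\mathrm{U}(\mathfrak{L})$, so both the pre-Lie exponential $e^{\varPsi(\lambda)}$ and the $\BCH$ product on $\GIV$ now involve unary operations that do not lower the arity. I would handle this by equipping $\b_{\GGra_A^\omega}$ with the complete filtration obtained by combining the arity grading with the $\psi$-adic filtration on the unstable and Manin--Zograf operations; it is precisely the divisibility by $\psi$ (respectively $\psi^2$) built into $L_{\mathrm{unst}}$ in \cref{rem:relaxRT} that makes the unary part pro-nilpotent and forces every structure map to raise this combined filtration. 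One must then check that this filtration is compatible with the filtration $\F$ on $\g_A$, so that $\lambda\cdot\alpha$ converges for every $\alpha \in \F_1 = \g_A$; granting this, all the preceding formal arguments go through unchanged.
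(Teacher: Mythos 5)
Your proposal is correct and follows essentially the same route as the paper, whose proof simply invokes the machinery of \cref{prop:ActionBIS} \emph{mutatis mutandis} together with the functoriality supplied by \cref{th:GGra0}. Your explicit treatment of convergence for the arity-one part $\mathrm{U}(\mathfrak{L})$ via the $\psi$-adic filtration is a welcome elaboration of a point the paper only flags implicitly in \cref{rem:relaxRT}.
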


\begin{proof}
The same methods as in \cref{sec:GTgroup}, especially \cref{prop:ActionBIS}, apply \emph{mutatis mutandis}.
Both assignments $A \mapsto \GIV$ and $A\mapsto \Aut\left(
\calMC(\g_A)\right)$ define functors from the category of
dg symmetric vector spaces.
 to the category of groups and the assignment $\GIV  \to
\Aut\left(
\calMC(\g_A)
\right)$ is a natural transformation between them.
\end{proof}

The underlying graded vector space of $\b^\vartheta_{\GGra_A^\omega}$ is equal to
$\prod_{n\geqslant 1} \GGra_A(n)^{\mathbb{S}_n}$ with the differential given by
\begin{equation}\label{eq:diffGraphCx}
\Gamma \mapsto
\DE(\Gamma) + \left[
\vcenter{\hbox{\begin{tikzpicture}[scale=0.5]
    \draw[thick] (0,0)--(1.5, 0);
    \draw[fill=white, thick] (0,0) circle [radius=10pt];
    \draw[fill=white, thick] (1.5,0) circle [radius=10pt];
    \end{tikzpicture}}}
+
\vcenter{\hbox{\begin{tikzpicture}[scale=0.5]
    \draw[thick]  (-0.4,2.7) arc [radius=0.4, start angle=180, end angle= 0];
    \draw[thick] (0, 2) to [out=135,in=270] (-0.4,2.7);
    \draw[thick] (0, 2) to [out=45,in=270] (0.4,2.7);
    \draw[fill=white, thick] (0,2) circle [radius=10pt];
    \node (n) at (0,1.5) {};
    \end{tikzpicture}}}\,,\, \Gamma\right]\ .
\end{equation}
Like in \cref{prop:h=1Triv}, we are able to fully compute the associated homology groups but they turn out to be non-trivial in this case.

\begin{theorem}\label{cor:GiventalGroup}
The homology groups of $\b^\vartheta_{\GGra_A^\omega}$
are spanned by symmetric tensors of the following graphs
  \begin{align} \label{eq:cohom-GiventFull}
&  \sum_{k\geqslant 0}\,\left(
\vcenter{\hbox{\begin{tikzpicture}[scale=0.6]
        \draw[fill=white, thick] (0,0.3) circle [radius=30pt];
        \draw[thick] (0,28pt)--(0,48pt);
        \node at (0,10pt) {\scalebox{1}{$a_1^k\psi^{k+1}$}};
        \node at (0,62pt) {\scalebox{1}{$a_2^k$}};
        \end{tikzpicture}}}
        - \sum_{i+j = k} (-1)^j \,\left(
\vcenter{\hbox{\begin{tikzpicture}[scale=0.9]
  \draw[dashed,thick]  (-0.4,2.7) arc [radius=0.4, start angle=180, end angle= 0];
  \draw[dashed,thick] (0, 2) to [out=135,in=270] (-0.4,2.7);
  \draw[dashed,thick] (0, 2) to [out=45,in=270] (0.4,2.7);
   \draw[fill=white, thick] (0,2) circle [radius=8pt];
  \node at (-0.7,2.4) {\scalebox{1}{$\psi_1^i$}};
  \node at (0,3.4) {\scalebox{1}{$a_1^{k}\otimes a_2^{k}$}};
  \node at (0.7,2.4) {\scalebox{1}{$\psi_2^j$}};
\end{tikzpicture}}}
 +
    \vcenter{\hbox{\begin{tikzpicture}[scale=0.75]
        \draw[dashed,thick] (0,0)--(4, 0);
        \draw[fill=white, thick] (0,0) circle [radius=10pt];
        \draw[fill=white, thick] (4,0) circle [radius=10pt];
        \node at (0.7,0.4) {\scalebox{1}{$\psi_1^i$}};
        \node at (2,0.4) {\scalebox{1}{$a_1^{k}\otimes a_2^{k}$}};
        \node at (3.3,0.4) {\scalebox{1}{$\psi_2^j$}};
        \node at (1, -0.8) { };
        \end{tikzpicture}}}
\right)\right)
\ ,
\\ & \notag
\vcenter{\hbox{\begin{tikzpicture}[scale=0.9]
  \draw[thick]  (-0.4,2.7) arc [radius=0.4, start angle=180, end angle= 0];
  \draw[thick] (0, 2) to [out=135,in=270] (-0.4,2.7);
  \draw[thick] (0, 2) to [out=45,in=270] (0.4,2.7);
   \draw[fill=white, thick] (0,2) circle [radius=8pt];
  \node at (0,3.4) {\scalebox{1}{$a_1\otimes a_2$}};
\end{tikzpicture}}}\ ,
\ \
\sum_{l\geqslant 2}\,
\vcenter{\hbox{\begin{tikzpicture}[scale=0.6]
        \draw[fill=white, thick] (0,0) circle [radius=22pt];
        \draw[thick] (0,22pt)--(0,12pt);
        \node at (0,2pt) {\scalebox{1}{$a^l\psi^l$}};
        \end{tikzpicture}}} \ ,
\ \
\text{and}
\ \
\vcenter{\hbox{\begin{tikzpicture}[scale=0.75]
        \draw[fill=white, thick] (0,0) circle [radius=10pt];
        \node at (0,0) {\scalebox{0.8}{$\ell$}};
        \end{tikzpicture}}}\ ,
\end{align}
for all elements $a_1^{k}\otimes a_2^{k} \in Z_\bullet\left(A^{\otimes 2}\right)$ such that
$\left(a_1^{k}\otimes a_2^{k}\right)^{(12)}=(-1)^{k} a_1^{k}\otimes a_2^{k}$, for $k\geqslant 0$,
all elements $a_1\otimes a_2 \in Z_\bullet\left(A^{\otimes 2}\right)^{\Sy_2}$,
all elements $a^l\in Z_\bullet(A)$, for $l\geqslant 0$, and all element $\ell\in L_{MZ}$~.
The underlying vector space of the Givental group/Lie algebra is spanned by symmetric tensors of these graphs of even degree.
\end{theorem}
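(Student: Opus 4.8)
The plan is to compute the homology of $\b^\vartheta_{\GGra_A^\omega}$ by a nested sequence of spectral sequences that successively peel off the three layers of structure carried by the tautological graph operad: the underlying connected graph (as in $\Gra$), the edge decorations in $\Mrk=\K\mathbb{I}\oplus\overline{\Mrk}$, and the unary operations from $\mathrm{U}(\mathfrak{L})$. Throughout I would exploit the distributive law isomorphism $\GGra_A\cong\EGra_A\circ\mathrm{U}(\mathfrak{L})$ of \cref{th:GGra0} and adapt, essentially verbatim, the genus-filtration method used to prove \cref{prop:h=1Triv} and \cref{thm:GRT1}, with the Complete Convergence Theorem \cite[Theorem~5.5.10]{WeibelBook} guaranteeing convergence at each stage.

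First I would isolate the pure graph skeleton. Filtering by the genus (first Betti number) of the underlying graph, the associated graded differential is the standard-edge insertion $\partial^\vartheta$ together with the tadpole insertion $[\omega,-]$, which carry the non-standard edge labels, the $\psi$-powers and the unary operations along as inert coefficients. The tadpole-free part is acyclic by \cite[Corollary~4]{KhorWillZiv2017}, and the remaining tadpole part has homology reduced to the single class $[\omega]$ by \cite[Proposition~3.4]{Willwacher15}, exactly as in \cref{prop:h=1Triv}. This forces the first page to be concentrated on the homologically minimal configurations that appear in \eqref{eq:cohom-GiventFull}: a single vertex carrying a tadpole, a single edge on two vertices, and the isolated unary decorations.

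Next I would run the differential $\DE$, which replaces a dashed edge decorated by $M$ by a standard edge decorated by $(\psi_1+\psi_2)M$. On a single edge or tadpole this is, up to coefficients, multiplication by $(\psi_1+\psi_2)$ on the symmetric part of $\K[[\psi_1,\psi_2]]$. Since $(\psi_1+\psi_2)$ is a non-zero-divisor, the dashed/standard edge pairs cancel in homology except for residual classes in the cokernel, and a direct computation identifies the surviving symmetric combination with $\sum_{i+j=k}(-1)^j\psi_1^i\psi_2^j$, which satisfies the key identity $(\psi_1+\psi_2)\sum_{i+j=k}(-1)^j\psi_1^i\psi_2^j=\psi_1^{k+1}-(-1)^{k+1}\psi_2^{k+1}$. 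This is precisely the identity that glues the dashed tadpole and dashed edge to the unary $R$-operation decorated by $\psi^{k+1}$: the boundary of the $R$-vertex, computed through the distributive-law relation \eqref{eq:DistLawTautGra} and the $R\cdot M$ action, produces exactly the $\DE$-image of the dashed combination, so that the full expression in the first line of \eqref{eq:cohom-GiventFull} is a cocycle and no combination other than the stated signed one survives.

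Finally I would treat the remaining unary operations. The $T$-operations $\sum_{l\geqslant 2}a^l\psi^l$ and the Manin--Zograf elements $\ell$ commute with all edge insertions and with $\DE$ — this is the content of the commutator computations $[\Psi_T,\d]=0$, $[\Psi_\ell,\d]=0$ and their brackets with the edge operations in the proof of \cref{th:GGra0} — so they descend to homology untouched and contribute the last two families of classes, while the standard tadpole decorated by a symmetric $a_1\otimes a_2$ is the decorated survivor of the $[\omega]$ class from \cref{prop:h=1Triv}. Restricting to even degree then yields the spanning set claimed. The hard part will be the third step: controlling the interaction between the unary $R$-operations and the dashed and standard edges through the distributive law, and proving that the precise signed combination in \eqref{eq:cohom-GiventFull} is the \emph{unique} surviving class in that sector. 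This demands a careful sign and convergence analysis of the interleaved spectral sequences, since the $R$-decorations are tied to the edge labels rather than being inert coefficients, and it is here that the divisibility constraints of \cref{rem:relaxRT} on $R$ and $T$ become essential.
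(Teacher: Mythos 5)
Your local ingredients are right (the identity $(\psi_1+\psi_2)\sum_{i+j=k}(-1)^j\psi_1^i\psi_2^j=\psi_1^{k+1}+(-1)^{k}\psi_2^{k+1}$ gluing the $R$-vertex to the dashed tadpole and dashed edge; the fact that $T$ and $\ell$ pass to homology untouched), but the global strategy has a genuine gap: the first spectral sequence you set up does not compute. Filtering by the genus of the underlying graph, the associated graded differential consists of the \emph{genus-preserving} pieces, namely $\d_1$ (dashed $\to$ usual) and $\d_2$ (vertex splitting), while the edge- and tadpole-creating pieces $\d_3+\d_4$ raise the genus and are deferred; so the first page is \emph{not} $\partial^\vartheta+[\omega,-]$ acting on the graph skeleton with inert coefficients, as you claim. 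More fundamentally, the decorations cannot be made inert for $\d_2$ and $\d_4$: by the distributive-law relation \eqref{eq:DistLawTautGra}, the vertex splitting lets the $R$-labels act on the newly created standard edge, and the tadpole creation produces a tadpole whose label is built from iterated $R$-actions, so these differentials genuinely couple the skeleton to the decorations. If the reduction to \cref{prop:h=1Triv} with inert coefficients were valid, the answer would be (decorations)$\otimes[\omega]$ sitting in odd degree and the Givental group would be essentially trivial --- the nontrivial classes of \eqref{eq:cohom-GiventFull} survive precisely \emph{because} the decorations obstruct the cancellations that make the undecorated complex almost acyclic. There is also a convergence problem you inherit from this choice: parallel edges are allowed in $\GGra_A$ (the labels prevent the sign cancellation used in \cref{prop:h=1Triv}), so for a fixed number of vertices the genus is unbounded and the completeness/regularity argument for the genus filtration does not transfer.

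The paper's proof is organized differently, and the difference is not cosmetic. One filters by the \emph{number of vertices}, so that the first-page differential is $\d_1+\d_3+\d_4$ and $\d_2$ is deferred. The components of arity $n\geqslant 2$ are then killed by an explicit contracting homotopy for $\d_3$ (remove the unique standard edge between the first two vertices), after a secondary filtration by the number of $\overline{\Mrk}$-decorations. The entire answer is thus concentrated on one-vertex graphs, and the crucial step --- absent from your proposal --- is the identification of $\big(\GGra_A(1),\d_1+\d_3+\d_4\big)$ with the universal enveloping algebra of an explicit dg Lie algebra $\mathfrak{L}^{\mathrm{ext}}=\mathfrak{L}\oplus\Mrk\oplus s^{-1}\Mrk$, to which Quillen's theorem $H_\bullet(\mathrm{U}(\mathfrak{g}))\cong \mathrm{U}(H_\bullet(\mathfrak{g}))$ applies; combined with the Poincar\'e--Birkhoff--Witt theorem, this is what produces the ``symmetric tensors of the listed graphs'' structure of the statement, which no amount of edge-by-edge cancellation analysis will give you. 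Only at the very end is $\d_2$ reinstated to correct the representatives of the first family (adding the two-vertex dashed-edge term). To repair your argument you would need to replace the genus filtration by the vertex-count filtration and supply the enveloping-algebra computation for the arity-one part.
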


\begin{proof}
Notice that
the differential \eqref{eq:diffGraphCx} is equal to the sum of the following four terms:
\begin{description}
\item[$\d_1$] coming for $\DE$, i.e. changing dashed edges into usual edges and multiplying the label by $\psi_1+\psi_2$,

\item[$\d_2$] coming from the commutator with $
\vcenter{\hbox{\begin{tikzpicture}[scale=0.5]
    \draw[thick] (0,0)--(1.5, 0);
    \draw[fill=white, thick] (0,0) circle [radius=10pt];
    \draw[fill=white, thick] (1.5,0) circle [radius=10pt];
    \end{tikzpicture}}}$, i.e. splitting vertices into two, linking them by a standard edge, and distributing the attached edges and labelings over them; it might also make some elements $R$ labeling the vertex act on the standard edge and thus making it into a usual one,

\item[$\d_3$] coming from the first term of the commutator with $\vcenter{\hbox{\begin{tikzpicture}[scale=0.5]
    \draw[thick]  (-0.4,2.7) arc [radius=0.4, start angle=180, end angle= 0];
    \draw[thick] (0, 2) to [out=135,in=270] (-0.4,2.7);
    \draw[thick] (0, 2) to [out=45,in=270] (0.4,2.7);
    \draw[fill=white, thick] (0,2) circle [radius=10pt];
    \node (n) at (0,1.5) {};
    \end{tikzpicture}}}$~, i.e. creating a new standard edge between two different vertices,

\item[$\d_4$] coming from the other terms of the commutator with $\vcenter{\hbox{\begin{tikzpicture}[scale=0.5]
    \draw[thick]  (-0.4,2.7) arc [radius=0.4, start angle=180, end angle= 0];
    \draw[thick] (0, 2) to [out=135,in=270] (-0.4,2.7);
    \draw[thick] (0, 2) to [out=45,in=270] (0.4,2.7);
    \draw[fill=white, thick] (0,2) circle [radius=10pt];
    \node (n) at (0,1.5) {};
    \end{tikzpicture}}}$~, i.e. creating a new usual tadpole with label coming the iterated actions of some elements $R_1,\ldots, R_k$ labeling the vertex.
\end{description}

For the sake of the forthcoming arguments, we will use the homological $\ZZ$-grading provided by the opposite of the number of usual edges. Let us consider the filtration
\[\mathrm{F}_p\coloneq \prod_{n\geqslant -p} \GGra_A(n)^{\mathbb{S}_n}\]
whose $p$th term consists of series of graphs with at least $-p$ vertices.
The first page $E^0_{-p,q}$ of the associated spectral sequence is isomorphic to
$\GGra_A(p)^{\mathbb{S}_{p}}_{-p+q}$, for $p\geqslant 1$, and $0$ otherwise. Its differential is equal to
$d^0=\d_1+\d_3+\d_4$.

\medskip

We shall first demonstrate that $E^1_{p, q}=0$, for $p\neq-1$ and any $q\in \NN$, or, in other words, that the chain complexes $\GGra_A(n)^{\mathbb{S}_n}$ equipped with the differential $\d_1+\d_3+\d_4$ are acyclic for $n\geqslant 2$~.
For that, we remark that considering the homology commutes with considering the invariants with respect to the action of a finite group. Thus, it is sufficient to that the chain complex $\left(\GGra_A(n), \d_1+\d_3+\d_4\right)$, where we slightly abuse the notation for the differential, is acyclic for $n\geqslant 2$. To this end, we consider another filtration whose $p$-th term consists of series of graphs with the set of edges and tadpoles decorated by at least $-p$ elements from $\overline{\Mrk}$~. The first page of the associated spectral sequence is a chain complex made up of linear combinations of decorated graphs with $n$ vertices with only $\d_3$ for differential. It is straightforward to see that the following degree $1$ map $h$ is a contracting homotopy for that complex: the image of a decorated graph $\Gamma$ under $h$ is equal to $0$ if there is no standard edge between the first and the second vertices, otherwise the map $h$ removes such an edge, which is anyway unique. This second filtration is obviously exhaustive and bounded below, so its spectral sequence converges by the Classical Convergence Theorem~\cite[Theorem~5.5.1]{WeibelBook}.

\medskip

We have established that the spectral sequence associated to the first filtration vanishes at page two, where only $E^1_{-1,q}$~, for $q\leqslant 1$, is non-trivial. So the first filtration is regular and it is also obviously complete, exhaustive, and bounded above:  its spectral sequence converges by the Complete Convergence Theorem~\cite[Theorem~5.5.10]{WeibelBook}. It remains to determine the spaces $E^1_{-1,q}$~, that is compute the homology of the complex of one-vertex graphs. Such graphs may have several attached tadpoles, usual or dashed, labeled by elements of $\Mrk$, and the vertex carries a label from $\mathrm{U}(\mathfrak{L})$~. The dg associative algebra whose underlying space is $\GGra_A(1)$ and whose differential is
\begin{equation}\label{eq:OneVertexDiff}
 \DE + \left[
\vcenter{\hbox{\begin{tikzpicture}[scale=0.5]
    \draw[thick]  (-0.4,2.7) arc [radius=0.4, start angle=180, end angle= 0];
    \draw[thick] (0, 2) to [out=135,in=270] (-0.4,2.7);
    \draw[thick] (0, 2) to [out=45,in=270] (0.4,2.7);
    \draw[fill=white, thick] (0,2) circle [radius=10pt];
    \node (n) at (0,1.5) {};
    \end{tikzpicture}}}\,,\, -\right]=\d_1+\d_3+\d_4\
\end{equation}
is isomorphic to the enveloping algebra of the following  dg Lie algebra  $\mathfrak{L}^{\mathrm{ext}}$.
The underlying vector space of $\mathfrak{L}^{\mathrm{ext}}$ is equal to $\mathfrak{L}\oplus \Mrk\oplus s^{-1}\Mrk$, where  $s^{-1}\Mrk$ corresponds to the usual tadpoles and $\Mrk$ corresponds to the dashed ones. The nontrivial brackets are the Lie brackets of $\mathfrak{L}$ and the ones coming from the following actions
\begin{align*}
&R\cdot M \coloneq -R(\psi_1)\circ_2^1 M - (-1)^{|M||R|} M\circ_2^1 R^{(12)}(\psi_2)~, \\
&R\cdot s^{-1}M \coloneq -R(\psi_1)\circ_2^1 s^{-1}M - (-1)^{(|M|-1)|R|} s^{-1}M\circ_2^1 R^{(12)}(\psi_2)~,
\end{align*}
for $M\in \Mrk$. Its differential
 $\d_{\mathfrak{L}^{\mathrm{ext}}}$ is defined by $\mathfrak{L} \oplus \Mrk \oplus s^{-1}\Mrk\to s^{-1}\Mrk$~, for which
\begin{align*}
&\d_{\mathfrak{L}^{\mathrm{ext}}} (M) \coloneq s^{-1}(\psi_1+\psi_2)  M~, \\
&\d_{\mathfrak{L}^{\mathrm{ext}}}  (R) \coloneq s^{-1}\big(R(\psi_1) + R^{(12)}(\psi_2)\big)~, \\
& \d_{\mathfrak{L}^{\mathrm{ext}}} (T) = \d_{\mathfrak{L}^\mathrm{ext}} (\ell_{g,n}) = \d_{\mathfrak{L}^{\mathrm{ext}}} \big(s^{-1}M\big)\coloneq  0~.
\end{align*}
The classical theorem \cite[Proposition~B.2.1]{QuillenRHT} of Quillen asserts that the homology of the universal enveloping algebra of $\mathfrak{L}^{\mathrm{ext}}$ is isomorphic to the universal enveloping algebra of the
homology of $(\mathfrak{L}^{\mathrm{ext}}, \d_{\mathfrak{L}^{\mathrm{ext}}})$. Therefore, the homology  of $\GGra_A(1)$ with respect to the differential $\d_1+\d_3+\d_4$ is isomorphic to the universal enveloping algebra
of the Lie subalgebra of~$\mathfrak{L}^{\mathrm{ext}}$ spanned by
  \begin{equation}\label{eq:cohom-GiventPartial}
  \sum_{k\geqslant 0}\,\left(
\vcenter{\hbox{\begin{tikzpicture}[scale=0.6]
        \draw[fill=white, thick] (0,0.3) circle [radius=30pt];
        \draw[thick] (0,28pt)--(0,48pt);
        \node at (0,10pt) {\scalebox{1}{$a_1^k\psi^{k+1}$}};
        \node at (0,62pt) {\scalebox{1}{$a_2^k$}};
        \end{tikzpicture}}}
        - \sum_{i+j = k} (-1)^j \,
    \vcenter{\hbox{\begin{tikzpicture}[scale=0.9]
  \draw[dashed,thick]  (-0.4,2.7) arc [radius=0.4, start angle=180, end angle= 0];
  \draw[dashed,thick] (0, 2) to [out=135,in=270] (-0.4,2.7);
  \draw[dashed,thick] (0, 2) to [out=45,in=270] (0.4,2.7);
   \draw[fill=white, thick] (0,2) circle [radius=8pt];
  \node at (-0.7,2.4) {\scalebox{1}{$\psi_1^i$}};
  \node at (0,3.4) {\scalebox{1}{$a_1^{k}\otimes a_2^{k}$}};
  \node at (0.7,2.4) {\scalebox{1}{$\psi_2^j$}};
\end{tikzpicture}}}
\right)\ ,
\ \
            \vcenter{\hbox{\begin{tikzpicture}[scale=0.9]
  \draw[thick]  (-0.4,2.7) arc [radius=0.4, start angle=180, end angle= 0];
  \draw[thick] (0, 2) to [out=135,in=270] (-0.4,2.7);
  \draw[thick] (0, 2) to [out=45,in=270] (0.4,2.7);
   \draw[fill=white, thick] (0,2) circle [radius=8pt];
  \node at (0,3.4) {\scalebox{1}{$a_1\otimes a_2$}};
\end{tikzpicture}}}
\ , \ \
\sum_{l\geqslant 2}\,
\vcenter{\hbox{\begin{tikzpicture}[scale=0.6]
        \draw[fill=white, thick] (0,0) circle [radius=22pt];
        \draw[thick] (0,22pt)--(0,12pt);
        \node at (0,2pt) {\scalebox{1}{$a^l\psi^l$}};
        \end{tikzpicture}}} \ ,
\ \
\text{and}
\ \
\vcenter{\hbox{\begin{tikzpicture}[scale=0.75]
        \draw[fill=white, thick] (0,0) circle [radius=10pt];
        \node at (0,0) {\scalebox{0.8}{$\ell$}};

        \end{tikzpicture}}}\ ,
        \end{equation}
for all elements $a_1^{k}\otimes a_2^{k} \in Z_\bullet\left(A^{\otimes 2}\right)$ such that
$\left(a_1^{k}\otimes a_2^{k}\right)^{(12)}=(-1)^{k} a_1^{k}\otimes a_2^{k}$, for $k\geqslant 0$,
all elements $a_1\otimes a_2 \in Z_\bullet\left(A^{\otimes 2}\right)^{\Sy_2}$,
all elements $a^l\in Z_\bullet(A)$, for $l\geqslant 0$, and all elements $\ell\in L_{MZ}$~.

\medskip

This proves that the homology of the chain complex $\b^\vartheta_{\GGra_A^\omega}$ is isomorphic to the underlying space of the universal enveloping algebra
of the  Lie subalgebra of~$\mathfrak{L}^{\mathrm{ext}}$ spanned by the elements depicted in \eqref{eq:cohom-GiventPartial}. As the final step, we recall that we used a spectral sequence that allowed us to temporarily ignore the part $\d_2$ of the differential, and the first set of elements in~\eqref{eq:cohom-GiventPartial} does not commute with $\vcenter{\hbox{\begin{tikzpicture}[scale=0.5]
    \draw[thick] (0,0)--(1.5, 0);
    \draw[fill=white, thick] (0,0) circle [radius=10pt];
    \draw[fill=white, thick] (1.5,0) circle [radius=10pt];
    \end{tikzpicture}}}$~.
 By a direct computation, one obtains homology representatives in the original complex given by~\eqref{eq:cohom-GiventFull}. It remains to conclude with the Poincar\'e--Birkhoff--Witt theorem.
\end{proof}

\begin{remark}\label{rem:ConstUnstable}
In the light of~\cref{rem:relaxRT}, we note that both the statement and the proof of~\cref{cor:GiventalGroup} change once we generalize the definitions of $R$ and $T$ elements suggested there. For instance, the homology of the differential~\eqref{eq:OneVertexDiff} acting on one-vertex graphs contains no tadpole term with a usual edge.
\end{remark}

Let us notice the following special feature of the Givental group action on homotopy CohFTs.

\begin{lemma}
The Givental group sends CohFTs without unit to CohFTs without unit.
\end{lemma}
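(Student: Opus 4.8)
The plan is to characterize what it means for a Maurer--Cartan element $\alpha\in\g_A$ to be a CohFT without unit, and then to check that this property is preserved under the pre-Lie exponential action described in \cref{th:ActOnCohFT}. Recall from \cref{cor:CohFTTFTtree}(1) that CohFTs without unit are precisely the solutions concentrated in weight $1$ (one vertex) without tadpoles, i.e.\ elements of $\Hbul\big(\oM\big)(A)$ that live on single-vertex graphs decorated by cohomology classes of $\oM$ with no internal edges. First I would make this structural condition explicit: an element $\alpha$ is a CohFT without unit if and only if it is a series of one-vertex tree-type graphs (no edges, no loops) whose vertex labels are genuine cohomology classes on the spaces $\oM_{g,n}$.

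The key computational input is the explicit list of homology-class representatives of the Givental Lie algebra given in \cref{cor:GiventalGroup}, namely the elements in \eqref{eq:cohom-GiventFull}. The action of the Givental group on $\alpha$ is by $\lambda\cdot\alpha=\sum_{n\geqslant 1}\frac{1}{n!}\big(e^{\varPsi(\lambda)}\big)_n(\alpha,\ldots,\alpha)$, where $\varPsi(\lambda)$ is the operadic action of the generators on $\g_A$. The plan is to examine the action of each type of generator from \eqref{eq:cohom-GiventFull} on a one-vertex, edge-free element $\alpha$ and verify that the result remains a one-vertex, edge-free element. The crucial generators here are the two unstable unary operations and the Manin--Zograf operation: these correspond to the elements $R$ (acting via multiplication by $\psi$-classes followed by pairing, keeping a single vertex), $T$ (acting via push-forward $(\pi_i)_*$ along the forgetful map, which again preserves the single-vertex structure), and $\ell$ (multiplication of the vertex class by a Manin--Zograf class $\ell_{g,n}$). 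By the explicit formulas in the proof of \cref{th:GGra0}, each of these sends a one-vertex graph with vertex label a cohomology class to a sum of one-vertex graphs with (possibly different) cohomology class labels, introducing no edges and no tadpoles.

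The point requiring care is the combined generator in the first line of \eqref{eq:cohom-GiventFull}, which involves dashed tadpoles and dashed edges in addition to the unstable $R$-type tadpole. First I would observe that a dashed edge or dashed tadpole, when acting on a one-vertex edge-free $\alpha$ via the rules in the proof of \cref{th:GGra0}, is \emph{contracted}: the action of a dashed edge pushes forward along a sewing map $(\circ_i^j)_*$ or $(\xi_{ij})_*$, producing again a single-vertex graph decorated by a cohomology class on the appropriate moduli space. Thus even these terms preserve the class of single-vertex edge-free elements. The main obstacle will be to confirm that the $R$-type \emph{usual} tadpole term (the first summand of the combined generator, carrying $a_1^k\psi^{k+1}$) does not produce a genuine (non-contractible) loop that would take us out of the CohFT locus; here one uses that the usual tadpole produced by $R$ acting on a single vertex again simply reattaches to the same vertex and, by the structure of the action, contributes a push-forward/multiplication that stays within $\Hbul\big(\oM\big)(A)$. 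Once all generators are checked, the result for the whole group follows since the pre-Lie exponential is built from iterated pre-Lie products of these generators, and the class of single-vertex edge-free elements is stable under each such operation; I would conclude by remarking that preservation is inherited through the exponential series term by term.
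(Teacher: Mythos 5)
Your overall strategy is exactly the paper's: identify CohFTs without unit with Maurer--Cartan elements supported on one-vertex graphs without tadpoles (\cref{cor:CohFTTFTtree}), and then check, generator by generator on the list \eqref{eq:cohom-GiventFull}, that the $\GGra_A$-action preserves that subspace. Your treatment of the $R$-, $T$- and $\ell$-type unary operations, and of the dashed tadpoles and dashed edges (which are indeed contracted via the push-forwards $\left(\xi_{ij}\right)_*$ and $\left(\circ_i^j\right)_*$ and hence return one-vertex graphs), is correct and matches what the paper's two-line proof implicitly relies on.

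There is, however, a concrete confusion in the paragraph where you claim to handle the ``usual tadpole''. The first summand of the combined generator in \eqref{eq:cohom-GiventFull} is not a tadpole at all: it is the unary unstable operation $R=\sum_{k}a_1^k\psi^{k+1}\otimes a_2^k\in\mathrm{U}(\mathfrak{L})$, drawn with the circle-and-leg convention of the paper; its action merely relabels a leaf and multiplies the vertex class by a power of a $\psi$-class, so no loop is ever created and there is nothing to contract. The generator you should worry about --- and which your proposal omits entirely --- is the second item of \eqref{eq:cohom-GiventFull}, the standalone \emph{usual} tadpole decorated by $a_1\otimes a_2$. Your proposed mechanism (``contributes a push-forward\dots that stays within $\Hbul\big(\oM\big)(A)$'') does not apply to it: in the action of $\GGra_A$ on $\g_A$ only \emph{dashed} edges and loops are contracted by push-forwards, whereas \emph{usual} edges and loops are kept as edges of the output graph, so $\Delta_{a_1\otimes a_2}$ applied to a one-vertex tadpole-free element produces a one-vertex graph \emph{with} a tadpole. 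This is precisely the one generator for which preservation of the CohFT locus is not immediate from the action formulas, and your argument as written would fail there; it needs a separate justification (for instance a parity or symmetry analysis of the resulting tadpole term), which neither your proposal nor, admittedly, the paper's very terse proof spells out.
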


\begin{proof}
Recall from \cref{cor:CohFTTFTtree} that CohFTs without unit coincide with
 Maurer--Cartan elements $\alpha\in\g_A$ supported on the one-vertex graphs without tadpoles.
The pre-Lie exponential formula given in \ref{th:ActOnCohFT} shows that the action of
any element of \eqref{eq:cohom-GiventFull} of
 the Givental group preserves the space of $\g_A$ of  one-vertex graphs without tadpoles, that is sends
 CohFTs without unit to CohFTs without unit.
\end{proof}

This property allows us to compare the present fundamental theory to the already existing group actions on CohFTs:  in the present case, we actually recover the Givental--Teleman theory. Our main reference for the classical Givental group action is~\cite{Teleman2012}, further surveys are available in~\cite{ShadrinBCOV,PandPixZvo2015}.
Since in most cases the Givental--Teleman theory is applied to CohFTs with a unit, we repeatedly stress that a (possibly existing) unit is not a part of the structure.

\begin{proposition}\label{prop:RecoverGivTel}  \leavevmode
Let $(A, \langle\ ,\, \rangle)$  be a graded symmetric vector space, i.e. with
 $\d_A=0$, satisfying \cref{ass:fdnd}, and let $\alpha \in \MC(\g_A)$ be a CohFT without unit.
The action
\begin{equation}r \cdot \alpha = \sum_{n\geqslant 1}
{\textstyle \frac{1}{n!}}
\left(e^{\varPsi(r)}\right)_n(\alpha, \ldots, \alpha)
\end{equation}
of the Givental group element
\[
r\coloneq\sum_{k\geqslant 0}\,\left(
\vcenter{\hbox{\begin{tikzpicture}[scale=0.6]
        \draw[fill=white, thick] (0,0.3) circle [radius=30pt];
        \draw[thick] (0,28pt)--(0,48pt);
        \node at (0,10pt) {\scalebox{1}{$a_1^k\psi^{k+1}$}};
        \node at (0,62pt) {\scalebox{1}{$a_2^k$}};
        \end{tikzpicture}}}
        - \sum_{i+j = k} (-1)^j \,\left(
\vcenter{\hbox{\begin{tikzpicture}[scale=0.9]
  \draw[dashed,thick]  (-0.4,2.7) arc [radius=0.4, start angle=180, end angle= 0];
  \draw[dashed,thick] (0, 2) to [out=135,in=270] (-0.4,2.7);
  \draw[dashed,thick] (0, 2) to [out=45,in=270] (0.4,2.7);
   \draw[fill=white, thick] (0,2) circle [radius=8pt];
  \node at (-0.7,2.4) {\scalebox{1}{$\psi_1^i$}};
  \node at (0,3.4) {\scalebox{1}{$a_1^{k}\otimes a_2^{k}$}};
  \node at (0.7,2.4) {\scalebox{1}{$\psi_2^j$}};
\end{tikzpicture}}}
 +
    \vcenter{\hbox{\begin{tikzpicture}[scale=0.75]
        \draw[dashed,thick] (0,0)--(4, 0);
        \draw[fill=white, thick] (0,0) circle [radius=10pt];
        \draw[fill=white, thick] (4,0) circle [radius=10pt];
        \node at (0.7,0.4) {\scalebox{1}{$\psi_1^i$}};
        \node at (2,0.4) {\scalebox{1}{$a_1^{k}\otimes a_2^{k}$}};
        \node at (3.3,0.4) {\scalebox{1}{$\psi_2^j$}};
        \node at (1, -0.8) { };
        \end{tikzpicture}}}
\right)\right)~,
\]
with
$a_1^{k}\otimes a_2^{k} \in A^{\otimes 2}$ such that
$\left(a_1^{k}\otimes a_2^{k}\right)^{(12)}=(-1)^k a_1^{k}\otimes a_2^{k}$, for $k\geqslant 0$,
coincides with the Givental--Teleman formula
\[\exp\left({\widehat{r}(z)}\right).\{\alpha\}\ , \quad \text{with} \quad \widehat{r}(z)=\sum_{k\geqslant 1}^\infty r_k z^k
\ , \quad \text{where} \quad r_k \colon a \mapsto \left\langle a, a_1^{k-1}\right\rangle a_2^{k-1}~, \]
for the $R$-group action on CohFTs \emph{without unit}.
\end{proposition}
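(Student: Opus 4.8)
The plan is to show that the two group actions---our graph-operadic action via the pre-Lie exponential and the classical Givental--Teleman $R$-action---agree on the level of the generators of the respective Lie algebras, and hence agree after exponentiation. Since both are defined by exponentiating a Lie algebra element (ours via the pre-Lie exponential $e^{\varPsi(r)}$, Teleman's via $\exp(\widehat{r}(z))$), and since a group homomorphism from a prounipotent group is determined by its action on the Lie algebra, it suffices to match the infinitesimal actions. First I would unpack the operation $\varPsi(r)$ on $\g_A$: the element $r$ is a sum of three graph types---a usual tadpole, a dashed tadpole with $\psi$-decorations, and a dashed edge with $\psi$-decorations---each carrying a bivector $a_1^k\otimes a_2^k$. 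By \cref{th:GGra0}, the action of each type is explicit: the usual tadpole multiplies a vertex class by $\psi_i^{k+1}$ and pairs the remaining tensor factor against the leaf, while the dashed tadpole and dashed edge contract via the push-forwards $\left(\xi_{ij}\right)_*$ and $\left(\circ_i^j\right)_*$ after decorating with the appropriate $\psi$-powers.

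The core computation is then to identify these three universal operations with the three building blocks of the Givental upper-triangular action. Recall that in Teleman's formalism, $\exp(\widehat{r}(z))$ acts on a CohFT by summing over \emph{stable graphs}: each vertex carries the original CohFT, each leaf is dressed by $R^{-1}(\psi)=\exp(-\widehat{r}(\psi))$, and each internal edge carries the bivector $\frac{\mathbb{I}-R^{-1}(\psi')R^{-1}(\psi'')}{\psi'+\psi''}$ built from $R$. The infinitesimal version of this---the action of $\widehat{r}(z)=\sum_{k\geqslant 1} r_k z^k$ itself---decomposes exactly into: (i) a leaf-decoration term $r_k\colon a\mapsto \langle a,a_1^{k-1}\rangle a_2^{k-1}$ applied at external legs, matching our usual tadpole; and (ii) an edge term obtained by differentiating the edge bivector, matching the dashed-edge and dashed-tadpole contributions. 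The precise bookkeeping is the dictionary $r_k\leftrightarrow a_1^{k-1}\otimes a_2^{k-1}$ together with the telescoping identity $\frac{\mathbb{I}-R^{-1}(\psi')R^{-1}(\psi'')}{\psi'+\psi''}$, whose first-order term in $r$ produces exactly the alternating sum $\sum_{i+j=k}(-1)^j \psi_1^i\psi_2^j\,(a_1^k\otimes a_2^k)$ appearing in the dashed graphs of $r$.

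The strategy I would follow is therefore: first verify that $\varPsi(r)$, as a degree-$1$ coderivation-type operation on the $(\S\Lie)_\infty$-structure $\g_A$, matches $\widehat{r}(z).(-)$ term by term on one-vertex and two-vertex graphs, using the explicit formulas from \cref{th:GGra0} and the definition of $r_k$. Then, because the pre-Lie exponential $e^{\varPsi(r)}$ governs the full nonlinear action via $\lambda\cdot\alpha=\sum_n \frac{1}{n!}(e^{\varPsi(r)})_n(\alpha,\dots,\alpha)$ (as in \cref{th:ActOnCohFT}), and because the Givental action likewise assembles stable-graph sums from iterated applications of $\widehat{r}$, both sides are recovered as the same formal exponential of the same infinitesimal generator. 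Matching the combinatorial coefficients---the $\frac{1}{n!}$ and the automorphism factors of stable graphs versus the pre-Lie iteration---is where care is required.

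The main obstacle I anticipate is precisely this combinatorial and sign reconciliation: the pre-Lie exponential organizes contributions by insertion of graphs into graphs with its own symmetrization and sign conventions (the Koszul rule on odd-degree edges, the $(-1)^j$ in the dashed-edge bivector), whereas the Givental--Teleman formula organizes them by summing over isomorphism classes of stable graphs with explicit automorphism-group denominators. Showing that these two resummations produce identical CohFT classes amounts to a careful graph-theoretic bijection between the terms of $e^{\varPsi(r)}$ and the terms of $\exp(\widehat{r}(z))$, respecting both the $\psi$-class decorations at legs and the edge bivectors. I expect the cleanest route is to reduce to checking the statement on the Lie-algebra generators and invoking functoriality of the exponential, so that the full nonlinear identity follows formally from the infinitesimal one, rather than attempting a direct term-by-term comparison of the exponentiated formulas.
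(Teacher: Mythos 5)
Your proposal is correct and follows essentially the same route as the paper: both reduce the statement to matching the infinitesimal (Lie algebra) actions — the paper writes out the arity-$n$, genus-$g$ component of $\sum_{n}\frac{1}{n!}(\varPsi(r))_n(\alpha,\dots,\alpha)$ as the sum of a leaf term, a self-gluing term, and a splitting term, identifies it with Teleman's standard infinitesimal $R$-action, and notes (as you anticipate) that the $\frac12$ coefficients come from the automorphism group of the dashed tadpole and from the pre-Lie exponential, while the symmetry condition on $a_1^k\otimes a_2^k$ translates into $\widehat{r}(z)+\widehat{r}^*(-z)=0$. The direct stable-graph comparison of the exponentiated formulas that you flag as the harder alternative is exactly what the paper relegates to a closing remark citing Pandharipande--Pixton--Zvonkine.
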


\begin{proof}
We will prove that both infinitesimal actions coincide. We start by noting that, under the identification of $A^{\otimes 2}$ with $\Hom(A,\allowbreak A)\cong A^*\otimes A$~, the condition
$\left(a_1^{k}\otimes a_2^{k}\right)^{(12)}=(-1)^k a_1^{k}\otimes a_2^{k}$ naturally appearing from studying the symmetric elements in our formalism immediately becomes the defining relation $\widehat{r}(z)+{\widehat{r}}^*(-z)=0$ for the Lie algebra of the Givental--Teleman $R$-group.

\medskip

In the case of CohFTs, we have, in the formalism of \cref{subsec:DefCohFT}, just a system of labels
$\alpha=\{\alpha_{g,n}\}_{(g,n)\in \NN^2}$, for the one-vertex graphs without tadpoles
$\alpha_{g,n}\in \big(\Hbul\big(\oM_{g,n}\big)\big)(A)$ that satisfy
the factorization properties of CohFTs. More precisely, the vertex of a one vertex graph in genus
$g$ with the leaves labeled by  $a_1,\dots,a_n$ is labeled by $\frac 1{n!}\sum_{\sigma\in S_n} \alpha_{g,n}(a_{\sigma(1)},\dots,a_{\sigma(n)})$, cf. Examples in~\cref{subsec:PZ}).
The pre-Lie exponential action of \cref{th:ActOnCohFT} is the exponent of the Givental Lie algebra
action on $\alpha$ given by $\sum_{n\geqslant 1}
{\textstyle \frac{1}{n!}}
\left(\varPsi(r)\right)_n(\alpha, \ldots, \alpha)$, which, in arity $n$ and genus $g$, is equal to:
\begin{multline*}
\sum_{k\geqslant 0}\sum_{m=1}^n r_{k+1}^{(m)} \alpha_{g,n}\psi_m^{k+1}
-
\frac 12 \sum_{k\geqslant 0}\sum_{i+j=k} (-1)^j
  \left(\xi_{n+1,n+2}\right)_* \langle -, r_{k+1}- \rangle^{(n+1,n+2)} \alpha_{g-1,n+2}\psi_{n+1}^i \psi_{n+2}^j
\\
-
\frac 12 \sum_{k\geqslant 0}\sum_{i+j=k} (-1)^j \sum \left(\circ_{n'+1}^{n-n'+1}\right)_* \langle -, r_{k+1}- \rangle^{[n'+1,n-n'+1]} \alpha_{g',n'+1}\psi_{n'+1}^i \otimes\alpha_{g-g',n-n'+1} \psi_{n-n'+1}^j~,
\end{multline*}
where $r_{k+1}^{(m)}$ means that $r_{k+1}$ acts on the $m$th input of $\alpha_{g,n}$,
the expression $\langle -, r_{k+1}- \rangle^{(n+1,n+2)}$ indicates that we substitute the bivector dual to $\langle -, r_{k+1}- \rangle$ to the $(n+1)$th and $(n+2)$th inputs respectively of $\alpha_{g-1,n+2}$,
and the expression $\langle -, r_{k+1}- \rangle^{[n'+1,n-n'+1]}$ indicates that we substitute the bivector dual to  $\langle -, r_{k+1}- \rangle$ to the $(n'+1)$th input of $\alpha_{g',n'+1}$ and $(n-n'+1)$th input of $\alpha_{g-g',n-n'+1}$ respectively. The last sum in the third term runs over all stable ways to simultaneously split $g=g'+(g-g')$ and the set of inputs labeled by $\{1,\dots,n\}$ into two complementary subsets of cardinality $n'$ and $n-n'$. The coefficient $1/2$ in the second summand comes from the order of the automorphism group of the second summand in the formula for $r$,
\[
-\sum_{i+j = k} (-1)^j \
\vcenter{\hbox{\begin{tikzpicture}[scale=0.9]
			\draw[dashed,thick]  (-0.4,2.7) arc [radius=0.4, start angle=180, end angle= 0];
			\draw[dashed,thick] (0, 2) to [out=135,in=270] (-0.4,2.7);
			\draw[dashed,thick] (0, 2) to [out=45,in=270] (0.4,2.7);
			\draw[fill=white, thick] (0,2) circle [radius=8pt];
			\node at (-0.7,2.4) {\scalebox{1}{$\psi_1^i$}};
			\node at (0,3.4) {\scalebox{1}{$a_1^{k}\otimes a_2^{k}$}};
			\node at (0.7,2.4) {\scalebox{1}{$\psi_2^j$}};
\end{tikzpicture}}}\,,
\]
and the coefficient $1/2$ in the third summand comes from the pre-Lie exponent.

\medskip

This formula is standard in Givental--Teleman theory, see e.g.~\cite{ShadrinBCOV,DotsenkoShadrinVallette11,PandPixZvo2015};  modulo the appropriate adjustment of the notation it coincides with~\cite[Definition 6.1]{Teleman2012}.

\medskip

An alternative way to prove this proposition would be to match the terms of the full group action $r\cdot \alpha$  with~\cite[Definition 2.2]{PandPixZvo2015}: the pre-Lie exponential $e^r$ produces all the stable graphs $\Gamma$ (with dashed edges) with coefficient $\frac{1}{|\mathrm{Aut}(\Gamma)|}$ and their action under $\varPsi$  on $\alpha$ coincides with the one given in~\emph{op.~cit.}.
\end{proof}

\begin{remark}
An interesting open question is to compare the present construction with the previously known construction of the Givental group action on homotopy hypercommutative algebras (also known as homotopy flat F-manifolds without unit) coming from change of trivialization of the trivial circle action, see \cite{DSV-Trivialisation}. That latter construction is \emph{a priori} not compatible with the cyclic group action. However, one can expect that it should have a cyclic formulation that might be shown to be equivalent to the genus $0$ part of the abovementioned action of the Givental group element $r$.
\end{remark}

One might now wonder how to interpret the actions of the other generators of the Givental group on CohFTs.
We claim that the action of the second type of generators of \eqref{eq:cohom-GiventFull}, that is the ones corresponding to the second type of unary unstable operations, recovers the translation action given in~\cite[Section 6.3]{Teleman2012}, see also~\cite[Section 2.2]{PandPixZvo2015}.

\begin{proposition}\label{prop:translation}
Let $(A, \langle\ ,\, \rangle)$  be a graded symmetric vector space, i.e. with
 $\d_A=0$, satisfying \cref{ass:fdnd}, and let $\alpha \in \MC(\g_A)$ be a CohFT without unit.
The action
\begin{equation}\label{eq:TranslationGivental}
T \cdot \alpha = \sum_{n\geqslant 1}
{\textstyle \frac{1}{n!}}
\left(e^{\varPsi(T)}\right)_n(\alpha, \ldots, \alpha)
\quad \text{of}\quad
T\coloneq \sum_{l\geqslant 2}\,
\vcenter{\hbox{\begin{tikzpicture}[scale=0.6]
        \draw[fill=white, thick] (0,0) circle [radius=22pt];
        \draw[thick] (0,22pt)--(0,12pt);
        \node at (0,2pt) {\scalebox{1}{$a^l\psi^l$}};
        \end{tikzpicture}}}
\in  \psi^2 A[[\psi]]
\end{equation}
coincides with the translation action on CohFTs \emph{without unit}.
\end{proposition}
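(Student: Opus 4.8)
The plan is to prove the statement by matching infinitesimal actions, exactly as in the proof of \cref{prop:RecoverGivTel}. Since both actions are obtained by exponentiating a Lie algebra action, it suffices to show that the Givental Lie algebra element $\varPsi(T)$ acts on a CohFT $\alpha$ in the same way as the infinitesimal translation operator from~\cite[Section~6.3]{Teleman2012}. First I would recall that a CohFT without unit is, in the formalism of \cref{subsec:DefCohFT}, a system of labels $\alpha=\{\alpha_{g,n}\}$ supported on one-vertex graphs without tadpoles, with $\alpha_{g,n}\in\big(\Hbul\big(\oM_{g,n}\big)\big)(A)$ satisfying the CohFT factorization relations.

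Next I would compute the action $\sum_{n\geqslant1}\frac1{n!}\big(\varPsi(T)\big)_n(\alpha,\dots,\alpha)$ in arity $n$ and genus $g$ directly from the definition of the action of the unary operation $T$ given in the proof of \cref{th:GGra0}. Recall that the operation $\sum_{l\geqslant2}a^l\psi^l$ acts on a one-vertex graph by summing, over all the leaves, the result of replacing the vertex class $\mu$ attached to a leaf $i$ labeled by $a$ with $\langle a,a^l\rangle\,(\pi_i)_*(\mu\,\psi_i^l)$, where $\pi_i$ forgets the $i$th marked point. Applied to $\alpha_{g,n}$ this produces, in each arity $n$, the class
\[
\sum_{l\geqslant2}\sum_{m=1}^{n+1}(\pi_m)_*\Big(\alpha_{g,n+1}\big(a_1,\dots,a^l,\dots,a_n\big)\,\psi_m^{l}\Big)\ ,
\]
where $a^l$ is inserted as a new $(n{+}1)$st argument and then pushed forward by forgetting that point. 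This is precisely the shape of the translation operator: translating the CohFT $\alpha$ by the vector-valued series $T(\psi)=\sum_{l\geqslant2}a^l\psi^l\in\psi^2 A[[\psi]]$ amounts to evaluating higher correlators with extra marked points decorated by $T$ and forgetting them, which is how the translation is formulated in~\cite[Section~6.3]{Teleman2012} (see also~\cite[Section~2.2]{PandPixZvo2015}).

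I would then spell out why the two expressions agree term by term. The key geometric input is that the operation $T$ inserts marked points decorated by powers of $\psi$ at least two and pushes them forward; this matches the expansion of $\exp$ of the translation vector field on the space of CohFTs. Here I would use the projection formula together with the pull-back relation for $\psi$-classes under the forgetful maps (the same relation $\pi_j^*\psi_i=\psi_i-D_{ij}$ used in the proof of \cref{th:GGra0}, with $\psi_j D_{ij}=0$) to check that iterated applications of $T$, organized by the pre-Lie exponential, reproduce the combinatorial coefficients $\frac{1}{|\mathrm{Aut}|}$ appearing in the translation action, and that the divisibility by $\psi^2$ (see \cref{rem:relaxRT}) guarantees convergence and stability of all the forgetful pushforwards involved.

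The main obstacle I anticipate is purely bookkeeping: matching the symmetry factors and signs between the pre-Lie exponential formula of \cref{th:ActOnCohFT} and the standard normalization of the translation action in the Givental--Teleman references. Since the translation vector is an ordinary (even) unary operation with no mixing between components — unlike the $R$-action, it creates no edges and so produces no new genus or separating structure — the commutators $\lbrack\Psi_T,\Psi_T\rbrack=0$ established in the proof of \cref{th:GGra0} show that the exponential is a genuine one-parameter translation, and the combinatorics reduce to the classical computation. I would therefore conclude, as in \cref{prop:RecoverGivTel}, by noting that after the appropriate adjustment of notation the resulting formula is exactly the translation action on CohFTs without unit.
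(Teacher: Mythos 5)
Your proposal is correct and follows essentially the same route as the paper: both reduce to the observation that $T$ is an arity-one operation (so the pre-Lie exponential collapses to the classical exponential of the operator $\varPsi(T)$, and only the $n=1$ term of the sum contributes), compute the action of $T$ as ``pair a leaf with $a^l$, multiply by $\psi^l$, push forward along the forgetful map,'' and then identify the result with the translation formula of Teleman and Pandharipande--Pixton--Zvonkine. The only cosmetic difference is that the paper writes out the closed-form exponential $\sum_m \frac{1}{m!}(\pi_*)^m\langle a^{l_m},\dots,a^{l_1};\alpha_{g,n+m}\rangle\psi^{l_1}_{n+m}\cdots\psi^{l_m}_{n+1}$ directly and matches it term by term, whereas you match the infinitesimal actions first and then invoke $[\Psi_T,\Psi_T]=0$ to control the iterated pushforwards; both organizations are sound.
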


\begin{proof}
Since $T$ is an arity $1$ operation, only the term $n=1$ in Formula~\eqref{eq:TranslationGivental} can contribute non-trivially. So the pre-Lie exponential resumes to a classical exponential and the action on $\alpha$ boils down to
\[\big(\exp (\varPsi(T))(\alpha)\big)_{g,n}=
\sum_{m\geqslant 0}
\sum_{l_1, \ldots, l_m\geqslant 2}
\frac 1{m!} (\pi_*)^m
\left\langle a^{l_m},\dots,a^{l_1}; \alpha_{g,n+m}\right\rangle
\psi_{n+m}^{l_1}\ldots \psi_{n+1}^{l_m}~,\]
where $\left\langle a^{l_m},\dots,a^{l_1}; \alpha_{g,n+m}\right\rangle$ amounts to applying the pairing to all the pairs made up of $a^{l_i}$ and the element of $A$ labeling the $(n-i+1)$th leaf, for $1\leqslant i\leqslant m$.
This formula coincides with the class given in~\cite[Section 6.3]{Teleman2012} and~\cite[Definition 2.5]{PandPixZvo2015}.
\end{proof}

The last type of generators of the Givental group are the Manin--Zograf elements $\ell$. Comparing their definition
with the discussion of a twist with Hodge characters and kappa-classes in~\cite[Sections 1.6 and 8.2-8.3]{Teleman2012}, we immediately get  the following statement.

\begin{proposition}\label{prop:ManinZografAction} The action $\ell \cdot \alpha$ by a Manin--Zograf element $\ell$ on a CohFT $\alpha$ \emph{without unit}
amounts to multiplying each component $\alpha_{g,n}$ by $\ell_{g,n}$~.
\end{proposition}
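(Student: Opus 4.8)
The plan is to unwind the definition of the Manin--Zograf action $\Psi_\ell$ established in the proof of \cref{th:GGra0} and match it term-by-term with the Hodge/$\kappa$-class twist of \cite{Teleman2012}. Recall that, by the construction of the morphism $\mathrm{U}(\mathfrak{L})\to\End_{\g_A}$, the action of a Manin--Zograf generator on a labeled graph $\gamma\in\g_A$ is given by summing over all vertices $v$ of $\gamma$ the graph obtained by multiplying the cohomology class labeling $v$ by the corresponding class $\ell_{g(v),n(v)}$. Since $\ell$ is an arity one operation, the pre-Lie exponential formula of \cref{th:ActOnCohFT} collapses: only the $n=1$ term $\frac{1}{1!}(e^{\varPsi(\ell)})_1(\alpha)$ contributes to $\ell\cdot\alpha$, and because $\Psi_\ell$ is itself already the linear derivation multiplying vertex classes, the exponential action restricted to a CohFT without unit (supported on one-vertex graphs without tadpoles, by \cref{cor:CohFTTFTtree}) just reproduces the vertex-wise multiplication.

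First I would make precise that for a CohFT without unit, the Maurer--Cartan element $\alpha$ is a system of classes $\{\alpha_{g,n}\}$ supported on one-vertex graphs; thus $\Psi_\ell$ sends the component $\alpha_{g,n}$ to $\alpha_{g,n}\,\ell_{g,n}$, and, crucially, this operation preserves the subspace of one-vertex graphs without tadpoles. Hence $\ell\cdot\alpha$ is again a CohFT without unit, with components $(\ell\cdot\alpha)_{g,n}=(\exp\Psi_\ell)(\alpha)_{g,n}$, which one reads off directly. Second, I would invoke the identification $\ell_{g,n}=\big(\sum_{i\geqslant1}k_i\kappa_i+\sum_{j\geqslant0}s_{2j+1}\mathrm{ch}_{2j+1}\big)|_{\oM_{g,n}}$ from \cite[Prop.~8.4]{Teleman2012}, so that multiplication by $\ell_{g,n}$ is exactly the multiplicative operation on CohFT classes described in \cite[Sections~1.6 and 8.2--8.3]{Teleman2012}.

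The essential verification, which I would state rather than belabor, is that the compatibility of $\ell_{g,n}$ with the gluing and contraction maps (the defining relations $\sigma^*\ell_{g,n}=\ell_{g-1,n+2}$ and $\rho^*\ell_{g,n}=\ell_{g',n'+1}\otimes1+1\otimes\ell_{g-g',n-n'+1}$ of the Manin--Zograf space $L$) is precisely what guarantees that the twisted system $\{\alpha_{g,n}\ell_{g,n}\}$ still satisfies the CohFT factorization axioms. This is the same compatibility already used in the proof of \cref{th:GGra0} to show $[\Psi_\ell,\d]=0$, so no new computation is required: the action is well-defined on the nose and lands among CohFT classes. The main (and only) potential obstacle is bookkeeping of conventions: one must check that the multiplicative twist appearing here matches the precise normalization of the Hodge-character/$\kappa$-twist in \cite{Teleman2012} without stray signs or factorial factors. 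Since $\ell$ has arity one and the pre-Lie exponential degenerates to an ordinary exponential of a single multiplication operator, these normalization factors are trivial, and the comparison is immediate once the dictionary between $\{k_i,s_{2j+1}\}$ and the $\kappa$- and $\mathrm{ch}$-coefficients is fixed.
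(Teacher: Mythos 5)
Your proposal is correct and follows the same route as the paper, which in fact offers no written proof at all: it declares the statement immediate from the definition of $\Psi_\ell$ in the proof of \cref{th:GGra0} (vertex-wise multiplication by $\ell_{g,n}$) together with the collapse of the pre-Lie exponential for an arity-one operation, exactly the two observations you spell out. Your additional remark that well-definedness rests on $[\Psi_\ell,\d]=0$, already checked via the relations $\sigma^*\ell_{g,n}=\ell_{g-1,n+2}$ and $\rho^*\ell_{g,n}=\ell_{g',n'+1}\otimes 1+1\otimes\ell_{g-g',n-n'+1}$, is consistent with how the paper handles it.
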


\begin{remark}
In fact, this part of the action is not really studied separately in the literature, since for the CohFTs with a unit (and even for CohFTs with a possibly non-flat unit) this action is equal, on the Lie algebra level, to a linear combination of the other elements of the Givental Lie algebra, see~\cite[Proposition 8.3]{Teleman2012}.
However, in the case where we have no unit at all (and no semi-simplicity to use pieces of Teleman's classification), the action of Manin--Zograf elements cannot be reduced to the action of the other generators of the Givental Lie algebra.
\end{remark}

\subsection{The quantum version of the tautological graph complex}
In this section, we apply the methods introduced in \cref{sec:GTgroup}, and more precisely in \cref{subsec:HoAction} and in \cref{sec:UDGGTgroup}, to the operad $\GGra_A$ instead of $\Gra$ and to the $\GGra_A$-algebra $\g_A$ instead of any $\Gra$-algebra~$\g$. Recall that we work under \cref{ass:fdnd}.
By a slight abuse, we use the same notations
\[
\vcenter{\hbox{
\begin{tikzcd}
\S \Delta \Lie
\arrow[rr, "\Phi"] \arrow[dr, "\Theta"']
& & \End_{\g_A}\ .
 \\
&\GGra_A  \arrow[ur, "\Psi"']  &
\end{tikzcd}
}}
\]
By construction, the  deformation complex $\aa_{\GGra_A}^{\theta}$ of the morphism $\Theta$ carries a complete dg Lie algebra structure.

\begin{definition}[Givental--Grothendieck--Teichm\"uller group/Lie algebra]
The induced (complete) Lie algebra
\[\givgrt\coloneq \left(H_\ev\Big(
\aa_{\GGra_A}^{\theta}\Big), [\,,] \right)\]
is called the \emph{Givental--Grothendieck--Teichm\"uller Lie algebra}. Its integration group
\[\GIVGRT\coloneq \left( H_\ev\Big(
\aa_{\GGra_A}^{\theta}\Big), \BCH,0\right)\]
is called the \emph{Givental--Grothendieck--Teichm\"uller group}.
\end{definition}

\begin{theorem}\label{th:ActOnQCohFT}
The Givental--Grothendieck--Teichm\"uller group acts functorially
\[\GIVGRT  \to
\Aut\left(
\calMC\left(\g^\hbar_A\right)
\right)\]
on the moduli spaces of quantum homotopy CohFTs under the pre-Lie exponential formula
\[\lambda \cdot \alpha \coloneqq \sum_{\substack{n\geqslant 1\\ k\geqslant 0}}
{\textstyle \frac{1}{n!}}
\left(e^{\Psi(\lambda)}\right)^k_n(\alpha, \ldots, \alpha)\, \hbar^k\ .\]
\end{theorem}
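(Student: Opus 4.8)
The plan is to reduce \cref{th:ActOnQCohFT} to an application of the general machinery already assembled in \cref{sec:GTgroup}, now with the bigger operad $\GGra_A$ in place of $\Gra$. The entire architecture of \cref{subsec:CycleAct}, \cref{subsec:HoAction}, and \cref{sec:UDGGTgroup} was deliberately formulated for an arbitrary $\Gra$-algebra $\g$, so the first step is simply to verify that $\g_A$ is a complete $\GGra_A$-algebra and that $\GGra_A$ contains $\S\Delta\Lie$ so that the deformation-theoretic setup applies. The former is exactly \cref{th:GGra0}(2), which provides the morphism of dg operads $\Psi : \GGra_A \to \End_{\g_A}$ together with the compatible inclusion $\Gra \hookrightarrow \GGra_A$; composing with $\Theta : \S\Delta\Lie \to \Gra$ from \cref{prop:GraAction}(2) gives the morphism $\Theta : \S\Delta\Lie \to \GGra_A$ fixed in the statement, and hence the shifted $\Delta$-Lie structure on $\g_A$ is $\Phi = \Psi \circ \Theta$ as recorded in the diagram preceding the theorem.

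First I would run the analogue of \cref{prop:CycleAction} verbatim: the morphism of operads $\Psi$ induces $\Psi_* : \aa_{\GGra_A}^\theta \to \aa_{\End_{\g_A}}^\phi$ of twisted dg Lie algebras (functoriality of the convolution Lie algebra, exactly as in \cref{lem:TW}), the pre-Lie exponential $e$ sends even cycles to $\S\Delta\Lie_\infty$-isotopies (\cref{lem:PreLieExp}), the functor $\Sigma$ of \cref{lem:DeltaLietoLie} produces shifted $\Lie_\infty$-automorphisms of $\g_A^\hbar$, and these act on $\MC(\g_A^\hbar)$. This yields a group action of $Z_\ev\big(\aa_{\GGra_A}^\theta\big)$ on $\MC(\g_A^\hbar)$ given by the stated pre-Lie exponential formula. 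Next, to pass to the moduli space $\calMC(\g_A^\hbar)$ and to homology, I would invoke \cref{cor:MorphBCHhomo}: since that statement is already proved for an arbitrary complete $\Gra$-algebra, and $\g_A$ is one, the induced action descends to a group morphism $\big(H_\ev(\aa_{\GGra_A}^\theta), \BCH, 0\big) \to \Aut\big(\calMC(\g_A^\hbar)\big)$. The key input there, \cref{lem:HoImpliesGauge}, is a statement about complete shifted $\Lie_\infty$-algebras and makes no reference to the specific operad, so it applies unchanged. By definition $H_\ev(\aa_{\GGra_A}^\theta) = \GIVGRT$, which gives the claimed functorial action.

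The only genuinely new points to check, beyond citing the earlier results, are that the constructions which previously relied on concrete graph-degree bounds for $\Gra$ still make sense for $\GGra_A$. Specifically, I must confirm that the relevant completeness and convergence hold: the filtration on $\aa_{\GGra_A}^\theta$ and on $\g_A^\hbar$ must be complete so that the Baker--Campbell--Hausdorff formula and the pre-Lie exponential converge, and the arity-$1$ component of any even cycle must vanish so that $Z_\ev\big(\aa_{\GGra_A}^\theta\big)$ forms a genuine group under $\BCH$. For $\g_A^\hbar$ the completeness is the filtration $\F$ from \cref{lem:DefComp}, and for $\aa_{\GGra_A}^\theta$ the relevant weight grading (arity minus one plus the power of $\hbar$) is the same one used throughout \cref{subsec:CycleAct}. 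The functoriality in $A$ is inherited from the functoriality of $A \mapsto \GGra_A$ and $A \mapsto \g_A$ already established in \cref{th:GGra0}.

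The step I expect to require the most care is the convergence of the pre-Lie exponential formula and the resulting action on Maurer--Cartan elements, since $\GGra_A$ is vastly larger than $\Gra$ --- in particular its arity-$1$ part now contains the nontrivial unary operations of $\mathrm{U}(\mathfrak{L})$, so one must ensure that summing over all ways of inserting $\alpha$ into $e^{\Psi(\lambda)}$ produces a well-defined element of $\g_A^\hbar$ with respect to the completion. I would handle this exactly as in \cref{prop:CycleAction}: Maurer--Cartan elements live in $\F_1$, and one checks that each fixed arity-and-$\hbar$-graded piece of the formula receives finitely many contributions, so the sum converges in the complete topology. Once convergence is secured, the rest is a direct transcription of the proof of \cref{thm:GRT1Action}, whose three components (\cref{thm:GRT1}-analogue via \cref{cor:MorphBCHhomo}, \cref{prop:QuantMorphTw}, and \cref{prop:QCohFTMC}) identify $\calMC(\g_A^\hbar)$ with the moduli space of quantum homotopy CohFTs.
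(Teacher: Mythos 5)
Your proposal is correct and follows essentially the same route as the paper, whose proof simply invokes the machinery of \cref{sec:GTgroup} (in particular \cref{cor:MorphBCHhomo}) \emph{mutatis mutandis} together with the functoriality in $A$ noted in \cref{th:ActOnCohFT}. Your additional attention to convergence, completeness of the filtrations, and the vanishing of the arity-one part of even cycles only makes explicit the checks the paper leaves implicit in the phrase ``mutatis mutandis.''
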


\begin{proof}
The same methods as in \cref{sec:GTgroup}, especially \cref{cor:MorphBCHhomo}, apply \emph{mutatis mutandis}.
Like in the proof of \cref{th:ActOnCohFT}, the assignment $\GIVGRT  \to
\Aut\left(
\calMC\left(\g^\hbar_A\right)
\right)$ defines a natural transformation between two functors from the category of dg symmetric vector spaces  to the category of groups.
\end{proof}

The underlying graded vector space of $\aa^\theta_{\GGra_A}$
 is equal to $\prod_{n\geqslant 1} \GGra_A(n)^{\mathbb{S}_n}[[\hbar]]$ with the differential given by
\[
\Gamma \mapsto
\DE (\Gamma) + \left[
\vcenter{\hbox{\begin{tikzpicture}[scale=0.5]
		\draw[thick] (0,0)--(1.5, 0);
		\draw[fill=white, thick] (0,0) circle [radius=10pt];
		\draw[fill=white, thick] (1.5,0) circle [radius=10pt];
		\end{tikzpicture}}}
+ \hbar\,
\vcenter{\hbox{\begin{tikzpicture}[scale=0.5]
		\draw[thick]  (-0.4,2.7) arc [radius=0.4, start angle=180, end angle= 0];
		\draw[thick] (0, 2) to [out=135,in=270] (-0.4,2.7);
		\draw[thick] (0, 2) to [out=45,in=270] (0.4,2.7);
		\draw[fill=white, thick] (0,2) circle [radius=10pt];
		\node (n) at (0,1.5) {};
		\end{tikzpicture}}}\,,\, \Gamma\right]\ .
\]
Like the universal deformation group $\mathrm{G}$ of \cref{sec:UDGGTgroup} but unlike the Givental group/Lie algebra, that we were able to fully compute in \cref{cor:GiventalGroup}, we are only able to compute some non-trivial parts of the homology groups. The following theorem justifies the naming conventions.

\begin{theorem}\label{thm:MainGGT}\leavevmode
\begin{enumerate}
\item
The canonical inclusion of operads $\Gra \hookrightarrow \GGra_A$ induces an embedding at the homology level
\[H_\bullet\left(\aa_{\Gra}^\theta\right)\hookrightarrow H_\bullet\left(\aa_{\GGra_A}^\theta\right)~.\]
The chain map
\[
\b^\vartheta_{\GGra_A^\omega}\hookrightarrow \aa_{\GGra_A}^\theta\ \ \text{given by} \ \
\Gamma\mapsto \hbar^{b_1(\Gamma)}\Gamma~,\]
where $b_1(\Gamma)$ is the genus of the underlying graph of $\Gamma$, i.e. its first  Betti number, induces a
embedding at the homology level
\[H_\bullet\left(\b^\vartheta_{\GGra_A^\omega}\right)\hookrightarrow H_\bullet\left(\aa_{\GGra_A}^\theta\right)~.\]

\item The various deformation groups are related as  follows
\[
\vcenter{\hbox{
\begin{tikzcd}
\GRT_1
\arrow[r, hook]
& \mathrm{G} \arrow[r, hook]  &
 \GIVGRT
&\arrow[l, hook']  \GIV
\end{tikzcd}~.
}}
\]
\end{enumerate}
\end{theorem}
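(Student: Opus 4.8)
The plan is to deduce both parts from two direct-sum splittings of chain complexes, after which the group statement in part (2) is essentially formal. Throughout I work under \cref{ass:fdnd} and use the description $\GGra_A\cong\EGra_A\circ\mathrm{U}(\mathfrak{L})$ from \cref{th:GGra0}(1), together with the explicit twisted differential $\partial^\theta=\DE+[\vartheta,-]+\hbar[\omega,-]$ on $\aa_{\GGra_A}^\theta$ and the fact, from \cref{lemma:CompletePreLie}, that its bracket is the $\hbar$-extension of the pre-Lie product $\gamma\star\delta=\sum_i\gamma\circ_i\delta$ on the totalisation of $\GGra_A$.

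For the genus-preserving embedding $\Gamma\mapsto\hbar^{b_1(\Gamma)}\Gamma$, I introduce the grading $w:=(\text{power of }\hbar)-b_1$ on $\aa_{\GGra_A}^\theta$. I would check that each summand of $\partial^\theta$ is homogeneous of $w$-degree $0$: $\DE$ (dashed $\to$ usual edge) and $[\vartheta,-]$ (insertion and splitting along a standard edge) leave both the $\hbar$-power and $b_1$ unchanged, even through the distributive-law rewrites, whereas $\hbar[\omega,-]$ raises each by exactly one since the tadpole adds one independent cycle. The key combinatorial input is additivity of the first Betti number under operadic insertion, $b_1(\Gamma\circ_i\Gamma')=b_1(\Gamma)+b_1(\Gamma')$, immediate from $b_1=E-V+1$. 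Hence $\aa_{\GGra_A}^\theta=\prod_{w\in\ZZ}(\aa_{\GGra_A}^\theta)_w$ is a product of subcomplexes; the map $\Gamma\mapsto\hbar^{b_1(\Gamma)}\Gamma$ is an isomorphism of complexes from $\b^\vartheta_{\GGra_A^\omega}$ onto the factor $w=0$ (it intertwines $\DE+[\vartheta,-]+[\omega,-]$ with $\partial^\theta$ precisely because the tadpole term carries the extra $\hbar$), and by additivity of $b_1$ it is moreover a morphism of dg Lie algebras. Passing to homology, $H_\bullet(\b^\vartheta_{\GGra_A^\omega})$ is a direct factor of $H_\bullet(\aa_{\GGra_A}^\theta)$, which is the second embedding.

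For $\iota:\aa_\Gra^\theta\to\aa_{\GGra_A}^\theta$ induced by $\Gra\hookrightarrow\GGra_A$, I would exhibit $\aa_\Gra^\theta$ as a direct summand subcomplex. Define the \emph{defect} of a decorated graph to be the number of dashed edges, plus the total $\psi$-degree of all edge labels, plus the number of edge labels in $\overline{\Mrk}$, plus the number of vertex decorations from $\mathfrak{L}$. The defect-zero graphs are exactly the image of $\Gra$ (standard edges labelled $\mathbb{I}$, no unary decorations), and I claim $\partial^\theta$ never decreases a positive defect to zero. The delicate cases are governed by the distributive law \eqref{eq:DistLawTautGra}: when $[\vartheta,-]$ or $[\omega,-]$ meets a vertex carrying an $R\in\psi Z_\bullet(A^{\otimes 2})[[\psi]]$, the rewriting either keeps the decoration or replaces an edge label $M$ by $R\cdot M$; since $R$ is divisible by $\psi$ (see \cref{rem:relaxRT}), $R\cdot M$ has strictly positive $\psi$-degree, hence positive defect. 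Likewise $\DE$ trades a dashed edge for a usual edge labelled $(\psi_1+\psi_2)M$, again of positive $\psi$-degree, while $T$- and $\ell$-decorations commute past $\vartheta,\omega$ untouched. Thus both $\{\text{defect}=0\}$ and $\{\text{defect}\geq1\}$ are subcomplexes, $\aa_{\GGra_A}^\theta=\aa_\Gra^\theta\oplus C$, and $H_\bullet(\aa_\Gra^\theta)$ injects as a direct factor, which is the first embedding.

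Finally, part (2) follows by integrating Lie-algebra monomorphisms via the $\BCH$ formula. The inclusion $\GRT_1\cong\mathrm{G}^0\hookrightarrow\mathrm{G}$ is the inclusion of the cohomological-degree-$0$ Lie subalgebra, by \cref{thm:GRT1}(1) together with the fact that the bracket respects the grading $|\gamma\hbar^k|=2(v-1)-e+2k$. The map $\mathrm{G}\hookrightarrow\GIVGRT$ is the even part of the first embedding and $\GIV\hookrightarrow\GIVGRT$ the even part of the second; both are induced by morphisms of dg Lie algebras ($\iota$ by functoriality of the convolution bracket, \cref{lem:TW}; the genus map by additivity of $b_1$) and both are injective on homology by part (1), so each integrates to a group monomorphism. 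I expect the main obstacle to be the defect-monotonicity verification for the first embedding: one must check, case by case through the distributive-law rewrites, that no differential term can simultaneously cancel all of a graph's non-$\Gra$ features — and it is exactly here that the $\psi$-divisibility of the unary operations, and the $(\psi_1+\psi_2)$ factor produced by $\DE$, are indispensable.
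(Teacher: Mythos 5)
Your argument is correct, and for the second embedding it takes a genuinely different route from the paper's. For the inclusion $\aa^\theta_{\Gra}\hookrightarrow\aa^\theta_{\GGra_A}$ you and the paper do essentially the same thing: the paper splits $\aa^\theta_{\GGra_A}\cong\aa^\theta_{\Gra}\oplus K$, with $K$ spanned by graphs having at least one non-trivial vertex label, dashed edge, or edge label in $\overline{\Mrk}$; your ``defect'' is a careful bookkeeping of why $K$ is indeed a subcomplex, a point the paper leaves implicit. For the map $\Gamma\mapsto\hbar^{b_1(\Gamma)}\Gamma$, however, the paper does \emph{not} use a splitting: it transports the explicit homology representatives of $\b^\vartheta_{\GGra_A^\omega}$ computed in \cref{cor:GiventalGroup} to cycles in $\aa^\theta_{\GGra_A}$ (the elements \eqref{eq:GiventalElementshbar}) and checks non-exactness by observing that the image of the twisted differential contains no one-vertex graphs without tadpoles and no one-vertex graphs with a usual tadpole whose label is free of $\psi_1,\psi_2$. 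Your grading $w=(\text{power of }\hbar)-b_1$ instead exhibits the image of the chain map as the $w=0$ subcomplex, with a complementary subcomplex given by the elements whose $w=0$ component vanishes, so that $H_\bullet\big(\b^\vartheta_{\GGra_A^\omega}\big)$ becomes a direct factor of $H_\bullet\big(\aa^\theta_{\GGra_A}\big)$. This is stronger (a direct factor rather than a mere injection), is independent of the computation of \cref{cor:GiventalGroup}, and the additivity of $b_1$ under operadic insertion gives the Lie-algebra compatibility needed for part (2) for free; the paper's route, on the other hand, produces the explicit representatives \eqref{eq:GiventalElementshbar} that are reused afterwards. One small imprecision: since the underlying space is a product over $(n,k)$ of direct sums over $b_1$, it is not literally the product of its $w$-homogeneous pieces, but the two-term splitting into the $w=0$ piece and its complement is all your argument requires.
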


\begin{proof}
Regarding the first point, it is straightforward to check that the two maps
\[\aa^\theta_{\Gra}\hookrightarrow \aa_{\GGra_A}^\theta \quad
\text{and} \quad
\b^\vartheta_{\GGra_A^\omega}\hookrightarrow \aa_{\GGra_A}^\theta\]
preserve the respective differentials. In the first case, the latter chain complex splits with respect to the former one:
\[\aa_{\GGra_A}^\theta \cong \aa^\theta_{\Gra} \oplus K~,\]
where $K$ is made up of series of graphs where at least one contains either a non-trivial label at a vertex, a dashed edge, or a usual edge labeled by an element of $\overline{\Mrk}$ (under the notation of \cref{ass:fdnd}).
In the second case, the homology groups of the source chain complex have been fully computed in \cref{cor:GiventalGroup}; their representatives \eqref{eq:cohom-GiventFull} provide us with the cycles of
$\aa_{\GGra_A}^\theta$ whose leading terms are symmetric tensors of
\begin{align} \label{eq:GiventalElementshbar}
&  \sum_{k\geqslant 0}\,
\vcenter{\hbox{\begin{tikzpicture}[scale=0.6]
        \draw[fill=white, thick] (0,0.3) circle [radius=30pt];
        \draw[thick] (0,28pt)--(0,48pt);
        \node at (0,10pt) {\scalebox{1}{$a_1^k\psi^{k+1}$}};
        \node at (0,62pt) {\scalebox{1}{$a_2^k$}};
        \end{tikzpicture}}}
        - \sum_{i,j \geqslant 0} (-1)^j \,
\hbar\vcenter{\hbox{\begin{tikzpicture}[scale=0.9]
  \draw[dashed,thick]  (-0.4,2.7) arc [radius=0.4, start angle=180, end angle= 0];
  \draw[dashed,thick] (0, 2) to [out=135,in=270] (-0.4,2.7);
  \draw[dashed,thick] (0, 2) to [out=45,in=270] (0.4,2.7);
   \draw[fill=white, thick] (0,2) circle [radius=8pt];
  \node at (-0.7,2.4) {\scalebox{1}{$\psi_1^i$}};
  \node at (0,3.4) {\scalebox{1}{$a_1^{i+j}\otimes a_2^{i+j}$}};
  \node at (0.7,2.4) {\scalebox{1}{$\psi_2^j$}};
\end{tikzpicture}}}
         - \sum_{i,j \geqslant 0} (-1)^j \,
    \vcenter{\hbox{\begin{tikzpicture}[scale=0.75]
        \draw[dashed,thick] (0,0)--(5, 0);
        \draw[fill=white, thick] (0,0) circle [radius=10pt];
        \draw[fill=white, thick] (5,0) circle [radius=10pt];
        \node at (0.7,0.4) {\scalebox{1}{$\psi_1^i$}};
        \node at (2.5,0.4) {\scalebox{1}{$a_1^{i+j}\otimes a_2^{i+j}$}};
        \node at (4.3,0.4) {\scalebox{1}{$\psi_2^j$}};
        \node at (1, -0.8) { };
        \end{tikzpicture}}}
\ ,
\\ & \notag
\hbar\vcenter{\hbox{\begin{tikzpicture}[scale=0.9]
  \draw[thick]  (-0.4,2.7) arc [radius=0.4, start angle=180, end angle= 0];
  \draw[thick] (0, 2) to [out=135,in=270] (-0.4,2.7);
  \draw[thick] (0, 2) to [out=45,in=270] (0.4,2.7);
   \draw[fill=white, thick] (0,2) circle [radius=8pt];
  \node at (0,3.4) {\scalebox{1}{$a_1\otimes a_2$}};
\end{tikzpicture}}}\ ,
\ \
\sum_{l\geqslant 2}\,
\vcenter{\hbox{\begin{tikzpicture}[scale=0.6]
        \draw[fill=white, thick] (0,0) circle [radius=22pt];
        \draw[thick] (0,22pt)--(0,12pt);
        \node at (0,2pt) {\scalebox{1}{$a^l\psi^l$}};
        \end{tikzpicture}}} \ ,
\ \
\text{and}
\ \
\vcenter{\hbox{\begin{tikzpicture}[scale=0.75]
        \draw[fill=white, thick] (0,0) circle [radius=10pt];
        \node at (0,0) {\scalebox{0.8}{$\ell$}};
        \end{tikzpicture}}}\ ,
\end{align}
for all elements $a_1^{k}\otimes a_2^{k} \in Z_\bullet\left(A^{\otimes 2}\right)$ such that
$\left(a_1^{k}\otimes a_2^{k}\right)^{(12)}=(-1)^{k} a_1^{k}\otimes a_2^{k}$, for $k\geqslant 0$,
all elements $a_1\otimes a_2 \in Z_\bullet\left(A^{\otimes 2}\right)^{\Sy_2}$,
all elements $a^l\in Z_\bullet(A)$, for $l\geqslant 0$, and all element $\ell\in L_{MZ}$~.
Notice that the image of the differential of $\aa_{\GGra_A}^\theta$
contains neither one-vertex graphs without tadpoles nor one-vertex graphs with a usual tadpole
labeled by an element not containing $\psi_1$ and $\psi_2$. This implies that such cycles cannot be exact.

\medskip

The second point is a direct consequence of the first one since both maps
$\aa^\theta_{\Gra}\hookrightarrow \aa_{\GGra_A}^\theta$
and
$\b^\vartheta_{\GGra_A^\omega}\hookrightarrow \aa_{\GGra_A}^\theta$
preserve the respective complete dg Lie algebra structures.
\end{proof}

\medskip

Using the Givental--Grothendieck--Teichm\"uller group action, we can now offer a more conceptual  interpretation of the Buryak--Rossi functor of \cref{subsec:Quantum} that maps homotopy CohFTs to homotopy quantum CohFTs.
To that end, we first construct a right inverse to the morphism of shifted dg Lie algebras
$\g_A^\hbar\to\g_A$ corresponding to the evaluation $\hbar=1$~. (Notice the similarity with the abovementioned map $\b^\vartheta_{\GGra_A^\omega}\hookrightarrow \aa_{\GGra_A}^\theta$.)

\begin{lemma}\label{lem:Xi}
The map $\Xi\colon\g_A\to \g_A^\hbar$,  defined on each decorated graph $\gamma$ by setting
 \[
\Xi(\gamma)\coloneq\hbar^{b_1(\gamma) + \sum_{v\in V(\gamma)} g(v)} \gamma~,
 \]
 is a morphism of shifted dg Lie algebras, which is the right inverse to the evaluation at $\hbar=1$~.
\end{lemma}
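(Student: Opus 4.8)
The plan is to exploit the grading of $\g_A$ by the total genus $g(\gamma)=b_1(\gamma)+\sum_{v}g(v)$ of a decorated graph and to track how each of the five operations constituting the shifted dg Lie structure shifts this grading. Indeed, $\Xi$ is exactly the rescaling that multiplies the total-genus-$k$ component of an element by $\hbar^k$ (so it is manifestly $\K$-linear and well defined, each graph acquiring a single determined power of $\hbar$), and the image of $\Xi$ is precisely the ``genus preserving'' subalgebra of $\g_A^\hbar$ mentioned in the remark closing \cref{subsec:hbar1}. Consequently the whole statement reduces to a bookkeeping of total-genus shifts.

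First I would record the genus behaviour of the operations, reading them off from \cref{lem:DefComp} and the genus analysis already carried out in the proof of \cref{lem:BRmorph}. The internal differential $\d_A$ leaves the underlying graph untouched, hence preserves the total genus. The operator $\Delta$ joins two leaves into a tadpole, so it raises $b_1$ by one without altering vertex decorations, i.e. it raises the total genus by exactly one. The bracket $\{\ ,\,\}$ grafts two graphs along a bridge, so $b_1$ and the internal genera simply add up: $g(\{\gamma,\zeta\})=g(\gamma)+g(\zeta)$. The point requiring the most care, and which I expect to be the main obstacle, is the total-genus invariance of $\d_1$ and $\d_2$, which are (the dual of) the bar differential of the modular operad $H_\bullet(\oM)$ and must be shown to preserve the grading intrinsically preserved by the modular-operadic contraction and composition maps. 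For $\d_2$, splitting a vertex into two joined by a new bridge via a partial decomposition map $\delta_i^j$ leaves $b_1$ unchanged and splits the internal genus additively ($g(v)=g'+g''$), so $g$ is preserved. For $\d_1$, the expansion map $\chi_{ij}\colon H^\bullet(\oM_{g+1,n-2})\to H^\bullet(\oM_{g,n})$ replaces a vertex of internal genus $h$ by a vertex of internal genus $h-1$ carrying a tadpole (the two new legs $i,j$ being joined), so $b_1$ goes up by one while $\sum_v g(v)$ goes down by one, leaving the total genus unchanged. I would stress this compensation, since it is exactly the feature distinguishing $\Xi$ (which tracks the total genus) from $\BR$ (which tracks only $b_1$ and therefore produces the extra factors $\lambda^\hbar$ of \cref{lem:mumford}).

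Granting this, the verification is a one-line genus count. For any operation $\mathcal{O}$ homogeneous of total-genus shift $\delta$ one has $\Xi\circ\mathcal{O}=\hbar^{\delta}\,\mathcal{O}\circ\Xi$, where on the right $\mathcal{O}$ is extended $\K[[\hbar]]$-linearly; this is immediate from $\Xi(\gamma)=\hbar^{g(\gamma)}\gamma$. Applying it with $\delta=0$ to $\d=\d_A-\d_1-\d_2$ gives $\Xi\d=\d\,\Xi$, and with $\delta=1$ to $\Delta$ gives $\Xi\Delta=\hbar\Delta\,\Xi$; summing yields the chain-map identity $\Xi\circ(\d+\Delta)=(\d+\hbar\Delta)\circ\Xi$, i.e. $\Xi$ intertwines the differential of $\g_A$ with that of $\g_A^\hbar$. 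For the bracket, additivity of the total genus together with $\K[[\hbar]]$-bilinearity and centrality of $\hbar$ gives $\Xi\{\gamma,\zeta\}=\hbar^{g(\gamma)+g(\zeta)}\{\gamma,\zeta\}=\{\hbar^{g(\gamma)}\gamma,\hbar^{g(\zeta)}\zeta\}=\{\Xi\gamma,\Xi\zeta\}$. Hence $\Xi$ is a morphism of shifted dg Lie algebras.

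Finally I would check the right-inverse property. Since each decorated graph $\gamma$ occurs in $\Xi(\gamma)$ with the single power $\hbar^{g(\gamma)}$, the composite $\mathrm{ev}_{\hbar=1}\circ\Xi$ is unproblematically defined and equals $\id_{\g_A}$; note that $\mathrm{ev}_{\hbar=1}$ is harmless on the image of $\Xi$ even though it need not converge on all of $\g_A^\hbar$. As $\mathrm{ev}_{\hbar=1}$ sends $\d+\hbar\Delta$ to $\d+\Delta$ and preserves the bracket, it is a morphism of shifted dg Lie algebras wherever defined, so $\Xi$ is indeed a section of it. The only genuine subtlety, as indicated, is the total-genus invariance of $\d_1$; everything else is formal.
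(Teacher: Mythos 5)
Your proof is correct and takes exactly the route the paper intends: its entire proof of \cref{lem:Xi} is the one-line remark that this is a direct computation using the fact that powers of $\hbar$ in $\g_A^\hbar$ record the total genus, and your bookkeeping of the total-genus shifts of $\d_A$, $\d_1$, $\d_2$, $\Delta$, and $\{\ ,\,\}$ (in particular the compensation $b_1\!\uparrow\!1$, $\sum_v g(v)\!\downarrow\!1$ for $\d_1$) is precisely that computation spelled out.
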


\begin{proof}
This is a direct computation using the fact that the powers of $\hbar$ in $\g_A^\hbar$ correspond to the total genus.
\end{proof}

Let us introduce an auxiliary parameter $s$ and work over the  ring $\K[[s]]$. We consider the following Manin--Zograf element
\[
\widehat{F} \coloneqq \sum_{j\geqslant 0} (2j)! s^{2j+1} f_{2j+1}  \ .
\]

\begin{proposition}\label{prop:BRasGGTaction}
For any homotopy CohFT $\alpha \in \MC\left(\g_A\right)$, the evaluation at $s=\hbar^{-1}$ of the Givental--Grothendieck--Teichm\"uller action of $\widehat{F}$ on $\Xi(\alpha)$ is equal to the image of $\alpha$ under the Buryak--Rossi functor:
\[\left(\widehat{F}\cdot \Xi(\alpha)\right)|_{s=\hbar^{-1}}=\BR(\alpha)~.\]
\end{proposition}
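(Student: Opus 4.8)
The plan is to prove the identity by evaluating both sides on an arbitrary decorated graph $\gamma$ appearing in $\alpha$ and matching the resulting $\hbar$-powers and vertex decorations. First I would unwind the left-hand side using that $\widehat{F}$ is an \emph{arity-one} Manin--Zograf element. Its representing cycle in $\aa_{\GGra_A}^\theta$ is the one-vertex graph labeled by $\widehat{F}$, carrying no factor of $\hbar$ (the last class in \eqref{eq:GiventalElementshbar}), so $\Psi(\widehat{F})$ sits in the arity-one, $\hbar$-degree-zero part of the convolution algebra. Since the pre-Lie product of arity-one elements is ordinary composition and $\Psi(\widehat{F})$ involves no contraction $\Delta$, its pre-Lie exponential stays in arity one with no $\hbar$, so in the action formula of \cref{th:ActOnQCohFT} only the term $n=1,\,k=0$ survives and $\widehat{F}\cdot\beta=\exp(\Psi(\widehat{F}))(\beta)$. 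By \cref{prop:ManinZografAction}, $\Psi(\widehat{F})$ acts additively over the vertices of a decorated graph by multiplying the class at a vertex of genus $g$ with $n$ half-edges by $\widehat{F}_{g,n}=\sum_{j\geqslant 0}(2j)!\,s^{2j+1}\,\mathrm{ch}_{2j+1}|_{\oM_{g,n}}$; being a sum over vertices, its exponential multiplies the class at each vertex $v$ by $\exp(\widehat{F}_{g(v),n(v)})$.

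The heart of the argument, and the step I expect to be the main obstacle, is the Mumford-type generating-function identity converting $\exp(\widehat{F}_{g,n})$ into the Hodge class $\lambda^\hbar$. Writing the Chern roots of the Hodge bundle $\mathbb{E}_g$ and taking logarithms gives
\[
\sum_{i=0}^{g}\lambda_i\,x^i=\exp\!\left(\sum_{m\geqslant 1}(-1)^{m-1}(m-1)!\,\mathrm{ch}_m\,x^m\right).
\]
By Mumford's relation $c(\mathbb{E})c(\mathbb{E}^\vee)=1$ from \cite{Mumford1983} (the same input as \cref{lem:mumford}), the even Chern characters of the Hodge bundle vanish, $\mathrm{ch}_{2j}=0$ for $j\geqslant 1$, so only the odd terms remain and, setting $x=s$,
\[
\exp(\widehat{F}_{g,n})=\exp\!\left(\sum_{j\geqslant 0}(2j)!\,s^{2j+1}\,\mathrm{ch}_{2j+1}\right)=\sum_{i=0}^{g}\lambda_i\,s^i.
\]
Evaluating at $s=\hbar^{-1}$ and reindexing then yields $\exp(\widehat{F}_{g,n})\big|_{s=\hbar^{-1}}=\sum_{i=0}^{g}\lambda_i\hbar^{-i}=\hbar^{-g}\lambda^\hbar$. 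The delicate points here are the precise combinatorial coefficients $(2j)!$ (which is exactly why $\widehat{F}$ is defined with these weights) and checking that, although $\widehat{F}$ is an infinite series in $s$, its exponential is the finite polynomial $\sum_{i\leqslant g}\lambda_i s^i$, so that the substitution $s=\hbar^{-1}$ produces only finitely many negative powers of $\hbar$.

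Finally I would assemble the bookkeeping of $\hbar$-powers. Applying the previous two steps to $\beta=\Xi(\alpha)$: by \cref{lem:Xi}, $\Xi$ multiplies $\gamma$ by $\hbar^{\,b_1(\gamma)+\sum_v g(v)}$, and the action of $\widehat{F}$ at $s=\hbar^{-1}$ multiplies the class at each vertex $v$ of genus $g(v)$ by $\hbar^{-g(v)}\lambda^\hbar$. The factor $\hbar^{\sum_v g(v)}$ from $\Xi$ cancels against $\prod_v\hbar^{-g(v)}$, so that
\[
\big(\widehat{F}\cdot\Xi(\gamma)\big)\big|_{s=\hbar^{-1}}=\hbar^{\,b_1(\gamma)}\cdot\big(\text{each vertex class multiplied by }\lambda^\hbar\big)\cdot\gamma,
\]
which is precisely $\BR(\gamma)$ by the description of the Buryak--Rossi map given below \cref{lem:BRmorph}. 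Summing over all $\gamma$ occurring in $\alpha$ gives $\big(\widehat{F}\cdot\Xi(\alpha)\big)\big|_{s=\hbar^{-1}}=\BR(\alpha)$, completing the proof. The only genuinely nontrivial ingredient is the Mumford identity of the middle paragraph; the reduction of the action and the cancellation of $\hbar$-powers are routine once that identity is in place.
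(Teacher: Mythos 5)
Your proposal is correct and follows essentially the same route as the paper's proof: reduce the action of the arity-one element $\widehat{F}$ to an ordinary exponential acting vertex-wise, invoke the standard Chern character/Chern class relation (together with Mumford's vanishing of the even Chern characters of the Hodge bundle) to identify $\exp(\widehat{F}_{g,n})$ with $\sum_i\lambda_i s^i$, and then cancel the genus contribution of $\Xi$ against the negative powers of $\hbar$ arising from $s=\hbar^{-1}$. The paper states the Chern character identity and the $\hbar$-bookkeeping more tersely, but the content is identical to what you wrote out.
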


\begin{proof}
Notice first that $\Xi(\alpha)\in \MC\left(\g_A^\hbar\right)$ is a quantum homotopy CohFT since $\Xi$ is a shifted dg Lie algebra morphism by \cref{lem:Xi}. So the Givental--Grothendieck--Teichm\"uller action $\widehat{F}\cdot \Xi(\alpha)$ applies and renders yet another homotopy quantum CohFT over $\K[[s]]$.
For each decorated graph $\gamma$ in $\alpha$, the standard relation between the Chern classes and Chern characters implies that the action
$\widehat{F}\cdot \Xi(\alpha)$
multiplies the cohomology class at each vertex $v\in V(\gamma)$ by
\[\exp\left(\sum_{j\geqslant 0} (2j)! s^{2j+1} \mathrm{ch}_{2j+1}\right) = \sum_{i \geqslant 0} \lambda_i s^{i}~,\]
under the notations introduced in \cref{subsec:Quantum}.
Because of the presence of the extra factor of $\hbar^{b_1(\gamma) + \sum_{v\in V(\gamma)} g(v)}$ in the image of $\Xi$, the negative powers of $\hbar$ arising after the evaluation at $s=\hbar^{-1}$ will not remain in the final result. Note that for a vertex $v\in V(\gamma)$, the class  $\hbar^{g(v)}\sum_{i \geqslant 0}^\infty \lambda_i\hbar^{-i}$ is equal to $\lambda^\hbar=\sum_{i \geqslant 0} \lambda_i\hbar^{g(v)-i}$, so we indeed obtain the formula we used to define the Buryak--Rossi functor.
\end{proof}

By a slight abuse of notations, let us denote by $r^\hbar, T$, and $\ell$
the images of $r, T$, and $\ell$
 (of \cref{prop:RecoverGivTel}, \cref{prop:translation}, and \cref{prop:ManinZografAction} respectively)
under the chain map
$\b^\vartheta_{\GGra_A^\omega}\hookrightarrow \aa_{\GGra_A}^\theta$; they are depicted in \cref{eq:GiventalElementshbar}.
\cref{thm:MainGGT} ensures that $r^\hbar, T$, and $\ell$ define respectively a \emph{Givental} action, a \emph{translation} action and a \emph{Manin--Zograf} action on homotopy quantum CohFTs.

\begin{corollary}The Buryak--Rossi functor maps the actions of $r, T$, and $\ell$ on homotopy CohFTs to the action of
$r^\hbar, T$, and $\ell$ on quantum homotopy CohFTs.
\end{corollary}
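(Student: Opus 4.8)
The plan is to establish the three intertwining identities $\BR(r\cdot\alpha)=r^\hbar\cdot\BR(\alpha)$, $\BR(T\cdot\alpha)=T\cdot\BR(\alpha)$ and $\BR(\ell\cdot\alpha)=\ell\cdot\BR(\alpha)$ simultaneously, by treating $\lambda\in\{r,T,\ell\}$ uniformly with image $\lambda^\hbar\in\{r^\hbar,T,\ell\}$ under the genus-grading embedding $\iota\colon\b^\vartheta_{\GGra_A^\omega}\hookrightarrow\aa_{\GGra_A}^\theta$, $\Gamma\mapsto\hbar^{b_1(\Gamma)}\Gamma$, of \cref{thm:MainGGT}. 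The bridge is \cref{prop:BRasGGTaction}, which expresses the Buryak--Rossi functor as $\BR(-)=\bigl(\widehat{F}\cdot\Xi(-)\bigr)\big|_{s=\hbar^{-1}}$, where $\Xi\colon\g_A\to\g_A^\hbar$ is the genus-grading morphism of \cref{lem:Xi}, the operation $\widehat{F}\cdot(-)$ is the Givental--Grothendieck--Teichm\"uller action of the Manin--Zograf element $\widehat{F}=\sum_{j\geqslant 0}(2j)!\,s^{2j+1}f_{2j+1}$, and the last step evaluates $s=\hbar^{-1}$. The whole statement then reduces to three commutations, carried out in turn.

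First I would establish the compatibility of $\Xi$ with the two Givental-type actions, namely $\Xi(\lambda\cdot\alpha)=\lambda^\hbar\cdot\Xi(\alpha)$ for $\lambda\in\{r,T,\ell\}$. This is the genus-bookkeeping identity, and it holds for the same reason that $\iota$ is a morphism of dg Lie algebras: the image $\iota\bigl(\b^\vartheta_{\GGra_A^\omega}\bigr)$ lies in the genus-preserving subalgebra of $\aa_{\GGra_A}^\theta$ (elements $\gamma\hbar^k$ with $k=b_1(\gamma)$), on which the quantum action of \cref{th:ActOnQCohFT} and the $\hbar=1$ action of \cref{th:ActOnCohFT} are intertwined precisely by the total-genus grading. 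Concretely, the operadic action of $\GGra_A$ is additive in genus: each operation contributes its own first Betti number, and the dashed-edge and tadpole operations push forward along $\circ_i^j$ and $\xi_{ij}$, raising the total genus exactly as recorded by the power of $\hbar$ attached to $\lambda^\hbar$. Hence the pre-Lie exponential formulas of \cref{th:ActOnCohFT} and \cref{th:ActOnQCohFT} match term by term after multiplication by the power of $\hbar$ equal to the total genus. This is the mechanism already used in \cref{lem:BRmorph}.

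Next I would verify the commutation $[\widehat{F},\lambda^\hbar]=0$ in the Givental--Grothendieck--Teichm\"uller Lie algebra $\givgrt$ for each $\lambda^\hbar\in\{r^\hbar,T,\ell\}$. The decisive point is that $\widehat{F}$ is a \emph{pure} Manin--Zograf element, with vanishing $\kappa$-part ($E=0$ in the notation $\ell=E+F$). By the action formula \eqref{eq:ManinZograf}, an element of $L_{MZ}$ acts on the Lie algebra of unstable unary operations only through its $E$-component, so $\widehat{F}$ acts trivially there; thus $[\widehat{F},T]=0$ and $\widehat{F}$ commutes with the $R$-type summand of $r^\hbar$. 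The bracket computations in the proof of \cref{th:GGra0} give $[\Psi_R,\Psi_\ell]=0$, as well as $[\Psi_\ell,\{\,,\,\}_M]=[\Psi_\ell,\Delta_M]=0$ and the same for the dashed operations ${}^h\{\,,\,\}_M$ and ${}^h\Delta_M$, so $\widehat{F}$ also commutes with the dashed-tadpole and dashed-edge summands of $r^\hbar$; finally $[\widehat{F},\ell]=0$ since Manin--Zograf elements commute. Consequently the corresponding prounipotent group elements commute, and so do the two GGT actions: $\widehat{F}\cdot(\lambda^\hbar\cdot X)=\lambda^\hbar\cdot(\widehat{F}\cdot X)$.

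Assembling the pieces, and using that the $\lambda^\hbar$-action involves only $\hbar$ (and no $s$), so that it is $\K[[s]]$-linear and commutes with the substitution $s=\hbar^{-1}$, I obtain
\begin{align*}
\BR(\lambda\cdot\alpha)
&=\bigl(\widehat{F}\cdot\Xi(\lambda\cdot\alpha)\bigr)\big|_{s=\hbar^{-1}}
=\bigl(\widehat{F}\cdot(\lambda^\hbar\cdot\Xi(\alpha))\bigr)\big|_{s=\hbar^{-1}}\\
&=\bigl(\lambda^\hbar\cdot(\widehat{F}\cdot\Xi(\alpha))\bigr)\big|_{s=\hbar^{-1}}
=\lambda^\hbar\cdot\BR(\alpha),
\end{align*}
which is the desired identity. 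The main obstacle I anticipate is precisely this last commutation, namely that the $\lambda^\hbar$-action commutes with the evaluation $s=\hbar^{-1}$: a priori this substitution produces negative powers of $\hbar$ that only cancel after the genus shift built into $\Xi$, so one must check that acting before or after evaluating yields the same well-defined element of $\g_A^\hbar$. This is controlled by the finiteness statement of \cref{lem:finite} --- each fixed arity- and genus-component of the relevant series is polynomial in $\hbar$ --- which guarantees that no convergence pathology occurs and that the two orders of operations agree.
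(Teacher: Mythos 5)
Your proof is correct and follows essentially the same route as the paper's: both reduce the statement to \cref{prop:BRasGGTaction} together with the observation that $\widehat{F}$, having trivial $E$-component, commutes with all the other generators of $\mathfrak{L}$ by \cref{eq:ManinZograf}, and both account for the matching powers of $\hbar$ via the genus grading (the paper phrases this as the extra factor of $\hbar$ in front of the genus-one term of $r^\hbar$). The paper's proof is simply a terser version of your three-step assembly.
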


\begin{proof}
This is a direct corollary of the interpretation of the Buryak--Rossi functor given in \cref{prop:BRasGGTaction} in term of the Givental--Grothendieck--Teichm\"uller action of the Manin--Zograf element $\widehat{F}$, since it commutes
with all the other elements in the Lie algebra $\mathfrak{L}$, see \cref{eq:ManinZograf}.
 We also note that the powers of $\hbar$ match due to the extra factor of $\hbar$ in front of the genus $1$ term in the formula of $r^\hbar$~.
\end{proof}

To conclude,
the full computation of the Givental--Grothendieck--Teichm\"uller group and its action on quantum homotopy CohFTs, like its applications in
the Buryak--Rossi theory of quantization of integrable hierarchies of
topological type,  are promising subjects for future research.


\bibliographystyle{alpha}
\bibliography{bib}

\end{document}